\documentclass[10pt]{article}
\usepackage[T1]{fontenc}
\usepackage{calligra}
\DeclareMathAlphabet{\mathcalligra}{T1}{calligra}{m}{n}
\usepackage[T1]{fontenc}
\usepackage{humanist}
\usepackage{rotunda}
\usepackage[utf8]{inputenc}


\usepackage{latexsym}
\usepackage{amsfonts}
\usepackage{amsmath}
\usepackage{esint}
\usepackage{amsthm}
\usepackage{amssymb}
\usepackage{mathrsfs}
\usepackage{dsfont}    
\usepackage{bbold}     
\usepackage[english]{babel}
\usepackage{caption}
\usepackage{float}
\usepackage{psfrag}
\usepackage{graphicx}
\usepackage{epsfig}
\usepackage{hyperref}
\textwidth 16.8cm
\textheight 21.6cm
\addtolength{\oddsidemargin}{-2.3cm}   
\addtolength{\evensidemargin}{-2.3cm}  
\addtolength{\topmargin}{-1.5cm}       
\newcommand{\ee}{\end{equation}}

\sloppy

\setlength{\parskip}{0.5ex}

\newtheorem*{theorem}{Theorem}
\newtheorem*{prop*}{Theorem}
\newtheorem{theo}{Theorem}[section]

\newtheorem{defi}[theo]{Definition}
\newtheorem{lemma}[theo]{Lemma}
\newtheorem{proposition}[theo]{Proposition}
\newtheorem{rmk}[theo]{Remark}

\newtheorem{const}[theo]{Constraint}
\newtheorem{hypo}{Hypothesis}[section]

\newtheorem*{metate}{Our Goal}

\newcommand{\zerarcounters}{\setcounter{equation}{0}}
\newcommand{\HH}{{\mathcal H}}
\newcommand{\ZZZ}{\mathds{Z}}
\newcommand{\CCC}{\mathds{C}}
\newcommand{\NNN}{\mathds{N}}

\newcommand{\RRR}{\mathds{R}}
\newcommand{\TTT}{\mathds{T}}
\newcommand{\uno}{\mathds{1}}

\newcommand{\db}{{\varrho}}
\newcommand{\calA}{{\mathcal A}}
\newcommand{\BB}{{\mathcal B}}

\newcommand{\DD}{{\mathcal D}}
\newcommand{\calE}{{\mathcal E}}
\newcommand{\calF}{{\mathcal F}}
\newcommand{\calG}{{\mathcal G}}
\newcommand{\calH}{{\mathcal H}}
\newcommand{\calI}{{\mathcal I}}

\newcommand{\calK}{{\mathcal K}}
\newcommand{\calL}{{\mathcal L}}

\newcommand{\NN}{{\mathcal N}}
\newcommand{\calO}{{\mathcal O}}
\newcommand{\calP}{{\mathcal P}}

\newcommand{\RR}{{\mathcal R}}
\newcommand{\SSSS}{{\mathcal S}}
\newcommand{\TT}{{\mathcal T}}
\newcommand{\calU}{{\mathcal U}}
\newcommand{\VV}{{\mathcal V}}

\newcommand{\calW}{{\mathcal W}}
\newcommand{\calX}{{\mathcal X}}

\newcommand{\tG}{{\mathtt G}}
\newcommand{\tR}{{\mathtt R}}

\newcommand{\gotp}{{\mathfrak p}}

\newcommand{\gotA}{{\mathfrak A}}
\newcommand{\gotB}{{\mathfrak B}}

\newcommand{\gotL}{{\mathfrak L}}

\newcommand{\gotN}{{\mathfrak N}}

\newcommand{\gotR}{{\mathfrak R}}

\newcommand{\gotU}{{\mathfrak U}}

\newcommand{\gotW}{{\mathfrak W}}

\newcommand{\ol}{\overline}

\newcommand{\Fullbox}{{\rule{2.0mm}{2.0mm}}}

\newcommand{\EP}{\hfill\Fullbox\vspace{0.2cm}}
\newcommand{\prova}{\noindent{\it Proof. }}
\newcommand{\io}{\infty}
\newcommand{\e}{\varepsilon}
\newcommand{\al}{\alpha}
\newcommand{\de}{\delta}
\newcommand{\be}{\beta}
\newcommand{\ze}{\zeta}

\newcommand{\x}{\xi}

\newcommand{\ka}{\kappa}
\newcommand{\g}{\gamma}
\newcommand{\om}{\omega}
\newcommand{\h}{\eta}
\newcommand{\la}{\lambda}

\newcommand{\f}{\varphi}
\newcommand{\s}{\sigma}

\newcommand{\B}{\boldsymbol{B}}
\newcommand{\del}{\partial}

\newcommand{\av}[1]{\langle #1 \rangle}

\newcommand{\oo}{\omega}

\newcommand{\ta}{\mathtt{a}}
\newcommand{\tb}{\mathtt{b}}
\newcommand{\tg}{\mathtt{g}}
\newcommand{\td}{\mathtt{d}}
\newcommand{\tr}{\mathtt{r}}
\newcommand{\tv}{\mathtt{v}}

\newcommand{{\resonance}}{relevant self-energy cluster }

\newcommand{\ii}{{\rm i}}

\newcommand{\Gl}{{\Gamma_{\gotp_1}}}

\newcommand{\Tl}{{\Theta_{\gotp_1}}}

\newcommand{\tino}{{\text{\tiny 0}}}

\def\leftinv#1{ {#1}^{-1}}
\def\ins#1#2#3{\vbox to0pt{\kern-#2 \hbox{\kern#1 #3}\vss}\nointerlineskip}

\bibliographystyle{elsarticle-num}

\makeindex
\begin{document}



\title{\bf Finite dimensional invariant KAM tori for tame vector fields}
\author{\bf L. Corsi$^*$, R. Feola$^{**}$, M. Procesi$^\dag$
\\
\small ${}^*$ Georgia Institute of Technology, Altanta, lcorsi6@math.gatech.edu; \\
\small
${}^{**}$ SISSA, Trieste, rfeola@sissa.it; 
\\
\small
${}^\dag$ Universit\`a di Roma Tre, procesi@mat.uniroma3.it}
%
%
\maketitle

\begin{abstract}
We discuss a Nash-Moser/ KAM algorithm for the construction of invariant tori for {\em tame} vector fields.
Similar algorithms have been studied widely both in finite and infinite dimensional contexts:
we are particularly interested in the second case where tameness properties of the vector fields become very important.
We focus on the formal aspects of the algorithm and particularly on the minimal hypotheses needed for convergence.
We discuss various applications where we show how our algorithm allows to reduce to solving only linear forced equations.
We remark that our algorithm works at the same time in analytic and Sobolev class.
\end{abstract}

\tableofcontents

\zerarcounters

\section{Introduction}
The aim of this paper is to provide a general and flexible approach to study 
  the existence,
for finite or infinite dimensional dynamical systems  of of finite dimensional  invariant tori carrying a quasi-periodic flow. 
To this purpose we discuss an iterative scheme for finding invariant tori for the dynamics of a vector field $F= N_0+G$, where $N_0$ is a linear vector field which admits an invariant torus and $G$ is a {\em perturbation}.\\
 By an { \em invariant torus} of $\dot u= F(u)$, with $u\in E$ a Banach space, we mean an embedding  $\TTT^d\to E$,  which is invariant under the dynamics of $F$, i.e. the vector field $F$ is tangent to the embedded torus. 
Similarly an {\em analytic} invariant torus is a map $\TTT_{s}^d\to E$ (with $s>0$)  where
 $\TTT^d_s$ is a {\em thickened} torus\footnote{ we use the standard notation $\TTT^d_s := \big\{ \theta \in \CCC^d \, : \, {\rm Re}(\theta)\in\TTT^{d},\ \max_{h=1, \ldots, d} |{\rm Im} \, \theta_h | < s
 	\big\}$}. 
	
It is very reasonable to work in the setting of the classical Moser scheme of \cite{M1}, namely $F$ acts  on a product space $(\theta,y,w)$ where $\theta\in \TTT^d_s,y\in \CCC^{d_1}$ while $w\in \ell_{a,p}$ some separable scale of Hilbert spaces.  The variables $\theta$ appear naturally as a parametrization for the invariant torus of $N_0$.
The $y$ variables are    constants of motion for $N_0$, 
 in applications they naturally appear
 as  ``conjugated'' to $\theta$, for instance in the Hamiltonian setting they come from the symplectic structure.
 The variables $w$ describe the dynamics in the directions orthogonal to the torus.
The main example that we have in mind is\footnote{we use the standard notation for vector fields $F=\sum_{\tv=\theta,y,w}F^{(\tv)}\del_{\tv}$}
 \begin{equation}\label{poesse}
 N_0= \omega^{(0)}\cdot \del_\theta + \Lambda^{(0)} w \del_w
 \end{equation}
 where $\omega^{(0)}\in\RRR^d$ is a constant vector while $\Lambda^{(0)}$ is a block-diagonal skew self--adjoint operator, independent of $\theta$.  
 Note that $N_0$ has the invariant torus $y=0,w=0$ , where the  vector field reduces to $\dot \f = \omega^{(0)}$.
 
Regarding the normal  variables $w$, we do not need to specify $\ell_{a,p}$ but only give some properties, see Hypothesis \ref{hyp22}, which essentially amount to requiring that $\ell_{a,p}$ is a weighted sequence space\footnote{a good example is to consider spaces of Sobolev or analytic functions on compact manifolds}
where $a\geq 0$ is an exponential weight while $p>0$ is polynomial, for example
 $$
 w=\{w_j\}_{j\in \calI\subseteq \NNN}\,,\quad w_j\in\CCC \,,\quad \| w\|_{a,p}^{2}:=\sum_{j\in\calI\subseteq\NNN} \lambda_j^{2p} e^{2a \lambda_j}|w_{j}|^{2}\,,\quad  0< \lambda_j\leq \lambda_{j+1}\ldots\quad \lambda_i\to \infty.
 $$
 Note that if $\calI$ is a finite set our space is finite dimensional.

\vskip.5truecm

%
%
The existence of an invariant torus 
in the variables $(\theta,y,w)$ means the existence of a map
$\TTT^{d}_s\overset{h}{\to} \CCC^{d_1}\times\ell_{a,p}$ of the form
$\theta \mapsto h(\theta)=(h^{(y)}(\theta),h^{(w)}(\theta))$ such that
\begin{equation}\label{gigi}
\calF(F,h)\equiv \calF(h):= F^{(\tv)}( \theta,h^{(y)}(\theta),h^{(w)}(\theta) )- \partial_\theta h^{(\tv)} \cdot  F^{(\theta)}(\theta,h^{(y)}(\theta),h^{(w)}(\theta))=0, \quad \tv=y,w,
\end{equation}
hence it coincides with the search for  zeros of the functional $\calF$ with unknown  $h$. Here $h$ lives in some Banach space, say
$H^{q}(\TTT^{d}_{s}; \CCC^{d_1}\times \ell_{a,p})$ on which $\calF$ is at least differentiable. Note moreover that $\calF(N_0,0)=0$ trivially. 
Since we are in a perturbative setting, once one has the torus embedding, one can study the dynamics of the variables $\theta$ restricted to the torus and look for 
a change of variables $\f\mapsto\theta(\f)$ which conjugates the dynamics of $\theta$ to the linear dynamics $\dot{\f}=\oo$ with $\oo\sim \oo^{(0)}$ a rationally  independent vector.
Obviously this could be done directly by looking for a {\em quasi-periodic solution} i.e. a map $$h : \f\to (h^{(\theta)}(\f), h^{(y)}(\f), h^{(w)}(\f)),\quad h\in H^{q}(\TTT^{d}_{s}; \CCC^d\times\CCC^{d_1}\times \ell_{a,p}).$$
which solves the functional equation
$$
\calF(h):= F( h^{(\theta)}(\varphi),h^{(y)}(\f),h^{(w)}(\varphi) )- \omega\cdot \partial_\f h=0.
$$
If we take $\ell_{a,p}=\emptyset$, $d_1=d$, this is the classical KAM framework of  Kolmogorov \cite{KKK}, Arnold \cite{Arn} and Moser \cite{Moser-Pisa-66}; see 
also \cite{Pos,Rus,Sev}.

Even in the simplest setting, equation \eqref{gigi} cannot be solved  by classical Implicit Function Theorem. In fact typically the operator $\calF$ in \eqref{gigi} linearized at $F=N_0,h=0$ is not invertible on $H^{q}(\TTT^{d}_{s}; \CCC^{d_1}\times \ell_{a,p})$ since it  has a spectrum which  accumulates to zero (the so-called small divisors).
To overcome this problem one can use  a {\em Nash-Moser} iterative scheme in order to find 
a sequence of approximate solutions rapidly converging to the true solution.
The fast convergence is used to control the loss of regularity due to the small divisors.
Such schemes are adaptations to Banach spaces of the  Newton method to find zeros of functions, see \cite{Ze}.

\begin{figure}[ht] 
	\centering 
	\ins{220pt}{-30pt}{$\calF(h)$}
	\ins{280pt}{-143pt}{$h_0$}
	\ins{245pt}{-143pt}{$h_1$}
	\ins{220pt}{-143pt}{$h_2$}
	\includegraphics[width=3in]{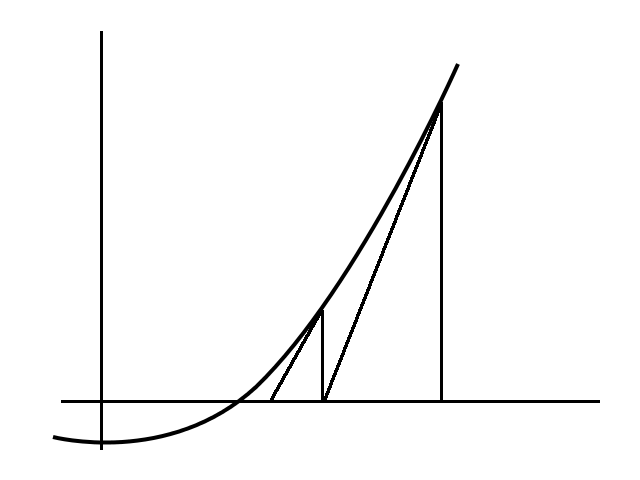} 
	\vskip.2truecm 
	\caption{Three steps of the Newton algorithm
		$h_{n+1} := h_n - (d_{h}\calF(h_n))^{-1} [\calF( h_n)] $ }
	\label{fig.newton} 
\end{figure} 
In order to run an algorithm of this type one must be able to control the linearized operator in a neighborhood 
of the expected solution; see Figure \ref{fig.newton}.
Due to the presence of small divisors it is not possible to invert such operator as a functional from a Sobolev space to itself (not even the operator linearized about zero).  
However, since the Newton scheme is quadratic, one may accept that $d_{h}\calF^{-1}$ is well defined as an unbounded ``tame'' operator 
provided that one has a good control on the loss of regularity.

Of course, if on the one hand 
in order to achieve such control it is in general not sufficient to give lower bounds on the eigenvalues, on the other hand one surely needs to avoid
zero or ``too small'' eigenvalues. To this end one typically  
uses  parameter modulation. 
Precisely one assumes that $\oo^{(0)}, \Lambda^{(0)}$  depend (non trivially and with some regularity)  on some parameters $\x\in \RRR^{k}$
for some $k$. Unfortunately, equations from physics may come with no such external parameters, so that one needs to extract them from the
initial data: however we shall not address this issue here.

\medskip

An equivalent approach to the Nash-Moser scheme is
 to find  
a change of coordinates $$(y,w)\rightsquigarrow  (\tilde y+ h^{(y)}(\theta),\tilde  w+ h^{(w)}(\theta))$$ such that the push forward of the vector field has an invariant torus at $\tilde y=0$, $\tilde w=0$.
Clearly  the equation for $h$  is  always \eqref{gigi},
hence the solvability conditions on the inverse of the linearized operator appear also in this context.

Following this general strategy  one can look for different types of changes of variables such that the push forward of the vector field has a  particularly simple form, and the existence of an invariant torus follows trivially.
For instance in classical KAM Theorems the  idea is to look for an {\em affine} change of variables  such that not only $y=0,w=0$ is an invariant torus but the vector filed $F$ linearized in the  directions normal to the torus is diagonal.  This means that in the quadratic scheme one not only performs a traslation but also a linear change of variables  which approximately diagonalizes the linearized operator at each step. Naturally this makes the inversion of $d_h\calF$ much simpler.
It is well known that it is possible to diagonalize a finite dimensional matrix if it has distinct eigenvalues.
Then, in order to diagonalize 
the linearized operator at an approximate solution,
one asks for lower bounds on the differences of the eigenvalues (the so called ``Second Mel'nikov'' conditions). Under these assumptions the bounds on the inverse follow by just imposing lower bounds on the eigenvalues (the so called  the First Mel'nikov conditions).
This requirement together with some structural  hypotheses on the system (Hamiltonianity, reversibility, ....) provides existence and  linear stability of the possible solution.
More in general if one wants to cancel non linear terms in the vector fields one needs to add some more restrictive conditions on the linear combinations of the eigenvalues (higher order Mel'nikov conditions). Naturally such conditions are not necessary for  the invertibility,  actually in many applications they cannot be imposed (already in the case of the NLS on the circle the eigenvalues are double) and typically in a Nash-Moser scheme they are not required.

\medskip

Quadratic algorithms for the construction of finite dimensional invariant tori have been used in the literature both in  
 finite, see for instance \cite{Fej} and references therein, or infinite dimensional setting. 
Starting from \cite{K1,W},
this problem has been widely studied in the context of \emph{Hamiltonian} PDEs; a certainly non exaustive list of classical results could be
for instance \cite{CW,Boj2,P1,Po2,KP,B2,Ku2,B3,CY,B5}, in which either the KAM scheme or
the Lyapunov-Schmidt decomposition and the Newton iteration method are used.

 The aforementioned literature is mostly restricted to semilinear PDEs on the circle.
 More recently also other extensions have been considered, such as cases where the spatial variable is higher dimensional,
 or when the perturbation is unbounded.
Regarding the higher dimensional cases, besides the classical papers by Bourgain, we mention also
\cite{GY,EK,BB2,BB1,GYX,PP3,BCP,CHP,GP}, where also manifolds other than the torus are considered.

The first results with
unbounded perturbations can be found in \cite{Ku2,KaP,ZGY,LY,BBiP1,BBiP2}; in these papers the authors follows the classical
KAM approach, which is based on the so-called ``second Mel'nikov'' conditions.
Unfortunately their approach fails in the case of \emph{quasi-linear} PDEs (i.e. when the nonlinearity contains derivatives of the
same order as the linear part). 
This problem has been overcome in 
\cite{IPT,Ba1,Ba2},  for the periodic case, and in \cite{BBM,BBM1}, first for forced and then also for autonomous cases, see also \cite{FP,M,BM1,G}.
The key idea of such papers is to incorporate into the reducibility scheme  techniques coming from pseudo-differential calculus.
The extensions for the autonomous cases are based on the ideas developed  in \cite{BB3},
where the Hamiltonian structure is exploited
 in order to extend results on forced equations to autonomous cases.

In all the results mentiond so far, the authors deal with two problems: the convergence of the iterative scheme,
 and the invertibility of the linearized operator in a neighborhood of the expected solution.
 Typically these problems are faced at the same time, by giving some non-degeneracy conditions on the spectrum
 of the linearized operator in order to get estimates on its inverse.
  However, while the bounds on the linearized operator clearly depend on the specific equation one
  is dealing with, the convergence of the scheme is commonly believed to be adjustable case by case.
 
Our purpose is  to separate the problems which rely only on abstract properties of the vector fields
from those depending on the particular equation under study.

Our point of view is to look for a change of coordinates, say $\Psi$, such that the push-forward of the vector field has an invariant torus at the origin. In fact all the 
results described above can be interpreted in this way. Typically one chooses a priori 
a \emph{group} $\mathcal G$ of changes of coordinates in which one looks for $\Psi$. Then, for many choices of $\mathcal G$, one may impose smallness conditions on the perturbation (depending on the choice of $\mathcal G$) and perform an iterative scheme which produces $\Psi$, provided that the parameters $\xi$ belong to some {\em Cantor like} set (again depending on the choice of $\mathcal G$). 
In this paper we impose some mild conditions on the group $\mathcal G$ such that an iterative algorithm can be performed. Then we explicitly state the {\em smallness conditions} and the  conditions on the parameters. Then, in Section \ref{appli} we show some particularly relevant choices of $\mathcal G$ (in fact we always describe the {\em algebra} which generates $\mathcal G$) and the  resulting
{\em Cantor like} sets. 
In this way, in order to apply our theorem to a particular vector field, one has to: first choose a group, then verify the smallness conditions and finally check that the  {\em Cantor like} set is not empty. 
As it might be intuitive, the simpler the structure of $\mathcal G$ the more complicated is to prove that the resulting {\em Cantor like} set in not empty. 
%
%



The present paper is mainly inspired by the approach in \cite{BB3}, but we follow a strategy more similar to the one of \cite{M1}.
In particular this allows us to cover also non-Hamiltonian cases, which require different techniques w.r.t. \cite{BB3}; compare Subsections \ref{exHam} and \ref{pensavo}.
We essentially produce an algorithm which interpolates a Nash-Moser scheme and KAM scheme. 
On the one hand we exploit the functional approach of the Nash-Moser scheme, which allows to use and preserve the 
``PDE structure'' of the problems; on the other hand we leave the freedom of choosing a convenient set of coordinates 
during the iteration (which is typical of a KAM scheme).
This allows us to deal with  more general classes of vector fields
and with analytic nonlinearities. This last point is particularly interesting in applications to quasi-linear PDEs, where
the only results in the literature are for Sobolev regularity; see \cite{FP2}.
In fact, we develop a formalism which allows us to cover cases with both analytic or Sobolev regularity, by exploiting
the properties of {\it tame} vector fields, introduced in Definition \ref{tame} and discussed in Appendix \ref{tameprop}.

Another feature of our algorithm is related to the ``smallness conditions''; see Constraints \ref{sceltapar} and \ref{sceltaparbis} and the
assumption \eqref{sizes}.
Clearly in every application the smallness is  given by the problem, and one needs adjusts the algorithm
accordingly. Again, while this is commonly believed to be easy to achieve, our point of view is the opposite, i.e. finding the mildest possible condition
that allow the algorithm to converge, and use them only when it is necessary. Of course if on the one hand this makes the conditions
intricated, on the other hand it allows more flexibility to the algorithm.

 \medskip

 {\em Description of the paper.}
 Let us discuss more precisely the aim of the present paper. We consider vector fields of  the form
 \begin{equation}\label{adattato}
\left\{
\begin{aligned}
&\dot\theta=F^{(\theta)}:=\omega^{(0)}(\x)+G^{(\theta)}(\theta,y,w)\\
&\dot y = F^{(y)}:=G^{(y)}(\theta,y,w)\\
&\dot w = F^{(w)}:=\Lambda^{(0)}(\x)w+G^{(w)}(\theta,y,w)
\end{aligned}
\right.
\end{equation}
where $N_0= \omega^{(0)}\cdot \del_\theta + \Lambda^{(0)} w \del_w$ and $G$ is a {\em  perturbation}, i.e. \eqref{adattato} admits an approximately invariant torus.
 Note that this does not necessarily mean that $G$ is small, but only that it is approximately tangent to the torus.
Recall that $\xi\in \calO_0\in \RRR^k$ is a vector of parameters.

Then the idea, which goes back to Moser \cite{M1}, is to find a change of coordinates
such that in the new coordinates the system \eqref{adattato} takes the form 
 \begin{equation}\label{adattato2}
\left\{
\begin{aligned}
&\dot\theta=\omega(\x)+\tilde{G}^{(\theta)}(\theta,y,w)\\
&\dot y = \tilde{G}^{(y)}(\theta,y,w)\\
&\dot w = \Lambda^{(0)}(\x)w+\tilde{G}^{(w)}(\theta,y,w)
\end{aligned}
\right.
\end{equation}
with $\oo\sim \oo^{(0)}$, the average of $\tilde{G}(\theta,0,0)$ is zero and $\tilde{G}^{(\tv)}(\theta,0,0)\equiv0$ for $\tv=y,w$.
More precisely in our main Theorem \ref{thm:kambis} we prove the convergence of an iterative algorithm which
provides a change of variables transforming \eqref{adattato} into \eqref{adattato2}, for all choices of $\xi$ in some explicitly
defined set $\calO_\infty$ (which however might be empty).
 
The changes of variables are not defined uniquely, and one can specify the problem by -- for instance -- identifying further terms in the Taylor expansion
 of $\tilde{G}$ w.r.t. the variables $y$ and $w$
which one wants to set to zero. Of course different choices of changes of variables modify the set $\calO_\infty$ so that in the
 applications it is not obvious to understand which is the best choice. In fact finding the setting in which one is able to
  prove that $\calO_\infty$ is non empty and possibly of positive measure is the most difficult part in the applications. We do not address this problem at all.
Our aim is instead to study very general  classes of changes of variables and find  general Hypotheses on the functional setting, the
vector field under study and the terms of the Taylor series that one wants to cancel, under which
 such an algorithm can be run, producing an explicit set $\calO_\infty$. 
 
 In particular in our phase space $\TTT^d_s\times \CCC^{d_1}\times \ell_{a,p}$ we {\em do not distinguish} the 
 cases where either $s$ or $a$ are equal to zero (Sobolev cases) from the analytic cases. In the same spirit we do not require that the vector field is analytic but only that it is $C^q$ for some large finite $q$. 
 The key ingredients of the paper are the following. 
 
 \noindent
 {\bf Tame Vector fields}. \label{uno} We require  that $F$  is $C^k$-{\em tame up to order $q$}, see Definition \ref{tame},  namely it is tame 
 together with its Taylor expansion up to finite order $k$ w.r.t. $y,w$ and it is regular up to order $q$ in $\theta$, see Subsection \ref{polipo}.
 We make this definition quantitative by denoting a {\em tameness constant} for $G$ by $C_{\vec v,p}(G)$, here $\vec v$ contains all the information relative to the domain of definition of $G$, while $p$ gives us the Sobolev regularity.
  In Appendix \ref{tameprop} we  
 describe some properties of  tame vector fields which we believe are interesting for themselves.
 Finally  our  vector fields are not necessarily bounded, instead they may lose some regularity, namely we allow
 \begin{equation}\label{foffi}
F: \TTT^d_s\times  \big\{ y \in \CCC^{d_1} \, : \, |y |_1 < r^\mathtt s \big\}\times \big\{w\in \ell_{a,p+\nu}\,:\, \|w\|_{a,\gotp_1}<r\big\}\to \CCC^d \times \CCC^{d_1} \times \ell_{a,p}
 \end{equation}
for some fixed $\nu\geq 0$. 
  The properties we require  are very general and are satisfied by large class of PDEs,
  for instance it is well known that these properties 
are satisfied by a large class of composition operators on Sobolev spaces; see \cite{Moser-Pisa-66, KaM}.
 \\
{\bf The  $({\mathbb \NN,\calX,\RR})$ decomposition.} We choose a subspace of polynomials  of maximal degree $\mathtt n$, which we call $\calX$,  containing all the terms we wants to ``cancel out'' from $G$. This space contains the algebra of the changes of variables we shall apply.
Clearly the subspace $\calX$ must be chosen so that a vector field with no terms in $\calX$ possesses an invariant torus. 
In order to identify the part of $F$ belonging to $\calX$ we Taylor-expand it about $y=0$, $w=0$:
{since $F$ is assumed to be a $C^{q}$ vector field, this obviously requires that $q$ is larger than $\mathtt n $ i.e. the maximal degree of the monomials in $\calX$.} 
With some abuse of notation (see Definition \ref{mamm} and comments before it) we denote this operation as a projection $\Pi_{\calX}F$.\\
We  also define a space of polynomial vector fields $\NN$ (which does not intersect $\calX$) such that $N_0\in \NN$.   We allow a lot of freedom on the choice of $\NN$, provided that it satisfies some  rather general hypotheses, in particular all vector fields in $\NN$ should have an invariant torus at zero, and $\NN$ should contain the unperturbed vector field $N_0$; in fact we shall require a stronger condition on $N_0$, i.e. that it is diagonal; see the example in \eqref{poesse} and Definition \ref{norm} for a precise formulation.
\\
Our space of $C^k$-tame vector fields is then decomposed uniquely as $\calX\oplus\NN\oplus\RR$, and we may write 
$$
(\uno-\Pi_\calX)F = N+R
$$ 
where $N=\Pi_\NN F$  while $R=\Pi_\RR F$ is a remainder.


%

\noindent {\bf The Invariant subspace $\cal E$. } We choose  a  space of vector fields $\calE$ (see Definition \ref{nx}) where we want our algorithm to run. Such space  appears naturally in the applications 
where usually one deals with special structures (such as Hamiltonian  or reversible structure) that  one wishes to preserve throughout  the algorithm. 
As one might expect, the choice of the space $\calE$ influences the set $\calO_\infty$.
 In the applications to PDEs the choice of the space $\calE$ is often quite subtle:
 we give some examples in Section \ref{appli}. 
 \\
 {\bf Regular vector fields.}  We choose a subspace of polynomial vector fields, which we denote by {\em regular vector fields}  and endow with a Hilbert norm $|\cdot |_{\vec v,p}$ with the only restriction that they should satisfy a set of properties detailed in Definition \ref{linvec-abs}, for instance we require that all regular vector fields are tame with tameness constant 
 equal to the norm $|\cdot |_{a,p}$ 
 and moreover that for $p=\gotp_1$ this tameness constant is {\em sharp}.
 Throughout our algorithm we shall apply close to identity  changes of variables which preserve $\calE$ and are generated by such vector fields (this is the group $\mathcal G$ of changes of variables).
 This latter condition can probably be weakened but, we believe, not in a substantial way: 
 on the other hand it is very convenient throughout the algorithm. 
  
%


\begin{metate}
We  fix any decomposition $(\NN,\calX,\RR)$, any space $\calE$ and any space of regular vector fields $\calA$ provided that they satisfy Definitions \ref{nomec}, \ref{nx} and \ref{linvec-abs}. \\
 We assume that $F= N_0+G$ belongs to $\calE$, is $C^{\mathtt n +2}$ tame (the value of $\mathtt n$ being fixed by $\calE$) and that $\Pi_\calX F=\Pi_{\calX}G$ is appropriately small while $(\uno-\Pi_{\calX})F$ is ``controlled''.
We look for a change of coordinates $\calH_{\io}$ such that for all $\x\in\calO_{\io}$
one has\footnote{given a diffeomorphism $\Phi$ one defines the push-forward of a vector field $F$
	as $\Phi_{*}F= d\Phi(\leftinv\Phi)[F(\leftinv{\Phi})]$. }
$$
\Pi_{\calX}(\calH_{\io})_{*}F\equiv0\,.
$$
\end{metate}
\noindent At the purely formal level,
a change of coordinates $\Phi$ generated by a bounded  vector field $g$, transforms 
$F$ into
\begin{equation}\label{thorn}
\Phi_*(F)\sim F+[g,F]+ O(g^2)\sim e^{[g,\cdot]}F.
\end{equation}

We find the change of variable we look for via an iterative algorithm; at each step
we need $\Phi$ to be such that $\Pi_\calX (\Phi_*F)=0$ up to negligible terms,
 so we need to find $g$ such that
$$
\Pi_\calX (F+ [g,F])=0\,;
$$
in other words we need to invert the operator $\Pi_\calX[F,\cdot]$. 
Since one expects $g\sim X:= \Pi_{\calX}F$ (which is assumed to be small) then, at least formally, the term 
 $[g,\Pi_{\calX}F]$ is negligible and one needs to solve
 \begin{equation}\label{eqhomolog}
 \Pi_\calX([(1-\Pi_\calX)F,g])-X:=u
 \end{equation}
with $g\sim X$ and $u\sim X^2$. Equation \eqref{eqhomolog} is called \emph{  homological equation} and 
in order to solve it one needs the ``approximate invertibility'' for the operator
\begin{equation}\label{eqhom1}
 \gotA(\cdot):= \Pi_\calX[(1-\Pi_\calX)F,\cdot].
 \end{equation}
 
 Then the iteration is achieved by setting $\Psi_0:=\uno$ and
 \begin{equation}\label{iteraz}
 \Psi_{n}:=\Phi^{1}_{g_{n}}\circ\Psi_{n-1}\,,\quad F_{n}:=(\Psi_{n})_{*}F
 \end{equation}
  where $\Phi^{1}_{g_{n}}$ is the time-$1$ flow map generated by the vector field $g_{n}$ which in turn solves the   homological equation 
 \begin{equation}\label{hom1}
 \Pi_\calX([(1-\Pi_\calX)F_{n-1},g_n])-X_{n-1}:=u_n.
 \end{equation}
   Since we need to preserve the structure, namely we need that at each step $F_{n}\in \calE$, then at each step the change of variables $\Phi^1_{g_{n}}$ should
   preserve $\calE$. In fact  we  require   that $g_n$ is a {\em regular } vector field,

 In order to pass from the formal level to the convergence of the scheme
 we need to prove that 
 $g_n$ and $u_n$ satisfy appropriate  bounds. 
 \\
  {\bf Homological equation.} We  say that a set of parameters $\calO$  satisfies the   homological equation  ( for $(F,K,\vec v^0,\rho)$) if there exist  $g,u$  which satisfy \eqref{eqhomolog} with appropriate bounds (depending on the parameters $K,\vec v^0,\rho$; $\vec v^0$ controls the domain of definition, $\rho$ controls the size of the change of variables and $K$ is an {\em ultravioled cut-off}), see  Definition \ref{pippopuffo2}.
 Since  $F$ is a merely differentiable vector field the bounds are  delicate since expressions like \eqref{thorn} may be meaningless
 in the sense that --apparently-- the new vector field $F_{n+1}$ is less regular than $F_n$, and hence it is not obvious that one can iterate the procedure.
Indeed the commutator loses derivatives and thus one cannot use Lie series expansions in order to describe the change of variables.
However one can  use Lie series expansion formula on  polynomials, such as $\Pi_{\calX}\Phi_{*}F$. 
  We show that, provided that $X$ is small  while $R,N-N_0$ are appropriately 
  bounded, we obtain a converging KAM algorithm. 
  \\
  Note that the smallness of $X$ implies the existence 
  of a sufficiently good approximate solution; on the other hand, we only need very little control on
   $(\uno-\Pi_\calX)F_n$, which results on very weak (but  quite cumbersome) assumptions on $R$ and especially on $N-N_0$; see \eqref{cioffame}
   and Remark \ref{concreto?}.
     \\
{\bf Compatible changes of variables.}
 It is interesting to notice that at each step of  the iterative scheme \eqref{iteraz} we may apply any   change of variables $\calL_n$ with the only condition that it does not modify the bounds, i.e.
 $\Pi_\calX (\calL_n)_\star F_{n-1} \sim \Pi_\calX  F_{n-1}$ (and the same for the other projections).  We formalize this idea in  Definition \ref{compa} where we introduce the changes of variables compatible with $(F,K,\vec v^0,\rho)$. 
 Then  we may set
  \begin{equation}\label{iteraz2}
  \calH_{n}=\Phi^{1}_{g_{n}}\circ\calL_n \circ \calH_{n-1}\,,\quad F_{n}:=(\calH_{n})_{*}F
  \end{equation} 
  and the algorithm is still convergent.
  \\
   This observation is essentially tautological but it might well be possible that in a new set of coordinates
  it is simpler to invert the operator $\gotA_{n}:= \Pi_\calX{\rm ad}((1-\Pi_\calX) F_{n-1})$ (for instance $\gotA_{n}$ may be diagonal up to a negligible remainder, see Subsection \ref{reduco}). 
  Note that since the approximate invertibility of 
   $\gotA_n$ is in principle   independent from the coordinates  (provided the change does not lose regularity), if one knows that a change of 
   variables simplifying $\gotA_n$ exists, there might be no need to apply it in order to deduce bounds on the approximate inverse. 
   This is in fact the strategy of the papers \cite{BBM,BBM1}, where the authors study fully nonlinear equations and prove existence 
   of quasi-periodic solutions with Sobolev regularity. On the other hand one might modify the definition of the subspace $\calX$ in such a 
   way that $\calL_n$ is the time one flow of a  regular bounded vector field in $\calX$. The best strategy clearly depends on the 
   application, so we leave the $\calL_n$ as an extra degree of freedom. 
We can summarize our result as follows, for the notations we refer to the  informal Definitions written above.
 \begin{theorem}\label{thm:kambis00}
 Fix $\nu\ge0$ as in \eqref{foffi}, and 
fix a decomposition $(\NN,\calX,\RR)$ , a subspace $\calE$   and a space of regular vector fields $\calA$. 	
Fix parameters $\e_0,\tR_0,\tG_0,\gotp_2$  satisfying appropriate constraints. Let $N_0$ be a diagonal vector field  and consider a  vector field
 	\begin{equation*}
 	F_0:=N_{0}+G_0 \in \calE
 	\end{equation*} 
 	which is  $C^{\mathtt n+2}$-tame up to order $q=\gotp_2+2$.
 	
 	Fix $\g_0>0$ and assume that
 	\begin{equation}\label{cioffame}
 	\g_{0}^{-1}C_{\vec{v}_0,\gotp_2}(G_0) \le \tG_0\,,\quad
 	\g_0^{-1}C_{\vec{v}_0,\gotp_2}(\Pi_{\NN}^\perp G_0)\le \tR_0\,, \quad
 	\g_0^{-1}|\Pi_{\calX}G_0|_{\vec{v}_0,\gotp_1}\le \e_0\,,
 	\quad  \g_0^{-1}|\Pi_{\calX}G_0|_{\vec{v}_0,\gotp_2}\le \tR_0\,,
 	\end{equation}
 	here $C_{\vec v,p}(G)$ is a tameness constant, while $|\cdot|_{\vec v,p}$ is the norm on regular vector fields.
 	

 	\medskip
 	
  \noindent	For all $n\geq 0$  we define recursively  changes of variables $\calL_n,\Phi_n$  and  compact sets
  $ \calO_n$ as follows.\\
  	Set $\HH_{-1}=\HH_0=\Phi_0=\calL_0=\uno$, and for $0\le j \le n-1$ set recursively
 	$\HH_j= \Phi_j\circ \calL_j\circ \HH_{j-1}$ and  $F_j:=(\HH_j)_{*}F_0:=N_0+G_{j}$.
 	Let $\calL_{n}$ be any {\rm compatible change of variables}    for $(F_{n-1},K_{n-1}, \vec v_{n-1},\rho_{n-1})$ 
 	and  $\calO_n$ be any compact set 
 	\begin{equation*}
 	\calO_{n}\subseteq \calO_{n-1}\,,
 	\end{equation*}  
 	which {\rm satisfies the   homological equation} for $((\calL_n)_*F_{n-1},K_{n-1},\vec v^{\text{\tiny 0}}_{n-1},\rho_{n-1})$, let $g_n$ be the
	solution of the   homological equation and $\Phi_n$ the time-1 flow map generated by $g_n$.
 	\\
	Then the sequence $\calH_n$ converges
 	for all $\x\in\calO_0$ 
 	to some change of variables
 	\begin{equation*}
 	{\calH}_\io={\calH}_\io(\x):  D_{{a_{0}},p}({s_{0}}/{2},{r_{0}}/{2})\longrightarrow D_{\frac{a_{0}}{2},p}({s_{0}},{r_{0}}).
 	\end{equation*}
 	
 	such that defining $F_{\infty}:=(\calH_\io)_{*}F_0$ one has 
 	\begin{equation*}
 	\Pi_\calX F_{\infty}=0
 	\quad \forall \xi \in \calO_\io:=\bigcap_{n\geq0}\calO_n
 	\end{equation*}
 	and 
 	$$
 	\g_0^{-1}C_{\vec{v}_\io,\gotp_1}(\Pi_{\NN}F_{\infty}-N_0)\le 2\tG_0, \quad
 	\g_0^{-1}C_{\vec{v}_\io,\gotp_1}(\Pi_{\RR}F_{\infty})\le 2\tR_0
 	$$
 	with $\vec{v}_\io:=(\g_0/2,\calO_\io,s_{0}/2,a_{0}/2)$. \end{theorem}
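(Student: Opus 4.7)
The plan is a standard super-quadratic Nash--Moser/KAM iteration, but carefully tuned so that smallness is only ever imposed on $\Pi_\calX G$ (the obstruction to having an invariant torus at the origin), while the component $(\uno-\Pi_\calX)F$ need only satisfy polynomial high-norm bounds that drift mildly along the scheme. First I would fix the sequences of parameters compatible with Constraints \ref{sceltapar} and \ref{sceltaparbis}: tolerances $\e_n = \e_0\chi^{-(3/2)^n}$ (or $\e_0^{\chi^n}$) for some $\chi$ slightly smaller than $2$, ultraviolet cutoffs $K_n$ growing geometrically, shrinking domain parameters $s_n\downarrow s_0/2$, $a_n\downarrow a_0/2$, $r_n\downarrow r_0/2$ chosen with summable differences, and vectors $\vec v_n = (\g_0(1+2^{-n})/2,\calO_n,s_n,a_n)$ so that the loss of regularity $\nu$ in \eqref{foffi} is absorbed at each step.

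The core of the proof is an induction at step $n$ on the bounds
\begin{equation*}
\g_0^{-1}|\Pi_\calX G_n|_{\vec v_n,\gotp_1}\le\e_n,\qquad \g_0^{-1}|\Pi_\calX G_n|_{\vec v_n,\gotp_2}\le 2\tR_0,
\end{equation*}
\begin{equation*}
\g_0^{-1}C_{\vec v_n,\gotp_2}(\Pi_{\NN}^\perp G_n)\le 2\tR_0,\qquad \g_0^{-1}C_{\vec v_n,\gotp_2}(G_n)\le 2\tG_0.
\end{equation*}
The inductive step runs as follows. The compatible change $\calL_{n+1}$ is, by Definition \ref{compa}, constructed precisely so as to preserve these four quantities up to fixed multiplicative constants; so it suffices to bound the effect of the time-$1$ flow $\Phi_{n+1}$ of $g_{n+1}$, where $g_{n+1}$ solves the homological equation on $\calO_{n+1}$ with the a priori bounds provided by Definition \ref{pippopuffo2}, namely $|g_{n+1}|_{\vec v_n,\gotp_1}\lesssim \g_0\e_n$ and $|g_{n+1}|_{\vec v_n,\gotp_2}\lesssim \g_0\tR_0$. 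Writing $F_n = X_n+(N_n+R_n)$ and using the homological identity
\begin{equation*}
\Pi_\calX\bigl([(\uno-\Pi_\calX)F_n,g_{n+1}]\bigr)-X_n = u_{n+1}
\end{equation*}
with $u_{n+1}$ quadratically small (of size $\e_n^2$ plus the ultraviolet tail $(\uno-\Pi_{K_n})X_n$), one gets $\Pi_\calX F_{n+1} = u_{n+1} + (\text{higher commutators})$. The higher commutators must be handled via the polynomial Lie series, not the ordinary one: since $\calX$ consists of polynomials of bounded degree, $\Pi_\calX$ composed with any number of commutators with $g_{n+1}$ involves only finitely many terms once the degree exceeds $\mathtt n$, so the series effectively truncates and is controlled by the tame estimates for regular vector fields stated in Definition \ref{linvec-abs}.

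The main obstacle is the high-norm side: the quantities $|\Pi_\calX G_{n+1}|_{\vec v_{n+1},\gotp_2}$, $C_{\vec v_{n+1},\gotp_2}(\Pi_\NN^\perp G_{n+1})$ and $C_{\vec v_{n+1},\gotp_2}(G_{n+1})$ enjoy no smallness and can only afford a geometric drift like $(1+C2^{-n})$ per step. This is where tameness is essential: in each commutator $[g_{n+1},F_n]$, a high-norm output is estimated by either the low norm of $g_{n+1}$ (which is $\lesssim\e_n$, summable) times the high norm of $F_n$, or the high norm of $g_{n+1}$ (bounded by $\tR_0$) times the \emph{low} norm of $F_n$ (which is $O(1)$ but absorbs the smallness through the ultraviolet cutoff and domain-shrinkage factor $(s_n-s_{n+1})^{-a}$). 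Telescoping these bounds with the gauge $\vec v_n$ chosen so that $(1+2^{-n})$ factors remain summable shows the four high-norm quantities stay inside $2\tG_0,2\tR_0$ throughout. The low-norm bound $\e_{n+1}\ge\g_0^{-1}|\Pi_\calX G_{n+1}|_{\vec v_{n+1},\gotp_1}$ follows from the quadratic estimate on $u_{n+1}$, the ultraviolet choice $K_n$, and the interpolation built into the tame inequalities; matching the super-quadratic rate with the losses of regularity is exactly what Constraints \ref{sceltapar}--\ref{sceltaparbis} are designed for.

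Finally, summability of $|g_{n+1}|_{\vec v_n,\gotp_1}\lesssim \g_0\e_n$ gives a Cauchy sequence of diffeomorphisms of the thickened torus, so $\calH_n\to\calH_\io$ uniformly on $D_{a_0/2,p}(s_0/2,r_0/2)$; the limit is a well-defined change of coordinates by the standard flow estimates in Appendix \ref{tameprop}. Passing to the limit in the identity $\Pi_\calX F_n\le\g_0\e_n\to 0$ for $\x\in\calO_\io$ yields $\Pi_\calX F_\infty = 0$, and the high-norm bounds pass to the limit by lower semicontinuity of the tameness and regular-norm functionals with respect to the coordinate changes, giving the stated bounds with constants $2\tG_0,2\tR_0$ on the asymptotic domain parameters $\vec v_\io$.
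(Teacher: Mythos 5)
Your plan reproduces the overall architecture of the paper's iteration (compatible change of variables, homological equation, time-$1$ flow, summability of $|g_n|_{\gotp_1}$, telescoping), but the inductive hypothesis you propose for the high norm is wrong, and this is not a cosmetic point: it is exactly where the scheme would fail.

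You keep all the $\gotp_2$-quantities bounded by fixed constants with only geometric drift:
\begin{equation*}
\g_0^{-1}|\Pi_\calX G_n|_{\vec v_n,\gotp_2}\le 2\tR_0,\qquad
\g_0^{-1}C_{\vec v_n,\gotp_2}(\Pi_{\NN}^\perp G_n)\le 2\tR_0,\qquad
\g_0^{-1}C_{\vec v_n,\gotp_2}(G_n)\le 2\tG_0,
\end{equation*}
and claim the KAM step only produces $(1+C2^{-n})$ growth. This cannot be true. The conjugation estimate (Lemma \ref{conj}, formula \eqref{dafare}) gives
\begin{equation*}
C_{\gotp_2}(\Phi_{n+1\,*}F_n)\lesssim C_{\gotp_2}(F_n)+C_{\gotp_1}(F_n)\,|f_{n+1}|_{\gotp_2+\nu+1},
\end{equation*}
and since $g_{n+1}\in E^{(K_n)}$ the smoothing gives $|f_{n+1}|_{\gotp_2+\nu+1}\lesssim K_n^{\nu+1}|g_{n+1}|_{\gotp_2}\lesssim K_n^{\mu+\nu+1}\,|\Pi_\calX G_n|_{\gotp_2}(1+\Gamma_{\gotp_1})$. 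The factor $C_{\gotp_1}(F_n)\sim\g\Gamma_{\gotp_1}$ is $O(\tG_0)$ \emph{and not small}: there is nothing in the low-norm factor that can absorb $K_n^{\mu+\nu+1}$. Your remark that ``the low norm of $F_n$ absorbs the smallness through the ultraviolet cutoff and domain-shrinkage factor $(s_n-s_{n+1})^{-a}$'' appeals to a Cauchy-estimate mechanism that only exists when $s_0>0$, while the theorem explicitly covers $s_0=0$; and in any case it does not apply to this term, which contains no ultraviolet-tail projection. Thus $C_{\gotp_2}(G_{n+1})$ picks up a genuine factor $K_n^{\mu+\nu+1}$, and since $K_n=K_0^{\chi^n}$ is super-exponential in $n$, a bound $\le 2\tG_0$ is violated already after one or two steps.

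The correct way to close the scheme, and what the paper does, is to \emph{let the high norm diverge} at a controlled polynomial rate in $K_n$: see \eqref{lamorte},
\begin{equation*}
\Gamma_{n,\gotp_2}\le\tG_0 K_n^{\ka_1},\qquad
\Theta_{n,\gotp_2}\le\tR_0 K_n^{\ka_1},\qquad
\g_n^{-1}|\Pi_\calX G_n|_{\vec v_n,\gotp_2}\le\tR_0 K_n^{\ka_1},
\end{equation*}
with $\ka_1$ calibrated by \eqref{exp1} so that $K_{n+1}^{\ka_1}=K_n^{\chi\ka_1}$ dominates the per-step growth factor $K_n^{\mu+\nu+2}$, and with the gap $\Delta\gotp=\gotp_2-\gotp_1$ chosen large (see \eqref{exp4}) so that the ultraviolet-tail term $K_n^{-\Delta\gotp}\Theta_{n,\gotp_2}\sim K_n^{\ka_1-\Delta\gotp}$ is still summable against the target $\e_0 K_n^{-\chi\ka_2}$. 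Your inductive scheme has no parameter $\ka_1$ and no mechanism to compensate the unavoidable polynomial growth, so it does not close. Once you correct the inductive ansatz to the divergent one, the rest of your plan — quadratic gain on $\Pi_\calX$ from $u_{n+1}$, finiteness of the polynomial Lie series, Cauchy sequence of diffeomorphisms, passing $\gotp_1$-bounds to the limit — is essentially the paper's proof. Note also that only the $\gotp_1$-bounds are claimed in the limit precisely because the $\gotp_2$-bounds diverge.
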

  \smallskip
While the scheme is quite general, as a drawback the set of  parameters $\xi$ for which the invariant torus exists is defined in a very 
complicated way, in terms of the approximate invertibility
of the operators  $\gotA_n$. 

In order to get a simpler description of the good parameters we may require that $(\NN,\calX,\RR)$ is a  ``triangular decomposition''
 i.e.  there exists a decomposition $\calX=\oplus_{j=1}^\mathtt b \calX_j$ such
 that for all $N\in \NN$ the action of  the operator $\gotN:=\Pi_{\calX}[N,\cdot]$ is block diagonal  while  for all $R\in \RR $ the action 
 of  $\gotR:=\Pi_{\calX}[R,\cdot]$ is strictly upper triangular, see Definition \ref{triang}. In Proposition \ref{uffffa} we show that under such hypotheses the problem of solving the   homological equation \eqref{hom1}  is reduced to proving the approximate invertibility of  $\gotN$, the so called {\em Melnikov conditions}, which is typically much simpler to analyze in the applications.
 Indeed solving \eqref{hom1} amounts to inverting $\gotA$ but since
 $\gotR$ is upper triangular and $\gotN$ is diagonal, the Neumann series
 $$
 \gotA^{-1}=(\gotN+\gotR)^{-1} = \gotN^{-1}\sum_{j}(-1)^j(\gotN^{-1}\gotR)^j
 $$
is  a finite sum.

Note that in order to produce a triangular decomposition one can associate degrees  to $w,y$ (say resp. $1$ and $\mathtt d>0$)   
in order to induce a degree decomposition (see Remark \ref{sonfica}) which gives to the space of polynomial vector fields a graded Lie algebra structure. 
By Lemma \ref{pillo} triangularity is achieved when  $\NN$ contains only terms with degree $ 0$ while $\RR$ contains only  terms with  
degree $>0$ while  $\calX=\oplus_{j=1}^\mathtt b \calX_j$ is the decomposition of $\calX$ in subspaces of increasing degree. Note that 
this can be done for any choice for $\mathtt d$.

Another natural way to understand the nature of the subspaces $\calX,\NN,\RR$  is through ``rescalings''. This means introducing a special degree
decomposition where the degree of $y$ is the same $\mathtt s$ as the one used in the definition of the domains in the phase space, see \eqref{foffi}. 
 This latter degree decomposition separates terms which behave differently under rescaling of the order on magnitude of the domains $r\to \la r$.
Note that in this way $\calX$ contains terms with negative scaling,  $\NN$ contains the terms with scaling zero and finally $\RR$  contains 
terms with positive scaling. 

Typically the invertibility of $\gotN$ relies on non degeneracy conditions on the eigenvalues which provides a lower bounds on the small divisors.
The size $\g_0$ of such denominators is essentially given by the problem. A vector  field is considered a perturbation if its size is small with respect  to the size of the small divisor. Hence in giving the smallness conditions on $G$ one must find a modulation between the size of the remainder $R$,
which can be made small by a rescaling, and the size of $\calX$ which   grows under the rescaling.
The polynomial vector fields in $N$ are more delicate. Indeed some such terms do not change under rescaling. 
Of course $\Pi_{\NN}G$ should  be much smaller with respect to  $N_0$ but  in fact one does not need that
$\Pi_{\NN}G$ is small with respect to $\g_0$ provided that $\Pi_\calX G$ is sufficiently small.
This  further justifies  why  the smallness conditions on the
vector field $G$  are  imposed separately on each term. 
We refer the reader to Paragraph \ref{riscalo} for more details.

In Section \ref{appli} we discuss various applications and examples 
and we show how our algorithm allows to interpolate between the Nash-Moser algorithm and the classical KAM one.  
Example 1. is the classic Nash-Moser approach, where one fixes the subspace $\calX$ to be as simple as possible. \\
 In Example 2.  we study Hamiltonian vector fields, and exploit the Hamiltonian structure in order to simplify the Melnikov conditions; this is a reinterpretation to our setting of the strategy of \cite{BB3}.  
 We conclude subsection \ref{exHam} by collecting our results into Theorem \ref{esempappli}.
 \\
In Example 3.   we only assume that our vector fiels is reversible and we simplify the Melnikov conditions by making an appropriate choice of the sets $\calX,\NN,\RR$, this is a new result which we believe should enable us to prove existence of quasi-periodic solutions  in various settings,  see \cite{FP2} for the case of the fully nonlinear NLS on the circle.
 We conclude subsection \ref{pensavo} by collecting our results into Theorem \ref{esempappli2}.
\\
In Examples 2. and 3.    our formulation essentially   decouples the dynamics on the approximate invariant torus, 
which is given by the equation for $\theta$ and $y$,
and the dynamics in the normal direction $w$.  
More precisely in these cases the invertibility of $\gotN_n$ follow from the conditions:
\begin{itemize}
\item The frequency vector $\oo_n(\x):= \langle F_n^{(\theta)}(\theta,0,0)\rangle$  needs to be diophantine;
\item The operator $\gotL_n:=\oo_n\cdot\del_{\theta}+d_{w}F^{(w)}(\theta,0,0)$ acting on $H^p(\TTT_s^d, \ell_{a,p})$ must be ``approximatively invertible''. 
\end{itemize}
Note that  $\gotL_n$ has the same form 
of the linearized operator of a forced equation, namely the case where $F^{\theta}= \oo\cdot\del_\theta$ and the frequency vector $\oo$ plays the r\^ole of
an  external parameter.
Moreover if $F$ is a vector field coming from a PDE (possibly after one step of Birkhoff Normal Form),
then $\gotL_n$ differs from the linearization of a composition operator by a finite rank term, this is an essential property in the study of quasi-linear PDEs.\\
 In Example 4 we prove a KAM theorem for a class of Hamiltonian vector fields corresponding to the classical paper \cite{Po2},  but requiring only finite differentiability and imposing milder smallness conditions, comparable  to those of \cite{BB}.
 We conclude subsection \ref{reduco} by stating a result on the existence of reducible invariant tori; this is the finitely differentiable version of \cite{Po2,BB}.

Finally in subsection \ref{exNLS} we discuss how to apply Examples 2 and 4 to an NLS with Fourier multipliers, respectively in the 
case of a Lie group and of $[0,\pi]$ with Dirichlet boundary conditions.

\medskip

\noindent
{\bf Acknowledgements}. We thank L.  Biasco, F. Giuliani, J. Massetti and R. Montalto for useful comments on the manuscript.
This research was  supported by the European Research Council under
FP7 ``Hamiltonian PDEs and small divisor problems: a dynamical systems approach'',
by PRIN2012 ``Variational and perturbative aspects of non linear differential problems" and McMaster University.

\zerarcounters

\section{ Functional setting and  main result }
\label{secNM}

In this paragraph we introduce all the relevant notation and tools we need. In particular we define our phase space, the  vector fields we will deal with and the type of change of variables we need in order to perform our KAM algorithm, as explained in the introdution.

\subsection{The Phase Space}\label{phase}
Our starting point is an infinite dimensional space with a product structure $V_{a,p} := \CCC^d \times \CCC^{d_1} \times \ell_{a, p}$. 
Here $\ell_{a,p}$ is a scale of  separable Hilbert spaces endowed with norms $\|\cdot \|_{a,p}$, 
in particular this means that $\|f\|_{a,p}\leq \|f\|_{a',p'}$ if $(a,p)\leq (a',p')$ lexicographically.
\noindent
\begin{hypo}\label{hyp22}
	The space $\ell_{0,0}$ is  endowed with a bilinear scalar product
	$$f,g\in \ell_{0,0} \mapsto f\cdot g \in \CCC.$$
	The scalar product identifies the dual $\ell_{a,p}^*$ with $\ell_{-a,-p}$ and is 
	such that
	\begin{equation}\label{sonasega}
	\|w\|_{0,0}^2= w\cdot \ol{w}\,,\quad |g \cdot f|\leq   \|g\|_{a,p}\|f\|_{-a,-p}\,\,\quad |g \cdot f|\leq   \|g\|_{0,0}\|f\|_{0,0}\leq \|g\|_{a,p}\|f\|_{0,0}.
	\end{equation}
	We denote the set of variables $\mathtt{V} := \big\{ \theta_1,\ldots, \theta_d, y_1,\ldots, y_{d_1}, w  \big\}$.	Moreover we make the following assumption on the scale  $\ell_{a,p}$.
We assume that there is a non-decreasing family $(\ell_K)_{K \geq 0}$ of closed subspaces of $\ell_{a,p}$ such that
$\cup_{K \geq 0} \ell_K$ is dense in $\ell_{a,p}$ for any $p\geq 0$, and that there are projectors
\begin{equation}\label{duck3}
\Pi_{\ell_{K}} : \ell_{0,0} \to \ell_{K}, \quad \Pi_{\ell_K}^{\perp}:=\uno-\Pi_{\ell_{K}},
\end{equation}
such  for all  $p,\al,\be\geq0$ there  exists a constant $\mathtt C=\mathtt C(a,p,\al,\be)$ such that  one has
\begin{subequations}
\begin{align}
&\|\Pi_{\ell_K}w\|_{a+\al,p+\be}\leq \mathtt C e^{\al K}K^{\be }\|w\|_{a,p}\;\; \forall w\in\ell_{a,p},\label{duck33}\\
&\|\Pi_{\ell_K}^{\perp}w\|_{a,p}\leq \mathtt C e^{-\al K}K^{-\be} \|w\|_{a+\al,p+\be}, \;\;\; \forall w\in \ell_{a+\al,p+\be}.\label{duck34}
\end{align}
\end{subequations}
\end{hypo} 

We shall need two parameters, $\gotp_0<\gotp_1$. Precisely $\gotp_0>d/2$ is needed in order to have the
Sobolev embedding and thus the algebra properties, while $\gotp_1$ will be chosen very large and is
needed in order to define the phase space.

\begin{defi}[Phase space]\label{spazio} Given $\gotp_1$ large enough,
we consider the toroidal domain
\begin{equation}\label{Dsr}
\TTT^d_s \times D_{a,p}(r) :=\TTT^d_s\times  B_{r^2} \times \B_{r,a,p,\gotp_1}\,,
 \subset V_{a,p}
\end{equation}
where 
\begin{equation}\nonumber
\begin{aligned}
&\TTT^d_s := \big\{ \theta \in \CCC^d \, : \, {\rm Re}(\theta)\in\TTT^{d},\ \max_{h=1, \ldots, d} |{\rm Im} \, \theta_h | < s
 \big\} \,, \\
&B_{r^\mathtt s}  := \big\{ y \in \CCC^{d_1} \, : \, |y |_1 < r^\mathtt s \big\}\,, \qquad
\B_{r,a,p,\gotp_1}:=\big\{w\in \ell_{a,p}\,:\, \|w\|_{a,\gotp_1}<r\big\}\,,
 \end{aligned}
 \end{equation}
and we denote by $ \TTT^d := ( \RRR/2\pi \ZZZ)^d $ the $ d $-dimensional torus.
\end{defi}

\begin{rmk}
Note that $\B_{r,a,p,\gotp_1}$ is the intersection of $\ell_{a,p}$ with the ball of radius $r$ in $\ell_{a,\gotp_1}$,
thus our phase space clearly depends on the parameter $\gotp_1$. We drop it in the notations since it will be
fixed once and for all, while $a,p,s,r$ vary throughout the algorithm and we carefully need to keep track of them.
\end{rmk}

Fix some numbers $s_0,a_0\geq 0$ and $r_0>0$. Given $s\leq s_0$, $a,a'\leq a_0$, $r\leq r_0$,
$ p,p'\geq \gotp_0$.
We endow the space $V_{a,p}$ with the following norm. For $u=(u^{(\theta)},u^{(y)},u^{(w)})\in \TTT^d_s \times D_{a,p}(r)$
\begin{equation}\label{normaV}
\|u\|_{V_{a,p}}=\frac{1}{\max\{1,s_0\}}\|u^{(\theta)}\|_{\CCC^{d}}+\frac{1}{r_0^{(\mathtt{s})}}\|u^{(y)}\|_{\CCC^{d_1}}+\frac{1}{r_0}\|u^{(w)}\|_{\ell_{a,p}},
\end{equation}

Now  consider maps
\begin{equation}\label{faglimale}
\begin{aligned}
 f : \TTT^d_s \times D_{a',p'}(r)  & \to V_{a,p} \\
 (\theta,y,w) &\to ( f^{(\theta)}(\theta,y,w), f^{(y)}(\theta,y,w),f^{(w)}(\theta,y,w)) \,,
\end{aligned}
\end{equation}
with
$$
f^{(\mathtt v)}(\theta,y,w) = \sum_{l\in \ZZZ^d} f^{(\mathtt v)}_l(y,w)e^{\ii l \cdot \theta}\,,\quad
\mathtt v\in{\mathtt V} \,, 
$$
where $f^{(\mathtt v)}(\theta,y,w) \in \CCC$ for $\mathtt v=\theta_i,y_i$ while $f^{(w)}(\theta,y,w) \in  \ell_{a,p}$.
We shall  use also the notation $f^{(\theta)}(\theta,y,w)\in \CCC^d$, $ f^{(y)}(\theta,y,w)\in\CCC^{d_1}$.

\begin{rmk}\label{perche}
We think of these maps as families of torus embeddings from $\TTT^d_{s}$ into $V_{a,p}$ depending
parametrically on $y,w\in D_{a',p'}(r)$, and this is the reason behind the choice of the norm; see below.
\end{rmk}

We define a norm (pointwise  on $y,w$)\ by setting
\begin{equation}\label{totalnorm}
\|f\|_{s,a,p}^2:=\|f^{(\theta)}\|^2_{s,p}+\|f^{(y)}\|^2_{s,p}+\|f^{(w)}\|^2_{s,a,p}
\end{equation}
where
\begin{equation}\label{equation}
\|f^{(\theta)}\|_{s,p}:=\left\{\begin{aligned} &\frac{1}{s_{0}}\sup_{i=1,\ldots,d}\|f^{(\theta_{i})}(\cdot,y,w)\|_{H^p(\TTT^d_s)}
\quad s\leq s_0\neq 0\\
&\sup_{i=1,\ldots,d}\|f^{(\theta_{i})}(\cdot,y,w)\|_{H^{p}(\TTT^{d}_{s})}\,,\quad s=s_0=0\end{aligned}\right.
\end{equation}
\begin{equation}\label{equation2}
\|f^{(y)}\|_{s,p}:=\frac{1}{r_0^\mathtt s}\sum_{i=1}^{d_1}\|f^{(y_{i})}(\cdot,y,w)\|_{H^{p}(\TTT^{d}_{s})}
\end{equation}
\begin{equation}\label{equation3}
\|f^{(w)}\|_{s,a,p}
:=\frac{1}{r_0}\left(\|f^{(w)}\|_{H^{p}(\TTT^{d}_{s};\ell_{a,\gotp_0})}
+\|f^{(w)}\|_{{H^{\gotp_0}(\TTT^{d}_{s};\ell_{a,p})}}
\right),
\end{equation}
where $H^{p}(\TTT^{d}_{s})=H^{p}(\TTT^{d}_{s}; \CCC)$ is the standard space of analytic functions in the strip of size $s$ which are Sobolev on the boundary with norm
 \begin{equation}\label{normasob}
 \|u(\cdot)\|_{H^{p}(\TTT^{d}_s)}^{2}:=\sum_{l\in\ZZZ^{d}}|u_l|^{2}e^{2s|l|}\langle l \rangle^{2p}, \qquad \av{ l}:=\max\{1,|l|\}.
 \end{equation}
 If $s=0$ clearly one has that $H^{p}(\TTT_{s}^{d})=H^{p}(\TTT^{d})$ is the standard Sobolev space.
 More in general given a Banach space $E$ we denote by $H^{p}(\TTT^{d}_{s};E)$ the space of 
 analytic functions in the strip of size $s$ which are Sobolev in $\theta$ on the boundary
with values in $E$ endowed with the natural norm.
 Note that trivially $\|\partial_\theta^{p'} u\|_{H^{p}(\TTT^{d}_s)}= \|u\|_{H^{p+p'}(\TTT^{d}_s)}$. 

 \begin{rmk}\label{chede}
 We can interpret \eqref{equation3} as follows. We associate $f^{(w)}$ with a function of $\theta$ defined as
 \begin{equation}\label{effepi}
 {\mathtt f}_p(\theta):= \sum_{l\in\ZZZ^d}\|f^{(w)}_{l}(y,w) \|_{a,p} e^{\ii \theta\cdot l}, 
 \end{equation}
 and then we have
 $$
 \|f^{(w)}\|_{s,a,p} = \|{\mathtt f}_{\gotp_0}\|_{H^p(\TTT^d_s)}+ \|{\mathtt f}_{p}\|_{H^{\gotp_0}(\TTT^d_s)}.
 $$
 \end{rmk}

 \begin{rmk}\label{stace}
If $\ell_{a,p}= H^p(\TTT^r_a)$ then fixing $\gotp_0\geq (d+r)/2$ we have that $\|\cdot \|_{s,a,p}$ in \eqref{equation3} is equivalent to $ \|\cdot \|_{H^p(\TTT^d_s\times \TTT^r_a)}$
 \end{rmk}

By recalling the definition in \eqref{normaV} we have that 
\begin{equation}\label{chede2}
\|f\|_{s,a,p}\sim\|f\|_{H^{\gotp_0}(\TTT^{d};V_{a,p})\cap{H^{p}(\TTT^{d};V_{a,\gotp_0})}}:=\|f\|_{H^{p}(\TTT^{d};V_{a,\gotp_0})}+\|f\|_{H^{\gotp_0}(\TTT^{d};V_{a,p})}
\end{equation}

\begin{rmk}\label{stocazzo}
Formula \eqref{totalnorm} depends on the point $(y,w)$, hence it is {\em not} a norm for vector fields
and this is very natural in the context of {\em Sobolev regularity}.
Indeed in the scale of domains $\TTT^d_s\times D_{a,p}(r)$ one controls only the $\gotp_1$ norm of $w$ (see Definition
\ref{spazio}), and hence there is no reason for which one may have
$$
\sup_{(y,w)\in D_{a,p} (r )}\|f\|_{s,a,p}<\io\,.
$$
Naturally if one fixes $ p=\gotp_1$ one may define as norm of $F$ the quantity
$\sup_{(y,w)\in D_{a,\gotp_1}(r)}\|F\|_{s,a,\gotp_1}$.

The motivation for choosing the norm \eqref{totalnorm}  is the following.
Along the algorithm we need to control commutators of vector fields. In the analytic case,
i.e. if $s_0\neq 0$, one may keep $p$ fixed and control the derivatives via
{\em Cauchy estimates} by reducing the analyticity, so  the phase space can be defined in terms of the fixed
$p$. However, since we do not want to add the hypothesis $s_0\neq 0$,
we have to leave $p$ as a parameter and use {\em tameness} properties of the vector field (see
Definition \ref{tame}) as  in the Sobolev Nash-Moser schemes.
\end{rmk}

 It is clear that any $f$ as in \eqref{faglimale} can be identified with ``unbounded'' vector fields
 by writing
 \begin{equation}\label{vectorfield}
 f\leftrightarrow \sum_{\tv\in{\mathtt V}}f^{(\tv)}(\theta,y,w)\del_{\tv},
 \end{equation}
 where the symbol $f^{(\tv)}(\theta,y,w)\del_{\tv}$ has the obvious meaning for $\tv=\theta_i,y_i$ while
 for $\tv=w$ is defined through its action on differentiable functions $G:\ell_{a,p}\to \CCC$ as
 $$
 f^{(w)}(\theta,y,w)\del_w G := d_w G[ f^{(w)}(\theta,y,w)].
 $$
 
 Similarly, provided that $|f^{(\theta)}(\theta,y,w)|$ is small for all $(\theta,y,w)\in  \TTT^d_s \times D_{a,p}(r)  $
 we may lift $f$ to a map \begin{equation}\label{mar}
\Phi:= (\theta +f^{(\theta)},y+f^{(y)}, w+ f^{(w)}) : \, \TTT^d_s \times D_{a',p'} \to
\TTT^d_{s_1} \times \CCC^{d_1}\times \ell_{a,p}\,,\quad \text{for some} \; s_1\ge s\,,
 \end{equation}
 and if we set $\|\theta\|_{s,a,p} := 1$  we can write
  $$
  \|\Phi^{(\tv)}\|_{s,a,p} = \|\tv\|_{s,a,p}+\|f^{(\tv)}\|_{s,a,p} \,,\; \tv=\theta,y,w\,.
  $$
  Note that 
  $$
  \|y\|_{s,a,p}= r_0^{-\mathtt s}|y|_1\,,\quad  \, \|w\|_{s,a,p}=r_0^{-1}\|w\|_{a,p}.
  $$ 
  
  \begin{rmk}\label{azz}
There exists ${\mathtt c}={\mathtt c}(d)$ such that if
 $\|f\|_{s,a,\gotp_1}\le {\mathtt c}\rho$ one has
$$
\Phi: \TTT^d_s \times D_{a+\rho a_0,p}(r ) \to \TTT^d_{s+\rho s_0}\times D_{a,p}(r +\rho r_0).
$$
  \end{rmk}

 We are interested in vector fields defined on a scale of Hilbert spaces; precisely
 we shall fix $\rho,\nu,q\ge0$ and consider vector fields
     \begin{equation}\label{vettorazzi}
  F:\TTT^d_s\times D_{a+\rho a_0,p+\nu}(r)\times \calO\to V_{a,p}\,,
  \end{equation}
  for some $s<s_0$, $a+\rho a_0\le a_0$, $r\le r_0$ and all $p+\nu \le q$. Moreover we require that $\gotp_1$ in Definition \ref{spazio} satisfies $\gotp_1\ge \gotp_0+\nu+1$.

  \begin{defi}\label{leftinverse}  
Fix $0\le\rho,\db\le1/2$, and consider two  differentiable maps $\Phi= \uno +f$, $\Psi= \uno +g$ as in
\eqref{mar} such that for all
$p\ge \gotp_0$,  $2\rho s_0 \le s \le s_0$, $2\rho r_0 \le r \le r_0$ and $0\leq a\leq a_0(1-2\db )$ one has
\begin{equation}\label{laputtana250}
\Phi,\Psi:\TTT^d_{s-\rho s_0} \times D_{a+\db a_0,p}(r-\rho r_0)\to \TTT^d_{s}\times D_{a,p}(r ).
\end{equation}
If
\begin{equation}\label{sinistra}
\begin{aligned}
\uno=\Psi\circ \Phi: \TTT^d_{s-2\rho s_0}\times D_{a+2\db a_0,p}&(r-2\rho r_0)&\longrightarrow &\
\TTT^d_s\times  D_{a,p}(r)\\
&\ (\theta,y,w)&\longmapsto &\ (\theta,y,w)
\end{aligned}
\end{equation}
we  say that $\Psi$ is a left inverse of $\Phi$ and write $\leftinv{\Phi}:=\Psi$.

Moreover fix  $\nu\geq 0$, $0\le\db'\le1/2$. Then for any 
$F : \TTT^d_{s}\times D_{a+\db' a_0,p+\nu}(r) \to V_{a,p}$, with $0\leq a\leq a_0(1-2\db -\db')$, 
we define the ``pushforward'' of $F$ as
\begin{equation}\label{push}
\Phi_* F:= d\Phi(\leftinv\Phi)[F(\leftinv{\Phi})]: \TTT^d_{s-2\rho s_0} \times D_{a+(2\db+\db') a_0,p+\nu}(r-2\rho r_0)  \to 
 V_{a,p}\,.
\end{equation}
\end{defi}

 We need to introduce  parameters $\x\in\calO_0$ a compact set in $\RRR^\mathfrak d$. Given any compact  $\calO\subseteq\calO_0$ 
 we consider  Lipschitz families of vector fields 
   \begin{equation}\label{vettori}
  F:\TTT^d_s\times D_{a',p'}(r)\times \calO\to V_{a,p}\,,
  \end{equation}
  and say  that they  are \emph{bounded} vector fields when  $p=p'$ and $a=a'$.
  Given a positive number $\g$ we introduce  the weighted Lipschitz norm
 \begin{equation}\label{ancoralip}
 \|F\|_{\vec v,p}=\|F\|_{\g,\calO,s,a,p}:=\sup_{\xi\in \calO}\|F(\x)\|_{s,a,p}
 + \g \sup_{\xi\neq \eta\in \calO}\frac{\|F(\x)-F(\eta)\|_{s,a,p-1}}{|\xi-\eta|}\,.
 \end{equation}
 and we shall drop the labels $\vec v=(\g,\calO,s,a)$ when this does not cause confusion. 
 More in general given $E$ a Banach space we define the Lipschitz norm as
  \begin{equation}\label{ancoralip22}
 \|F\|_{\g,\calO,E}
 :=\sup_{\xi\in \calO}\|F(\x)\|_{E}
 + \g \sup_{\xi\neq \eta\in \calO}\frac{\|F(\x)-F(\eta)\|_{E}}{|\xi-\eta|}\,.
 \end{equation}

 \begin{rmk}\label{ck}
 Note that in some applications one might need to assume a higher regularity in $\xi$. In this case it is 
 convenient to define the weighted $q_1$-norm
 $$
 \|F\|_{\vec v,p}=\|F\|_{\g,\calO,s,a,p}:=\sum_{\substack{h\in \NNN^{\mathfrak d} \\
  |h|\leq q_1}}\g^{|h|} \sup_{\xi\in \calO} \|\partial_\xi^h F(\x)\|_{s,a,p-|h|}.
$$
Where the derivatives are in the sense of Whitney.

 Throughout the paper we shall always use the Lipschitz norm \eqref{ancoralip}, although all the properties 
 hold verbatim also for the $q_1$-norm. 
 \end{rmk}

  \begin{defi}\label{vecv}
  We shall denote by
 $\VV_{\vec v,p}$  with $\vec v= (\g,\calO,s,a,r)$ the space of  vector fields as in \eqref{vettorazzi}  with $\db=0$. 
 By slight abuse of notation we denote the norm $\|\cdot\|_{\g,\calO,s,a,p}=\|\cdot\|_{\vec v,p}$ .
 \end{defi}


\subsection{Polynomial decomposition}\label{polipo}

In $\VV_{\vec v,p}$ we identify the closed {\em monomial} subspaces
\begin{equation}\label{sotto}
\begin{aligned}
&\mathcal V^{(\tv,0)} := \{f\in \VV_{\vec v,p}\,:\; f= f^{(\tv,0)}(\theta) \partial_\tv\} \,, \quad \tv\in \mathtt V \\ & 
\mathcal V^{(\tv,\tv')} := \{f\in \VV_{\vec v,p}\,:\; f= f^{(\tv,\tv')}(\theta)[\tv']  \partial_\tv \}\,,\quad
 \tv\in\mathtt V\,,\quad \tv'= y_1,\ldots,y_{d_1},w
\\&  \mathcal V^{(\tv,\tv_1,\dots,\tv_k)} := \{f\in \VV_{\vec v,p}\,:\; f= f^{(\tv,\tv_1,\dots,\tv_k)}(\theta)[\tv_1,\dots,\tv_k]  
\partial_\tv \}\,,\quad  \tv\in\mathtt V\,,\quad \tv_i= y_1,\ldots,y_{d_1},w\,,
\end{aligned}
\end{equation}
where $\tv_1,\ldots,\tv_k\in{\mathtt U}:=\{y_1,\ldots,y_{d_1},w\}$ are ordered and $f^{(\tv,\tv_1,\ldots,\tv_k)}$ is a multilinear
form symmetric w.r.t. repeated variables.

As said after \eqref{faglimale}  it will be convenient to use also vector notation so that,
for instance
$$
f^{(y,y)}(\theta)y\cdot\del_y \in\VV^{(y,y)}=\bigoplus_{i\le j=1,\ldots,d_1}
\VV^{(y_i,y_j)}
$$
with $f^{(y,y)}(\theta)$ a $d_1\times d_1$ matrix.

Note that the polynomial vector fields of (maximal) degree $k$ are
$$
\calP_k:=
\bigoplus_{\tv\in{\mathtt V}}\,\bigoplus_{j=0}^k\bigoplus_{(\tv_1,\ldots,\tv_j)\in{\mathtt U}}\mathcal V^{(\tv,\tv_1,\dots,\tv_j)}\,,
$$
so that, given a polynomial $F\in\calP_k$ we may define its ``projection'' onto a monomial subspace
$\Pi_{\mathcal V^{(\tv,\tv_1,\dots,\tv_j)}}$ in the natural way.

Since we are not working on spaces of polynomials, but on vector fields with finite regularity,
we need some more notations.
Given a $C^{k+1}$ vector field $F\in\VV_{\vec v,p}$, we introduce the notation
\begin{equation}\label{aiuto}
F^{(\tv,0)}(\theta):=F^{(\tv)}(\theta,0,0), \quad F^{(\tv,\tv')}(\theta)[\cdot]:=d_{\tv'}F^{(\tv)}(\theta,0,0)[\cdot], \quad  \tv\in\mathtt V\,,\quad \tv'= y_1,\ldots,y_{d_1},w
\end{equation}
$$
F^{(\tv,\tv_1,\dots,\tv_k)}(\theta)[\cdot,\dots,\cdot]=\frac{1}{\al(\tv_1,\ldots,\tv_k)!}
\Big(\prod_{i=1}^kd_{\tv_i}\Big)
F^{(\tv)}(\theta,0,0)[\cdot,\dots,\cdot]\,,\quad  \tv\in\mathtt V\,,\quad \tv_i= y_1,\ldots,y_{d_1},w\,,
$$
where we assume that $\tv_1,\ldots \tv_k$ are ordered and the $(d+1)$-dimensional
vector $\al(\tv_1,\ldots,\tv_k)$
denotes the multiplicity of each component.

By Taylor approximation formula any vector field in $\VV_{\vec v,p}$ which is  $C^{k+1}$ in $y,w$
may be written in a unique way as
sum of its Taylor polynomial in $\calP_k$ plus a $C^{k+1}$ (in $y,w$)
vector field with a zero of order at least $k+1$ at $y=0$, $w=0$. 
We think of this as a direct sum of vector spaces and introduce 
the notation
\begin{equation}\label{pro}
\Pi_{\VV^{(\tv,\tv_1,\dots,\tv_k)} }F:= F^{(\tv,\tv_1,\dots,\tv_k)}(\theta)[\tv_1,\dots,\tv_k]\,,
\end{equation}
 we refer to such operators as {\em projections}.
%
%
 
 \begin{defi}\label{mamm}
We identify the vector fields in $\VV_{\vec v,p}$ which are $C^{k+1}$ in $y,w$, with the direct sum
 $$
\calW_{\vec v,p}^{(k)}= \calP_k\oplus \RR_k\,,
 $$
 where $\RR_k$ is the space of $C^{k+1}$ (in $y,w$) vector fields with a zero of order at least $k+1$
 at $y=0$, $w=0$. 
On $\calW_{\vec v,p}^{(k)}$ we induce the natural norm for direct sums, namely for 
$$
f=\sum_{\tv\in{\mathtt V}}\,\sum_{j=0}^k\sum_{(\tv_1,\ldots,\tv_j)\in{\mathtt U}}
f^{(\tv,\tv_1,\dots,\tv_j)}(\theta)[\tv_1,\dots,\tv_j]\del_\tv + f_{\RR_k}\,
\qquad f_{\RR_k}\in\RR_{k}\,,
$$
we set
\begin{equation}\label{supernorma}
\|f\|_{\vec v,p}^{(k)}:=\sum_{\tv\in{\mathtt V}}\,\sum_{j=0}^{k}\sum_{(\tv_1,\ldots,\tv_j)\in{\mathtt U}}
\|f^{(\tv,\tv_1,\dots,\tv_j)}(\cdot)[\tv_1,\dots,\tv_j]\|_{\vec v,p} + \|f_{\RR_k}\|_{\vec v,p}\,.
\end{equation}

\end{defi}
\begin{rmk}
Note that with this definition if $k=\infty$ we are considering analytic maps with the norm
$$
\sum_{\tv\in{\mathtt V}}\,\sum_{j=0}^{\infty}\sum_{(\tv_1,\ldots,\tv_j)\in{\mathtt U}}
\|f^{(\tv,\tv_1,\dots,\tv_j)}(\cdot)[\tv_1,\dots,\tv_j]\|_{\vec v,p} .
$$
\end{rmk}

  We can and shall introduce in the natural way the polynomial subspaces and the norm \eqref{supernorma}
  also for maps 
  $\Phi= (\theta +f^{(\theta)},y+f^{(y)}, w+ f^{(w)})$ with
  $$
  \Phi: \TTT^d_s\times D_{a',p'}(r)\times \calO \to \TTT^d_{s_1}\times D_{a,p}(r_1)\,,
  $$
  since the Taylor formula holds also for functions of this kind.
  
We also denote
\begin{equation}\label{lemedie}
\begin{aligned}
&\av{\VV{(\tv, \tv_{{1}},\cdots, \tv_{{k}})}}:=\!\{f\in \VV^{(\tv,\tv_1,\ldots,\tv_k)}\!:  
f =  \langle f^{(\tv,\tv_1,\ldots,\tv_k)}\rangle\cdot
 \partial_\tv\},\\
&\VV^{(\tv, \tv_{{1}},\cdots, \tv_{{k}})}_0:=\{f\in \VV^{(\tv, \tv_{{1}},\cdots, \tv_{{k}})}\!:  
f =  (f^{(\tv, \tv_{{1}},\cdots, \tv_{{k}})}-\langle
 f^{(\tv, \tv_{{1}},\cdots, \tv_{{k}})}\rangle)\cdot \partial_\tv\}\,,
 \end{aligned}
\end{equation}
where $ \av{f}:=\fint_{\TTT^d}f(\theta)d\theta$.
  \medskip
  
\noindent  {\bf Tame vector fields}.\,
     We now define vector fields  behaving ``tamely'' 
  when composed with maps $\Phi$.  Let us fix a degree $\mathtt n\in \NNN$ and thus a norm 
   \begin{equation}\label{porc}
    \|f\|_{\vec v,p}=\|f\|_{\vec v,p}^{({\mathtt n})}\,.
    \end{equation}
  
   \begin{defi}\label{tame}
   Fix a large $q\geq \gotp_1$, $k\ge 0 $ and  a set $\calO$.
   Consider a $C^{k+\mathtt n+1}$ vector field
     $$
     F\in\calW_{\vec v,p}^{({\mathtt n})},\qquad   \vec v= (\g,\calO,s,a,r)\,.
     $$
    We say that $F$ is $C^k$-\emph{tame} (up to order $q$) if there exists a scale of  constants 
   $C_{\vec{v},p}(F)$,  with $C_{\vec{v},p}(F)\le C_{\vec{v},p_1}(F)$ for $p\le p_1$,
   such that  the following holds.
   
   For all  $\gotp_0\leq p\leq p_1\leq q$ consider any $C^{{\mathtt n}+1}$ map
   $\Phi= (\theta +f^{(\theta)},y+f^{(y)}, w+ f^{(w)})$
   with $\|f\|_{\vec{v}',\gotp_1}<1/2$ and 
   $$
   \Phi:\TTT^d_{s'}\times  D_{a_1,p_1}(r')\times \calO \to \TTT^d_s\times 
   D_{a,p +\nu }(r)\,,\quad \text{ for some } \quad r'\leq r\,, s'\leq s;
   $$   
   and denote
    $ \vec v'= (\g,\calO,s',a,r')$.
   Then for any $m=0,\ldots,k$ and any $m$ vector fields    
   $$h_1,\dots,h_m:\; \TTT^d_{s'}\times  D_{a_1,p_1}(r')\times \calO  \to V_{a,p +\nu}, $$
   one has 
  \begin{equation*}
  \begin{array}{crcl} 
    (\text{T}_m) & 
    \|d^{m}_{\mathtt U}F(\Phi)[h_1,\dots,h_m]\|_{\vec{v}',p} &\leq & 
     \big(C_{\vec{v},p}(F)+C_{\vec{v},\gotp_0}(F)\|\Phi\|_{\vec{v}',{p+\nu}} \big)
   \prod_{j=1}^m\|h_j\|_{\vec{v}',\gotp_0+\nu} \\ [.5em]
   & & &
    +C_{\vec{v},\gotp_0}(F)\sum_{j=1}^m\|{h_j}\|_{\vec{v}',{p+\nu}}\prod_{i\neq j}\|h_i\|_{\vec{v}',{\gotp_0+\nu}}
   \end{array}
   \end{equation*}
  for all $(y,w)\in D_{a_1,p_1}(r')$ and $p\leq q$.
   Here $d_{\mathtt U}F$ is the differential of $F$ w.r.t. the variables ${\mathtt U}:=\{y_1,\ldots,y_{d_1},w\}$
   and the norm is the one defined in \eqref{porc}.

   \noindent
   We say that a \emph{bounded} vector field $F$ is tame  if the conditions  (T$_m$)
   above hold with $\nu=0$.
   We  call $C_{\vec{v},p}(F)$ the $p$-\emph{tameness constants of }$F$.
   \end{defi}
  \begin{rmk}
  Note that in the definition above appear two regularity indices: $k$ being the maximum regularity in $y,w$ and $q$ the one in $\theta$.
  \end{rmk}

  \begin{rmk}\label{tameness}
   
   Definition \ref{tame} is quite natural if one has to deal with functions and vector fields which are
   merely differentiable.
   In order to clarify what we have in mind we consider the following example. Let $L$ be a linear operator
   $$
   L:H^{p}(\TTT^d)\to H^p(\TTT^d)\,.
   $$
   In principle there is no reason for $L$ to satisfy a bound like
   \begin{equation}\label{lop}
   \| L u\|_{p}\lessdot \|L\|_{\calL,p} \|u\|_{\gotp_0}+ \|L\|_{\calL,\gotp_0}\|u\|_p
   \end{equation}
   where $\|\cdot \|_{\calL,p}$ is the $H^p$-operator norm. However if $L=M_a$ is a multiplication operator, i.e.
   $M_au=au$ for some $a\in H^{p}(\TTT^d)$ then it is well known that
   $$
   \| M_a u\|_{p}\leq	\kappa_p(\|a\|_{p} \|u\|_{\gotp_0}+ \|a\|_{\gotp_0}\|u\|_p)
   $$
   which is \eqref{lop} since $\|a\|_p=\|M_a\|_{\calL,p}$. In this case we may set for all $p\leq q$
   $ C_{p}(M_{a})=\kappa_q\|a\|_p\,$, where $q$ is the highest possible regularity.
   This is of course a  trivial  (though very common in the applications) example in which the tameness constants
   and the operator norm coincide; we preferred to introduce Definition \ref{tame}
   since it is the most general class in which we are able to prove our result.
   \end{rmk}
   
   \begin{rmk}\label{tameness100}
   It is trivial to note that, given a sequence $C_{\vec{v},p}(F)$ of tameness constants for the field $F$, then any increasing sequence 
   $\tilde{C}_{\vec{v},p}(F)$ such that  $C_{\vec{v},p}(F)\leq \tilde{C}_{\vec{v},p}(F)$ for any $p$ is a possible choice of tameness constants for $F$.
   This leads to the natural question of finding a sharp sequence which then could be used as norm.
   Throughout the paper we shall write $C_{\vec{v},p}(F)\leq C$ to mean that the tameness constants 
   of $F$ can (and shall) be chosen in order to satisfy the bound.
   
   \end{rmk}

\subsection{Normal form decomposition}\label{normale}
 \begin{defi}[{\bf $(\NN,\calX,\RR)$-decomposition}]\label{nomec}
  Let  $\NN,\calX\subseteq \calP^{(\mathtt n)}$  have  the
  following properties:
    \begin{itemize}
    \item[(i)] if $\NN\cap\mathcal V^{(\tv,\tv_1,\dots,\tv_j)}\neq \emptyset$ then either
     $\NN\cap\mathcal V^{(\tv,\tv_1,\dots,\tv_j)}=\mathcal V^{(\tv,\tv_1,\dots,\tv_j)}$ or $\NN\cap\mathcal V^{(\tv,\tv_1,\dots,\tv_j)}=\av{\mathcal V^{(\tv,\tv_1,\dots,\tv_j)}}$
     for all $j\le {\mathtt n}$;

      \item[(ii)] one has $\VV^{(\mathtt v,0)}\subset \calX$ for $\mathtt v=y,w$.
    \end{itemize}
   
    We then decompose
  $$
  \calW_{\vec v,p}=C^{2{\mathtt n}+3}\cap\calW^{({\mathtt n})}_{\vec v,p}:=\NN\oplus \calX\oplus\RR
  $$
  where $C^{2{\mathtt n}+3}$ is the set of vector fields with $(2{\mathtt n}+3)$-regularity in $y,w$,
   $\RR$ contains all of $\RR_{\mathtt n}$ and all the polynomials generated by monomials not in
  $\NN\oplus \calX$. We shall denote $\Pi_\RR:= \uno-\Pi_\NN-\Pi_\calX$ and
  more generally for $\SSSS= \NN,\calX,\RR$ we shall denote  $ \Pi^\perp_{\SSSS}:= \uno-\Pi_\SSSS$.
   \end{defi}

 The following Definition is rather involved since we are trying to make our result as general as possible.
 However in the applications we have in mind, it turns out that one can choose $\calA_{s,a,p}$ satisfying the
 properties of the Definition below in an  explicit and natural way; see Section \ref{appli}.

\begin{defi}
[{\bf Regular vector fields}]\label{linvec-abs}
Given a subset $\calA_{s,a,p}\subset \calP^{(\mathtt n)}$ of polynomial  vector fields  $f: \TTT^d_s\times D_{a,p+\nu}(r )\to V_{a,p}$  we say that $\calA$ is a space of regular vector fields if the following holds.

Given a compact set $\calO\in \calO_0$ we  denote by $\calA_{\vec v,p}$ with $\vec v= (\g,\calO,s,a,r)$ the set of Lipschitz families $\calO\to \calA_{s,a,p}$.
We require that $\calA_{s,a,p}$ is a scale of Hilbert spaces w.r.t a  norm  $|\cdot|_{s,a,p}=|\cdot|_{s,a,p,\nu} $ and 
   we denote by $|\cdot|_{\vec v,p}$ the corresponding $\g$-weighted Lipschitz norm.

\begin{enumerate}
\item  $\VV^{(\mathtt v,0)}\in \calA_{\vec v,p}$ for  $\mathtt v= y,w$ 
while either $\calA_{\vec v,p}^{(\theta)}=\emptyset$ or
$\calA_{\vec v,p}^{(\theta)}=\VV^{(\theta,0)}_0$.

\item All $f\in \calA_{\vec v,q}$ are  $C^k$ tame up to order $q$ for all $k$, with tameness constants
 \begin{equation}\label{tameconst2}
  C_{\vec{v},p}(f)= \mathtt C \,|f|_{\vec{v},p}.
\end{equation}
for some $C$ depending on $\gotp_0$ on the dimensions $d,d_1$ and on the maximal regularity $q$.
Moreover $|\cdot|_{\gotp_1}$ is a {\em sharp} tameness constant, namely there exists a $\mathtt{c}$  depending  on $\gotp_0,\gotp_1,d,d_1$ such that
 \begin{equation}\label{tameconst3}
   |f|_{\vec{v},\gotp_1} \le \mathtt{c} \, C_{\vec{v},\gotp_1}(f)
\end{equation}
for any $f$ and any tameness constant $ C_{\vec{v},\gotp_1}(f)$.
\item For $K>1$ there exists  smoothing projection operators $\Pi_K: \calA_{\vec v,p}\to \calA_{\vec v,p} $ such that $\Pi_K^2=\Pi_K$, 
%
for $p_1\ge 0$, one has 
\begin{align}
& \qquad| \Pi_{K}F |_{\vec v,p+p_1} \le  \mathtt C K^{{p_1 }}| F|_{\vec v,p} \label{P1}\\
& \qquad| F-\Pi_{K}F |_{\vec v,p} \le  \mathtt C K^{{-p_1 }} | F|_{\vec v,p+p_1}\label{P2}
\end{align}
finally if $C_{\vec v,p}(F)$ is any tameness constant for $F$ then we may choose a tameness constant such that
\begin{equation}
\qquad C_{\vec v,p+p_1}(\Pi_{K}F )\le \mathtt C K^{{p_1 }} C_{\vec v,p}(F)\qquad\label{P3}
\end{equation}

%
We denote  by $E^{(K)}$ the subspace where $\Pi_K E^{(K)}=E^{(K)}$.

\item Let $\BB$ be the set of bounded vector fields in $\calA_{s,a,p}\ni f: \TTT^d_s\times D_{a,p}(r )\to V_{a,p}$ with the corresponding norm $|\cdot |_{s,a,p,0}$.
For all $f\in\BB$ such that
 \begin{equation}\label{pippo2}
 |f|_{\vec{v},\gotp_1}\leq {\mathtt c}{\rho},
 \end{equation}
with  $\rho>0 $  small enough,
 the following holds:
 
  \noindent
 (i) The map $\Phi:=\uno+f$ is such that
 \begin{equation}\label{speriamobene2}
 \Phi: \TTT^{d}_{s}\times D_{a,p}(r)\times \calO\longrightarrow \TTT^{d}_{s+\rho s_0}\times D_{a,p}(r+\rho r_0).
 \end{equation}

 \noindent
 (ii) There exists a vector field $h\in \BB$ such that
 \begin{itemize}
 \item $|h|_{\vec{v}_1,p}\leq 2|f|_{\vec{v},p}$, 
 the map $\Psi:=\uno+h$ is such that
 \begin{equation}\label{nome002}
 \Psi : \TTT^{d}_{s-\rho s_0}\times D_{a,p}(r-\rho r_0)\times \calO\to \TTT^{d}_{s}\times  D_{a,p}(r ).
 \end{equation}
 
 \item  for all $(\theta,y,w)\in \TTT^d_{s-2\rho s_0}\times D_{a,\gotp_1}(r-2\rho r_0)$ one has
 \begin{equation}\label{nome2}
 \Psi\circ\Phi(\theta,y,w)=(\theta,y,w).
 \end{equation}
 \end{itemize}   
  \item Given any regular bounded vector field $g\in \mathcal B$, $p\geq \gotp_1$  with $| g|_{\vec{v},\gotp_1}\leq {\mathtt c} \rho$
   then  for $0\leq t\leq1$ there exists $f_{t}\in \mathcal B$ such that the time$-t$ map of the flow of $g$ is of the form 
    $\uno+ f_{t}$ moreover we have $|f_{t}|_{\vec{v},p} \leq  2| g|_{\vec{v}_1,p}$ where $\vec v_1= (\lambda,\calO,s-\rho s_0,a,r)$.
\end{enumerate}
\end{defi}

\begin{defi}\label{nx}
Consider $\mathcal E$ a subspace\footnote{For instance
 $\mathcal E$ may be the subspace of Hamiltonian vector fields.}
 of $\VV_{\vec{v},p}$.
  We say that $\calE$ is compatible with the $(\NN,\calX,\RR)$-decomposition if
 
 \begin{itemize}

 \item[(i)]
  any $F\in \calE\cap \calX$ is a regular vector field;

  \item[(ii)]
  for any $F\in \calE\cap\calP_{\mathtt n}$ one has
$\Pi_{\calU}F\in \calE$ for $\calU=\NN,\calX,E^{(K)}$;

 \item[(iii)]  
denoting 
\begin{equation}\label{subspazio}
{\BB}_\calE:=\Big\{f\in\calX\cap \BB\, :\;   \Phi_{f}^t \mbox{ is }\;\mathcal E\; {\mbox{ preserving for all }} t\in[0,1]
\Big\}\subset \calX\cap \BB\,,
\end{equation}
one has
  \begin{equation}\label{proloco}
\forall g\in \BB_\calE,F\in \calE:\quad  [g,F]\in \calE\,,\quad \forall g,h\in \BB_\calE:\quad  \Pi_\calX[g,h]\in \BB_\calE\,.
\end{equation}

\end{itemize}
\end{defi}

\begin{defi}[{\bf Normal form}]\label{norm}
 We say that $N_{0}\in \NN\cap \calE$ is a {\em diagonal} vector field if for all $K>1$
\begin{equation}\label{norm1}
 {\rm ad}(N_0)\Pi_{E^{(K)} } \Pi_\calX =\Pi_{E^{(K)} } \Pi_\calX  {\rm ad} (N_0)\,,\quad\mbox{on $\BB_\calE$.}
\end{equation}

 \end{defi}

 \subsection{Main result}\label{main}

Let us fix once and for all a $(\NN,\calX,\RR)$-decomposition.
Before stating the result we need to 
introduce parameters (which shall depend on the application) fulfilling the following constraints.
\begin{const}[{\bf The exponents}]\label{sceltapar}
We fix parameters $\e_0,\tR_0,\tG_0,\mu,\nu,\eta,\chi,\al,\ka_1,\ka_2,\ka_3,\gotp_2$ such that the following holds.
\begin{itemize}
\item $0<\e_0\le\tR_0\le \tG_0$ with $\e_0\tG_0^3,\e_0 \tG_0^2 \tR_0^{-1}<1$.

\item $\mu,\nu,\ka_3\ge 0$, $\gotp_2>\gotp_1$, $0\le \al <1$, $1<\chi<2$  such that $\al\chi<1$.

\item Setting $\ka_0:= \mu+\nu+4$  and $\Delta\gotp:= \gotp_2-\gotp_1$  one has
\begin{subequations}\label{exp}
\begin{align}
 \ka_1 &> \max(\frac{\ka_0+\ka_3}{\chi}, \frac{\ka_0}{\chi-1})\,,
 \label{exp1} \\
 \ka_2&> \max\big(\frac{2\ka_0}{2-\chi}, \frac{1}{1-\al\chi}((1+\al)\ka_0 +2  \max(\ka_1,\ka_3)-\chi \ka_1)\big) \,,
 \label{exp2}\\
 \eta &> \mu+(\chi-1) \ka_2+1\,,
 \label{exp3} \\
 \Delta\gotp&>\max\big(\ka_0+\chi \ka_2+\max(\ka_1,\ka_3),\frac{1}{1-\al}(\ka_0+(\chi-1) \ka_2+\max(\ka_1,\ka_3))\big)\,,
 \label{exp4}\\
 \al \Delta\gotp &\le  \ka_2+\chi \ka_1-\ka_0-\max(\ka_1,\ka_3)\,.
 \label{exp5}
\end{align} 
\label{eq106}
\end{subequations}
\item  there exists $K_0>1$   such that
\begin{equation}\label{expexpexp}
 \log K_0\geq \frac{1}{\log\chi}C,
\end{equation}
with $C$ a given function of $\mu,\nu,\eta,\al,\ka_1,\ka_2,\ka_3,\gotp_2$  
and moreover
\begin{subequations}\label{expexp}
\begin{align}
&\tG_0^2\tR_0^{-1}\e_0 K_0^{\ka_0}\max (1, 	\tR_0 \tG_0 K_0^{\ka_0+(\chi-1)\ka_2})<1\,,
\label{1s1}\\
&\max( K_0^{\ka_1},\e_0 K_0^{\ka_3})  K_0^{\ka_0-\Delta\gotp +(\chi-1) \ka_2 }\tG_0\e_0^{-1} \max\Big ( 1,\tR_0 ,
 \e_0\tG_0 K_0^{\al\Delta\gotp} \Big)\le 1\,,
 \label{4s2}\\
& \max( K_0^{\ka_1},\e_0 K_0^{\ka_3})K_0^{\ka_0-\chi \ka_1}\tG_0 \tR_0^{-1}\max
 \Big( \tR_0 , \e_0 \tG_0  K_0^{\al\Delta\gotp} \Big)\le 1\,.
 \label{6s2}
 \end{align}
 \label{small}
 \end{subequations}
\end{itemize}
\end{const}

\begin{rmk}\label{concreto?} 
 In the applications  the
 constants $\tG_0,\tR_0,\e_0$ in Constraint \ref{sceltapar} are given by the problem under
 study and typically they depend parametrically on ${\rm diam}(\calO_0)\sim \gamma$;
 then one wishes to show that for $\g$ small enough it is possible to choose all other
 parameters in order to
 fulfill Constraint \ref{sceltapar}. Often this implies requiring that $K_0\to \infty$ as $\g\to 0$.
 In order to highlight this dependence one often uses $\e_0$ as parameter and
introduces $\tg,\tr$ such that
\begin{equation}\label{porcammerda}
 \tG_0 \sim \e_0^\tg \,,\quad \tR_0\sim \e_0^\tr \,,\quad \text{ with } \quad \tg\le\tr\le 1\,,\quad 
 \min\{1+3\tg,1+2\tg-\tr\}> 0\,. 
\end{equation}
Then given $\ka_0,\ka_3$ one looks for $\al,\chi,\ka_1,\ka_2,\gotp_2$ satisfying \eqref{eq106} and,
setting
$K_0= \e_0^{-\ta}$, the constraints \eqref{small} become constraints on $\ta$.
Another typical procedure is to write $\tG_0,\tR_0,\e_0$ as powers of $K_0$, see paragraph \ref{riscalo}.
Note that in \eqref{porcammerda} we need $1+3\tg>0$ but in principle we allow $\tg<0$; same for $\tr$. This means in particular that $\tG_0$ and 
$\tR_0$ might be very large.
\end{rmk}

\begin{defi}[{\bf Homological equation}]\label{pippopuffo2} 
Let $\gamma> 0$, $K\ge K_0$,
consider a compact set $\calO \subset \calO_0$ and set $\vec{v}=(\g,\calO,s,a,r)$ and $\vec{v}^{\text{\tiny 0}}=(\g,\calO_0,s,a,r)$.
Consider a  vector field $F\in \calW_{\vec v^{\text{\tiny 0}},p}$  i.e.
$$
F= N_0+G: \calO_0\times    D_{a,p+\nu}(r)\times\TTT^{d}_{s}\to V_{a,p}\,, 
$$
 which is  $C^{\mathtt n+2}$-tame up to order $q=\gotp_2+2$.
   We say $\mathcal O$ satisfies the  \emph{  homological equation}, for
   $(F,K,\vec{v}^{\text{\tiny 0}},\rho)$ if  
    the following holds.

 \noindent
1.\, For all $\xi\in \mathcal O$   one has  $F(\xi)\in \calE$ and $|\Pi_{\calX}G|_{\vec{v},\gotp_{2}-1}\leq \mathtt{C}\, C_{\vec{v},\gotp_{2}}(\Pi_{\NN}^{\perp}G)$.
 
 \noindent
2.\, there exist   a bounded regular vector field $g\in \calW_{\vec{v}^{\text{\tiny 0}},p}\cap E^{(K)}$ such that 

 \begin{itemize}
 \item[(a)]  $g\in \BB_\calE$ for all $\xi\in \calO$,
  \item[(b)] one has $|g|_{\vec{v}^{\text{\tiny 0}},\gotp_1}\le \mathtt C |g|_{\vec v,\gotp_1}\le {\mathtt c}\rho$
  and for $\gotp_1\le p\le \gotp_2$
 \begin{equation}\label{buoni22}
|g|_{\vec v,p}\leq
\gamma^{-1}K^{\mu}(|\Pi_K\Pi_{\calX}G|_{\vec v,p}
+K^{\al(p-\gotp_1)}|\Pi_K\Pi_{\calX}G|_{\vec v,\gotp_1}
\g^{-1}C_{{\vec v,p}}(G))\,,
 \end{equation}
 $$
 |\Pi_\calX[\Pi_{\calX}^{\perp} G, g]|_{\vec v,p-1} \le C_{\vec v,p+1}(G)|g|_{\vec v,\gotp_1}+  C_{\vec v,\gotp_1}(G)|g|_{\vec v,p+\nu+1}
 $$
 \item[(c)] setting 
 $
 u:=\Pi_{K}\Pi_{\calX}({\rm ad}(\Pi_{\calX}^{\perp} F)[g]-F),
 $
one has 
  \phantom{assssasfffff}
  \begin{equation}\label{cribbio42}
  \begin{aligned}
    |u|_{\vec{v},\gotp_1}&\leq \e_0\g^{-1} K^{-\eta+\mu} C_{\vec{v},\gotp_1}(G)|\Pi_{K}\Pi_{\calX}G|_{\vec{v},\gotp_1},\\
    |u|_{\vec{v},\gotp_2}&\leq \g^{-1} K^{\mu}\left(
    |\Pi_{K}\Pi_{\calX}G|_{\vec{v},\gotp_2}C_{\vec{v},\gotp_1}(G)+K^{\al(\gotp_2-\gotp_1)}|\Pi_{K}\Pi_{\calX}G|_{\vec{v},\gotp_1}C_{\vec{v},\gotp_2}(G)
    \right);
\end{aligned}
  \end{equation}
  \item[(d)] setting $\vec{v}'=(\g,\calO,s-\rho s_0,a,r-\rho r_0)$, and let $\Phi$ the change of variables 
  generated by $g$, 
  one has that
  \begin{equation}\label{cribbio420}
  \begin{aligned}
  |\Pi_{\calX}\Phi_{*}F|_{\vec{v}',\gotp_2-1}\leq C_{\vec{v},\gotp_2}(\Pi_{\NN}^{\perp}G)+
  \mathtt{C}\left( |g|_{\vec{v},\gotp_2}C_{\vec{v},\gotp_1}(G)+
|g|_{\vec{v},\gotp_1}C_{\vec{v},\gotp_2}(G)  
  \right)
  \end{aligned}
  \end{equation}
  \end{itemize}
\end{defi}

\begin{defi}[{\bf Compatible changes of variables}]\label{compa} Let the parameters in Constraint
\ref{sceltapar} be fixed.
Fix also $\vec v= (\g,\calO,s,a,r)$, $\vec{v}^{\text{\tiny 0}}= (\g,\calO_0,s,a,r)$ with $\calO\subseteq\calO_0$
a compact set,  parameters
$K\ge K_0,\rho<1$. Consider a
 vector field $F= N_0+G\in \calW_{\vec{v}^{\text{\tiny 0}},p}$  which is  $C^{\mathtt n+2}$-tame up to
 order $q=\gotp_2+2$ and such that, $$F\in \calE\quad \forall\x\in \calO\,,\qquad |\Pi_{\calX} G|_{\vec{v},\gotp_2-1}\leq \mathtt{C} C_{\vec{v},\gotp_2}(\Pi_{\NN}^\perp G) .$$
  We say that a 
left invertible $\calE$-preserving change of variables 
$$
\calL, \calL^{-1}: \TTT^d_{s}\times D_{a,\gotp_1}(r)\times \calO_0 \to
 \TTT^d_{s+\rho s_0}\times D_{a-\rho a_0,\gotp_1}(r+\rho r_{0})
 $$
 is {\em compatible} with $(F,K,\vec v,\rho)$ if the following holds:
\begin{itemize}
\item[(i)]  $\calL$ is ``close to identity'', i.e.
denoting $\vec{v}^{\text{\tiny 0}}_1:=(\g,\calO_0,s-\rho s_0,a-\rho a_0,r-\rho r_0)$ one has 
\begin{equation}\label{satana}
\begin{aligned}
&\|(\calL-\uno)h\|_{\vec{v}^{\text{\tiny 0}}_1,\gotp_1}\leq \mathtt C  \e_0 K^{-1}
\|h\|_{\vec{v}^{\text{\tiny 0}},\gotp_1}\,.\\
\end{aligned}
\end{equation}

\item[(ii)]  $\calL_*$ conjugates the $C^{\mathtt n+2}$-tame vector field $F$ to 
the vector field $\hat{F}:= 
\calL_{*}F=  N_0+ \hat G$
which is $C^{\mathtt n+2}$-{tame}; moreover denoting
 $\vec v_2:=(\g,\calO,s-2\rho s_0,a-2\rho a_0,r-2\rho r_0)$
one may choose the tameness constants of $\hat G$ so that
\begin{equation}\label{odio}
\begin{aligned}
&C_{\vec v_2, \gotp_1}(\hat{G})\le C_{\vec v,\gotp_1}(G)(1+\e_0 K^{-1})\,,\\
&C_{\vec v_2,\gotp_{2}}(\hat{G})\le \mathtt C \big(C_{\vec v,\gotp_{2}}(G)+
\e_0 K^{\ka_3}C_{\vec v,\gotp_{1}}(G)\big) \,\\
&
|\Pi_{\calX}\hat{G}|_{\vec{v}_2,\gotp_2-1}
\le \mathtt C \big(C_{\vec v,\gotp_{2}}(\Pi_{\NN}^{\perp}G)+
\e_0 K^{\ka_3}C_{\vec v,\gotp_{1}}(\Pi_{\NN}^{\perp}G)\big) \,.
\end{aligned}
\end{equation}
\item[(iii)] $\calL_*$ ``preserves the $(\NN,\calX,\RR)$-decomposition'', namely one has
\begin{equation}\label{satana2}
\Pi_\NN^\perp (\calL_* \Pi_\NN F) = 0\,, \quad \qquad \Pi_{\calX}(\calL_{*}\Pi_{\calX}^\perp F)=0\,.
\end{equation}
\end{itemize}
\end{defi}

%

Given $\g_0>0$ we set for $n\ge 0$
\begin{equation}\label{numeretti}
\begin{aligned}
& 
\quad
\mathtt G_n=\mathtt G_0(1+\sum_{j=1}^n 2^{-j}),\quad \mathtt R_n=\mathtt R_0(1+\sum_{j=1}^n 2^{-j}),\quad K_n= (K_0)^{\chi^n},\, \g_n= \g_{n-1}(1-\frac{1}{2^{n+2}}), \\ & a_n= a_0(1-\frac{1}{2}\sum_{j=1}^n2^{-j})\,,\quad
 r_n= r_0(1-\frac{1}{2}\sum_{j=1}^n2^{-j}), 
\quad s_n= s_0(1-\frac{1}{2}\sum_{j=1}^n2^{-j})\,, \\ &
 \Pi_n := \Pi^{(K_n)}\,,\quad
\Pi_n^{\perp}:=\uno-\Pi_n\,, E_n= E^{(K_n)}, \quad \rho_{n}:=\frac{1}{2^{n+5}} 
\end{aligned}
\end{equation}
Finally, for all $n\ge0$ we denote
$\vec v_n=(\g_{n},\calO_{n},s_{n},a_{n},r_{n})$, $\vec{v}^{\text{\tiny 0}}_n= (\g_{n},\calO_{0},s_{n},a_{n},r_{n})$.

\medskip

We reformulate our main result, stated  in the Introduction, in a more precise way. This is useful  for applications, where  one needs to have information of the sequence of vector fields $F_n$ and on the changes of variables $\calH_n$ in order to prove that the set $\calO_\infty$ is not empty.

 \begin{theo}[{\bf Abstract KAM}]\label{thm:kambis}
Fix  a decomposition and a subspace $\calE$ as in Definitions \ref{nomec} and \ref{nx}. Fix parameters $\e_0,\tR_0,\tG_0,\mu,\nu,\eta,\chi,\al,\ka_1,\ka_2,\ka_3,\gotp_2$  satisfying Constraint \ref{sceltapar}. Let $N_0$ be a diagonal vector field as in Definition \ref{norm} and consider a  vector field
 \begin{equation}\label{kam1}
 F_0:=N_{0}+G_0 \in \calE\cap\calW_{\vec{v}_{0},p}
 \end{equation} 
 which is  $C^{\mathtt n+2}$-tame up to order $q=\gotp_2+2$.

Fix $\g_0>0$ and assume that
 \begin{equation}\label{sizes}
\g_{0}^{-1}C_{\vec{v}_0,\gotp_2}(G_0) \le \tG_0\,,\quad
 \g_0^{-1}C_{\vec{v}_0,\gotp_2}(\Pi_{\NN}^\perp G_0)\le \tR_0\,, \quad
 \g_0^{-1}|\Pi_{\calX}G_0|_{\vec{v}_0,\gotp_1}\le \e_0\,,
 \quad  \g_0^{-1}|\Pi_{\calX}G_0|_{\vec{v}_0,\gotp_2}\le \tR_0\,.
 \end{equation}

For all $n\geq 0$  we define recursively  changes of variables $\calL_n,\Phi_n$  and  compact sets
$ \calO_n$ as follows.

\smallskip

  Set $\HH_{-1}=\HH_0=\Phi_0=\calL_0=\uno$, and for $0\le j \le n-1$ set recursively
  $\HH_j= \Phi_j\circ \calL_j\circ \HH_{j-1}$ and  $F_j:=(\HH_j)_{*}F_0:=N_0+G_{j}$.
  Let $\calL_{n}$ be any change of variables   compatible with $(F_{n-1},K_{n-1}, \vec v_{n-1},\rho_{n-1})$,
    and  $\calO_n$ be any compact set 
   \begin{equation}\label{oscurosignore}
   \calO_{n}\subseteq \calO_{n-1}\,,
   \end{equation}  
  which satisfies the   homological equation for $((\calL_n)_*F_{n-1},K_{n-1},\vec v^{\text{\tiny 0}}_{n-1},\rho_{n-1})$.
   For $n>0$ let $g_n$ be the regular vector field defined in item (2) of Definition \ref{pippopuffo2} and set
    $\Phi_n$ the time-1 flow map generated by $g_n$. 
    
    Then $\Phi_n$ is left invertible and 
     $F_n:= (\Phi_n\circ\calL_n)_*F_{n-1}\in \calW_{\vec v^{\text{\tiny 0}}_n,p}$ is  $C^{\mathtt n+2}$-tame up to order $q=\gotp_2+2$.
Moreover the following holds.

      \begin{itemize}
     
  \item[{\bf (i)}]
  Setting $G_n=F_n-N_0$ then
 \begin{equation}\label{lamorte}
 \begin{aligned}
  \Gamma_{n,\gotp_1}&:=\g_{n}^{-1} C_{\vec{v}_n,\gotp_1}(G_n)\leq \tG_n, \quad 
  \Gamma_{n,\gotp_2} :=\g_{n}^{-1}
  C_{\vec{v}_n,\gotp_2}(G_n)\leq  \tG_0 K_n^{\ka_1},\\
\Theta_{n,\gotp_1}&:= \g_{n}^{-1}C_{\vec{v}_n,\gotp_1}(\Pi_{\NN}^\perp G_n)\leq \tR_n,
 \quad  \Theta_{n,\gotp_2}:=
 \g_{n}^{-1}C_{\vec{v}_n,\gotp_2}(\Pi_{\NN}^\perp G_n)\leq  \tR_0  K_{n}^{\ka_1}
 \\
\de_n&:= \g_{n}^{-1} |\Pi_{\calX}G_n|_{\vec{v}_n,\gotp_1}\leq K_0^{\ka_2} \e_0 K_{n}^{-\ka_2},
\quad \g_{n}^{-1} |\Pi_{\calX}G_n|_{\vec{v}_n,\gotp_2}\leq \tR_0 K_n^{\ka_1} 
\\
|g_{n}|_{\vec u_n,\gotp_1}&
     \leq  K_0^{\ka_2} \e_0 \tG_0K_{n-1}^{-\ka_2+\mu+1},
     \quad 
     |g_{n}|_{\vec{u}_{n},\gotp_2}
     \leq 
     \tR_0 \tG_0^{-1} K_{n-1}^{-\nu-1+\chi \ka_1} 
 \end{aligned}
 \end{equation}
where 
$\vec u_{n}=(\g_{n},\calO_{n},s_{n}+12\rho_n s_0 ,a_{n}+12\rho_{n} a_0,r_{n}+12\rho_n r_0)$.
 \item[{\bf (ii)}]
 The sequence $\calH_n$ converges
     for all $\x\in\calO_0$ 
  to some change of variables
  \begin{equation}\label{dominio}
  {\calH}_\io={\calH}_\io(\x):  D_{{a_{0}},p}({s_{0}}/{2},{r_{0}}/{2})\longrightarrow D_{\frac{a_{0}}{2},p}({s_{0}},{r_{0}}).
  \end{equation}
  
 \item[{\bf (iii)}]
 Defining $F_{\infty}:=(\calH_\io)_{*}F_0$ one has 
 \begin{equation}\label{fine}
 \Pi_\calX F_{\infty}=0
 \quad \forall \xi \in \calO_\io:=\bigcap_{n\geq0}\calO_n
 \end{equation}
 and 
 $$
 \g_0^{-1}C_{\vec{v}_\io,\gotp_1}(\Pi_{\NN}F_{\infty}-N_0)\le 2\tG_0, \quad
  \g_0^{-1}C_{\vec{v}_\io,\gotp_1}(\Pi_{\RR}F_{\infty})\le 2\tR_0
 $$
 with $\vec{v}_\io:=(\g_0/2,\calO_\io,s_{0}/2,a_{0}/2)$. 
 
 \end{itemize}
\end{theo}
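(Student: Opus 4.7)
The proof proceeds by induction on $n$, establishing simultaneously: (a) $\Phi_n$ is well defined, left-invertible, and $\calE$-preserving on the nested domains; (b) $F_n\in \calW_{\vec v^{\text{\tiny 0}}_n,p}$ is $C^{\mathtt n+2}$-tame up to order $\gotp_2+2$; (c) the six estimates in \eqref{lamorte} hold at level $n$. The base case $n=0$ is just the hypothesis \eqref{sizes} together with the identifications $\calH_{-1}=\calH_0=\Phi_0=\calL_0=\uno$.

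For the inductive step, I first apply the compatible change $\calL_n$. By Definition \ref{compa}, the field $\hat F_{n-1}:=(\calL_n)_*F_{n-1}$ belongs to $\calE$, preserves the $(\NN,\calX,\RR)$-decomposition, and by \eqref{odio} its tameness constants satisfy $C_{\vec v,\gotp_1}(\hat G_{n-1})\le(1+\e_0 K_{n-1}^{-1})C_{\vec v,\gotp_1}(G_{n-1})$ and $C_{\vec v,\gotp_2}(\hat G_{n-1})\le \mathtt C(C_{\vec v,\gotp_2}(G_{n-1})+\e_0 K_{n-1}^{\ka_3}C_{\vec v,\gotp_1}(G_{n-1}))$, with the analogous bounds on $\Pi_\NN^\perp$. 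Inserting the inductive bounds on $\Gamma_{n-1,\gotp_j}$ and $\Theta_{n-1,\gotp_j}$ gives identical bounds for $\hat G_{n-1}$ up to uniform constants. By hypothesis $\calO_n$ satisfies the homological equation for $(\hat F_{n-1},K_{n-1},\vec v^{\text{\tiny 0}}_{n-1},\rho_{n-1})$, so Definition \ref{pippopuffo2} produces a regular bounded $g_n\in\BB_\calE\cap E_{n-1}$ satisfying \eqref{buoni22} and \eqref{cribbio42}; inserting the inductive bound $\delta_{n-1}\le K_0^{\ka_2}\e_0 K_{n-1}^{-\ka_2}$ together with the smoothing estimates of Definition \ref{linvec-abs}(3) yields the two bounds on $|g_n|_{\vec u_n,\gotp_j}$ in \eqref{lamorte}.

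The heart of the scheme is the push-forward $F_n=(\Phi_n)_*\hat F_{n-1}$. Item (5) of Definition \ref{linvec-abs} guarantees $\Phi_n=\uno+f_n$ is well defined with $|f_n|_{\vec v,p}\le 2|g_n|_{\vec v,p}$, and Remark \ref{azz} controls the domain it acts on. Composition with this near-identity, handled through the tame framework of Definition \ref{tame} and the commutator estimates on $[g_n,\hat F_{n-1}]$, yields both the high-norm bounds $\Gamma_{n,\gotp_2}\le \tG_0 K_n^{\ka_1}$, $\Theta_{n,\gotp_2}\le \tR_0 K_n^{\ka_1}$ and the low-norm bounds $\Gamma_{n,\gotp_1}\le \tG_n$, $\Theta_{n,\gotp_1}\le \tR_n$; the added error at $\gotp_1$ is of order $|g_n|\cdot \tG_{n-1}\lesssim \e_0\tG_0 K_{n-1}^{-\ka_2+\mu+1}\tG_{n-1}$, summable against the telescoping $\tG_n$ thanks to the choice of $\ka_2$. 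The quadratic improvement on $\Pi_\calX F_n$ comes from writing
\[
\Pi_\calX F_n=\Pi_\calX \hat G_{n-1}+\Pi_\calX[g_n,\hat F_{n-1}]+\Pi_\calX\!\int_0^1(1-t)(\Phi^t_{g_n})_*[g_n,[g_n,\hat F_{n-1}]]\,dt,
\]
using the definition of $u_n$ in Definition \ref{pippopuffo2}(2)(c) so that the first two summands collapse to $u_n+\Pi_{K_{n-1}}^\perp\Pi_\calX \hat G_{n-1}+\Pi_\calX[\Pi_\calX^\perp \hat G_{n-1},g_n]$, controlled by \eqref{cribbio42}, the smoothing \eqref{P2}, and item (2)(b); the double-commutator term is $O(|g_n|^2)$. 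Combined with \eqref{cribbio420} for the $\gotp_2-1$ bound this yields $\delta_n\le K_0^{\ka_2}\e_0 K_n^{-\ka_2}$.

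Item (ii) follows because $|g_n|_{\vec v,\gotp_1}\lesssim \e_0\tG_0 K_{n-1}^{-\ka_2+\mu+1}\to 0$ super-exponentially while $\|\calL_n-\uno\|\lesssim \e_0 K_{n-1}^{-1}$ by \eqref{satana}, so $\calH_n$ is Cauchy in the $\gotp_1$-norm on the nested domains whose intersection contains the claimed range since $s_n\to s_0/2$, $a_n\to a_0/2$, $r_n\to r_0/2$. Item (iii) then follows by passing to the limit in \eqref{lamorte}: $\delta_n\to 0$ yields $\Pi_\calX F_\infty=0$ on $\calO_\infty$, and the uniform bounds $\Gamma_{n,\gotp_1}\le 2\tG_0$, $\Theta_{n,\gotp_1}\le 2\tR_0$ survive the limit. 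The main obstacle is a delicate bookkeeping: one must verify that after chaining \eqref{odio}, \eqref{buoni22}, \eqref{cribbio42}, \eqref{cribbio420} with the commutator and flow estimates, each quantity in \eqref{lamorte} strictly improves by the prescribed power of $K_n$. The exponent constraints \eqref{exp1}--\eqref{exp5} are engineered for precisely this balance: they guarantee that the quadratic gain $K_{n-1}^{-2\ka_2}$ on $\delta_n$ beats the small-divisor and commutator losses $K_{n-1}^{\mu+(\chi-1)\ka_2}$, while \eqref{exp4}--\eqref{exp5} handle the interpolation between $\gotp_1$ and $\gotp_2$ through the factor $K^{\al(\gotp_2-\gotp_1)}$ in \eqref{buoni22}, and \eqref{expexp} fixes $K_0$ large enough that all geometric-series sums converge. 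A second subtlety, forced by the merely $C^{\mathtt n+2}$-tame regularity of $F$, is that the Lie series expansion can be applied rigorously only to the finite-dimensional projection $\Pi_\calX$, whereas its action on the non-polynomial remainder must be treated through Definition \ref{tame} with its regularity loss $\nu$; this is why the $\calX$-component and its complement are treated separately throughout.
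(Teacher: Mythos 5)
Your proposal is correct and follows essentially the same route as the paper: the single KAM step (compatible map $\calL_n$, solution $g_n$ of the homological equation, push-forward by the time-$1$ flow, with all terms involving $N_0$ handled by substituting the homological equation, and the quadratic gain on $\Pi_\calX$ obtained from $u_n$, the $\Pi_{K_{n-1}}^\perp$ smoothing and the double-commutator remainder), followed by the inductive verification of \eqref{lamorte} under Constraint \ref{sceltapar}, the Cauchy estimate for $\calH_n$, and the passage to the limit. The only points you leave implicit are already in the paper's argument: the dropped $\Pi_{K_{n-1}}^\perp$ on the term $\Pi_\calX[\Pi_\calX^\perp\hat G_{n-1},g_n]$ (without which the estimate you invoke via \eqref{P2} would not give the required $K_n^{-\chi\ka_2}$ decay) and the short inductive identity $(\calH_n)_*F_0=F_n$ used in item (iii), as in \eqref{fine2}.
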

\begin{proof}
The proof of this result is deferred to Section \ref{itscheme}.
\end{proof}

\begin{rmk}
Note that if one makes the further assumption that $s_0>0$, the smallness conditions as well as the definition of the set of  parameters
in Definition \ref{pippopuffo2} simplify drastically: in particular one may choose $\gotp_2=\gotp_1$.
We are not makeing this assumption because our aim was to have a unified proof; however we discuss the time analytic case in
Appendix \ref{solaminchia} for completeness.
\end{rmk}

\zerarcounters
\section{Triangular decomposition and Mel'nikov conditions}\label{brooks}

In most applications one may redefine the sets on which one can solve the
  homological equation in a more direct way, by introducing the so-called {\em Mel'nikov conditions}.

We start by introducing some notation.
\begin{defi}[{\bf Triangular decomposition}]\label{triang}
We say that a decomposition $(\NN,\calX,\RR)$ is triangular if
 $\calX$ admits a block decomposition
\begin{equation}\label{blodia}
 \calX=\bigoplus_{j=1}^\mathtt b \calX_j
\end{equation}
such that  for all $N\in\NN,R\in\RR$ setting
\begin{equation}\label{sodia}
\gotN:=\Pi_{\calX}{\rm ad}(N)\,,\quad  \gotR:=\Pi_{\calX}{\rm ad}(R)
\end{equation}
then $\gotN$ is block diagonal and $\gotR$ is strictly upper triangular, i.e. 
$$
\gotN:\; \calX_i\to \calX_i\,,\quad \gotR:\; \calX_i\to \bigoplus_{j>i}\calX_j\,.
$$
\end{defi}
\begin{rmk}\label{sonfica}
In order to construct a triangular decomposition one generally associates some degree to the variables\footnote{clearly one must give $\theta$ degree zero  since we do not Taylor expand on it, then by convention we decide to give degree one to $w$.}:
$${\rm deg}(\theta)=0\,,\quad {\rm deg}(w)=1\,,\quad {\rm deg}(y)= \td, $$
this automatically fixes the degree of a  monomial vector field as 
$$
{\rm deg}(y^j e^{\ii \theta\cdot \ell} w^\al \partial_{\tv})=  j \td  +|\al| - {\rm deg}(\tv),
$$
moreover one verifies that if $g$ has degree $d_1$ and $f$ degree $d_2$ then $[f,g]$ has degree $d_1+d_2$. Finally 
we remark that $\VV^{(\tv,0)}$ has negative degree for $\tv=y,w$ and degree equal to zero for $\tv=\theta$, in the same way $\VV^{(\tv,\tv)}$ has always degree zero for $\tv=y,w$ while $\VV^{\theta,\tv}$ has positive degree.
Then to a polynomial we may associate its minimal and maximal degree. In the same way if a $C^{k+1}$ function has zero projection on all spaces $\VV^{\tv,\tv_1,\cdots,\tv_h}$, with $h\le k$ and degree $\le d$ we call $d$  its minimal degree.
In many applications it is convenient to place all monomials of degree $\le 0$ in $\NN\cap\calX$ and all those of positive degree in $\RR$.

\end{rmk}
\begin{lemma}\label{pillo}
Any decomposition such that $\NN$ contains only polyomials of degree zero and $\RR$  contains only terms of minimal degree $>0$ is triangular w.r.t. the degree decomposition of $\calX= \oplus_j \calX_j$ where the $\calX_j$ are spaces of homogeneous polynomials with increasing degree $\td_j$.
\end{lemma}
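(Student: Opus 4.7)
The plan is to exploit the graded Lie algebra structure on $\calP^{(\mathtt n)}$ induced by the degree assignment ${\rm deg}(\theta)=0$, ${\rm deg}(w)=1$, ${\rm deg}(y)=\td$ from Remark \ref{sonfica}. The key fact, also stated there, is that the Lie bracket is additive in degrees: if $f$ is homogeneous of degree $d_1$ and $g$ is homogeneous of degree $d_2$, then $[f,g]$ is homogeneous of degree $d_1+d_2$. This extends to $C^{k+1}$ vector fields in the following form: each Taylor component of $[f,g]$ of degree $d$ arises from Taylor components of $f,g$ of degrees $d_1, d_2$ with $d_1+d_2=d$, which is an immediate consequence of the Leibniz rule.

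For the block-diagonal property of $\gotN$, I would take $N\in\NN$ and $X\in\calX_i$. By hypothesis $N$ has degree zero, and by the graded decomposition $X$ is homogeneous of degree $\td_i$; hence $[N,X]$ is homogeneous of degree $\td_i$. Since $\calX=\bigoplus_j\calX_j$ with the $\td_j$ strictly increasing in $j$, the projection $\Pi_\calX[N,X]$ lies in the unique summand of degree $\td_i$, namely $\calX_i$. Therefore $\gotN(X)=\Pi_\calX[N,X]\in\calX_i$.

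For the strict upper triangularity of $\gotR$, I would take $R\in\RR$ and $X\in\calX_i$. By hypothesis every Taylor component of $R$ has degree strictly greater than zero; by the additivity property applied component-wise, every Taylor component of $[R,X]$ has degree strictly greater than $\td_i$. Projecting onto $\calX$, the result must lie in $\bigoplus_{\td_j>\td_i}\calX_j=\bigoplus_{j>i}\calX_j$, using the strict monotonicity of the $\td_j$. Hence $\gotR(X)\in\bigoplus_{j>i}\calX_j$.

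The only point that requires some care is the contribution of the non-polynomial remainder in $\RR_{\mathtt n}$ to $\Pi_\calX[R,X]$: here one has to verify that the additivity of (minimal) degrees survives the truncation. This is handled by observing that the Taylor expansion of $[R,X]$ truncated at polynomial order $\mathtt n$ in $y,w$ depends only on finitely many Taylor coefficients of $R$, each of which carries strictly positive degree by the hypothesis on $R$; the bracket formula written in terms of Taylor coefficients then preserves the graded structure and the argument of the previous paragraph applies verbatim. I expect this bookkeeping about the $\RR_{\mathtt n}$ remainder to be the only nontrivial step; everything else reduces to the Leibniz rule and the strict monotonicity of $\td_1<\cdots<\td_{\mathtt b}$.
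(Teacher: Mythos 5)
Your proposal is correct and follows essentially the same route as the paper: degree additivity of the bracket gives block-diagonality for the degree-zero part $\NN$ and a strict degree increase (hence strict upper triangularity) for $\RR$, with the monotonicity of the $\td_j$ identifying the target blocks. Your treatment of the non-polynomial remainder in $\RR_{\mathtt n}$ (only finitely many Taylor coefficients of $R$, all of positive degree, survive the projection onto $\calX$) is the same bookkeeping the paper expresses by noting that components of sufficiently high minimal degree are annihilated by $\Pi_\calX\,{\rm ad}(\cdot)$ on $\calX$.
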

\begin{proof}
Given any  polynomial $P$ of positive minimal degree the operator ${\rm ad}P$ has positive degree, namely its action on any polynomial increases its minimal degree.  Now for any $N\in \NN$ $\Pi_\calX {\rm ad} N$ preserves the degree and thus maps each $\calX_j$ into itself.   By definition the maximal degree in $\calX$ is given by $\td_b$. Then  if we have a tame function $f$ with minimal degree $>\td_b -\max(1,\td)$ the operator
$\Pi_\calX {\rm ad}f$ on $\calX$ is equal to zero, and this is just a property of polynomial subspaces in $\RR$. Since all $R\in \RR$ have positive degree, then  obviously $\Pi_\calX {\rm ad}R$ is upper triangular.
\end{proof}

Once we have a triangular decomposition we introduce the following notion
\begin{defi}[{\bf Mel'nikov conditions}]\label{pippopuffo3} 
Let $\gamma,\mu_1> 0$, $K\ge K_0$, consider a compact set $\calO \subset \calO_0$ and set
$\vec{v}=(\g,\calO,s,a,r)$ and $\vec{v}^{\text{\tiny 0}}=(\g,\calO_0,s,a,r)$.
Consider a  vector field $F\in \calW_{\vec v^{\text{\tiny 0}},p}$  i.e.
$$
F= N_0+G: \calO_0\times    D_{a,p+\nu}(r)\times\TTT^{d}_{s}\to V_{a,p}\,, 
$$
 which is  $C^{\mathtt n+2}$-tame up to order $q=\gotp_2+2$.
   We say $\mathcal O$  satisfies the Mel'nikov conditions  for
   $(F,K,\vec{v}^{\text{\tiny 0}})$ if  
    the following holds.

 \noindent
1.\, For all $\xi\in \mathcal O$   one has  $F(\xi)\in \calE$  and $|\Pi_{\calX}G|_{\vec{v},\gotp_{2}-1}\leq \mathtt{C}\, C_{\vec{v},\gotp_{2}}(\Pi_{\NN}^{\perp}G)$.
 
 \noindent
2.\, Setting $\gotN:= \Pi_K\Pi_\calX {\rm ad }(\Pi_\NN F)$ for all $\xi\in \calO$ there exists a block-diagonal  operator 
$\gotW: E^{(K)}\cap\calX\cap\calE  \to E^{(K)}\cap \BB_\calE $ such that 
 for any vector field $X\in E^{(K)}\cap\calX\cap\calE $

 \begin{itemize}

  \item[(a)] one has
 \begin{equation}\label{buoni}
 \begin{aligned}
|\gotW X|_{\vec v,p}&\le
\gamma^{-1}K^{\mu_1}(|X|_{\vec v,p}
+K^{\al(p-\gotp_1)}|X|_{\vec v,\gotp_1}
\g^{-1}C_{{\vec v,p}}(G)).
\end{aligned}
 \end{equation}
 
 \item[(b)] setting
 $
u:=(\Pi_{K}{\rm ad}(\Pi_{\NN}F)[\gotW X]-X)
 $ 
one has 
  \phantom{assssasfffff}
  \begin{equation}\label{cribbio4}
  \begin{aligned}
  |u|_{\vec{v},\gotp_1}&\leq \e_0 \g^{-1} K^{-\h+\mu_1} C_{\vec{v},\gotp_1}(G)|X|_{\vec{v},\gotp_1},\\
    |u|_{\vec{v},\gotp_2}&\leq \g^{-1} K^{\mu_1}\left(
    |X|_{\vec{v},\gotp_2}C_{\vec{v},\gotp_1}(G)+K^{\al(\gotp_2-\gotp_1)}|X|_{\vec{v},\gotp_1}C_{\vec{v},\gotp_2}(G)
    \right).
\end{aligned}
  \end{equation}
  \end{itemize}
\end{defi}

Then we have the following result.

\begin{proposition}[{\bf Homological equation}]\label{uffffa}
Let $\gamma> 0$, $K\ge K_0$,
consider a compact set $\calO \subset \calO_0$ and set $\vec{v}=(\g,\calO,s,a,r)$ and $\vec{v}^{\text{\tiny 0}}=(\g,\calO_0,s,a,r)$.
Consider a  vector field $F\in \calW_{\vec v^{\text{\tiny 0}},p}$  i.e.
$$
F= N_0+G: \calO_0\times    D_{a,p+\nu}(r)\times\TTT^{d}_{s}\to V_{a,p}\,, 
$$
 which is  $C^{\mathtt n+2}$-tame up to order $q=\gotp_2+2$. 
Assume that $\g\sim {\rm diam} \calO_0$ and set
 \begin{equation}\label{sizes2b}
\Gamma_{p}:=\g^{-1}C_{\vec{v},p}(G), \quad 
 \Theta_{p}:=\g^{-1}C_{\vec{v},p}(\Pi_{\NN}^\perp G).
 \end{equation}
Assume finally that for any $f\in \BB_\calE$ 
\begin{equation}\label{vino2}
\begin{aligned}
&|\Pi_\calX  [\Pi_\RR G,f]|_{\vec v,p-1}\le C_{\vec v,p+1}(\Pi_{\NN}^{\perp}G)|f|_{\vec v,\gotp_1}+ C_{\vec v,\gotp_1}(\Pi_{\NN}^{\perp}G)|f|_{\vec v,p+\nu+1},\\
&|\Pi_\calX  [\Pi_\NN G,f]|_{\vec v,p-1}\le C_{\vec v,p+1}(G)|f|_{\vec v,\gotp_1}+ C_{\vec v,\gotp_1}(G)|f|_{\vec v,p+\nu+1},\\
\end{aligned}
\end{equation}
If $\calO$ satisfies the Mel'nikov conditions of Definition \ref{pippopuffo3} for $(F,K,\vec{v}^{\text{\tiny 0}})$  then 
$\calO$ satisfies items 1.  and 2.a-b-c  of Definition \ref{pippopuffo2} provided that we fix 
\begin{equation}\label{migo}
\mu= (b+1)(\mu_1+\nu+1 )+ \mathtt t
\end{equation}where $\mathtt t>0$ is such that $$ (1+ \Theta_{\gotp_1}(1+\Gamma_{\gotp_1}))^{b}(1+\Gamma_{\gotp_1})\le K_0^{\mathtt t}.
$$
\end{proposition}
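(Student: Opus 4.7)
The plan is to use the triangularity of the decomposition to invert the operator $\gotA:=\Pi_K\Pi_\calX\mathrm{ad}(\Pi_\calX^\perp F)=\gotN+\gotR$ by a \emph{finite} Neumann expansion built from the block-diagonal approximate inverse $\gotW$ of $\gotN$ supplied by Definition~\ref{pippopuffo3}. Item~1 of Definition~\ref{pippopuffo2} coincides verbatim with item~1 of Definition~\ref{pippopuffo3}, so there is nothing to prove there; I will focus on item~2.

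Set $X:=\Pi_K\Pi_\calX G$, $Y_0:=X$ and iteratively $Y_{j+1}:=\gotR\gotW Y_j$. The strict upper triangularity in Definition~\ref{triang} gives $Y_{\mathtt b}=0$, which makes
\[
g\ :=\ \sum_{j=0}^{\mathtt b-1}(-1)^j\,\gotW Y_j
\]
a well-defined finite sum. That $\gotW Y_j\in{\BB}_\calE$ and $Y_j\in\calX\cap\calE\cap E^{(K)}$ at each step follows inductively from Definition~\ref{pippopuffo3}, Definition~\ref{nx}(ii)--(iii), and the observation $\Pi_\RR G = F-N_0-\Pi_\NN G-\Pi_\calX G\in\calE$. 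Writing $\gotN\gotW Y_j=Y_j+u_j$ with $u_j$ the Mel'nikov remainder on $Y_j$, a one-line telescoping
\[
\gotA g=\sum_{j=0}^{\mathtt b-1}(-1)^j(Y_j+u_j)+\sum_{j=0}^{\mathtt b-1}(-1)^j Y_{j+1}\ =\ X+\sum_{j=0}^{\mathtt b-1}(-1)^j u_j
\]
(using $Y_{\mathtt b}=0$ and sign cancellation) identifies $u:=\sum_j(-1)^j u_j$ with the remainder prescribed in item~2.c.

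For the quantitative bounds I would iterate two bilinear inequalities: by (\ref{buoni}) each application of $\gotW$ costs $\g^{-1}K^{\mu_1}$, while (\ref{vino2}) applied to $\Pi_\RR G$ shows that each factor $\gotR$ costs $\g\,\Theta_{\gotp_1}$ at the price of $\nu+2$ derivatives. In the composition $\gotR\gotW$ the $\g^{\pm 1}$ factors cancel and the $\nu+2$ derivative loss is absorbed, up to a factor $K^{\nu+2}$, by the smoothing (\ref{P1}), since everything lives in $E^{(K)}$. After $\mathtt b-1$ applications of $\gotR\gotW$ and one final $\gotW$, the bound on $|g|_{\vec v,p}$ takes the form (\ref{buoni22}) with exponent $(\mathtt b+1)(\mu_1+\nu+1)+\mathtt t$, where $\mathtt t$ swallows the cumulative constant $(1+\Theta_{\gotp_1}(1+\Gamma_{\gotp_1}))^{\mathtt b}(1+\Gamma_{\gotp_1})\le K_0^{\mathtt t}$; this matches (\ref{migo}). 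The two inequalities in (\ref{cribbio42}) are obtained in the same way by summing the Mel'nikov estimates (\ref{cribbio4}) applied to each $Y_j$, and item~2.b is immediate from the splitting $\Pi_\calX^\perp G=\Pi_\NN G+\Pi_\RR G$ combined with the two lines of (\ref{vino2}).

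The only genuine difficulty lies in the bookkeeping of the simultaneous low/high regularity terms in (\ref{buoni}) and (\ref{vino2}) through the $\mathtt b$-fold iteration, making sure that both the $\gotp_1$- and the $p$-norms stay controlled at every step and that the total power of $K$ comes out to be exactly $\mu$ as in (\ref{migo}); once this is set up correctly everything else is a routine accounting of factors $K$, $\g$, $\Gamma_{\gotp_1}$, $\Theta_{\gotp_1}$.
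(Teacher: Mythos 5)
Your proposal is correct and follows the same strategy as the paper's own proof: you build $g=\gotW(\uno+\gotU)^{-1}\Pi_K\Pi_\calX G$ exactly as in the paper (your recursive $Y_j=(\gotR\gotW)^jX=\gotU^jX$ is just an unrolled version of the Neumann sum with $\gotU:=\gotR\gotW$ nilpotent of order $\mathtt b$), identify the remainder $u$ by the same telescoping, and recover the bounds by iterating \eqref{buoni} and \eqref{vino2} through the finite sum, absorbing the $\nu+O(1)$ derivative losses via \eqref{P1} since everything is in $E^{(K)}$; the accumulated polynomial factor in $\Theta_{\gotp_1},\Gamma_{\gotp_1}$ is then swallowed into $K_0^{\mathtt t}$, yielding \eqref{migo}. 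The one step you omit is that the construction gives $g$ only for $\xi\in\calO$, whereas Definition \ref{pippopuffo2} requires $g\in\calW_{\vec v^{\text{\tiny 0}},p}\cap E^{(K)}$, i.e.\ a Lipschitz family on all of $\calO_0$ together with the bound $|g|_{\vec v^{\text{\tiny 0}},\gotp_1}\le\mathtt C|g|_{\vec v,\gotp_1}$ in item 2.b; the paper closes this with a Kirszbraun extension, using that $\calA\cap E^{(K)}$ is a Hilbert space with respect to $|\cdot|_{s,a,\gotp_1}$.
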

\begin{proof}
The proof is deferred to Appendix \ref{app:homo}.
\end{proof}

\zerarcounters
\section{Applications}\label{appli}
In order to use the {\em Mel'nikov conditions} in Definition \ref{pippopuffo3} instead of the {\em  homological equation} in
 Definition \ref{pippopuffo2} in Theorem \ref{thm:kambis} we need to prove that also item $2.d$ of Definition \ref{pippopuffo2} holds. 
This latter point depends strongly on the application so we discuss it in various examples. 

Clearly the simplest possible case is  $\ell_{a,p}=0$ or a finite dimensional space. In any case we need to work in some subspace $\calE$ endowed with some
structure (say reversible or Hamiltonian). To this purpose we restrict $\ell_{a,p}$ as follows.
\begin{defi}\label{prodotto}
	We assume that $\ell_{a,p}$ has a product structure  $\ell_{a,p}= h_{a,p}\times h_{a,p}$ with $w=(z^+,z^-)$ and $h_{a,p}$ is a scale of Hilbert spaces w.r.t. a norm $\|\cdot\|_{a,p}$ satisfying \eqref{sonasega}.  Moreover we assume that the subspaces $\ell_K$ have a product structure  as well $\ell_K= h_K\times h_K$ with the $h_K$ satisfying Hypothesis \ref{hyp22}.
\end{defi}

%
%
%
 \subsection{Example 1: Reversible Nash-Moser.}\label{exNM}
 Let us first discuss the ``minimal choice'', i.e. where in all the definitions we make the simplest possible choices.
 
 Clearly the minimal choice for $\calX$ is
 \begin{equation}\label{Xmin}
  \calX:=  \VV^{(y,0)}\oplus \VV^{(w,0)},
 \end{equation}
 whereas for $\NN$ one can make for instance the classical choice
  \begin{equation}\label{sottoMIN}
 \NN:= \VV^{(\theta,0)}\oplus \VV^{(w,w)}\oplus \VV^{(y,y)}\oplus \VV^{(y,w)}\oplus
  \VV^{(w,y)}. 
 \end{equation}

The decomposition \eqref{Xmin} and \eqref{sottoMIN} is
{\em trivially} triangular, see Definition \ref{triang}, provided that we set  $\mathtt b=1$ 
  since for any $R\in \RR, X\in \calX$ one has 
  \begin{equation}\label{oh}
  \Pi_\calX[R,X] =0\,.
  \end{equation}
  
   Note that  it is a degree decomposition
  with $\mathtt d(y)=1$, where   $\NN$ is generated by all the monomials of degree zero and  $\calX$ is generated by all
  those of negative degree.

 We choose the regular vector fields   as $\calA= \calX$, by setting for $f=\big(0,f^{(y)}(\theta),f^{(w)}(\theta)\big)$, 
 $$
 |f|_{\vec v,p}:= \|f\|^{(1)}_{\vec v,p}= \|f\|_{\vec v,p},
 $$ 
 with the projectors $\Pi_K$ defined as 
%
\begin{equation}\label{tronca}
\begin{aligned}
&(\Pi_{K}f^{(y,0)})(\theta):=\sum_{|\ell|\leq K}f^{(y,0)}_{\ell}e^{\ii \ell\cdot \theta},\\
&(\Pi_{K}f^{(w,0)})(\theta):=\sum_{|\ell|\leq K}\Pi_{\ell_{K}}f^{(w,0)}_{\ell}e^{\ii\ell\cdot \theta}.
\end{aligned}
\end{equation}

 \begin{lemma}\label{esempio1}
 The regular vector fields defined above satisfy all the properties of Definition \ref{linvec-abs};
 moreover the norm $|f|_{\vec v,p}$ is a {\em sharp} tameness constant for all $p\ge \gotp_1$.
 \end{lemma}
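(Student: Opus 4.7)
The plan is to verify each item of Definition \ref{linvec-abs} in turn, exploiting the fact that every $f=(0,f^{(y)}(\theta),f^{(w)}(\theta))\in\calX$ has vanishing $\theta$-component and no dependence on $y,w$. Item (1) is immediate: $\VV^{(y,0)},\VV^{(w,0)}\subset\calX=\calA$ by \eqref{Xmin}, and $\calA^{(\theta)}=\emptyset$. For item (2), tameness, I first observe that $d_{\mathtt U}^m f\equiv 0$ for every $m\ge 1$, so the conditions $(\text{T}_m)$ are trivially satisfied once $(\text{T}_0)$ is established. For $(\text{T}_0)$, with $\Phi=\uno+g$ one has $f(\Phi)=f^{(y)}(\theta+g^{(\theta)})\del_y+f^{(w)}(\theta+g^{(\theta)})\del_w$, so the required bound reduces, pointwise in $(y,w)\in D_{a,p}(r)$, to the standard Moser tame composition estimate on $H^p(\TTT^d_s)$; this delivers exactly the asymmetric structure of $(\text{T}_0)$ with the choice $C_{\vec v,p}(f):=\mathtt C\,|f|_{\vec v,p}$ and $\nu=0$.

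For item (3), the operators $\Pi_K$ in \eqref{tronca} split as a Fourier truncation $|\ell|\le K$ combined with the projections $\Pi_{\ell_K}$ of Hypothesis \ref{hyp22}. The inequalities \eqref{P1}, \eqref{P2} then follow by summing the weights $e^{2s|\ell|}\av{\ell}^{2p}$ under the constraint $|\ell|\le K$, together with \eqref{duck33}--\eqref{duck34} applied in the $w$-component; \eqref{P3} is then a consequence of the identification of the tameness constant with $\mathtt C\,|\cdot|_{\vec v,p}$ combined with \eqref{P1}. Items (4) and (5) become almost vacuous: since $g^{(\theta)}\equiv 0$ for $g\in\calA$ and the remaining components $g^{(y)}(\theta),g^{(w)}(\theta)$ are frozen along trajectories of $g$, the ODE $\dot u=g(u)$ integrates explicitly and the time-$t$ flow is $\Phi^t_g=\uno+tg$. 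In particular $\Phi=\uno+f$ admits the explicit left-inverse $\Psi=\uno-f$, so $h=-f$; the mapping properties \eqref{laputtana250}--\eqref{nome2} and the bound $|h|_{\vec v_1,p}=|f|_{\vec v_1,p}\le|f|_{\vec v,p}$ follow immediately from the smallness assumption $|f|_{\vec v,\gotp_1}\le\mathtt c\rho$, while the corresponding bound on $f_t=tg$ with $t\in[0,1]$ settles (5).

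For sharpness at any $p\ge\gotp_1$, I apply $(\text{T}_0)$ with $\Phi=\uno$ and evaluate the resulting pointwise inequality at $(y,w)=(0,0)$: using the convention $\|\theta\|_{s,a,p}=1$ together with \eqref{normaV}--\eqref{equation3}, one has $\|\uno\|_{\vec v,p+\nu}\big|_{(y,w)=0}=1$, and since $f(\uno)\equiv f$ this gives
\[
|f|_{\vec v,p}=\|f(\uno)\|_{\vec v,p}\le C_{\vec v,p}(f)+C_{\vec v,\gotp_0}(f)\le 2\,C_{\vec v,p}(f)
\]
for any admissible sequence of tameness constants, which is the claimed sharpness with $\mathtt c=2$.

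The only step requiring genuine care is item (2): one must verify that the pointwise-in-$(y,w)$ composition estimate on $H^p(\TTT^d_s)$ produces \emph{uniformly} over the domain $D_{a,p}(r)$ the precise tame structure demanded by $(\text{T}_0)$ in Definition \ref{tame}. This is a direct application of the standard Moser composition lemma (cf.\ \cite{Moser-Pisa-66,KaM}) and, although routine, constitutes the bulk of the verification.
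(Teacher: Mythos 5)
Your proposal is correct and follows essentially the same route as the paper: item (1) is immediate; item (2) is reduced to $(T_0)$ by observing $d_{\mathtt U}^m f\equiv 0$ for $m\geq 1$ and to a Moser composition estimate (the paper delegates this to Lemma \ref{stolemma}, but it is the same calculation); the smoothing bounds in item (3) are handled via \eqref{duck33}--\eqref{duck34} as in the paper's \eqref{ese101}--\eqref{ese102}; items (4) and (5) are exactly the paper's observation that the flow of a $\theta$-independent translation field integrates explicitly; and the sharpness argument --- $(T_0)$ evaluated at $\Phi=\uno$, $(y,w)=(0,0)$, using $\|\uno\|_{\vec v,p}\equiv 1$ --- is precisely the paper's \eqref{ese100}.
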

 \begin{proof}
Item $(1)$ is trivial and the bound \eqref{tameconst2} follows essentially by an explicit computation (see the proof of Lemma \ref{stolemma} for more details).
 Now by 
 Definition \ref{tame} we have that the bounds $(T_{m})$ hold for any change of variables $\Phi$ and any $y,w$.
 Hence for $\Phi\equiv\uno$ and $y=0=w$, for any $p\geq\gotp_0$, one has
 \begin{equation}\label{ese100}
 |f|_{\vec{v},p}=\|f\|_{\vec{v},p}=\|f(\Phi)\|_{\vec{v},p}\leq C_{\vec{v},p}(f)+C_{\vec{v},\gotp_0}(f)\|\Phi\|_{\vec{v},p}\leq \mathtt{c}C_{\vec{v},p}(f),
 \end{equation}
  where the last inequality holds since $\|\Phi\|_{\vec{v},p}\equiv1$ independently of $p$
  (recall that the map $\Phi$ is evaluated at $w=y=0$).
 This means that  $|f|_{\vec v,p}$ is a { sharp} tameness constant:
   this  trivially implies that item  $(2)$ in Definition \ref{linvec-abs}   hold.
  Let us check item $(3)$. Recall the definition of the projectors in \eqref{tronca} and of the norm in \eqref{totalnorm}; for $\tv=\theta,y$ one has
  \begin{equation}\label{ese101} 
  \begin{aligned}
  \|\Pi_{K}f\|^{2}_{s+s_1,a,p+p'}&=\sum_{|l|\leq K}|f^{(\tv,0)}_l|^2\langle l\rangle^{2(p+p')}e^{2|l|(s+s_1)}\\
  &\leq C 
  K^{2p'}e^{2Ks_1}\sum_{l\in\ZZZ}|f^{(\tv,0)}_l|^2\langle l\rangle^{2p}e^{2|l|s}=  K^{2p'}e^{2Ks_1}  \|\Pi_{K}f\|^{2}_{s,a,p}.
  \end{aligned}
  \end{equation}
  The latter bounds holds also for the norm \eqref{ancoralip}, hence \eqref{P1} holds.
  Similarly the estimate \eqref{P1} holds also for $\tv=w$. Moreover for $\tv=w$ we can write 
  $$
  (\uno-\Pi_{K})f^{(w,0)}(\theta)=\sum_{|l|>K}e^{\ii \ell\cdot \theta}f_{l}^{(w,0)}+\sum_{|l|\leq K}(\uno-\Pi_{\ell_K})f_{l}^{(w,0)}e^{\ii\ell\cdot\theta},
  $$
  hence one has for $p,p'\in\NNN$ 
  \begin{equation}\label{ese102}
  \begin{aligned}
  \| (\uno-\Pi_{K})f^{(w,0)}(\theta)\|^{2}_{s,a,p}&\leq \sum_{|l|>K}\langle l\rangle^{2p}\|f_l^{(w,0)}\|^{2}_{a,\gotp_0}e^{2s|l|}
  +\sum_{|l|>K}\langle l\rangle^{2p}e^{2s|l|}\|(\uno-\Pi_{\ell_K})f^{(w,0)}_l\|_{a,\gotp_0}^{2}\\
  & + \sum_{|l|>K}\langle l\rangle^{2\gotp_0}\|(\uno-\Pi_{\ell_K})f_l^{(w,0)}\|^{2}_{a,p}e^{2s|l|}
   + \sum_{l\in\ZZZ}\langle l\rangle^{2\gotp_0}e^{2s|l|}\|(\uno-\Pi_{\ell_K})f^{(w,0)}_l\|_{a,p}^{2}\\
  &  + \sum_{|l|>K}\langle l\rangle^{2\gotp_0}\|\Pi_{\ell_K}f_l^{(w,0)}\|^{2}_{a,p}e^{2s|l|}
  +\sum_{|l|\leq K}\langle l\rangle^{2p}e^{2s|l|}\|(\uno-\Pi_{\ell_K})f^{(w,0)}_l\|_{a,\gotp_0}^{2}\\
  &\leq  2K^{-2p'}\sum_{l\in\ZZZ}\langle l\rangle^{2(p+p')}\|f_l^{(w,0)}\|^{2}_{a,\gotp_0}e^{2s|l|}\\
&+ 2K^{-2p'}\sum_{l\in\ZZZ}\langle l\rangle^{2\gotp_0}\|f_l^{(w,0)}\|^{2}_{a,p+p'}e^{2s|l|}\\
&+ \mathtt{c}\sum_{l\in\ZZZ}\langle l\rangle^{2(p+p')}K^{-2(p'+p)}K^{2\gotp_0}K^{2(p-\gotp_0)}\|f_l^{(w,0)}\|^{2}_{a,\gotp_0}e^{2s|l|}\\
&+\mathtt{c}\sum_{l\in\ZZZ}\langle l\rangle^{2\gotp_0}e^{2s|l|}K^{2(p-\gotp_0)}K^{-2(p+p'-\gotp_0)}\|f^{(w,0)}_l\|_{a,p+p'}^{2}\\
&\leq C K^{-2p'}\|f^{(w,0)}\|^{2}_{s,a,p+p'},
  \end{aligned}
  \end{equation}
    and the latter bounds holds also for the norm \eqref{ancoralip}. Similar bounds holds also for $\tv=\theta,y$, hence \eqref{P2} holds.
    Condition \eqref{P3} is trivial. Finally, items 
    $4$ and $5$ can be checked easily since the map generated by vector fields in $\calA$ are simply translations.
  \end{proof}

  By looking at the homological equation it is clear that the minimal requirement for the vector field is that
  $F^{(y)}(\theta,y,0)=-F^{(y)}(-\theta,y,0)$, otherwise even when $\ell_{a,p}=\emptyset$ one can easily produce
examples in which invariant tori do not exist\footnote{Consider for instance $\dot y=1$.}.

Following  Sevryuk (see for instance \cite{Sev} and references therein) one expects to require that the vector field  satisfies some
appropriate symmetry: this can be stated by saying that the vector field is reversible w.r.t. some involution.
  Naturally one needs also the ``unperturbed vector field'' $N_0$ to be reversible w.r.t. the chosen involution, being the vector field that identifies the approximately
  invariant torus. In the applictaions to PDEs, one typically deals with $N_0$ of the form
  \begin{equation}\label{enneo}
N_0= \omega^{(0)}\cdot\partial_{\theta} + \ii \Lambda^{(0)} w \partial_w = \omega^{(0)}\cdot\partial_{\theta} + \ii \Omega^{(0)} z^+ \partial_{z^+} - \ii \Omega^{(0)} z^- \partial_{z^-}
\end{equation} 
with  $\Omega^{(0)}$ a linear operator which is $\theta$-independent and   block-diagonal w.r.t. all the $h_{K}$. Thus $N_0$ is a diagonal operator as in Definition
\ref{norm}. 
Unfortunately such $N_0$ is not reversible w.r.t. the ``simple'' involution $(\theta,y,w)\to(-\theta,y,w)$, but it is reversible
 w.r.t. the involution $S:(\theta,y,(z^+,z^-))\to (-\theta,y,(z^-,z^+))$.
  Therefore,
 as for the subspace $\calE $ we choose 
 \begin{equation}\label{minimarev}
  \calE=\calE^{(0)}:= \left\{F\in 
  \VV_{\vec v,p}:\quad \begin{pmatrix}
  F^{(\theta)}(-\theta,y,(z^-,z^+))\\F^{(y)}(-\theta,y,(z^-,z^+))\\
  F^{(z^+)}(-\theta,y,(z^-,z^+))\\F^{(z^-)}(-\theta,y,(z^-,z^+))
  \end{pmatrix}
  = - \begin{pmatrix}
  -  F^{(\theta)}(\theta,y,(z^+,z^-))\\F^{(y)}(\theta,y,(z^+,z^-))\\
    F^{(z^-)}(\theta,y,(z^+,z^-))\\F^{(z^+)}(\theta,y,(z^+,z^-))
    \end{pmatrix} \right\}\,,
 \end{equation}
i.e. the vector fields which are reversible w.r.t. the involution $S$.
  
The conditions of Definitions \ref{nomec} and \ref{nx} are trivially fulfilled with $\mathtt n=1$ and 
\begin{equation}\label{beby}
\BB_\calE:=\{g= (0,g^{(y)}(\theta),g^{(w)}(\theta))\,: \qquad  g^{(y)}(-\theta)=g^{(y)}(\theta)\,, \quad g^{(z^+)}(-\theta)= g^{(z^-)}(\theta) \}\,.
\end{equation}

Now consider a vector field of the form
$\calE\ni F= N_0+G$ and 
our aim is to apply Theorem \ref{thm:kambis} to $F$
 provided that $G$ is sufficiently small. 
  The simplest possible choice of compatible change of variables  is, $\calL_n:=\uno$ for all $n$.
   With such choices, our scheme is the standard a Nash-Moser algorithm to find solutions of the torus embedding equation \eqref{gigi}. 
   Indeed each $\Phi_n$ is a
   traslation in the $y,w$ direction 
   $$
   y\to y+g_n^{(y)}(\theta)\,,\quad w\to w+g_n^{(w)}(\theta)
   $$  
  so that 
  \begin{equation}\label{traslazione}
  \calH_n: \; y\to y+h_n^{(y)}(\theta)\,,\quad w\to w+h_n^{(w)}(\theta)\,,\quad h_n= \sum_{j=0}^ng_j \,, 
  \end{equation}
  $$
   F_n= F( \theta,y+h_n^{(y)},w+h_n^{(w)} )- \partial_\theta h_n \cdot  F^{(\theta)}(\theta,y+h_n^{(y)},w+h_n^{(w)}).
   $$ 
   Note that $h_n$ is simply an {\em approximate solution}  for the torus embedding equation  \eqref{gigi}, indeed one has that
   $F_n^{(y)}(\theta,0,0), F_n^{(w)}(\theta,0,0)\to 0$ as $n\to \infty$.
    
  The difference with the standard Nash-Moser algorithm is therefore only in the point of view: instead of looking for a torus embedding, we are looking for
  a translation in the $y,w$ variables which puts the embedding to zero. 
  
  With the above assumptions and assuming also that the smallness conditions \eqref{sizes} are satisfied, then we can apply
  Theorem \ref{thm:kambis}. Now we show that in this case the set of parameters satisfying the the Mel'nikov conditions
  of Definition  \ref{pippopuffo3} also satisfies the   homological equation of Definition \ref{pippopuffo2}.
 
  \smallskip 
%
%
%
%

 \smallskip

 \begin{proposition}\label{uffffa1.1}
 Let $\gamma> 0$, $K\ge K_0$,
 consider a compact set $\calO \subset \calO_0$ and set $\vec{v}=(\g,\calO,s,a,r)$ and $\vec{v}^{\text{\tiny 0}}=(\g,\calO_0,s,a,r)$.
 Consider the  vector field $F\in \calW_{\vec v^{\text{\tiny 0}},p}$ with (see \eqref{enneo})
 $$
 F= N_0+G: \calO_0\times    D_{a,p+\nu}(r)\times\TTT^{d}_{s}\to V_{a,p}\,, 
 $$
  which is  $C^{3}$-tame up to order $q=\gotp_2+2$.  
 Assume that $\g\sim {\rm diam} \calO_0$ and 
 $F\in \calE$ defined in \eqref{minimarev}.
 If $\calO$ satisfies the Mel'nikov conditions of Definition \ref{pippopuffo3} for $(F,K,\vec{v}^{\text{\tiny 0}})$  then 
 $\calO$ satisfies the Homological equation of Definition \ref{pippopuffo2} provided that we fix 
 parameters $\mu=\mu_1$.
 \end{proposition}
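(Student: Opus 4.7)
The plan is to exploit the extreme simplicity of the decomposition \eqref{Xmin}--\eqref{sottoMIN}: since $\mathtt b = 1$ and, by \eqref{oh}, $\Pi_\calX [\Pi_\RR F, g] = 0$ for every $g \in \calX$, the operator to be approximately inverted in the homological equation collapses to $\gotN = \Pi_K \Pi_\calX {\rm ad}(\Pi_\NN F)$ itself, which is precisely the operator for which Definition \ref{pippopuffo3} supplies the approximate right-inverse $\gotW$. The first step is therefore to define $g := \gotW(\Pi_K \Pi_\calX G)$ directly, without any Neumann iteration. Then item $2.(a)$ of Definition \ref{pippopuffo2} is built into the range of $\gotW$, item $1$ transfers verbatim from the Mel'nikov hypothesis, the first bound in $2.(b)$ on $|g|_{\vec v,p}$ with $\mu = \mu_1$ is a direct transcription of \eqref{buoni}, and the commutator bound in $2.(b)$ reduces, via \eqref{oh}, to estimating $\Pi_\calX [\Pi_\NN G, g]$, which can be handled by a direct product-rule computation using the explicit monomial structure \eqref{sottoMIN} together with the tameness of $G$. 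Item $2.(c)$ is then immediate, since
\[
u \;=\; \Pi_K \Pi_\calX ({\rm ad}(\Pi_\calX^\perp F)[g] - F) \;=\; \Pi_K ({\rm ad}(\Pi_\NN F)[g] - \Pi_K \Pi_\calX G)\,,
\]
so \eqref{cribbio42} is exactly \eqref{cribbio4} applied to $X = \Pi_K \Pi_\calX G$ with $\mu = \mu_1$.

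The heart of the proof is item $2.(d)$. The plan is to use \eqref{buoni22} together with the smallness of $\Pi_\calX G$ coming from \eqref{sizes} to deduce $|g|_{\vec v, \gotp_1} \le \mathtt c \rho$, so that property $(5)$ of Definition \ref{linvec-abs} produces the time-one flow $\Phi$ of $g$ on the reduced domain associated to $\vec v'$, and $\Phi_* F$ admits the Lie-series expansion $\Phi_* F = \sum_{k \ge 0} \frac{1}{k!} {\rm ad}(g)^k F$ of \eqref{thorn}. Decomposing
\[
\Pi_\calX [g, F] \;=\; \Pi_\calX [g, \Pi_\calX G] + \Pi_K\Pi_\calX [g, \Pi_\calX^\perp F] + \Pi_K^\perp \Pi_\calX [g, \Pi_\calX^\perp F]
\]
and substituting $\Pi_K\Pi_\calX [g, \Pi_\calX^\perp F] = -\Pi_K \Pi_\calX G - u$, which is just the homological equation rewritten, one arrives at
\[
\Pi_\calX \Phi_* F \;=\; \Pi_K^\perp \Pi_\calX G \;-\; u \;+\; \Pi_\calX [g, \Pi_\calX G] \;+\; \Pi_K^\perp \Pi_\calX [g, \Pi_\calX^\perp F] \;+\; \sum_{k \ge 2}\tfrac{1}{k!}\Pi_\calX {\rm ad}(g)^k F\,.
\]
The first summand is bounded in $|\cdot|_{\vec v', \gotp_2 - 1}$ by $\mathtt C\, C_{\vec v, \gotp_2}(\Pi_\NN^\perp G)$ via the standing hypothesis on $|\Pi_\calX G|_{\vec v, \gotp_2-1}$; the term $u$ is controlled by \eqref{cribbio42}; the remaining three contributions are all of order at least two in $g$, and the plan is to bound them by tame commutator estimates using the smoothing \eqref{P2} and the interpolation properties of Definition \ref{tame}, producing exactly $\mathtt C(|g|_{\vec v, \gotp_2} C_{\vec v, \gotp_1}(G) + |g|_{\vec v, \gotp_1} C_{\vec v, \gotp_2}(G))$, which is the right-hand side of \eqref{cribbio420}.

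The main technical obstacle is the control of the Lie-series tail $\sum_{k \ge 2}\frac{1}{k!}\Pi_\calX {\rm ad}(g)^k F$ in the low-regularity norm $|\cdot|_{\vec v', \gotp_2 - 1}$, since $F$ is only $C^{\mathtt n + 2}$-tame and each bracket with $g$ nominally costs one derivative in $\theta$. The plan is to handle it by exploiting that $g$ is a \emph{bounded} regular vector field in $\BB_\calE \subset \calX$, so that by item $(5)$ of Definition \ref{linvec-abs} the flow is well-defined with uniform bounds, each bracket $[g, \cdot]$ is a bounded operation with tame constant bounded by \eqref{tameconst2}, and the smallness $|g|_{\vec v, \gotp_1} \le \mathtt c \rho$ ensures geometric convergence of the tail with the finite loss of analyticity and domain absorbed into the passage from $\vec v$ to $\vec v'$.
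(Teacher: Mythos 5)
Your treatment of item 1 and of items 2.(a)--(c) is essentially the paper's argument: since \eqref{oh} kills $\Pi_\calX\,{\rm ad}(\Pi_\RR F)$ on $\calX$, one sets $g=\gotW\,\Pi_K\Pi_\calX G$ directly, the first bound of \eqref{buoni22} is \eqref{buoni} with $\mu=\mu_1$, and \eqref{cribbio42} is \eqref{cribbio4} applied to $X=\Pi_K\Pi_\calX G$. For the second bound in 2.(b) the paper is slightly slicker than your ``product-rule'' computation: it observes via \eqref{commu2} that $C_{\vec v,p+1}(G)|g|_{\vec v,\gotp_1}+C_{\vec v,\gotp_1}(G)|g|_{\vec v,p+\nu+1}$ is a tameness constant for $[\Pi_\calX^{\perp}G,g]$ and then invokes the sharpness of $|\cdot|_{\vec v,p}$ for all $p\ge\gotp_1$ (Lemma \ref{esempio1}, special to this choice of $\calA$), which converts tameness-constant bounds into norm bounds; your explicit computation would also work here, but the sharpness mechanism is the one you will need again below, so it is worth internalizing.

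The genuine gap is in item 2.(d). You expand $\Phi_*F=\sum_{k\ge0}\frac{1}{k!}{\rm ad}(g)^kF$ and propose to control the tail $\sum_{k\ge2}$ by ``geometric convergence''. This is exactly what the setting forbids: $F$ is only $C^{3}$-tame, i.e.\ finitely differentiable in $(y,w)$ and regular only up to order $q=\gotp_2+2$ in $\theta$, and $s_0=0$ is allowed, so there are no Cauchy estimates. Each application of ${\rm ad}(g)$ consumes a $(y,w)$-derivative of $F$ (through $d_{\mathtt U}F[g]$) or a $\theta$-derivative (through $\partial_\theta g\cdot F^{(\theta)}$ and its iterates), so the terms of the series beyond the available regularity are not even defined, let alone summable; the estimate \eqref{tameconst2} bounds tameness constants of the regular field $g$ itself, not the operator norm of ${\rm ad}(g)$ acting on merely tame fields, which is unbounded (it loses $\nu+1$ derivatives). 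This is precisely the point the paper stresses after \eqref{thorn}: one cannot describe the change of variables by Lie series; only the exact identities of Remark \ref{resto}, which stop at the second bracket with an integral remainder, are available, and these are covered by the $C^{\mathtt n+2}$-tameness. The paper's proof of 2.(d) therefore does not expand at all: it bounds a tameness constant $C_{\vec v,\gotp_2-1}(\Pi_\calX\Phi_*F)$ directly by the right-hand side of \eqref{cribbio420}, using the conjugation Lemma \ref{conj} (push-forward written as $d\Phi(\leftinv\Phi)[F(\leftinv\Phi)]$ with the explicit inverse of the translation), Lemma \ref{derivate}, Lemma \ref{normnorm}, Remark \ref{monomi} and the homological equation for the $N_0$ contribution, exactly as in the proof of \eqref{kam9}; the norm bound \eqref{cribbio420} then follows from the sharpness of $|\cdot|_{\vec v,p}$ in this example. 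To repair your argument you should replace the Lie series by this finite-order scheme (identity plus first bracket plus integral remainder, conjugation estimates for the rest); as written, the tail estimate fails.
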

\begin{proof}
We note that \eqref{oh} implies $\Pi_\calX {\rm ad} (\Pi_\calX^\perp G)$= $ \Pi_\calX {\rm ad} (\Pi_\NN G)$ so we may set
$ g= \gotW \, \Pi_K\Pi_\calX G\in \BB_\calE$ for $\xi\in \calO$. It is easily seen that the first of \eqref{buoni22} follows from \eqref{buoni}. As for the second equation we use the sharpness of $|\cdot|_{\vec v,p}$. Indeed 
by Lemma \ref{commu2} we know that $C_{\vec v,p+1}(G)|g|_{\vec v,\gotp_1} +  C_{\vec v,\gotp_1}(G)|g|_{\vec v,p+\nu+1}$ is a tameness constant for $[\Pi_{X}^\perp G,g]$. Then the bound follows by Lemma \ref{esempio1}. 
Regarding 2.c, one simply notes that formula \eqref{cribbio4} implies \eqref{cribbio42}. \\
Finally in order to prove 2.d, we start by showing that  the inequality \eqref{cribbio420} holds by substituting the l.h.s.
with a tameness constant $C_{\vec{v},\gotp_2-1}(\Pi_{\calX}\Phi_{*}F)$; this follows from
 Lemmata \ref{conj}, \ref{derivate}, \ref{normnorm} and Remark \ref{monomi}, see the proof of estimate \eqref{kam9} for more details.  Therefore, the bound \eqref{cribbio420} follows  from the sharpness of $|\cdot|_{\vec{v},p}$ for any $p$.
\end{proof} 

By Proposition \ref{uffffa1.1} the set $\calO_{\io}$ of Theorem \ref{thm:kambis} contains the intersection over $n$ of the
sets in which the Mel'nikov conditions are satisfied for $(F_n,K_n,\vec{v}_n^{\text{\tiny 0}})$, therefore we now
analyze the Mel'nikov conditions.
The operator $\Pi_\calX {\rm ad}(\Pi_\NN F_n)$ has the form
\begin{equation}\label{merdina1}
\Pi_\calX {\rm ad}(\Pi_\NN F_n)=\big( F^{(\theta)}_n(\theta,0,0)\cdot \partial_\theta\big)\,\uno  +\begin{pmatrix}
 F^{(y,y)}_n(\theta) &  F^{(y,w)}_n(\theta)\\
 F^{(w,y)}_n(\theta)  & F^{(w,w)}_n(\theta)
\end{pmatrix}\,.
\end{equation}
Recall that  $F^{(\tv_i,\tv_j)}$ are defined in \eqref{aiuto}. Note that this operator  maps $\BB_\calE$ in $\calX\cap\calE$. 
Finding $\gotW$ satisfying \eqref{buoni} and \eqref{cribbio4}  is now equivalent to finding an approximate
inverse for a $K_n$-truncation of \eqref{merdina1}, which unfortunately seems a quite delicate question.

  A possible simplification occurs  if instead of \eqref{sottoMIN} we consider  the decomposition (recall \eqref{lemedie})
  $$
  \NN:= \langle\VV^{(\theta,0)}\rangle \oplus \VV^{(w,w)}
  \oplus \VV^{(y,y)}\oplus \VV^{(y,w)}\oplus
    \VV^{(w,y)}\
   \quad \calX=\calA:= \VV^{(y,0)}\oplus \VV^{(w,0)}\oplus \VV^{(\theta,0)}_0\,,
   $$   
   and leave $\calE$ unchanged; it is easily seen that the equivalent of Lemma \ref{esempio1} holds, and that
   \begin{equation}\label{beby2}
   \BB_\calE:=\{g= (g^{(\theta)}(\theta),g^{(y)}(\theta),g^{(w)}(\theta))\,: \; g^{(\theta)}(-\theta)=-g^{(\theta)}(\theta)\,,\;  g^{(y)}(-\theta)=g^{(y)}(\theta)\,, \; g^{(z^+)}(-\theta)= g^{(z^-)}(\theta) \}\,.
   \end{equation}  
   Note that in this case we would obtain a stronger result, since the dynamics on the model torus would be linear.
   
 We divide $\calX$ by degree decomposition as in  \eqref{blodia}, with $\mathtt b=2$  and $\calX_1= \VV^{(y,0)}\oplus \VV^{(w,0)}$,
  $\calX_2= \VV^{(\theta,0)}_0$; this decomposition is triangular by Remark \ref{sonfica} and the equivalent of Proposition \ref{uffffa1.1} holds.
  
   As before we
  fix $\calL_n=\uno$ for all $n$. Now the maps $\Phi_n$ are a translation in the $y,w$ direction  composed with
  a torus diffeomorphism . They are hence
  of the form
    \begin{equation}\label{criceto200}
  \theta\to \theta+h_n^{(\theta)}(\theta), \quad y\to y+h_n^{(y)}(\theta)\,,\quad w\to w+h_n^{(w)}(\theta)
  \end{equation}
  defined in such a way that
  $F_n^{(\theta,0)}(\theta)= \omega^{(n)} + O(|g_n|)$ (here $\omega^{(n)}$ is the average of
  $F_n^{(\theta,0)}(\theta)$ w.r.t. $\theta$). 
  
  Regarding the Mel'nikov conditions we have that, by definition, $\Pi_\calX {\rm ad}(\Pi_\NN F_n)$
  is block-diagonal on $\calX=\calX_1\oplus \calX_2$ and its action on $\calX_1$ is of the form
  \begin{equation}\label{merdina2}
 \big( \omega^{(n)}\cdot \partial_\theta\big)\,\uno  +\begin{pmatrix}
   F^{(y,y)}_n(\theta) &  F^{(y,w)}_n(\theta)\\
   F^{(w,y)}_n(\theta) & F^{(w,w)}_n(\theta)
  \end{pmatrix}\,,
  \end{equation}
  while the action on $\calX_2$ is simply $\omega^{(n)}\cdot \partial_\theta$. Thus the Mel'nikov conditons \eqref{buoni},\eqref{cribbio4} on the component $\calX_2$ amount to requiring that $\omega^{(n)}$ is $\gamma,\tau$ diophantine up to order $K_n$. All the difficulty is now reduced to inverting \eqref{merdina2}.
  
  Note that, under the same Diophantine hypotheses on  $\omega^{(n)}$, the operator \eqref{merdina1} can be reduced to the form \eqref{merdina2} by choosing
  at each step $n$, the change of variables  $\calL_n$ to be the torus diffeomorphism 
  which reduces $F_n^{(\theta)}(\theta,0,0)$ to its mean value. Of course one needs to verify that
  the $\calL_n$ are in fact a sequence of compatible changes of variables as in Defintion \ref{compa}. 
  
  If we assume that the subspaces $\ell_K$ of Hypothesis \ref{hyp22} are finite dimensional then the invertibility of 
  $ \Pi_{K_n} \Pi_\calX {\rm ad}(\Pi_\NN F_n) \Pi_{K_n}$ can be imposed by requiring that its eigenvalues are non-zero (the so-called first Mel'nikov condition);
   however, unless $\ell_K$ is uniformly bounded (i.e. when $\ell_{a,p}$ is a finite dimensional space)
   it is not at all trivial to obtain from such condition the bounds \eqref{buoni} and \eqref{cribbio4}.
   
   To the best of our knowledge the only examples in which one has enough control on \eqref{merdina2} as it is, are the forced cases,
   i.e. when $F^{(\theta)}=\om_0$ and there are no $y$ variables, that is $d_1=0$. In this case one can use the so-called multiscale approach; see for instance
   \cite{Bo4,BB1,BCP}. Otherwise one needs a more refined decomposition; see below.
\medskip
%
%
%

   \subsection{Example 2: Hamiltonian KAM/Nash-Moser.}\label{exHam}
  
 The following section is essentially a reformulation in our notations of the approach proposed in \cite{BB3}.  We start by remarking that when the vector field \eqref{kam1} is Hamiltonian, it is natural to apply to it only symplectic changes of variables: this amounts to 
completing  the maps introduced in \eqref{criceto200} to symplectic ones.
 We now describe our procedure and at the end of the subsection we state the Theorem with an application to the NLS equation.

\begin{defi}[{\bf  Symplectic structure}]\label{struttHAM}
Recall that we assumed $\ell_{a,p}$ to have the  product structure of Definition \ref{prodotto}.
We  endow  the phase space with the symplectic structure  $d \theta\wedge dy +\ii d z^+\wedge d z^-$. 
\end{defi}
We consider the decomposition
\begin{equation}\label{sottoHAM}
\NN:= \av{\VV^{(\theta,0)}}\oplus \VV^{(w,w)}\oplus \VV^{(y,w,w)}\,,
\quad \calX:=  \VV_0^{(\theta,0)} \oplus \VV^{(y,0)}\oplus \VV^{(y,y)} \oplus
 \VV^{(y,w)}\oplus \VV^{(w,0)} \,.
\end{equation}
This decomposition satisfies \eqref{nomec} with $\mathtt n=2$ and it is a degree decomposition with ${\rm deg}(y)=2$.
Using the notation of \eqref{blodia} we have $\tb= 3 $ and
$$
\calX_1=\VV^{(y,0)}\,,\quad \calX_2= \VV^{(y,w)}\oplus \VV^{(w,0)}\,,\quad \calX_3=  \VV_0^{(\theta,0)} \oplus \VV^{(y,y)}.
$$

We remark that if one wants to solve the torus embedding equation taking advantage of the Hamiltonian structure, then the decompostion \eqref{sottoHAM} appears naturally since it is the minimal decomposition containing \eqref{sottoMIN} and preserving the Hamiltonian structure. More precisely given a  change of
coordinates as in \eqref{criceto200}, completing it to a symplectic one produces an element of $\calX$ in \eqref{sottoHAM}. 

Note that by (i) of Definition \ref{nx}, the bigger is the set $\calX$, the more delicate is the choice of $\calA$.

     \begin{defi}[{\bf Finite rank  vector fields}]\label{linvec}
 	We consider vector fields $f: \TTT^d_s\times D_{a,p+\nu}(r )\to V_{a,p}$ of the form
 	\begin{equation}\label{maremma}
 	\begin{aligned}
 	&f = \sum_{v\in \mathtt V} f^{(v,0)}\del_v + (f^{(y,y)} y +f^{(y,w)}\cdot w )\cdot\del_y \,,\\ 
 	&f^{(y_i,w)}\in H^p(\TTT^d_s;\ell_{-a,-\gotp_0-\nu})\cap H^{\gotp_0}(\TTT^d_s;\ell_{-a,p-\gotp_1-\gotp_0-\nu}) \,,\quad \langle f^{(\theta,0)}\rangle =0
 	\end{aligned}
 	\end{equation}
 	and we set for $p\geq\gotp_1$
 	\begin{equation}\label{linnorm}
	\begin{aligned}
 	|f|_{s,a,p}:&=\sum_{u=\theta,y,w} \|f^{(u,0)}\|_{s,a,p} +
 	\max_{i,j=1,\dots,d_1}\|f^{(y_i,y_j)}\|_{s,a,p}+\\
	+\frac{1}{r_0^{\mathtt{s}}} &\max_{i=1,\ldots,d_1}\left(
	\|f^{(y_i,w)}\|_{H^{p}(\TTT^{d}_{s};\ell_{-a,-\gotp_0-\nu})}+\|f^{(y_{i},w)}\|_{H^{\gotp_0}(\TTT^{d}_{s};\ell_{-a,p-\gotp_1-\gotp_0-\nu})}\right)
	\end{aligned}
 	\end{equation}
 	We say that $f$ is 	 of \emph{finite rank}  if $	|f|_{s,a,p}<\infty$. We denote by $\calA_{s,a,p}$ the space of finite rank vector fields. \\
 	Given a compact set $\calO\subseteq \calO_0$ we  denote by $\calA_{\vec v,p}$ with $\vec v= (\g,\calO,s,a,r)$ the set of Lipschitz families $\calO\to \calA_{s,a,p}$ with the  corresponding $\g$-weighted Lipschitz norm which we denote by $|\cdot|_{\vec v,p}$.
 \end{defi}
 \begin{rmk}
 	Note that $\VV^{(y,w)}$ is not contained in the set of finite rank vector fields; indeed in general by the identification of $\ell_{a,p}^*$ with $\ell_{-a,-p}$ one has that
	and  $g\in \VV^{(y_i,w)}$
 	can be written as  $g^{(y_i,w)}(\theta)\cdot w \,\del_{y_i}$ where
	$$
	g^{(y_i,w)}\in  H^{p}(\TTT^d_s,\ell_{-a,-\gotp_0-\nu})\cap H^{\gotp_0}(\TTT^d_s,\ell_{-a,-p-\nu})\,.
	$$
	On the other hand \eqref{maremma} is a stronger condition.
 	Our -- notationally quite unpleasant -- choice of $\ell_{-a,p-\gotp_1-\gotp_0-\nu}$ is needed in order to verify condition \eqref{tameconst3} in Definition \ref{linvec-abs}. 
 \end{rmk}
 \begin{defi}\label{projector}
 Given $K>0$ and a vector field  $f\in\calA$ 
   we define the projection $\Pi_K f$  as
   \begin{equation}\label{duck33333}
 \begin{aligned}
 &(\Pi_{K}f^{(\tv,0)})(\theta):=\sum_{|\ell|\leq K}f^{(\tv,0)}_{\ell}e^{\ii \ell\cdot \theta}, \qquad \tv=\theta,y\,,\\
 &(\Pi_{K}f^{(w,0)})(\theta):=\sum_{|\ell|\leq K}\Pi_{\ell_{K}}f^{(w,0)}_{\ell}e^{\ii\ell\cdot \theta},
 \quad (\Pi_{K})f^{(y_i,y_j)}(\theta):=\sum_{|\ell|\leq K}f^{(y_i,y_j)}_{\ell}e^{\ii \ell\cdot\theta},i,j=1,\ldots,d\,,\\
 &(\Pi_{K}f^{(y_i,w)})(\theta):=\sum_{|\ell|\leq K}\Pi_{\ell_{K}}f^{(y_i,w)}_\ell e^{\ii \ell\cdot\theta}\,,
 \end{aligned}
 \end{equation}
 %
 %
 %
 and we define $E^{(K)}$ as the subspace of $\calA_{\vec v,p}$ where $\Pi_K$ acts as the identity.
 \end{defi}  
 \begin{lemma}\label{grrr}
The finite rank vector fields of Definition \ref{linvec} satisfy all the conditions  of Definition \ref{linvec-abs}. 
\end{lemma}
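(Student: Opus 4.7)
The plan is to verify the five conditions of Definition \ref{linvec-abs} in order, exploiting the key structural fact that every $f\in\calA$ is at most affine in the variables $(y,w)$.

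Item $(1)$ is immediate from inspection of \eqref{maremma}: vector fields of the form $f^{(w,0)}(\theta)\del_w$ and $f^{(y,0)}(\theta)\del_y$ trivially fit the finite rank template, while the mean-zero condition $\langle f^{(\theta,0)}\rangle=0$ identifies $\calA^{(\theta)}$ with $\VV^{(\theta,0)}_0$. For item $(2)$, the crucial observation is that for $k\geq 2$ one has $d^k_{\mathtt U}f\equiv 0$, so (T$_m$) is vacuous for $m\geq 2$; for $m=0,1$ one checks by direct computation, using the algebra property of $H^{\gotp_0}(\TTT^d_s)$ with $\gotp_0>d/2$ and the duality pairing \eqref{sonasega} to handle the term $f^{(y_i,w)}\cdot w$ when evaluated on a test vector field $h_j$, that $C_{\vec v,p}(f)=\mathtt C\,|f|_{\vec v,p}$ satisfies (T$_m$). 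The involved regularity index $\ell_{-a,p-\gotp_1-\gotp_0-\nu}$ in \eqref{linnorm} is tuned precisely to absorb the regularity loss coming from evaluating on $w\in\B_{r,a,\gotp_1}$ through the embedding $\ell_{a,\gotp_1}\hookrightarrow \ell_{a,\gotp_0+\nu}$. Sharpness of $|\cdot|_{\vec v,\gotp_1}$ is then obtained by evaluating any tameness constant $C_{\vec v,\gotp_1}(f)$ on $\Phi=\uno$ with $y=w=0$ (which recovers the $f^{(u,0)}$ components as in Lemma \ref{esempio1}), and applying (T$_1$) with $h_1$ equal to an appropriate basis vector in the $y$- or $w$-direction (which recovers the $f^{(y,y)}$ and $f^{(y_i,w)}$ components).

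For item $(3)$, the projector $\Pi_K$ defined componentwise in \eqref{duck33333} is clearly idempotent and preserves the structure \eqref{maremma}. The estimates \eqref{P1} and \eqref{P2} split into two independent bounds: a Fourier truncation in $\theta$, handled by the elementary $\langle\ell\rangle^{p_1}\leq K^{p_1}$ for $|\ell|\leq K$ and $\langle\ell\rangle^{-p_1}\leq K^{-p_1}$ for $|\ell|>K$ in the spirit of \eqref{ese101}--\eqref{ese102}; and the action on the $\ell_K$ components of $f^{(w,0)}$ and $f^{(y_i,w)}$, which is exactly \eqref{duck33}--\eqref{duck34} of Hypothesis \ref{hyp22}. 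The bound \eqref{P3} on truncated tameness constants is inherited from item $(2)$, since $C_{\vec v,p+p_1}(\Pi_K f)\leq \mathtt C\,|\Pi_K f|_{\vec v,p+p_1}\leq \mathtt C K^{p_1}|f|_{\vec v,p}$.

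Items $(4)$ and $(5)$ are where the main technical obstacle lies: one has to show that near-identity maps and flows generated by vector fields in $\BB\cap\calA$ stay inside $\calA$ and satisfy the required norm estimates without loss of Sobolev regularity in $p$. For $(4)$ I would construct the left inverse $\Psi=\uno+h$ as the fixed point of the contraction $h\mapsto -f\circ(\uno+h)$ in the ball $\{|h|_{\vec v_1,p}\leq 2|f|_{\vec v,p}\}$; the essential point is that composition with a translation $(\theta,y,w)\mapsto(\theta+h^{(\theta)},y+h^{(y)},w+h^{(w)})$ preserves the at-most-affine structure in $(y,w)$, so the iterates remain in $\calA$, and the norm bound follows from the tameness estimates of item $(2)$ together with standard Cauchy-like estimates on the shrinking domain. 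Item $(5)$ is analogous: the time-$t$ flow of $g\in\BB\cap\calA$ is constructed by Picard iteration, the affine-in-$(y,w)$ structure is preserved at each step, and the bound $|f_t|_{\vec v,p}\leq 2|g|_{\vec v_1,p}$ follows from Gr\"onwall together with item $(2)$. The genuine difficulty — and the reason behind the specific regularity choice $\ell_{-a,p-\gotp_1-\gotp_0-\nu}$ in \eqref{linnorm} rather than the sharper $\ell_{-a,-p-\nu}$ — is to ensure that the norm of the composition is controlled by the norm of $f$ at the same regularity $p$, without any loss due to the dual pairing of $f^{(y_i,w)}$ with the $w$-component of the perturbation.
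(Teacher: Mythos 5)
Your outline is correct and, for items (1)--(3), it matches the paper's argument almost verbatim: item (1) by inspection, the tameness constant $\mathtt C|f|_{\vec v,p}$ by a direct componentwise computation using the algebra property in $\theta$ and the pairing \eqref{sonasega} for $f^{(y_i,w)}\cdot w$ (this is Lemma \ref{stolemma}, which also uses that $f$ is a degree-one polynomial so only (T$_0$), (T$_1$) need checking, via Lemma \ref{derivate}-(iv)), and the smoothing bounds exactly as in \eqref{ese101}--\eqref{ese102}, with the $(y,w)$ component treated like the $(w,0)$ one. For the sharpness \eqref{tameconst3} you test (T$_1$) on basis vectors $h_1$, while the paper tests (T$_0$) at $\Phi=\uno$ with a general point $w$ and invokes sharpness of Cauchy--Schwarz in \eqref{tatuo}; these are the same duality idea and neither is spelled out in more detail than the other. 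The genuine divergence is in items (4)--(5): you propose a fixed-point contraction for the full map $\uno+f$ and a Picard/Gr\"onwall argument for the flow, whereas the paper (Lemmata \ref{cometichiami} and \ref{flusso}) exploits the \emph{triangular} affine structure: only the $\theta$-component needs a real inversion, which is the scalar torus-diffeomorphism Lemma \ref{lem.diffeo} (itself a contraction, with the Cauchy-type estimate \eqref{diffeo6} trading strip width for derivatives when $s>0$, and a citation to the Sobolev case when $s=0$), after which $w$ and $y$ are recovered by explicit affine algebra (a finite Neumann series for $(\uno-f^{(y,y)})^{-1}$), and the flow is obtained by solving the $\theta$-ODE first and then integrating the $w$- and $y$-equations. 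The paper's route buys you that no contraction or Gr\"onwall estimate in the full scale $\calA_{\vec v,p}$ is ever needed, and in particular the delicate preservation of the finite-rank structure and of the exotic index $\ell_{-a,p-\gotp_1-\gotp_0-\nu}$ is immediate from the explicit formulas; your route would work but you would have to redo, inside the abstract contraction, exactly the loss-of-analyticity bookkeeping that Lemma \ref{lem.diffeo} already encapsulates, including the Sobolev case $s_0=0$ where there is no strip to shrink and one must instead use the $W^{1,\infty}$ control coming from $\gotp_1\ge\gotp_0+\nu+1$. Finally, note that Definition \ref{linvec-abs} also asks for the Hilbert-space structure of $\calA_{s,a,p}$, which the paper disposes of in one sentence (each component of \eqref{linnorm} is a Hilbert norm) and which your write-up omits.
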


\begin{proof}
The Hilbert structure comes from the fact that \eqref{linnorm} is defined by using the norm of an Hilbert space on each component.
Item $1$ follows by the definition while
item $2$ formula \eqref{tameconst2} is proved in \ref{stolemma}. 
To prove bound \eqref{tameconst3} in item $2$ we reason as follow.
Let us study the $(y,w)$ component since the other are trivial.
For $\Phi=\uno$ one has that
\begin{equation}\label{tatuo}
\|f^{(y,w)}\circ\Phi\|_{s,a,\gotp_1}=\max_{i=1,\ldots,d}\frac{1}{r_0^{\mathtt{s}}}\sum_{l\in\ZZZ}\langle l\rangle^{2\gotp_1}|f_{l}^{y_i,w}\cdot w |^{2}\leq 
\max_{i=1,\ldots,d}\frac{1}{r_0^{\mathtt{s}}}\sum_{l\in\ZZZ}\langle l\rangle^{2\gotp_1}\|f^{(y_i,w)}_{l}\|^{2}_{-a,-\gotp_0-\nu}\|w\|^{2}_{a,\gotp_0+\nu}
\end{equation}
using the Cauchy-Schwartz inequality. By the sharpness of the latter inequality we deduce that any tameness constant must be larger that the right hand side of \eqref{tatuo}, which in turn is bounded from below by  $\frac12 |f^{(y,w)}|_{s,a,\gotp_1}$.

Items   $4,5$ are proved in  Lemmata \ref{cometichiami} and \ref{flusso} in the Appendix.

Finally we need to show that item $3$ holds. 
Now given a vector field $f\in \calA_{\vec{v},p}$, the components $f^{(\tv,0)}$ are discussed in \eqref{ese101} and \eqref{ese102},
and the components $f^{(\tv,0)}$ and $f^{(y,y)}$ can be treated in the same way.
By the definition of the projector \ref{projector} one has that the last component $(y,w)$ behaves essentially
as the component $(w,0)$. Hence again the smoothing bounds hold by reasoning as done in \eqref{ese101} and \eqref{ese102}.
\end{proof}

We choose  ($J$ is the standard symplectic matrix)
\begin{equation}\label{spazioham}
\calE=\calE^{(0)}_{\rm Ham}:=\Big\{ F\in \VV_{\vec v,p}:\;  F=(\partial_y H,-\partial_\theta H,\ii J \partial_w H 
 )\,,\quad H(\theta,y,z^+,\ol {z^+})\in \RRR\Big\},
\end{equation}
while the regular vector fields are given by Definition \ref{linvec}. 
Note that by construction $\calE^{(0)}_{\rm Ham}\cap \calX\equiv \calA$, indeed the condition $J\partial_{w} H(\theta,y,w)=F^{(w)}(\theta,y,w)\in \ell_{a,p}$ implies that $\partial_w F^{(y)}:= -\partial_w\partial_\theta H(\theta,y,w) \in \ell_{a,p}$ as well. 
Then $\BB_\calE$ is the space of regular Hamiltonian vector fields.
The conditions of Definitions \ref{nomec} and \ref{nx} are trivially fulfilled. Note that the degree decomposition preserves the Hamiltonian structure.
\begin{lemma}\label{sharpa}
Consider a tame vector field $f\in \calA_{\vec{v},p}\cap \calE$ (i.e. regular vector field according to Definition \ref{linvec} which is Hamiltonian).  There exists a $\mathtt{c}$  (depending  at most on $\gotp_0$ and 
on the dimensions $d,d_1$) such that for any tameness constant 
 \begin{equation}\label{tameconst33}
   |f|_{\vec{v},p} \le \mathtt{c} \, C_{\vec{v},p+1}(f)
\end{equation}
for any $p\geq \gotp_1$.
\end{lemma}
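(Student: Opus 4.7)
The plan is to control each of the four terms in the definition \eqref{linnorm} of $|f|_{\vec v,p}$ separately, using tameness together with the rigidity forced by $f \in \calA\cap\calE^{(0)}_{\rm Ham}$. The diagonal pieces $\|f^{(\tv,0)}\|_{s,a,p}$ with $\tv=\theta,y,w$ are treated exactly as in the proof of Lemma \ref{esempio1}: apply $(T_0)$ to $F$ with $\Phi=\uno$, evaluate at $(y,w)=(0,0)$, and use $\|\uno\|_{\vec v',p}\equiv 1$ to conclude $\|f^{(\tv,0)}\|_{s,a,p}\le \mathtt c\,C_{\vec v,p}(f)$. The term $\max_{i,j}\|f^{(y_i,y_j)}\|_{s,a,p}$ is handled analogously by applying $(T_1)$ with the constant test vector field $h=(0,e_j,0)$, whose norm $\|h\|_{\vec v',p}=r_0^{-\mathtt s}$ is independent of $p$, and again evaluating at $(y,w)=(0,0)$. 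Since the finite-rank form \eqref{maremma} forces $f^{(\theta)}$ and $f^{(w)}$ to be independent of $y,w$, only the $y$-component of $d_{y_j}F$ survives at the origin, yielding $\max_{i,j}\|f^{(y_i,y_j)}\|_{s,a,p}\le \mathtt c\,C_{\vec v,p}(f)$. All these bounds are uniform in the $\gamma$-weighted Lipschitz variable, because the norms involved already are $\gamma$-weighted Lipschitz norms.

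The nontrivial term is the $(y_i,w)$ contribution, and here I exploit the Hamiltonian structure to convert a $w$-derivative into a $\theta$-derivative; this is exactly what justifies the index shift $p\mapsto p+1$ in the statement. Since $f^{(\theta)}=\partial_y H$ and $f^{(w)}=\ii J\partial_w H$ must be independent of $(y,w)$ while $F^{(y_i)}=-\partial_{\theta_i}H$, at the point $(y,w)=(0,0)$ one has $\partial_w H(\theta,0,0)=-\ii J^{-1}f^{(w,0)}(\theta)=\ii J\,f^{(w,0)}(\theta)$, and therefore
\begin{equation*}
f^{(y_i,w)}(\theta) \;=\; d_w\bigl(-\partial_{\theta_i}H\bigr)(\theta,0,0) \;=\; -\ii J\bigl(\partial_{\theta_i}f^{(w,0)}\bigr)(\theta),
\end{equation*}
or in components $f^{(y_i,z^\pm_j)}(\theta)=\mp\ii\,\partial_{\theta_i}f^{(z^\mp_j,0)}(\theta)$. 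Since $J$ is an isometry of each $\ell_{a,p}$, this identity reduces the dual-space norms of $f^{(y_i,w)}$ to those of $\partial_{\theta_i}f^{(w,0)}$.

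With this identity and the elementary scale inequality $\|u\|_{-a,q}\le\|u\|_{a,\max(0,q)}$ (valid for $a\ge 0$ and $\lambda_j\ge 1$), the first summand $\|f^{(y_i,w)}\|_{H^p(\TTT^d_s;\ell_{-a,-\gotp_0-\nu})}$ is at most $\|f^{(w,0)}\|_{H^{p+1}(\TTT^d_s;\ell_{a,\gotp_0})}$, which is one of the summands of $r_0\|f^{(w,0)}\|_{s,a,p+1}$ from \eqref{equation3}. For the second summand with dual index $q:=p-\gotp_1-\gotp_0-\nu$, the pointwise Fourier bound $\langle\ell\rangle^{2(\gotp_0+1)}\lambda_j^{2\max(0,q)}\le\langle\ell\rangle^{2(p+1)}\lambda_j^{2\gotp_0}+\langle\ell\rangle^{2\gotp_0}\lambda_j^{2(p+1)}$ holds because $\langle\ell\rangle,\lambda_j\ge 1$ and $\max(0,q)\le p$, so that by $a^{\alpha}b^{\beta}\le a^{\alpha+\beta}+b^{\alpha+\beta}$ both the $\langle\ell\rangle$-heavy and $\lambda_j$-heavy summands on the right absorb the left-hand side. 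This again reduces the estimate to $\|f^{(w,0)}\|_{s,a,p+1}$, which by the Lemma \ref{esempio1}-style tameness bound is $\lesssim C_{\vec v,p+1}(f)$. The hard part is the dual-norm bookkeeping and the verification of the pointwise Young-type inequality uniformly in $p\ge\gotp_1$; everything else is a routine application of tameness evaluated at the identity map.
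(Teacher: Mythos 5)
Your proof is correct and follows essentially the same route as the paper: the $(\tv,0)$ and $(y,y)$ components are handled by the sharpness argument of Lemma \ref{esempio1} (tameness tested at $\Phi=\uno$, $y=w=0$), and the $(y,w)$ component is reduced via the Hamiltonian identity $f^{(y,w)}(\theta)=-\ii J\,\partial_{\theta}f^{(w,0)}(\theta)$ to a bound on $\|f^{(w,0)}\|_{\vec v,p+1}$, which accounts for the shift $p\mapsto p+1$. Your explicit dual-norm bookkeeping for the $\ell_{-a,-\gotp_0-\nu}$ and $\ell_{-a,p-\gotp_1-\gotp_0-\nu}$ summands merely fills in the inequality chain \eqref{sharpa2} that the paper states without detail, so there is no substantive difference.
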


\begin{proof}
On the components $(\tv,0)$, $\tv=\theta,y,w$ and $(y,y)$ the bound \eqref{tameconst33} is proved in Lemma \ref{esempio1}. Let us study the $(y,w)-$component. 
First recall that, since we are in a Hamiltonian setting, then one has
$f^{(y,w)}(\theta)=-i J \del_{\theta}f^{(w,0)}(\theta)$. Hence  for $\Phi\equiv\uno$ and $y=0=w$ one has
 for any $p\geq\gotp_0$ that 
\begin{equation}\label{sharpa2}
|f^{(y,w)}(\theta)|_{\vec{v},p}\leq|\del_{\theta}f^{(w,0)}(\theta)|_{\vec{v},p}\leq |f^{(w,0)}(\theta)|_{\vec{v},p+1}=\|f^{(w,0)}\circ{\Phi}\|_{\vec{v},p+1}\leq\mathtt{c} C_{\vec{v},p+1}(f).
\end{equation} 
Threfore the assertion follows.
\end{proof}

As in Subsection \ref{exNM} we now relate the Mel'nikov conditions to the  homological equation.

\begin{proposition}\label{uffffa2}
Let $\gamma> 0$, $K\ge K_0$,
consider a compact set $\calO \subset \calO_0$ and set $\vec{v}=(\g,\calO,s,a,r)$ and $\vec{v}^{\text{\tiny 0}}=(\g,\calO_0,s,a,r)$.
Consider a  vector field $F\in \calW_{\vec v^{\text{\tiny 0}},p}\cap \calE^{(0)}_{\rm Ham}$ of the form
$$
F= N_0+G: \calO_0\times    D_{a,p+\nu}(r)\times\TTT^{d}_{s}\to V_{a,p}\,, 
$$
where $\calE^{(0)}_{\rm Ham}$ is defined  \eqref{spazioham} and $N_0$ is defined in \eqref{enneo} with $\Omega^{(0)}$ self-adjoint.
Assume that $F$ is  $C^{4}$-tame up to order $q=\gotp_2+2$. 
Assume that $\g\sim {\rm diam} \calO_0$ and set
 \begin{equation}\label{sizes2bb}
\Gamma_{p}:=\g^{-1}C_{\vec{v},p}(G), \quad 
 \Theta_{p}:=\g^{-1}C_{\vec{v},p}(\Pi_{\NN}^\perp G).
 \end{equation}

If $\calO$ satisfies the Mel'nikov conditions of Definition \ref{pippopuffo3} for $(F,K,\vec{v}^{\text{\tiny 0}})$  then 
$\calO$ satisfies the  homological equation of Definition \ref{pippopuffo2} provided that we fix 
parameters $\mu$ and $\mathtt{t}$ as in \eqref{migo}.
\end{proposition}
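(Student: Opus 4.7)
The plan is to reduce to Proposition \ref{uffffa}: first verify that \eqref{sottoHAM} is triangular and that the commutator hypothesis \eqref{vino2} holds, invoke that proposition to obtain items $1$ and $2.a$--$2.c$ of Definition \ref{pippopuffo2}, and then establish item $2.d$ separately exploiting the near-sharpness of the finite rank norm (Lemma \ref{sharpa}).

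Following Remark \ref{sonfica}, I would set $\mathrm{deg}(\theta)=0$, $\mathrm{deg}(w)=1$, $\mathrm{deg}(y)=2$. A direct enumeration shows that every summand of $\NN$ has degree $0$, while every monomial vector field outside of $\NN \oplus \calX$ has strictly positive degree; moreover $\calX_1=\VV^{(y,0)}$, $\calX_2=\VV^{(y,w)}\oplus\VV^{(w,0)}$, $\calX_3=\VV^{(\theta,0)}_0\oplus\VV^{(y,y)}$ is the natural degree filtration, with degrees $-2,-1,0$ respectively. Lemma \ref{pillo} then yields triangularity with $\tb=3$. The commutator estimates \eqref{vino2} for $f\in\BB_\calE$ are standard tame bounds, following from the lemmata collected in Appendix \ref{tameprop} together with Lemma \ref{grrr}, using only that $G$ is $C^4$-tame. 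Setting $g := \gotW(\Pi_K\Pi_\calX G)$, the Mel'nikov hypothesis guarantees $g \in \BB_\calE$, i.e.\ $g$ is a regular Hamiltonian vector field. Applying Proposition \ref{uffffa} with parameters $\mu,\mathtt t$ as in \eqref{migo} for $\tb=3$ yields items $1$ and $2.a$--$2.c$ of Definition \ref{pippopuffo2}.

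For item $2.d$, the approach is to bound $|\Pi_\calX \Phi_* F|_{\vec v',\gotp_2-1}$ by producing a tameness constant $C_{\vec v',\gotp_2}(\Pi_\calX \Phi_* F)$ and then invoking Lemma \ref{sharpa}, at the cost of one derivative. Since $\Pi_\calX \Phi_* F$ is polynomial of degree at most $\mathtt n=2$ in $(y,w)$, the Lie series $\Phi_* F = F + [g,F] + \tfrac12[g,[g,F]] + \dots$ truncates on the $\calX$-component after finitely many terms. The leading term $\Pi_\calX F$ is absorbed by $C_{\vec v,\gotp_2}(\Pi_\NN^\perp G)$, and the iterated commutators are controlled via the same tame commutator estimates used in the proof of Proposition \ref{uffffa1.1} (following Lemmata \ref{conj}, \ref{derivate}, \ref{normnorm} and the argument for \eqref{kam9}) by a constant multiple of $|g|_{\vec v,\gotp_2+1}C_{\vec v,\gotp_1}(G) + |g|_{\vec v,\gotp_1}C_{\vec v,\gotp_2+1}(G)$. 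The bound on $|g|_{\vec v,\gotp_2+1}$ obtained from \eqref{buoni22} applied at $p=\gotp_2+1$ then yields \eqref{cribbio420}, with the single extra derivative absorbed into the shift $\mathtt t$ of \eqref{migo}.

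The main obstacle is precisely the derivative loss in Lemma \ref{sharpa}, $|f|_{\vec v,p}\le \mathtt c\, C_{\vec v,p+1}(f)$, which is structural: a Hamiltonian vector field in $\calA$ satisfies $f^{(y,w)} = -\ii J\partial_\theta f^{(w,0)}$, so controlling the intrinsic norm of $f$ always requires one extra $\theta$-derivative on the tameness side. This forces a more restrictive relationship between $\mu$, $\mu_1$ and the tameness orders than in the reversible Nash-Moser setting of Proposition \ref{uffffa1.1}, and is precisely what is encoded by the additive shift $\mathtt t$ in \eqref{migo}, alongside the Neumann-series contribution $(\tb+1)(\mu_1+\nu+1)$ coming from the triangular inversion of $\gotN+\gotR$.
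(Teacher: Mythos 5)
Your reduction of items 1 and 2.a--c to Proposition \ref{uffffa} is the same route as the paper, but note that verifying \eqref{vino2} requires converting tameness constants of commutators into the regular norm $|\cdot|_{\vec v,p-1}$ at \emph{every} $p\ge\gotp_1$; Lemma \ref{grrr} only gives sharpness at $p=\gotp_1$ (see \eqref{tameconst3}), so the ingredient you actually need there is Lemma \ref{sharpa} together with \eqref{commu2} — this is exactly how the paper argues, and it is why \eqref{vino2} has $p-1$ on the left against $p+1$ on the right.

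The genuine gap is in your treatment of item 2.d. First, the Lie series does \emph{not} truncate on the $\calX$-component: $g\in\BB_\calE$ contains degree-zero pieces (in $\VV^{(\theta,0)}_0\oplus\VV^{(y,y)}$, i.e.\ $\calX_3$) and $F$ contains monomials of every degree, so $\Pi_\calX\,{\rm ad}(g)^kF\neq0$ for all $k$; moreover at finite regularity each bracket costs a derivative (cf.\ \eqref{commu2}), so the full series cannot be summed termwise anyway. The paper avoids this by using either the push-forward/integral-remainder representation (Remark \ref{resto}, Lemmata \ref{conj}, \ref{normnorm}, as in the proof of \eqref{kam9}) or, more directly, the exact affine formula $\Pi_\calX\Phi_*F=\Pi_\calX\big(F\circ\Phi^{-1}+df[F\circ\Phi^{-1}]\big)$ with $f$ of the finite-rank form \eqref{maremma}, where the only delicate term $f^{(y,w)}\cdot d_wF^{(w)}[\cdot]$ is rewritten through the adjoint and controlled because $\ii\s_3\,d_wF^{(w)}$ is self-adjoint (Hamiltonian structure) and $f^{(y,w)}\in\ell_{-a,-\gotp_0-\nu}$. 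Second, your index bookkeeping does not deliver \eqref{cribbio420}: that estimate must hold with exactly $|g|_{\vec v,\gotp_2}C_{\vec v,\gotp_1}(G)+|g|_{\vec v,\gotp_1}C_{\vec v,\gotp_2}(G)$ on the right, whereas you end up needing $|g|_{\vec v,\gotp_2+1}$ — obtained by invoking \eqref{buoni} at $p=\gotp_2+1$, outside the range $\gotp_1\le p\le\gotp_2$ in which the Mel'nikov conditions are assumed — and $C_{\vec v,\gotp_2+1}(G)$, which is not among the quantities appearing in Definition \ref{pippopuffo2}. Finally, the one-derivative loss of Lemma \ref{sharpa} is not ``absorbed into $\mathtt t$'': $\mathtt t$ in \eqref{migo} only absorbs the size factors $(1+\Theta_{\gotp_1}(1+\Gamma_{\gotp_1}))^{b}(1+\Gamma_{\gotp_1})$ from the Neumann series, while the derivative loss is accommodated by the slack already built into the definitions (the left-hand sides of \eqref{cribbio420} and of item 1 are stated at $\gotp_2-1$, and \eqref{vino2} at $p-1$), so no modification of $\mu$ or $\mathtt t$ is available or needed for this purpose.
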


\begin{proof}
We wish to apply  Proposition \ref{uffffa} in order to prove that items 1. and 2.a-b-c  of Definition \ref{pippopuffo2} are satisfied for $\calO$ satisfying the
Mel'nikov conditions. In order to do so we need to prove \eqref{vino2}. The desired bounds follow from Lemma \ref{sharpa} and from the bounds \eqref{commu2} on tameness constants of commutators. We now prove item  2.d. We claim that there exists a choice of a tameness constant $C_{\vec{v},\gotp_2-1}(\Pi_{\calX}\Phi_{*}F)$ which satisfies \eqref{cribbio420}. Indeed this follows from
 Lemmata \ref{conj}, \ref{derivate}, \ref{normnorm} and Remark \ref{monomi}, see the proof of estimate \eqref{kam9} for more details.  The bound \eqref{cribbio420} follows  from Lemma \ref{sharpa}.
 
In fact one may prove  \eqref{cribbio420} directly (obtaining a slightly better bound). Let  $\Phi=\uno+f$ be the
time one flow map of the field $g$ defined by Proposition \ref{uffffa} and $\Phi^{-1}=\uno+\tilde{f}$ its inverse. Note that $f$ is of the form \eqref{maremma}
and $\Pi_{\calX}\Phi_{*}F=\Pi_{\calX}F\circ{\Phi^{-1}}+df[F\circ{\Phi^{-1}}]$. The bound on the first summand follows by item 1. 
The only non trivial term in the second summand is given by
 $$
 f^{(y,w)}(\theta)\big[d_{w}F^{(w)}({\Phi^{-1}(\theta,0,0)})[w]\big]=\big((d_{w}F^{(w)}({\Phi^{-1}(\theta,0,0)}))^{*}f^{(y,w)}(\theta)\big)\cdot w.
 $$
 By the Hamiltonian structure the operator\footnote{We use the standard notation for the Pauli matrices.}
  $$i \s_3  d_{w}F^{(w)}\,,\quad \s_3:=\begin{pmatrix}
 1 &0 \\ 0 &-1 
 \end{pmatrix}$$ is self-adjoint, hence the bound \eqref{cribbio420} follows by the tame estimates on $F$
 and the fact that $f^{(y,w)}\in \ell_{-a,-\gotp_0-\nu}$. 
 Hence the assertion follows.
\end{proof}

Again, under the same assumptions as in Proposition \ref{uffffa2} and of course assuming  the smallness conditions \eqref{sizes}, we can apply Theorem \ref{thm:kambis};
by Proposition \ref{uffffa2} the set $\calO_{\io}$ of Theorem \ref{thm:kambis} contains the intersection over $n$ of the
sets $\mathcal{C}_{n}$ in which the Mel'nikov conditions are satisfied for $(F_n,K_n,\vec{v}_n^{\text{\tiny 0}})$, therefore we now
analyze the Mel'nikov conditions.

The operator 
$\Pi_\calX{\rm ad}(\Pi_\NN F_n)$, restricted to  the blocks  $\calX_1,\calX_3$ coincide with the  operator $(\omega^{(n)}\cdot \partial_{\theta})\,\uno$  
 while on the block $\calX_2= \VV^{(w,0)}\oplus\VV^{(y,w)}$,  we get
\begin{equation}\label{merdina3Ham}
\begin{pmatrix}
\big( \omega^{(n)}\cdot \partial_\theta\big)\,\uno  -  F^{(w,w)}_n(\theta) & 0 \\
  - F^{(y,w,w)}_n(\theta) & \big( \omega^{(n)}\cdot \partial_\theta\big)\,\uno  + \big(  F^{(w,w)}_n(\theta)\big)^*
\end{pmatrix}\,.
  \end{equation}

Note that the operators appearing on the diagonal of  \eqref{merdina3Ham} are  $i \s_3$ times a self-adjoint operator, moreover the whole
operator maps Hamiltonian vector fields into Hamiltonian vector fields.

As before, the Mel'nikov conditons \eqref{buoni},\eqref{cribbio4} on the component $\calX_1,\calX_3$ amount to requiring that
$\omega^{(n)}$ is $\gamma,\tau$ diophantine up to order $K_n$. 

In conclusion we have proved the following Theorem.

\begin{theo}\label{esempappli}
Consider a  vector field $F\in \calW_{\vec v^{\text{\tiny 0}},p}\cap \calE^{(0)}_{\rm Ham}$ of the form
$$
F= N_0+G: \calO_0\times    D_{a,p+\nu}(r)\times\TTT^{d}_{s}\to V_{a,p}\,, 
$$
where $\calE^{(0)}_{\rm Ham}$ is defined  \eqref{spazioham} and $N_0$ is defined in \eqref{enneo} with $\Omega^{(0)}$ self-adjoint.
Assume that $F$ is  $C^{4}$-tame up to order $q=\gotp_2+2$. 
Fix $\g>0$ such that $\g\sim {\rm diam} \calO_0$ and assume that $G$ satisfies the smallness conditions \eqref{sizes}
of Theorem \ref{thm:kambis}. Then there exists an invariant torus 
for $F$ provided that $\x$ belong to the set $\calO_{\infty}$ of Theorem \ref{thm:kambis}.
Finally $\calO_{\infty}$ contains  $\bigcap_{n}\mathcal{C}_{n}$
where $\mathcal{C}_{n}$ is the set of $\x$ such that $\oo_{n}$ is $(\g,\tau)-$diophantine
and the matrix $\mathfrak{N}_{n}$ in \eqref{merdina3Ham} is approximatively invertible
with tame bounds like \eqref{buoni} and \eqref{cribbio4}.
\end{theo}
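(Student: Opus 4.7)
The plan is to apply the abstract KAM Theorem \ref{thm:kambis} directly, using the triangular decomposition \eqref{sottoHAM}, the space $\calE = \calE^{(0)}_{\rm Ham}$ from \eqref{spazioham}, and the regular vector fields $\calA$ of Definition \ref{linvec}. First I would verify the abstract axioms one by one: the decomposition \eqref{sottoHAM} satisfies Definition \ref{nomec} with $\mathtt n = 2$, as noted immediately after its statement; Lemma \ref{grrr} shows that the finite rank vector fields fulfill all the conditions of Definition \ref{linvec-abs}; compatibility of $\calE^{(0)}_{\rm Ham}$ with the decomposition in the sense of Definition \ref{nx} follows because the degree decomposition preserves the Hamiltonian structure and because $\calE^{(0)}_{\rm Ham}\cap\calX \equiv \calA$; finally $N_0$ in \eqref{enneo} with $\Omega^{(0)}$ self-adjoint and $\theta$-independent commutes with $\Pi_{E^{(K)}}\Pi_\calX$ on $\BB_\calE$, so it is diagonal in the sense of Definition \ref{norm}. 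The $C^4$-tameness hypothesis matches exactly $C^{\mathtt n + 2}$-tameness. With the smallness conditions \eqref{sizes} in hand, Theorem \ref{thm:kambis} then produces a sequence of compatible changes of variables $\calH_n$ converging to $\calH_\infty$ and a nested family of compact sets $\calO_\infty = \bigcap_n \calO_n \subseteq \calO_0$ on which $\Pi_\calX F_\infty = 0$; since every vector field lying entirely in $(\uno-\Pi_\calX)\VV_{\vec v,p}$ admits the invariant torus $\{y=0,\,w=0\}$, the push-forward $F_\infty$ yields the claimed torus for $F$.

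To describe $\calO_\infty$ as concretely as possible, I would select the simplest admissible compatible changes of variables, namely $\calL_n \equiv \uno$, which trivially satisfy \eqref{satana}, \eqref{odio}, \eqref{satana2}. With this choice the only constraint on $\calO_n$ is that it satisfies the homological equation of Definition \ref{pippopuffo2} for $(F_{n-1},K_{n-1},\vec v_{n-1}^{\text{\tiny 0}},\rho_{n-1})$. Here the crucial input is Proposition \ref{uffffa2}: any subset on which the Mel'nikov conditions of Definition \ref{pippopuffo3} hold automatically satisfies the homological equation, with the loss parameter $\mu$ adjusted according to \eqref{migo}. Hence I can set $\calO_n := \calO_{n-1} \cap \mathcal{C}_{n-1}$, where $\mathcal{C}_{n-1}$ is the Mel'nikov set at step $n-1$, obtaining $\calO_\infty \supseteq \bigcap_n \mathcal{C}_n$.

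It remains to unpack what the Mel'nikov conditions look like on the three triangular blocks. The operator $\Pi_\calX\,{\rm ad}(\Pi_\NN F_n)$ is block-diagonal on $\calX = \calX_1 \oplus \calX_2 \oplus \calX_3$; on $\calX_1$ and $\calX_3$ it reduces, once $\omega^{(n)}$ is defined as the $\theta$-average of $F_n^{(\theta,0)}$, to the scalar transport operator $\omega^{(n)}\cdot\partial_\theta$ acting on vector-valued functions of $\theta$. For such operators, the bounds \eqref{buoni}, \eqref{cribbio4} reduce via a routine loss-of-derivatives estimate to the standard $(\gamma,\tau)$-Diophantine condition on $\omega^{(n)}$ up to cut-off $K_n$. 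The substantive condition, and what I expect to be the main obstacle in any concrete application, is the approximate invertibility with tame bounds of the block-lower-triangular operator \eqref{merdina3Ham} on $\calX_2 = \VV^{(w,0)}\oplus \VV^{(y,w)}$, whose diagonal entries have the form $\omega^{(n)}\cdot\partial_\theta$ plus $i\sigma_3 L_n$ with $L_n$ self-adjoint by the Hamiltonian structure; for the present theorem however this is built directly into the definition of $\mathcal{C}_n$. Defining $\mathcal{C}_n$ as the intersection of the Diophantine set for $\omega^{(n)}$ with the set where such an approximate inverse of \eqref{merdina3Ham} exists satisfying \eqref{buoni}--\eqref{cribbio4}, the inclusion $\calO_\infty \supseteq \bigcap_n \mathcal{C}_n$ follows, which together with Theorem \ref{thm:kambis} completes the proof.
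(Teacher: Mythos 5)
Your proposal reproduces the paper's own argument: verify the abstract hypotheses (Definitions \ref{nomec}, \ref{linvec-abs} via Lemma \ref{grrr}, \ref{nx}, \ref{norm} with $\mathtt n=2$), invoke Theorem \ref{thm:kambis} with $\calL_n\equiv\uno$, use Proposition \ref{uffffa2} to pass from the Mel'nikov conditions of Definition \ref{pippopuffo3} to the homological equation of Definition \ref{pippopuffo2}, and then read off the block structure of $\Pi_\calX\,{\rm ad}(\Pi_\NN F_n)$ on $\calX_1\oplus\calX_2\oplus\calX_3$, identifying the Diophantine condition on $\omega^{(n)}$ on the scalar blocks $\calX_1,\calX_3$ and the operator \eqref{merdina3Ham} on $\calX_2$. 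This is exactly the route taken in subsection \ref{exHam}, so nothing further is needed.
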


We claim that, in the applications, proving the approximate invertibility of
\eqref{merdina3Ham} is significantly simpler than proving the invertibility of \eqref{merdina2}.

\subsection{Example 3: Reversible KAM/Nash-Moser.}\label{pensavo}

 We now wish to obtain a triangular decomposition for the Melnikov conditions as in \eqref{merdina3Ham} but without restricting to Hamiltonian vector fields. To this purpose we set
 \begin{equation}\label{sottoREV}
 \NN:= \VV^{(\theta,0)}\oplus \VV^{(w,w)}\,,
 \quad \calX:=  \VV^{(y,0)}\oplus \VV^{(y,y)} \oplus \VV^{(y,w)}\oplus \VV^{(w,0)}.
 \end{equation}
 or 
 \begin{equation}\label{sottoREV2}
 \NN:= \av{\VV^{(\theta,0)}}\oplus \VV^{(w,w)}\,,
 \quad \calX:=  \VV^{(\theta,0)}_0 \oplus\VV^{(y,0)}\oplus \VV^{(y,y)} \oplus
  \VV^{(y,w)}\oplus \VV^{(w,0)}.
 \end{equation}
such choices are compatible with Definition \ref{nomec} with $\mathtt{n}=1$.
  Note that  both cases come from  a degree decomposition provided that we fix
  $1<{\rm deg}(y)<2$, therefore they are trivially triangular. Now the degree decomposition of \eqref{blodia}, say in case \eqref{sottoREV2}, reads $\mathtt b=4$ and gives   $\calX_1= \VV^{(y,0)}$, $\calX_2=\VV^{(w,0)}$, $\calX_3= \VV^{(y,w)} $ and $\calX_4= \VV^{(\theta,0)}_0\oplus \VV^{(y,y)}$. 
    
    We 
    define the space of regular vector field $\calA_{\vec v,p}$ as the ``finite rank vector field'' of Definition \ref{linvec}.
and  introduce the smoothing operator $\Pi_{K}$ as  in Definition \ref{projector}. By lemma \ref{grrr} such vector fields satisfy all the conditions  of Definition \ref{linvec-abs}. 

\smallskip

Regarding the choice of $\calE$, we require the reversibility condition \eqref{minimarev}, moreover, in order to satisfy condition (i) and (iii) of Definition \ref{nx} we set  
\begin{equation}\label{spazorev}
\calE=\calE^{(1)}:=\Big\{F\in \calE^{(0)}:\;d_w F^{(y)}(\theta,y,w)\in H^{p}(\TTT^{d}_{s};\ell_{-a,-\gotp_0-\nu})\cap
H^{\gotp_0}(\TTT^{d}_{s};\ell_{-a,p-\gotp_1-\gotp_0-\nu})\,,\;
 \Big\}.
\end{equation}

If we set $\calL_n=\uno$ as before, we get changes of variables of the form
$$
y\to y+ h^{(y,0)}(\theta) + h^{(y,y)}(\theta) y + h^{(y,w)}(\theta)\cdot w \,,\quad w\to w+h^{(w,0)}(\theta)\,,\quad   \theta\to \theta+ h^{(\theta,0)}(\theta)
$$
i.e. the changes of variables \eqref{criceto200} of Example 1,  composed with a $(y,w)$-linear change of variable of finite rank. Note that a regular $g\in\calX$ is in $\BB_\calE$ if  it satisfies \eqref{beby2}.
 
On ${\calE}^{(1)}$ we give a slightly stronger definition of tame vector field.
\begin{defi}\label{adjointtt}
 We say that a $C^{3}-$tame vector field $F\in \calE^{(1)}$ is ``adjoint-tame'' if
 there exists a choice of tameness constants $C_{\vec{v},p}(F)$
such that, for any $\Phi$  generated by $g\in \BB_{\calE}$ and for any $h$ as in Definition \ref{tame}, the adjoint\footnote{We recall that, given a linear operator $A:X\to Y  $ its adjoint is  $A^*:Y^* \to X^*$. Our condition implies that $(d_{\mathtt U}F(\Phi))^{*}$ is bounded from $Y_1\to X_1$, with $Y_1\subset Y^*$ and $X_1\subset X^*$ 
this is hence a  much stronger condition.} of $d_{\mathtt U}F(\Phi)$ is tame
and satisfies the bounds. 
Setting 
$$X^{p}:=H^{p+\nu}(\TTT^{d}_{s};\CCC^{d_1}\times\ell_{-a,-\gotp_0-\nu})\cap 
H^{\gotp_0+\nu}(\TTT^{d}_{s};\CCC^{d_1}\times\ell_{-a,p-\gotp_1-\gotp_0-\nu})$$ and 
$$Y^{p}:=
H^{p}(\TTT^{d}_{s};V_{-a,-\gotp_0})\cap 
H^{\gotp_0}(\TTT^{d}_{s};V_{-a,p-\gotp_1-\gotp_0})$$
for $p\geq\gotp_1$,
one has (see formula \eqref{ancoralip22})
 \begin{equation}\label{carota}
  \begin{array}{crcl} 
    (\text{T}_1)^* & 
    \|(d_{\mathtt U}F(\Phi))^{*}[h]\|_{\g,\calO,X^{p}} &\leq & 
     \big(C_{\vec{v},p}(F)+C_{\vec{v},\gotp_0}(F)|g|_{\vec{v}',{p+\nu}} \big)
   \|h\|_{\g,\calO,Y^{\gotp_0}} 
    +C_{\vec{v},\gotp_0}(F)\|{h}\|_{\g,\calO,Y^{p}}.
   \end{array}
   \end{equation}
\end{defi}

We have the following Lemmata.
\begin{lemma}\label{scerpa}
Consider a regular vector field $f\in \calA_{\vec{v},p}$.  Then $f$ satisifes \eqref{carota} with  $C_{\vec v,p}(f)=|f|_{\vec v,p}$. Moreover there exists a $\mathtt{c}$  (depending  at most on $\gotp_0$ and 
on the dimensions $d,d_1$) such that for any tameness constant satisfying \eqref{carota} one has
\begin{equation}\label{tameconst333}
|f|_{\vec{v},p} \le \mathtt{c} \, C_{\vec{v},p}(f)
\end{equation}
for any $p\geq \gotp_1$.
\end{lemma}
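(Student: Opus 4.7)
The proof splits into the direct adjoint-tame estimate (with constant $|f|_{\vec v,p}$) and the sharpness lower bound. Since any $f\in \calA_{\vec v,p}$ has the explicit form \eqref{maremma}, the differential $d_{\mathtt U}f$ acts only on the $\partial_y$ component and, for $\Phi=\uno+g$, is given by
\begin{equation*}
(d_{\mathtt U} f(\Phi))[k^{(y)},k^{(w)}](\theta) = \bigl(0,\; f^{(y,y)}(\Phi_\theta) k^{(y)}(\theta) + f^{(y,w)}(\Phi_\theta)\cdot k^{(w)}(\theta),\;0\bigr),
\end{equation*}
where $\Phi_\theta:=\theta+g^{(\theta)}(\theta)$. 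Its adjoint therefore has the explicit block form
\begin{equation*}
((d_{\mathtt U} f(\Phi))^* h)(\theta) = \bigl((f^{(y,y)}(\Phi_\theta))^* h^{(y)}(\theta),\; (f^{(y,w)}(\Phi_\theta))^* h^{(y)}(\theta)\bigr),
\end{equation*}
the second component being the element of $\ell_{-a,-\gotp_0-\nu}$ obtained by multiplying $f^{(y_i,w)}(\Phi_\theta)$ by the scalar $h^{(y_i)}(\theta)$ and summing over $i$.

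For the direct inequality the plan is to estimate each block separately. The $(y,y)$ entry is bounded in $X^p$ using the standard tame composition-and-product estimates (as in Lemma \ref{stolemma}) applied to $f^{(y,y)}\in H^p(\TTT^d_s)$. For the more delicate $(y,w)$ entry, the $X^p$ norm of $(f^{(y_i,w)}(\Phi_\theta))^* h^{(y_i)}$ splits into an $H^{p+\nu}(\TTT^d_s;\ell_{-a,-\gotp_0-\nu})$ piece and an $H^{\gotp_0+\nu}(\TTT^d_s;\ell_{-a,p-\gotp_1-\gotp_0-\nu})$ piece; both are controlled by tame composition-and-product estimates once one uses that $f^{(y_i,w)}$ lies in the intersection appearing in \eqref{maremma}, which is exactly what makes the shifted exponents in Definition \ref{linvec} tailored to the $X^p,Y^p$ scale. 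Summing the contributions yields \eqref{carota} with $C_{\vec v,p}(f)=|f|_{\vec v,p}$ up to a constant depending only on $\gotp_0,\gotp_1,d,d_1$; the $\gamma$-weighted Lipschitz version follows by repeating the argument on finite differences in $\xi$.

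For sharpness, I would test \eqref{carota} at $\Phi=\uno$ (so $g=0$) against specific $h$. The components $\|f^{(u,0)}\|_{\vec v,p}$ with $u=\theta,y,w$ and $\|f^{(y_i,y_j)}\|_{\vec v,p}$ are recovered as in Lemma \ref{esempio1} from the ordinary tame conditions $(T_0)$ and $(T_1)$ at $(y,w)=(0,0)$ with $h$ a constant basis vector in the $y$-direction. For the $f^{(y_i,w)}$-term, take $h(\theta)=e_i$ constant in $\theta$; then $\|h\|_{Y^p}$ and $\|h\|_{Y^{\gotp_0}}$ are bounded by a universal constant, and a direct computation gives
\begin{equation*}
\bigl((d_{\mathtt U} f(\uno))^* h\bigr)^{(w)}(\theta)=f^{(y_i,w)}(\theta),
\end{equation*}
so that \eqref{carota} produces
\begin{equation*}
\|f^{(y_i,w)}\|_{H^{p+\nu}(\TTT^d_s;\ell_{-a,-\gotp_0-\nu})}+\|f^{(y_i,w)}\|_{H^{\gotp_0+\nu}(\TTT^d_s;\ell_{-a,p-\gotp_1-\gotp_0-\nu})}\le \mathtt c\, C_{\vec v,p}(f);
\end{equation*}
since $\nu\ge 0$ this dominates the corresponding $H^p$- and $H^{\gotp_0}$-terms in \eqref{linnorm}, completing the desired bound.

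The main obstacle I expect is the bookkeeping of the regularity indices on the $(y_i,w)$ block: one has to align the Sobolev index of the $X^p,Y^p$ scale (shifted by $\nu$ on the $\theta$ side) and the two $\ell$-indices $-\gotp_0-\nu$ and $p-\gotp_1-\gotp_0-\nu$ with the norms appearing in \eqref{linnorm}. This is precisely the reason why Definition \ref{linvec} requires the membership $f^{(y_i,w)}\in H^p(\TTT^d_s;\ell_{-a,-\gotp_0-\nu})\cap H^{\gotp_0}(\TTT^d_s;\ell_{-a,p-\gotp_1-\gotp_0-\nu})$; once this matching is verified in both directions of the inequality, the proof reduces to routine tame product estimates.
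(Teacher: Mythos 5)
Your proposal is correct and takes essentially the same approach as the paper's (quite terse) proof: both identify the adjoint of $d_{\mathtt U}f(\Phi)$ explicitly on the $(y,y)$ and $(y,w)$ blocks, observe that $f^{(y_i,w)}$ is the only genuinely nontrivial component, and for sharpness test \eqref{carota} at $\Phi=\uno$ against a constant $h$ so that $(d_{\mathtt U}f)^*h$ produces $f^{(y_i,w)}$ directly, after which the $X^p$-norm dominates the corresponding piece of $|\cdot|_{\vec v,p}$ since $\nu\ge0$. You fill in several details the paper omits (the explicit block formula for the adjoint, the choice of test vector $h=e_i$, and the index-matching between $X^p$, $Y^p$ and \eqref{linnorm}), but the underlying argument coincides with the one sketched in the paper.
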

\begin{proof}
We only sketch the proof of the Lemma when $\Phi$ is the identity: the general case is essentially identical due to the simple structure of $\calA_{\vec{v},p}$ and $\BB$. The only non-trivial components are $f^{(y_i,w)}\cdot w$. 	 The adjoint of the differential is then the map $\lambda\to f^{(y_i,w)} \lambda$ with $\lambda\in H^p(\TTT^d_s)$. The result follows by the definition of $|\cdot|_{\vec v,p}$.
\end{proof}
\begin{lemma}\label{adjadjadj}
The adjoint-tame vector fields are closed with respect to close to identity  changes of variables $\Phi=\uno+f$ generated by $\psi\in\BB_{\calE}$. 
In particular, setting $F_{+}=\Phi_{*}F$,
 one has that the tameness constants in \eqref{dafare} satisfy condition $(T1)^*$.


\end{lemma}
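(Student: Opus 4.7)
The plan is to use the pushforward formula $F_+(u) = d\Phi(\Phi^{-1}(u))[F(\Phi^{-1}(u))]$, where $\Phi = \uno + f$ with $f \in \calA$ the time-one flow of $\psi \in \BB_\calE$ (item~(5) of Definition~\ref{linvec-abs}). By items~(4)--(5) the left inverse is $\Phi^{-1} = \uno + \tilde f$ with $\tilde f \in \calA$ and $|\tilde f|_{\vec v,p} \leq 2|f|_{\vec v,p}$; in particular both $f$ and $\tilde f$ are adjoint-tame via Lemma~\ref{scerpa}.

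Given a test map $\Phi' = \uno + f'$ with $f'$ generated by some $g' \in \BB_\calE$, I would first set $\Phi'' := \Phi^{-1}\circ\Phi' = \uno + f''$ and, using items~(4)--(5) of Definition~\ref{linvec-abs} together with the closure properties of $\BB_\calE$ in \eqref{proloco}, verify that $\Phi''$ is itself the time-one flow of a regular vector field in $\BB_\calE$, with $|f''|_{\vec v, p} \leq \mathtt C(|f|_{\vec v, p} + |f'|_{\vec v, p})$. Then the chain rule applied to $F_+(\Phi') = (\uno + df)(\Phi'')\, F(\Phi'')$ yields
$$
d_\mathtt{U} F_+(\Phi')[h] = d^2 f(\Phi'')\big[d\Phi^{-1}(\Phi')\, h_\mathtt{U},\, F(\Phi'')\big] + (\uno + df(\Phi''))\, dF(\Phi'')\, d\Phi^{-1}(\Phi')\, h_\mathtt{U},
$$
where $d\Phi^{-1}(\Phi') = \uno + d\tilde f(\Phi')$ is identity plus a small regular perturbation.

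Taking pointwise adjoints, $(d_\mathtt{U} F_+(\Phi'))^*$ decomposes as a sum of compositions of three kinds of factors: an inner factor $(d\Phi^{-1}(\Phi'))^*$, a middle factor which is either $(dF(\Phi''))^*$ or the adjoint of $d^2 f(\Phi'')[\cdot, F(\Phi'')]$, and an outer factor $(\uno + df(\Phi''))^*$. The first middle factor is handled by invoking the adjoint-tameness of $F$ (Definition~\ref{adjointtt}) applied to the map $\Phi''$; this is legitimate precisely because $\Phi''$ is generated by an element of $\BB_\calE$. The second middle factor reduces, via Lemma~\ref{scerpa}, to an adjoint-tame estimate for $f \in \calA$ with constants of size $|f|_{\vec v, p+1}$ multiplied by the norm of $F(\Phi'')$ in the relevant scale (itself controlled by the usual tameness of $F$). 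The inner and outer factors each introduce an identity plus small regular correction, whose action on the pre-dual scales $X^p, Y^p$ is again estimated by Lemma~\ref{scerpa} and absorbed using $|f|_{\vec v, \gotp_1}, |f'|_{\vec v, \gotp_1} \leq \mathtt{c}\rho \ll 1$. Putting the three pieces together yields \eqref{carota} for $F_+$ with natural candidate tameness constants of the form $C_{\vec v_1, p}(F_+) = \mathtt C \big(C_{\vec v, p}(F) + |f|_{\vec v, p+\nu+1} C_{\vec v, \gotp_0}(F)\big)$.

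The main technical obstacle will be carefully tracking the loss-of-regularity indices in the composition of adjoints: the resulting product of three linear operators must ultimately map $h \in Y^{\gotp_0}$ (or $Y^p$) into $X^p$, with intermediate stages living in various pre-dual Sobolev scales $\ell_{-a, -\gotp_0-\nu}$, $\ell_{-a, p-\gotp_1-\gotp_0-\nu}$, and the index shifts coming from $\nu$ and from the discrepancy between $\gotp_0$ and $\gotp_1$ must cancel cleanly when reassembling the bound $(T_1)^*$. A secondary issue is the proof that $\Phi'' = \Phi^{-1}\circ\Phi'$ is really the flow of an $\calE$-preserving regular vector field; this requires a Baker--Campbell--Hausdorff-type argument, but the $|\cdot|_{\vec v, \gotp_1}$-smallness of both $f$ and $f'$ makes the relevant series convergent and, together with \eqref{proloco}, keeps the generator inside $\BB_\calE$.
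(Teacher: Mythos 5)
Your plan follows essentially the same route as the paper's proof: write $F_+$ via the pushforward formula, note that $\Phi^{-1}=\uno+\tilde f$ with $\tilde f$ regular, compose with the test map to reduce everything to a near-identity regular map, and then obtain $(T_1)^*$ for $F_+$ by combining the adjoint-tameness of $F$ with Lemma \ref{scerpa} for the regular factors, absorbing the identity-plus-small corrections thanks to the $\gotp_1$-smallness; the candidate constants you propose are exactly those of \eqref{dafare} coming from Lemma \ref{conj}.

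The one place where you genuinely diverge is the claim that $\Phi''=\Phi^{-1}\circ\Phi'$ must be exhibited as the time-one flow of a single element of $\BB_\calE$, via a Baker--Campbell--Hausdorff argument. This step is both unnecessary and, in the abstract framework, not justified: $\BB_\calE\subset\calX\cap\BB$ consists of polynomial vector fields of bounded degree, and \eqref{proloco} only guarantees that the \emph{projected} commutator $\Pi_\calX[g,h]$ stays in $\BB_\calE$, so the unprojected commutators appearing in the BCH series may leave $\calX$, and no convergence of that series in this category is provided by the hypotheses of Definition \ref{linvec-abs}. The paper avoids the issue entirely: all that is used is that the composition equals $\uno+k$ with $k$ a \emph{regular bounded} field satisfying the product bound $|k|_{\vec v_1,p}\le |\psi|_{\vec v,p}|g'|_{\vec v,\gotp_1}+|\psi|_{\vec v,\gotp_1}|g'|_{\vec v,p}$, which follows from the composition and flow estimates for regular fields (items 4--5 of Definition \ref{linvec-abs}, Lemma \ref{cometichiami}), after which the adjoint-tame bound of $F$ is evaluated at this near-identity regular map and combined with Lemma \ref{scerpa}, exactly as in your three-factor decomposition. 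If you replace your BCH step by this simple composition bound, your argument matches the paper's. A minor bookkeeping point: in the chain rule the factor $d_{\mathtt U}\Phi'=\uno+d_{\mathtt U}f'$ should also appear alongside $d\Phi^{-1}(\Phi')$; it is another identity-plus-small regular factor and is handled in the same way.
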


\begin{proof}
Fix  a  vector field $F : \TTT^d_s\times D_{a,p+\nu}(r) \times \calO \to V_{a,p}$
which is $C^3$--tame,
 By Remark \ref{azz}, we know that if $|f|_{\vec{v},\gotp_1}=\mathtt{c}\rho$ with $\mathtt{c}$ small enough  then there exists
$\Phi^{-1}=\uno+\tilde{f}$ with $|\tilde{f}|_{\vec{v},p}\sim |f|_{\vec{v},p}\sim |\psi|_{\vec{v},p}$ and 
one has, by Lemma \ref{conj}
$F_+:=\Phi_* F: \TTT^d_{s- 2\rho s_0}\times D_{a,p+\nu}(r-2\rho r_0)\times \calO  \to  V_{a- 2\rho a_0,p}$
is $C^3$--tame up to order $q-\nu-1$, with scale of constants
 \begin{equation}\label{dafare222}
 C_{\vec{v}_{2},p}(F_+)\leq (1+\rho)\Big(C_{\vec{v},p}(F)+C_{\vec{v},\gotp_{0}}(F)
 C_{\vec{v}_{1},p+\nu+1}(f)\Big),
 \end{equation}
 where $\vec{v}:=(\la,\calO,s,a,r)$, $\vec{v}_{1}:=(\la,\calO,s-\rho s_0,a-\db a_0, r-\rho r_0)$ and 
 $\vec{v}_{2}:=(\la,\calO,s-2\rho s_0,a-2\rho a_0, r-2\rho r_0)$.
 Now by Lemma \ref{scerpa} since  $f,\tilde{f}$ are ``regular'' vector fields, 
they are also ``adjoint-tame''. 

Consider a transformation $\Gamma$ generated by $g\in\BB_{\calE}$. We need to check that $(d_{\mathtt U}F_{+}(\Gamma))^{*}[h]$
satisfies \eqref{carota} with $C_{\vec{v},p}(F)\rightsquigarrow C_{\vec{v}_2,p}(F_{+})$.

 One can write $F_{+}=F\circ{\Phi^{-1}}+df(\Phi)[F\circ\Phi^{-1}]$ and study the two summands separately.  
First 
note that $\Phi\circ\Gamma= \Psi=\uno+k$ with $k\in\BB$ such that
$$
|k|_{\vec{v}_1,p}\leq |\psi|_{\vec{v},p}|g|_{\vec{v},\gotp_1}+|\psi|_{\vec{v},\gotp_1}|g|_{\vec{v},p}.
$$
In particular $k$ is ``adjoint-tame'' since it is regular.
On the other hand if one has two linear (in $y,w$) vector fields $A$ and $B$ which are ``adjoint-tame'', then 
$AB$ is clearly ``adjoint-tame''. Now one can write 
$(F_{+}\circ{\Gamma})=F(\Phi^{-1}\circ{\Gamma})+df(\Phi\circ{\Gamma})[F(\Phi^{-1}\circ\Gamma)]$.
Let us study for instance the first summand. 
Essentially \eqref{carota} follows by the chain rule, the property $(T1)^{*}$ on $F$, and the tame estimates on the differential of 
$k$ and on its adjoint.

One has that $d_{\mathtt{U}}F(\Phi^{-1}\circ\Gamma)=d_{\mathtt{U}}F(\Phi^{-1}\circ\Gamma)d_{\mathtt{U}}(\Phi^{-1}\circ\Gamma)$. The estimate
\eqref{carota} follows by $(T1)^{*}$ on the field $F$ and the  
estimates on $k$.
%
%
%
%
%
%
%
%
Jus as an example consider the term from the differential of $df [F\circ\Phi^{-1} ]$
is, for $i=1,\ldots,d_1$,  the operator $f^{(y_{i},w)}\cdot d_{w}F^{(w)}(\theta,y,w)[\cdot]$. For 
$h\in H^{p}(\TTT^{d}_{s};\CCC)$ we have the estimate
\begin{equation}\label{carota3}
\begin{aligned}
&\|(d_{w}F^{(w)}(\Psi))^{*}f^{(y_{i},w)} h\|_{\g,\calO,
H^{p+\nu}(\TTT^{d}_{s};\ell_{-a,-\gotp_0-\nu})\cap 
H^{\gotp_0+\nu}(\TTT^{d}_{s};\ell_{-a,p-\gotp_1-\gotp_0-\nu})}\stackrel{(T1)^{*}}{\leq} \\
&\leq 
(C_{\vec{v},p}(F)+C_{\vec{v},\gotp_0}(F)|k|_{\vec{v},p+\nu})\|f^{(y_{i},w)}h\|_{H^{\gotp_0}(\TTT^{d}_{s};\CCC)}+
C_{\vec{v},\gotp_0}(F)(1+|k|_{\vec{v},\gotp_1})\|f^{(y_{i},w)}h\|_{H^{p}(\TTT^{d}_{s};\CCC)}\\
&\leq
(C_{\vec{v},p}(F)+C_{\vec{v},\gotp_0}(F)|k|_{\vec{v},p+\nu})|\psi|_{\vec{v},\gotp_0}
\|h\|_{H^{\gotp_0}(\TTT^{d}_{s};\CCC)}+\\
&+
C_{\vec{v},\gotp_0}(F)(1+|k|_{\vec{v},\gotp_1})(
\|h\|_{H^{p}(\TTT^{d}_{s};\CCC)}|\psi|_{\vec{v},\gotp_1}
+\|h\|_{H^{\gotp_0}(\TTT^{d}_{s};\CCC)}|\psi|_{\vec{v},p}
)\\
&\leq  C_{\vec{v},\gotp_0}(F)(1+2|\psi|_{\vec{v},\gotp_1}|g|_{\vec{v},\gotp_1})|\psi|_{\vec{v},\gotp_1}\|h\|_{H^{p}(\TTT^{d}_{s};\CCC)}\\
&+
\|h\|_{H^{\gotp_0}(\TTT^{d}_{s};\CCC)}
\Big[C_{\vec{v},p}(F)|\psi|_{\vec{v},\gotp_1}+C_{\vec{v},\gotp_0}(F)|\psi|_{\vec{v},p+\nu}
+C_{\vec{v},\gotp_0}(F)|\psi|_{\vec{v},\gotp_1}|g|_{\vec{v},p+\nu}
\Big].
\end{aligned}
\end{equation}

\end{proof}

 \begin{proposition}\label{uffffa3}
Let $\gamma> 0$, $K\ge K_0$,
consider a compact set $\calO \subset \calO_0$ and set $\vec{v}=(\g,\calO,s,a,r)$ and $\vec{v}^{\text{\tiny 0}}=(\g,\calO_0,s,a,r)$.
Consider a  vector field $F\in \calW_{\vec v^{\text{\tiny 0}},p}\cap \calE^{(1)}$ of the form
$$
F= N_0+G: \calO_0\times    D_{a,p+\nu}(r)\times\TTT^{d}_{s}\to V_{a,p}\,, 
$$
where $\calE^{(1)}$ is defined  \eqref{spazorev} and $N_0$ is defined in \eqref{enneo} with $\Omega^{(0)}$ self-adjoint.
Assume that $F$ is  $C^{ 3}$-tame up to order $q=\gotp_2+2$
and adjoint-tame. 
Assume that $\g\sim {\rm diam} \calO_0$  and set
%
 \begin{equation}\label{sizes2b100}
\Gamma_{p}:=\g^{-1}C_{\vec{v},p}(G), \quad 
 \Theta_{p}:=\g^{-1}C_{\vec{v},p}(\Pi_{\NN}^\perp G).
 \end{equation}

If $\calO$ satisfies the Mel'nikov conditions of Definition \ref{pippopuffo3} for $(F,K,\vec{v}^{\text{\tiny 0}})$  then 
$\calO$ satisfies the Homological equation of Definition \ref{pippopuffo2} provided that we fix 
parameters $\mu$ and $\mathtt{t}$ as in \eqref{migo}.
Moreover $\Phi_{*}F$ (defined in \eqref{cribbio420}) is adjoint-tame.
\end{proposition}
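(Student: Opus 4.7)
The plan is to mirror the structure of Proposition \ref{uffffa2}, with the adjoint-tame hypothesis playing the role that the Hamiltonian identity $f^{(y,w)} = -\ii J \partial_\theta f^{(w,0)}$ played in the Hamiltonian case. The two non-trivial inputs are an application of the abstract Proposition \ref{uffffa} for items 1 and 2.a-b-c of Definition \ref{pippopuffo2}, and a direct argument for item 2.d that makes essential use of $(T_1)^{*}$.

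First I would verify the hypothesis \eqref{vino2} required by Proposition \ref{uffffa}. The commutator bounds of Lemma \ref{commu2} yield a scale of ordinary tameness constants for $[\Pi_{\NN}G,f]$ and $[\Pi_{\RR}G,f]$ when $f\in\BB_\calE$, and the passage from those constants to the regular norm $|\cdot|_{\vec v,p-1}$ on the $\calX$-projection is handled by the sharpness statement \eqref{tameconst333} of Lemma \ref{scerpa}. This is precisely where the adjoint-tame hypothesis enters at this stage: in the reversible setting it is only the adjoint-tame tameness constants that control the finite-rank $(y,w)$ component of a regular vector field. Once \eqref{vino2} is in place, applying Proposition \ref{uffffa} directly produces items 1 and 2.a-b-c of Definition \ref{pippopuffo2} with the choices of $\mu$ and $\mathtt t$ prescribed by \eqref{migo}.

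The hard part is item 2.d, namely the bound \eqref{cribbio420} on $|\Pi_\calX \Phi_*F|_{\vec v',\gotp_2-1}$. Writing $\Phi = \uno + f$ for the time-one flow of the regular field $g$ produced by item 2.b, and $\Phi^{-1}= \uno + \tilde f$ with $|f|_{\vec v,p},|\tilde f|_{\vec v,p}\sim |g|_{\vec v,p}$, I would split
$$
\Pi_\calX \Phi_* F = \Pi_\calX\bigl(F\circ \Phi^{-1}\bigr) + \Pi_\calX\, df\bigl[F\circ\Phi^{-1}\bigr].
$$
The first summand is controlled via item 1 of Definition \ref{pippopuffo2} combined with Lemma \ref{conj}. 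In the second summand all contributions are routine consequences of the tame estimates on $F$ and of the regularity of $f$, with the single exception of the finite-rank block
$$
f^{(y_i,w)}(\theta)\bigl[d_w F^{(w)}(\Phi^{-1}(\theta,0,0))[w]\bigr] = \Bigl(\bigl(d_w F^{(w)}(\Phi^{-1}(\theta,0,0))\bigr)^{\!*} f^{(y_i,w)}(\theta)\Bigr)\cdot w.
$$
In the Hamiltonian case this was handled by self-adjointness of $\ii \sigma_3 \, d_w F^{(w)}$; here I would instead invoke the adjoint-tame bound \eqref{carota} to estimate $(d_w F^{(w)})^{\!*}$ acting on $f^{(y_i,w)} \in \ell_{-a,-\gotp_0-\nu}$, and then close by combining with the estimates on $|g|_{\vec v,p}$ from 2.b and with Lemma \ref{scerpa}.

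Finally, the \emph{moreover} statement is an immediate corollary of Lemma \ref{adjadjadj} applied with generator $\psi = g$: by items 2.a and 2.b the vector field $g$ lies in $\BB_\calE$ and satisfies the smallness $|g|_{\vec v,\gotp_1}\le \mathtt c\, \rho$, which are exactly the hypotheses of that Lemma.
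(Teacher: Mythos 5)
Your overall strategy matches the paper's: invoke Proposition \ref{uffffa} for items 1, 2.a-b-c; deal with item 2.d separately using the adjoint-tame hypothesis in place of the Hamiltonian identity $f^{(y,w)}=-\ii J\partial_\theta f^{(w,0)}$; and deduce the ``moreover'' clause from Lemma \ref{adjadjadj}. However there are two concrete gaps.

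\emph{Verification of \eqref{vino2}.} Your chain ``Lemma \ref{commu2} gives ordinary tameness constants for $[\Pi_{\NN}G,f]$ and $[\Pi_{\RR}G,f]$, then the sharpness bound \eqref{tameconst333} converts them into the regular norm $|\cdot|_{\vec v,p-1}$'' does not close. The sharpness statement \eqref{tameconst333} of Lemma \ref{scerpa} only applies to tameness constants that \emph{satisfy the adjoint-tame estimate \eqref{carota}}, and nothing in Lemma \ref{derivate}(iii) guarantees that the constants it produces for the commutator are adjoint-tame. You correctly sense that adjoint-tameness must enter, but the way to make it enter is not through Lemma \ref{commu2} at all: one must estimate the $(y_i,w)$ block of $\Pi_\calX[\Pi_{\RR}G,f]$ and $\Pi_\calX[\Pi_{\NN}G,f]$ directly via the $(T_1)^*$ bounds on $\Pi_{\RR}G$ and $\Pi_{\NN}G$, mimicking the estimate \eqref{carota3} in the proof of Lemma \ref{adjadjadj}. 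That is precisely the paper's route.

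\emph{Item 2.d.} You split $\Pi_\calX \Phi_*F = \Pi_\calX(F\circ\Phi^{-1}) + \Pi_\calX df[F\circ\Phi^{-1}]$ and claim the first summand is ``controlled via item 1 of Definition \ref{pippopuffo2} combined with Lemma \ref{conj}.'' But item 1 of Definition \ref{pippopuffo2} only bounds $|\Pi_\calX G|_{\vec v,\gotp_2-1}$, i.e.\ the perturbation, while $\Pi_\calX(F\circ\Phi^{-1})$ also contains the contribution of $N_0\circ\Phi^{-1}$. Writing $\Phi^{-1}=\uno+\tilde f$, its $(w,0)$ component is $\ii\Lambda^{(0)}\tilde f^{(w,0)}(\theta)$, which loses $\nu$ derivatives and is not covered by your argument. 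The paper avoids this by splitting instead $\Phi_*F=\Phi_*G+\Phi_*N_0$: the term $\Phi_*G$ is handled via adjoint-tameness of $G$, while $\Phi_*N_0$ is written as $\int_0^1(\Phi^t)_*[g,N_0]\,dt$ as in \eqref{uffa3}, and then $[g,N_0]$ is replaced by $\Pi_K\Pi_\calX\hat F+u-[g,\Pi_\calX^\perp\hat G]$ via the homological equation \eqref{cribbio42} --- a cancellation you do not exploit and cannot do without, since $N_0$ is unbounded and is not controlled by the hypothesis on $G$ alone. The ``moreover'' statement is correct and matches the paper.
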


 \begin{proof}
 We wish to apply  Proposition \eqref{uffffa} in order to prove that items $(1)$  and 2.a,b,c  of Definition \ref{pippopuffo2} are satisfied. In order to do so we need to prove \eqref{vino2}.
 One has that \eqref{vino2} follows directly by using the adjoint-tameness estimates 
 on $\Pi_{\RR}G$ and $\Pi_{\NN}G$ as done in the proof of Lemma \ref{adjadjadj}.
Regarding item $2.d$. To prove estimate \eqref{cribbio420} one can use the adjoint-tameness of $G$ to get the bound
 for the term $\Phi_{*}G$. The term $\Phi_{*}N_0$ must be treated as
 done in \eqref{uffa3} of Proposition \ref{kamstep}  by using the norm $|\cdot|_{\vec{v},p}$
 instead of the tameness constant. This can be done using the \eqref{vino2} and the fact that
 $g$ in Definition \ref{pippopuffo2} satisfies the homological equation \eqref{cribbio42}.
 The adjoint-tameness of $\Phi_{*}F$ follows by Lemma \ref{adjadjadj} since $g\in \BB_{\calE}$
 by definition.
 \end{proof}
 As in the previous examples, under the same assumptions as in Proposition \ref{uffffa3} and of course assuming \eqref{sizes}, we can apply Theorem \ref{thm:kambis};
 by Proposition \ref{uffffa3} the set $\calO_{\io}$ of Theorem \ref{thm:kambis} contains the intersection over $n$ of the
 sets in which the Mel'nikov conditions are satisfied for $(F_n,K_n,\vec{v}_n^{\text{\tiny 0}})$, therefore we now
 analyze the Mel'nikov conditions \eqref{pippopuffo3} in this case. \\
The operator $\Pi_\calX{\rm ad}(\Pi_\NN F_n)$decomposes as follows: we get the operator $(\omega^{(n)}\cdot \partial_{\theta})\,\uno$ on the blocks  $\calX_1,\calX_4$  while on the blocks $\calX_2,\calX_3$,  we get
\begin{equation}\label{merdina3}
\big( \omega^{(n)}\cdot \partial_\theta\big)\,\uno  - F^{(w,w)}_n(\theta)\,,\quad  \big( \omega^{(n)}\cdot \partial_\theta\big)\,\uno  + \Big( F^{(w,w)}(\theta)\Big)^*
  \end{equation}
respectively. As in the previous  example the Diophantine condition on $\omega^{(n)}$ is used in order to solve the homological equations on $\calX_1,\calX_4$.
In conclusion we have proved the following Theorem, which is the analogous of Theorem \ref{esempappli} in the reversible case, simply requiring less regularity
for $F$, adjoint-tameness and of course the reversible structure instead of the Hamiltonian one.

\begin{theo}\label{esempappli2}
	Consider a  vector field $F\in \calW_{\vec v^{\text{\tiny 0}},p}\cap \calE^{(1)}$ of the form
	$$
	F= N_0+G: \calO_0\times    D_{a,p+\nu}(r)\times\TTT^{d}_{s}\to V_{a,p}\,, 
	$$
	where $\calE^{(1)}$ is defined  \eqref{spazorev} and $N_0$ is defined in \eqref{enneo} with $\Omega^{(0)}$ self-adjoint.
	Assume that $F$ is  $C^{3}$-tame up to order $q=\gotp_2+2$ and adjoint-tame. 
	Fix $\g>0$ such that $\g\sim {\rm diam} \calO_0$ and assume that $G$ satisfies the smallness conditions \eqref{sizes}
	of Theorem \ref{thm:kambis}. Then there exists an invariant torus 
	for $F$ provided that $\x$ belong to the set $\calO_{\infty}$ of Theorem \ref{thm:kambis}.
	Finally $\calO_{\infty}$ contains  $\bigcap_{n}\mathcal{C}_{n}$
	where $\mathcal{C}_{n}$ is the set of $\x$ such that $\oo_{n}$ is $(\g,\tau)-$diophantine
	and the matrices  \eqref{merdina3} are approximatively invertible
	with tame bounds like \eqref{buoni} and \eqref{cribbio4}.
\end{theo}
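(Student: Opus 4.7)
The plan is to realize Theorem \ref{esempappli2} as a direct corollary of the abstract Theorem \ref{thm:kambis} combined with Proposition \ref{uffffa3}, after checking that the concrete ingredients (the decomposition \eqref{sottoREV2}, the invariant subspace $\calE^{(1)}$, and the finite rank regular vector fields of Definition \ref{linvec}) satisfy the abstract hypotheses of Definitions \ref{nomec}, \ref{nx}, \ref{linvec-abs} and \ref{norm}.

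First, I would verify the setup. The decomposition \eqref{sottoREV2} satisfies Definition \ref{nomec} with $\mathtt n=1$ since all relevant monomial subspaces either lie entirely in $\NN\oplus\calX$ or are of the right ``average'' form, and $\VV^{(y,0)},\VV^{(w,0)}\subset\calX$. Compatibility of $\calE^{(1)}$ with the decomposition (Definition \ref{nx}) follows because the reversibility condition is preserved by projection onto the monomial subspaces in $\calX$ and by the projectors $\Pi_K$, while the extra condition on $d_w F^{(y)}$ built into \eqref{spazorev} guarantees that $\calE^{(1)}\cap\calX$ coincides with the finite rank vector fields of Definition \ref{linvec}, so $\calE^{(1)}\cap\calX\subset\calA$. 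Property \eqref{proloco} is straightforward from the explicit parity conditions \eqref{beby2} on $\BB_\calE$. Lemma \ref{grrr} shows that $\calA$ satisfies Definition \ref{linvec-abs}, and $N_0$ in \eqref{enneo} is diagonal in the sense of Definition \ref{norm} because $\Omega^{(0)}$ is $\theta$-independent and block-diagonal w.r.t. each $\ell_K$.

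Next I would apply Theorem \ref{thm:kambis} with the trivial choice of compatible changes of variables $\calL_n=\uno$ (trivially compatible for any vector field), using the smallness assumptions \eqref{sizes}. This yields the sequence $\calH_n=\Phi_n\circ\calH_{n-1}$ of changes of variables generated by regular vector fields $g_n\in\BB_\calE$, converging on $\calO_\infty$ to a map $\calH_\infty$ such that $\Pi_\calX F_\infty=0$, which immediately produces the invariant torus $\{y=0,w=0\}$ for the transformed field and hence for $F$. The key point here is that Lemma \ref{adjadjadj} and Proposition \ref{uffffa3} guarantee that adjoint-tameness is preserved at each step, so the hypothesis of adjoint-tameness available for $F_0$ can be used inductively at every iteration.

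To identify $\calO_\infty$, I would use Proposition \ref{uffffa3}: at step $n$, if $\calO_n\subset\mathcal C_n$ then the Mel'nikov conditions of Definition \ref{pippopuffo3} hold for $(F_n,K_n,\vec v_n^{\text{\tiny 0}})$, and hence the homological equation of Definition \ref{pippopuffo2} holds for the same data, which is precisely the selection rule used in Theorem \ref{thm:kambis}. It therefore remains to make the operator $\gotN_n:=\Pi_\calX\operatorname{ad}(\Pi_\NN F_n)$ explicit on the triangular decomposition $\calX=\calX_1\oplus\calX_2\oplus\calX_3\oplus\calX_4$ induced by degree (Lemma \ref{pillo}). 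By direct computation, $\gotN_n$ acts as the scalar operator $\omega^{(n)}\!\cdot\!\partial_\theta\,\uno$ on $\calX_1=\VV^{(y,0)}$ and on $\calX_4=\VV^{(\theta,0)}_0\oplus\VV^{(y,y)}$, whose approximate invertibility up to cut-off $K_n$ reduces exactly to the $(\gamma,\tau)$-Diophantine condition on $\omega^{(n)}=\langle F_n^{(\theta,0)}\rangle$; and on $\calX_2=\VV^{(w,0)}$, $\calX_3=\VV^{(y,w)}$ it reduces to the two operators displayed in \eqref{merdina3}, which are formal adjoints of each other thanks to the reversibility and the self-adjointness of $\Omega^{(0)}$. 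The required tame bounds \eqref{buoni}, \eqref{cribbio4} for $\gotW$ are the defining condition of the approximate invertibility of these two operators, and this is exactly the second clause defining $\mathcal C_n$. Assembling these observations gives $\bigcap_n\mathcal C_n\subseteq\calO_\infty$.

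The main obstacle is the inductive preservation of adjoint-tameness: one needs Lemma \ref{adjadjadj} to guarantee that at each step the transformed vector field $F_n=(\Phi_n)_*F_{n-1}$ is still adjoint-tame (with suitably updated constants), because Proposition \ref{uffffa3} is what converts the concrete Mel'nikov conditions in $\mathcal C_n$ into the abstract homological equation required by Theorem \ref{thm:kambis}. Once this propagation is in place, the remaining work is bookkeeping: verify that the tameness-constant estimates in \eqref{lamorte} are compatible with the adjoint-tameness constants along the iteration, and that the degree decomposition indeed places $\NN$ in degree zero and $\calX$ in the strictly lower degrees, so that Lemma \ref{pillo} ensures the triangularity needed by Proposition \ref{uffffa}.
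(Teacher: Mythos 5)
Your proposal is correct and follows essentially the same route as the paper: verify the abstract hypotheses for the triple $(\NN,\calX,\RR)$ from \eqref{sottoREV2}, $\calE^{(1)}$ from \eqref{spazorev}, and the finite rank vector fields of Definition \ref{linvec}; invoke Theorem \ref{thm:kambis} with $\calL_n=\uno$; use Proposition \ref{uffffa3} (and its reliance on Lemma \ref{adjadjadj} to propagate adjoint-tameness) to replace the abstract homological equation by the Mel'nikov conditions; and finally identify the block-diagonal action of $\Pi_\calX\operatorname{ad}(\Pi_\NN F_n)$ as $\omega^{(n)}\cdot\partial_\theta$ on $\calX_1,\calX_4$ and as the pair \eqref{merdina3} on $\calX_2,\calX_3$. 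This is exactly the chain of reasoning the paper uses to declare the theorem proved at the end of subsection \ref{pensavo}.
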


As in the previous example one could also apply our KAM scheme with the decomposition \eqref{sottoMIN} but  using as changes of variables operators $\calL_n$ which block diagonalize \eqref{merdina1} into \eqref{merdina3}.
\subsection{Example 4: KAM  with reducibility.}\label{reduco}
Up to now we have just reduced our problem to the inversion of \eqref{merdina3Ham} or \eqref{merdina3}. Inverting such operators is not trivial and requires some subtle multiscale arguments as discussed in Example 2, subsection \ref{exNLS} (see also \cite{BB3}).
\\
Clearly a major simplification would appear if we were able to diagonalize \eqref{merdina3}-\eqref{merdina3Ham}. This is indeed the classical KAM approach 
(see \cite{Moser-Pisa-66}, \cite{Po2}) but it requires much stronger non resonance conditions, i.e. the second Mel'nikov conditions. 
How to use reducibility in order to prove bounds of the form \eqref{merdina3}-\eqref{merdina3Ham} for nonlinear PDEs on a circle has been discussed in various papers, see \cite{BBM1}, \cite{BBM}, \cite{FP}. Here we briefly recall the main point in the simplest possible context. We consider a Hamiltonian case and assume to work with the decomposition \eqref{sottoHAM}. 
Let suppose that in Definition \ref{struttHAM} we have  $$h_{a,p}= \oplus_{j\in \NNN}h_j, \quad \ell_{a,p}=\oplus_{j\in \NNN}\ell_j\,,\quad \ell_j= h_j\times h_j\,,\quad w_j=(z_j^+,z_j^-)$$
 with $h_j$ finite dimensional subspaces, for simplicity suppose them one-dimensional.
  Then  one may introduce finite dimensional monomial
  subspaces\footnote{Of course if $\ell_{a,p}$ is infinite dimensional then it is not true 
  that for instance
$
\bigoplus_{i,j} \VV^{(w_i,w_j)} = \VV^{(w,w)}\,.
$}
$ \VV^{(\tv, z^{\s_1}_{j_{1}},\cdots, z^{\s_k}_{j_{k}})}$, with $\s_i=\pm 1$.

For a  linear operator $A(\theta)\in \calL(\ell_{a,p},\ell_{a,p})$ one considers its block decomposition $\{A_j^i\}_{i,j\in \ZZZ^r}$ and  the off-diagonal
decay norm
\begin{equation}\label{decado}
(|A|_{s,a,p}^{\rm dec})^2:= \sum_{h=(h_1,h_2)\in \NNN\times\ZZZ^{d} }
\langle h \rangle^{2p}e^{2(a|h_1|+s|h_2|)}\sup_{j\in\NNN}  |A_{j+h_1}^{j}(h_2)|^2 
\end{equation}
where $|A_j^i|$ is the operator norm on $\calL(\ell_i, \ell_j)$ . Then we consider the corresponding weighted Lipschitz norm which we denote
$|\cdot|^{\rm dec}_{\vec v,p}$.
This gives a special role to diagonal $\theta$-independent vector fields, so we define
%
%
%
%
%
$$
\NN_0:=\av{ \VV^{(\theta,0)}}\bigoplus_{j,\s} \langle\VV^{(z^\s_j,z^\s_j)}\rangle\cap \VV^{(w,w)}\,,$$

Then one can choose $\calE$ as 
\begin{equation}\label{mannaggiac}
\calE^{(2)}_{\rm Ham}:=\{F\in \calE^{(0)}_{\rm Ham}:\;\quad d_w F^{(w)}(\theta,y,u) =D+ M \}
\end{equation}
 with $D$ diagonal  and $M$ a bounded operator with finite $| \cdot|^{\rm dec}_{\vec v,p}$ norm.
We are in the framework of \cite{Po2} or \cite{BB}, but we are {\bf not} requiring analiticity, therefore we need some tameness properties, which we ensure by choosing the norm $|\cdot|^{\rm dec}_{\vec v,p}$. Let us  briefly recall the notations.  We consider the Hamiltonian vector field
$$
F_0= \oo_{0}\cdot\del_{\theta}+\ii\sum_{j}\Omega_{j}^{(0)}z^+_j\del_{z^+_j} - \ii \sum_{j}\Omega_{j}^{(0)}z^-_j\del_{z^-_j} + G_0(\xi,y,\theta,w)
$$
with the following assumptions.
\begin{enumerate}
	\item [A.]  {\it Non-degeneracy}. We require that for all $j\neq k $, $\ell\in \TTT^d$
	\begin{equation}
	\label{nondeg}
	\big|\{\xi\in \calO_0: \oo_0\cdot l+\Omega^{(0)}_{j} \pm \Omega_{k}^{(0)}=0 \} \big| =0 \,,\qquad  \Omega^{(0)}_{j} \pm \Omega_{k}^{(0)}\neq 0\quad \forall \xi\in \calO_0\,
	\end{equation}
	\item[B.] {\it Frequency Asymptotics}.  We assume that $\xi \to \omega^{(0)}(\xi)$ is a lipeomorphism and  $$|\omega(\xi)|,|\omega(\xi)|^{\rm Lip},|\Omega^{(0)}_j(\xi)-j^\nu|, | \Omega^{(0)}_j(\xi)|^{\rm Lip}< M\,,\quad | \xi(\omega)|^{\rm Lip}\le L\,,\quad  \forall \xi\in \calO_0$$ for some $\nu>1$.
	\item[C.] {\it Regularity}. We require $G_0\in \calE^{(2)}_{\rm Ham}$ with $D=0$, more precisely $G$ is $C^4$ tame  bounded   Hamiltonian vector field with  $\Pi_{\NN_0}G=0$ well defined and Lipschitz for $\xi\in \calO_0$ a compact set of positive measure. 
	\item[D.] {\it Smallness}. In Constraint \ref{sceltapar} fix  $$\al=0\,,\quad \chi=3/2\,,\quad \ka_1= 2\ka_0+1, \ka_3= 3\ka_0+1\,,\quad \ka_2= 4 \ka_0+1\,,$$ 
	$$
	 \eta= \mu+ 2\ka_2 +3\,,\quad \Delta \gotp = 9 \ka_0 +3,\quad \gotp_1= \frac{d+2}{2}+\nu +1 $$ this leaves as only parameter $\mu$. Suppose that
	$\tG_0\sim\tR_0\sim 1$,  fix $\e_0$ small  and set
	$K_0= \e_0^{-1/5\ka_0}$ so that  the smallness conditions \eqref{expexpexp},\eqref{1s1}--\eqref{6s2} are satisfied.  
	
	We assume finally that 
	$$
	\calP_0:= \Pi_{\NN}F_{0}-\Pi_{\NN_0}F_{0}= \Pi_{\NN}G_{n}-\Pi_{\NN_0}G_{0}= P^{(0)}(\theta) w \del_w +\frac{\ii}2 J w \cdot\sum_i \partial_{\theta_i} P^{(0)}(\theta) w\del_{y_i}
	$$
	is small i.e.
	$$
\g_0^{-1}| P^{(0)}|^{\rm dec}_{\vec v_0,\gotp_1}\le \e_0
	$$
\end{enumerate}

We are in the context of Subsection \ref{exHam}, so we have Proposition \ref{uffffa2} with $\tb=3,\mathtt t=1$, and for convenience let us set 
\begin{equation}\label{fisso}
\mu= 4 (\mu_1+\nu)+5\,,\quad \mu_1=  2\tau+2\,.
\end{equation}
\begin{theo}\label{pesciolone}
Fix 	$\tau> d+1+ \frac{2}{\nu-1}$  and $\e_0(LM)^{\tau+1}\ll 1$. Let  $F_0$ be a $C^4$ tame vector field  up to order $q= \gotp_2+2$ satisfying assumptions A. to D.  Then  for $\g_0$ small enough there exists  positive measure Cantor like set $\calO_{\infty}(\g_0)\subset \calO_0$ of asymptotically full Lebesgue measure  as $\g\to 0$ and  a symplectic close to identity change of variables $\HH_\infty$ such that  
$$
\Pi_\calX (\HH_\infty)_{*} F_0=0\,,\quad (\Pi_\NN-\Pi_{\NN_0})(\HH_\infty)_{*} F_0=0,\,,\quad \forall \xi\in \calO_{\infty}(\g_0) 
$$
so that $F_0$ has a reducible KAM torus.
\end{theo}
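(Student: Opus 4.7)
The plan is to apply the abstract Theorem \ref{thm:kambis} in the Hamiltonian framework of Subsection \ref{exHam}, with decomposition \eqref{sottoHAM}, regular vector fields given by Definition \ref{linvec} and invariant subspace $\calE^{(2)}_{\rm Ham}\subset \calE^{(0)}_{\rm Ham}$ from \eqref{mannaggiac}. Assumption C supplies a $C^4$-tame Hamiltonian starting vector field with $\Pi_{\NN_0}G_0=0$; assumption D furnishes explicit values of the exponents and the starting size $\e_0$ so that Constraint \ref{sceltapar} and the smallness conditions \eqref{sizes} of Theorem \ref{thm:kambis} are satisfied (noting that $\g_0^{-1}|\Pi_\calX G_0|_{\vec v_0,\gotp_1}\le \e_0$ controls $\tG_0,\tR_0\sim 1$). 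So the first task is only the bookkeeping check that the constants in D imply \eqref{exp}--\eqref{small}; this is routine.

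The nontrivial content lies in the choice of the compatible changes of variables $\calL_n$ and in verifying the homological equation via the Mel'nikov reformulation of Proposition \ref{uffffa2}. At the $n$-th step I would write $d_w F_n^{(w)}(\theta,0,0)=D_n+M_n$ with $D_n$ diagonal, $\theta$-independent and $M_n$ of small $|\cdot|^{\rm dec}_{\vec v_n,\gotp_1}$ norm (this is preserved along the iteration because of the choice of $\calE^{(2)}_{\rm Ham}$). I would then define $\calL_n$ as the time-$1$ symplectic flow of a Hamiltonian finite-rank vector field, linear in $w$, generated by a matrix $S_n$ chosen to solve the reducibility homological equation
\[
[D_n,S_n]+\Pi_{K_{n-1}}M_n=[\Pi_{K_{n-1}}M_n]_{\rm diag}\,,
\]
where the diagonal block is absorbed into the new $D_{n+1}$. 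Invertibility of $\mathrm{ad}(D_n)$ on the off-diagonal block requires the \emph{second Mel'nikov conditions}
\[
|\omega_n(\xi)\cdot\ell+\Omega^{(n)}_j(\xi)\pm\Omega^{(n)}_k(\xi)|\ge \frac{\g_n}{\langle\ell\rangle^{\tau}}\,,\qquad |\ell|\le K_{n-1}\,,
\]
which define together with the first Mel'nikov conditions on $\omega_n\cdot\ell$ the candidate sets $\calO_n\subset\calO_{n-1}$. Once these hold, $\gotN_n$ becomes essentially diagonal; the bounds \eqref{buoni}--\eqref{cribbio4} then follow from the decay-norm estimates on $M_n$ together with Lemma \ref{scerpa}-type bounds, and Proposition \ref{uffffa2} upgrades them to the homological equation of Definition \ref{pippopuffo2}. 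The compatibility of $\calL_n$ in the sense of Definition \ref{compa} is then inherited from item 5 of Definition \ref{linvec-abs} and the $\e_0 K^{-1}$ smallness of $S_n$, which comes from a Neumann inversion using the Mel'nikov bounds at scale $K_{n-1}$.

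Granted this, Theorem \ref{thm:kambis} produces $\HH_\infty$ with $\Pi_\calX(\HH_\infty)_*F_0=0$ on $\calO_\infty=\bigcap_n\calO_n$, and by construction of the $\calL_n$ also $(\Pi_\NN-\Pi_{\NN_0})(\HH_\infty)_*F_0=0$ there, which is the reducibility statement. The main obstacle, and the only step that requires the global hypotheses A and B rather than just perturbative estimates, is showing that $\calO_\infty$ has asymptotically full measure as $\g_0\to0$. For this I would propagate along the iteration Lipschitz estimates for the perturbed frequencies, namely $|\omega_n-\omega^{(0)}|^{\rm Lip},|\Omega^{(n)}_j-\Omega^{(0)}_j|^{\rm Lip}\le C\e_0$ uniform in $n,j$, using the tame bounds \eqref{lamorte} together with the decay norm control. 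Assumption B ($\Omega^{(0)}_j\sim j^\nu$ with $\nu>1$) gives $|\Omega^{(n)}_j\pm\Omega^{(n)}_k|\gtrsim |j^\nu-k^\nu|$ for large $j,k$, so for each $\ell\in\ZZZ^d$ only finitely many pairs $(j,k)$ give small divisors and in each resonant strip one has $|j-k|\lesssim\langle\ell\rangle$ and $\max(j,k)\lesssim\langle\ell\rangle^{1/(\nu-1)}$; assumption A plus the Lipschitz dependence in $\xi$ via the non-degeneracy $\xi\mapsto \omega(\xi)$ then gives measure of each bad set $\lesssim \g\langle\ell\rangle^{-\tau}\max(j,k)$. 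The choice $\tau>d+1+\tfrac{2}{\nu-1}$ makes the total sum convergent, yielding $|\calO_0\setminus\calO_\infty|\lesssim \g_0(LM)^{\tau+1}\to 0$, which is precisely the claim.

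The delicate point in this final measure step, which I expect to be the real technical bottleneck, is that the Cantor sets $\calO_n$ are defined in terms of the \emph{perturbed} frequencies $\omega_n,\Omega^{(n)}_j$, not the unperturbed ones; hence one needs Whitney-Lipschitz extensions of the $\Omega^{(n)}_j$ to the whole of $\calO_0$ with uniform Lipschitz constants (this is standard but must be checked against our definition of $\g$-weighted norm in \eqref{ancoralip}). Once this is in place the measure estimate reduces to the classical computation as in \cite{Po2,BB}, the only novelty being that we have only finite differentiability, which is absorbed by the fact that we do not need analyticity of $\Omega_j^{(n)}$ in $\xi$ but only Lipschitz dependence.
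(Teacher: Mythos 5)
Your proposal follows essentially the same route as the paper: apply the abstract Theorem \ref{thm:kambis} in the Hamiltonian setting of Subsection \ref{exHam} (decomposition \eqref{sottoHAM}, $\calE^{(2)}_{\rm Ham}$, Proposition \ref{uffffa2}), take the compatible maps $\calL_n$ to be the symplectic reducibility transformations solving $[\mathcal S_{n},\DD_{n-1}]+\Pi_{K_{n-1}}\calP_{n-1}=0$ on the second-Mel'nikov sets \eqref{secmel3}, impose the first Mel'nikov conditions \eqref{secmel300} for the homological equation, and conclude the measure estimate from assumptions A--B with $\tau>d+1+\tfrac{2}{\nu-1}$ exactly as in \cite{Po2} (the paper simply cites Corollary C there). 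The only inaccuracy worth flagging is your appeal to item 5 of Definition \ref{linvec-abs} for the compatibility of $\calL_n$: the generator $\mathcal S_{n}$ is quadratic in $w$ and lies in the $\NN$-directions $\VV^{(w,w)}\oplus\VV^{(y,w,w)}$, so it is not a regular (finite-rank, $\calX$-type) vector field and that item does not apply; compatibility is instead verified, as in Lemma \ref{redKam}, through the decay norm \eqref{decado} and the inductive bounds $\g_n^{-1}|P^{(n)}|^{\rm dec}_{\vec v_n,\gotp_1}\le K_0^{\ka_1}\e_0K_n^{-\ka_1}$, $\g_n^{-1}|P^{(n)}|^{\rm dec}_{\vec v_n,\gotp_2}\le 2\tG_0K_n^{\ka_1}$, together with $\ka_3>\ka_1+2\tau+1$ -- which is in substance the ``Mel'nikov smallness of $S_n$'' you invoke, just with the correct bookkeeping.
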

We apply Theorem \ref{thm:kambis} to $F_0$ and we aim to show that we can produce a non-empty set $\calO_\infty$ by choosing the $\calL_n$ appropriately.
By definition,  at each step $n$, set
\begin{align}\label{claKAM}
\Pi_{\NN_0}F_{n} &=\DD_n=\oo_{n}\cdot\del_{\theta}+\ii\sum_{j}\Omega_{j}^{(n)}z^+_j\del_{z^+_j} - \ii \sum_{j}\Omega_{j}^{(n)}z^-_j\del_{z^-_j}\,,\\ \Pi_{\NN}F_{n}-\Pi_{\NN_0}F_{n} &=\calP_n= P^{(n)}(\theta) w \del_w +\frac{\ii}2 J w \cdot\sum_i \partial_{\theta_i} P^{(n)}(\theta) w\del_{y_i}.
\end{align}

\begin{lemma}[KAM reduction step]\label{redKam}
Fix $\vec v_n=(\g_n,\calO_n,s_n,a_n,r_n)$, $\vec v^0_n=(\g_n,\calO_0,s_n,a_n,r_n)$ and $K_n\gg 1$ as in \eqref{numeretti}. Given a  Hamiltonian vector field  $\DD_n+\calP_n$  as in \eqref{claKAM} with $$ \rho_n(M\g_n)^{-1} K_n^{2\tau+1} |P^{(n)}|^{\rm dec}_{\vec v_n,\gotp_1}\ll 1$$ 
 there exists a symplectic change of variables	
$\calL_{n+1}$,  which conjugates
 $$(\calL_{n+1})_{*}(\DD_n+\calP_n):=\widehat\DD_n + \widehat\calP_n 
$$ (here $\widehat\DD_n$ is the projection of the conjugate vector field onto $\NN_0$ ) so that

\begin{enumerate}
	\item $\calL_{n+1}$ is  the time one flow of the  Hamiltonian vector field $$\mathcal S_{n+1}:= S^{(n+1)}(\theta) w \del_w +\frac{\ii}2 J w \cdot\sum_i \partial_{\theta_i} S^{(n+1)}(\theta) w\del_{y_i} \,,\quad {\rm with}\;
|S^{(n+1)}|^{\rm dec}_{\vec v^0_n, p}\le M \g_n^{-1} K_n^{2\tau+1} |P^{(n)}|_{\vec v_n,p}^{\rm dec}.$$
\item Set
\begin{equation}\label{secmel3}
\widehat{\calO}_n:= \{\xi\in\calO_n:\; 
|\oo^{(n)}\cdot l+\Omega^{(n)}_{j} \pm \Omega_{k}^{(n)}  |\geq \frac{M\g_n}{K_n^{\tau}}, \quad |\ell|\le K_n
\}.
\end{equation}   For all $\xi\in \widehat{\calO}_n$,  $\mathcal S_{n+1}$ solves the Homological equation
$$
[\mathcal S_{n+1},\DD_n] + \Pi_{K_n}\calP_n=0 \,,\quad (\Pi_K P)_{ij}= \left\{\begin{aligned}
P_{ij} & \quad {\rm if}\; |i-j|\le K \\ 0 & \quad {\rm otherwise}
\end{aligned}\right.
$$
\item Setting $\widehat v_n:=(\g_n,\widehat\calO_n,s_n,a_n,r_n) $ we have the bounds
\begin{align*}
&|\widehat P^{(n)}|^{\rm dec}_{\widehat v_n,\gotp_1}\le  |S^{(n+1)}|^{\rm dec}_{\vec v, \gotp_1} |P^{(n)}|^{\rm dec}_{\vec v, \gotp_1} + K_n^{-(\gotp_2-\gotp_1)}| P^{(n)}|^{\rm dec}_{\vec v_n,\gotp_2}\,,\\ 
&| \widehat P^{(n)}|^{\rm dec}_{\widehat v_n,\gotp_2} \le  | P^{(n)}|^{\rm dec}_{\vec v_n,\gotp_2} +{\rm const} |S^{(n+1)}|^{\rm dec}_{\vec v, \gotp_2} | P^{(n)}|^{\rm dec}_{\vec v_n,\gotp_2} \,,
\end{align*}
\end{enumerate}
\end{lemma}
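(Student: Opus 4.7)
My strategy follows the classical KAM reducibility step. Since $\mathcal S_{n+1}$ has exactly the structural form of $\calP_n$ (a quadratic Hamiltonian vector field built from a matrix-valued $S^{(n+1)}(\theta)$ plus its Hamiltonian companion in $\partial_y$), its time-one flow is symplectic and preserves $\calE^{(2)}_{\rm Ham}$ together with the block structure of Definition \ref{struttHAM}. Hence the lemma reduces to three points: (a) solving the homological equation with the stated bound on $S^{(n+1)}$; (b) showing that $\calL_{n+1}$ is well defined on the required domains; (c) expanding $(\calL_{n+1})_{*}(\DD_n+\calP_n)$ via the Lie series and estimating the residual in the decay norm.

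For (a), I would rewrite $[\mathcal S_{n+1},\DD_n]+\Pi_{K_n}\calP_n=0$ componentwise in the Fourier basis in $\theta$ and in the block decomposition of $\ell_{a,p}$. Since $\DD_n$ acts diagonally the equation decouples into the scalar equations
\begin{equation*}
\ii\bigl(\oo^{(n)}\cdot\ell+\sigma_1\Omega^{(n)}_i+\sigma_2\Omega^{(n)}_j\bigr)\,S^{(n+1)}_{ij,\sigma_1\sigma_2}(\ell)=-P^{(n)}_{ij,\sigma_1\sigma_2}(\ell),\qquad \sigma_{1,2}\in\{\pm1\},
\end{equation*}
for $|\ell|,|i-j|\le K_n$, which I solve by division and set to zero otherwise. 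On $\widehat\calO_n$ the denominators are bounded below by $M\g_n K_n^{-\tau}$ by \eqref{secmel3}, and inserting this pointwise in $\xi$ into \eqref{decado} yields the sup-norm bound $(M\g_n)^{-1}K_n^{\tau}|P^{(n)}|^{\rm dec}_{\vec v_n,p}$. Differentiating the quotient in $\xi$ and using assumption~B produces an additional small-divisor factor together with a loss of one derivative in $\ell$ from $\partial_\xi(\oo^{(n)}\cdot\ell)$, i.e.\ an extra $M^{-1}\g_n^{-1}K_n^{\tau+1}$ in the Lipschitz piece; after multiplying by $\g_n$ to pass to the $\g$-weighted Lipschitz norm and applying a Whitney extension from $\widehat\calO_n$ to $\calO_0$, this produces the claimed bound (1). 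The Hamiltonian and reversible structure of $S^{(n+1)}$ are inherited from those of $P^{(n)}$ since the denominators are real.

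For (b), the smallness hypothesis of the lemma translates into $|\mathcal S_{n+1}|^{\rm dec}_{\vec v^0_n,\gotp_1}\ll\rho_n$, so item (5) of Definition \ref{linvec-abs} produces the time-one flow $\calL_{n+1}$, which is $\rho_n$-close to the identity and maps the shrunk domain into itself. For (c), the Lie series together with the homological equation cancels the first-order term and rearranges into
\begin{equation*}
(\calL_{n+1})_{*}(\DD_n+\calP_n)=\DD_n+\Pi_{K_n}^\perp\calP_n+[\mathcal S_{n+1},\calP_n]+\sum_{k\ge 2}\frac{1}{k!}\mathrm{ad}(\mathcal S_{n+1})^{k-1}\bigl([\mathcal S_{n+1},\calP_n]-\Pi_{K_n}\calP_n\bigr).
\end{equation*}
The projection onto $\NN_0$ defines $\widehat\DD_n$ and the rest defines $\widehat\calP_n$; the $\gotp_1$-bound in (3) follows by combining the smoothing estimate $|\Pi_{K_n}^\perp\calP_n|^{\rm dec}_{\vec v_n,\gotp_1}\le K_n^{-(\gotp_2-\gotp_1)}|\calP_n|^{\rm dec}_{\vec v_n,\gotp_2}$ from item~(3) of Definition \ref{linvec-abs} with an algebra/tame inequality for $|\cdot|^{\rm dec}$ under commutators of such structured vector fields, which produces the $|S^{(n+1)}|^{\rm dec}|P^{(n)}|^{\rm dec}$ term; the $\gotp_2$-bound is analogous but without the smoothing gain.

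The main obstacle will be proving the algebra-plus-tame inequality for the decay norm $|\cdot|^{\rm dec}$ under commutators of the structured vector fields $P(\theta)w\partial_w+\tfrac{\ii}{2}Jw\cdot(\partial_\theta P(\theta))w\partial_y$, with simultaneous control of the sup and Lipschitz seminorms in $\xi$, and checking that the Hamiltonian structure of these commutators keeps them in the same class so that $\widehat\calP_n$ is genuinely of the announced form with a new matrix $\widehat P^{(n)}(\theta)$. A secondary technicality is the Whitney extension of $S^{(n+1)}$ from $\widehat\calO_n$ to $\calO_0$ preserving the $\g$-weighted bound with only the polynomial loss $K_n^{2\tau+1}$.
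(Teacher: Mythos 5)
The paper states Lemma \ref{redKam} without giving a proof --- it is the standard KAM reducibility step, implicitly deferred to \cite{Po2} and \cite{BB} --- so there is no ``paper's own proof'' to compare against line by line. That said, your proposal is correct and reproduces the classical argument that the paper is clearly invoking: componentwise resolution of $[\mathcal S_{n+1},\DD_n]+\Pi_{K_n}\calP_n=0$ in the Fourier basis in $\theta$ and in the block decomposition of $\ell_{a,p}$, small-divisor bounds from \eqref{secmel3} yielding the sup part and an extra $K_n^{\tau+1}$ in the Lipschitz part, Whitney/Kirszbraun extension from $\widehat\calO_n$ to $\calO_0$, Lie-series expansion, and the smoothing/algebra estimates for $|\cdot|^{\rm dec}$ to get item (3). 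Your observation that the structural form of $\widehat\calP_n$ is preserved because the quadratic-in-$w$ Hamiltonians form a Lie subalgebra, and that solving the $(w,w)$ component automatically pins down the $(y,w,w)$ companion through the Hamiltonian constraint, are exactly the right reasons.

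Two points should be made explicit for completeness. First, the scalar equation you write has a vanishing denominator when $\ell=0$, $i=j$ and $\sigma_1\sigma_2=-1$; this term is \emph{not} solved because by construction $\calP_n = (\Pi_\NN-\Pi_{\NN_0})F_n$ has no diagonal $\theta$-independent part, so $P^{(n)}_{jj,+-}(0)=0$ identically --- otherwise $\widehat\calO_n$ in \eqref{secmel3} would be read as empty. Second, tracking the $M$-dependence carefully in the Lipschitz estimate (denominator lower bound $M\g_n K_n^{-\tau}$ against numerator Lipschitz bound $\lesssim M(1+K_n)$) gives the prefactor $(M\g_n)^{-1}K_n^{2\tau+1}$ rather than literally $M\g_n^{-1}K_n^{2\tau+1}$ as in the statement; your derivation is consistent with the former, and the discrepancy is harmless but worth flagging since it propagates into the smallness hypothesis.
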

Now we may  apply this reduction step at each step in our iteration in Theorem \ref{thm:kambis}, provided that we show  that the $\calL_n$ are compatible changes of variables. This we prove by induction. In fact we recursively obtain that
$$
\gamma_n^{-1}|P^{(n)}|_{\vec v, \gotp_1}^{\rm dec}  \le K_0^{\kappa_1}\e_0 K_n^{-\kappa_1} \,,\quad \gamma_n^{-1}|P^{(n)}|_{\vec v, \gotp_2}^{\rm dec}
 \le  2\mathtt G_0 K_n^{\kappa_1}
$$
so that the $\calL_n$ are compatible  since  $\kappa_3> \kappa_1+2\tau+1$. Now we can choose a set $\calO_{n+1}$ which satisfies the  Melnikov conditions \eqref{merdina3Ham} at step $n$ for $\widehat F_n= (\calL_n)_{*} F_n$.  Since $\Pi_\NN \widehat F_n= (\calL_n)_{*} \Pi_\NN  F_n $, this amounts to finding an approximate inverse for ad$(\widehat \DD_n+\widehat\calP_n)$  which satisfies  \eqref{buoni} with $\mu_1$ fixed in \eqref{fisso}. 
Now, finding a partial inverse of \eqref{merdina3Ham} is equivalent to finding an inverse to  ${\rm ad}\,\widehat\DD_n$, since ${\rm ad}\,\widehat\calP_n$ can be put inside the remainder, see formula \eqref{cribbio42}.
In turn the invertibility of ${\rm ad}\,\widehat\DD_n$ on $\calX_2$ is ensured by requiring 
 lower bounds on the eigenvalues, i.e. choosing at each step 
\begin{equation}\label{secmel300}
{\calO}_{n+1}:= \{\xi\in\widehat\calO_n:\; 
|\widehat\oo_n\cdot l+\widehat\Omega^{(n)}_{j}|\geq \frac{\g_n M}{K_n^{\tau}}\,, \quad |\ell|\le K_n
\}
\end{equation} 
in Theorem \ref{thm:kambis}. One can easily check that, troughout the algorithm 
 $\xi \to \omega^{(n)}(\xi)$ and $\xi \to \widehat\omega^{(n)}(\xi)$ are lipeomorphisms and  $$|\omega^{(n)}-\omega_0|+\gamma_0|\omega^{(n)}-\omega_0|^{\rm Lip},|\Omega^{(n)}_j-\Omega^{(0)}_j|+\g_0|\Omega^{(n)}_j-\Omega^{(0)}_j|^{\rm Lip}< \g_0 \e_0 \,,\quad | \xi_n(\omega)|^{\rm Lip}\le 2L\,,\quad \mbox{in}\; \calO_n,$$  the same for the corresponding $\widehat{\cdot}$ quantities.
\begin{lemma}
For $\tau> d+1+ \frac{2}{\nu-1}$  and $\e_0(LM)^{\tau+1}\ll 1$ we have that $|\calO_0\setminus \cap_n \calO_n| \to 0$  as $\g_0\to 0$.  
\end{lemma}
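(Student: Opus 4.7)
The plan is to decompose
$$
\calO_0 \setminus \bigcap_{n\geq 0} \calO_n \;\subset\; \bigcup_{n\geq 0} \bigl[(\calO_n \setminus \widehat\calO_n) \cup (\widehat\calO_n \setminus \calO_{n+1})\bigr],
$$
and to bound each piece as a union of elementary resonance sets indexed by the step $n$, a frequency multi-index $\ell\in\ZZZ^d$ with $|\ell|\le K_n$, internal indices $(j,k)\in\NNN^2$ and a sign $\s=\pm$ (for the second Mel'nikov condition) or a singleton $j\in\NNN$ (for the first Mel'nikov condition).  Throughout I rely on the inductive frequency perturbation estimates recorded just above the lemma, namely
$$
|\omega^{(n)}-\omega^{(0)}|+\g_0|\omega^{(n)}-\omega^{(0)}|^{\mathrm{Lip}}\;\lesssim\; \g_0 \e_0,\qquad |\Omega^{(n)}_j-\Omega^{(0)}_j|+\g_0|\Omega^{(n)}_j-\Omega^{(0)}_j|^{\mathrm{Lip}}\;\lesssim\; \g_0 \e_0,
$$
together with the analogous bounds for $\widehat\omega^{(n)}$ and $\widehat\Omega^{(n)}_j$; these are produced inductively by Lemma \ref{redKam}(1) combined with the geometric size decay $\g_n^{-1}|P^{(n)}|^{\mathrm{dec}}_{\vec v_n,\gotp_1}\le K_0^{\ka_1}\e_0 K_n^{-\ka_1}$ guaranteed by the compatibility of the $\calL_n$ inside Theorem \ref{thm:kambis}.

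\emph{Truncation of internal indices.}  From $\Omega^{(0)}_j = j^\nu + O(1)$ (condition B) and the perturbation estimate above, one has $|\Omega^{(n)}_j+\Omega^{(n)}_k|\ge c(j^\nu+k^\nu)$ and, for $j\neq k$, $|\Omega^{(n)}_j-\Omega^{(n)}_k|\ge c|j-k|\max(j,k)^{\nu-1}$.  Since $|\omega^{(n)}\cdot\ell|\le MK_n$ for $|\ell|\le K_n$, the second Mel'nikov $+$ condition is automatic once $\min(j,k)\gtrsim K_n^{1/\nu}$, the $-$ condition (with $j\neq k$) is automatic once $\max(j,k)\gtrsim K_n^{1/(\nu-1)}$, and the first Mel'nikov condition is automatic for $j\gtrsim K_n^{1/\nu}$.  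At each step $n$ this leaves at most $O(K_n^{2/(\nu-1)})$ active pairs $(j,k)$ per $\ell$ and $O(K_n^{1/\nu})$ active singletons $j$ per $\ell$.

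\emph{Measure of one resonance, and summation.}  For an active $(\ell,j,k,\s)$ with $\ell\neq 0$, set $\psi_n(\xi):=\omega^{(n)}(\xi)\cdot\ell+\Omega^{(n)}_j(\xi)+\s\Omega^{(n)}_k(\xi)$.  Parametrizing $\xi$ by $\omega\in\omega^{(0)}(\calO_0)$ via the lipeomorphism in condition B (which has inverse of Lipschitz constant $L$), the map $\omega\mapsto\psi_n(\xi(\omega))$ is an $O(\e_0\g_0)$-Lipschitz perturbation of $\omega\mapsto\omega\cdot\ell+\Omega^{(0)}_j+\s\Omega^{(0)}_k$, whose derivative along $\ell/|\ell|$ is at least $|\ell|-2LM\ge|\ell|/2$ as soon as $|\ell|\ge 4LM$.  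Fubini in the direction $\ell/|\ell|$ then gives
$$
\bigl|\{\xi\in\calO_n:\ |\psi_n(\xi)|<M\g_n K_n^{-\tau}\}\bigr|\;\le\; C L^d M \g_n K_n^{-\tau};
$$
the finitely many $\ell$ with $0<|\ell|<4LM$ and the case $\ell=0$ are handled using the second half of condition A (the uniform non-vanishing of $\omega^{(0)}\cdot\ell+\Omega^{(0)}_j\pm\Omega^{(0)}_k$ on $\calO_0$) together with compactness, and give empty contribution for $\g_0$ small.  Multiplying by the $\ell$-count $O(K_n^d)$ and by the internal-index count $O(K_n^{2/(\nu-1)})$ from the previous paragraph yields
$$
|\calO_n\setminus\widehat\calO_n| + |\widehat\calO_n\setminus\calO_{n+1}|\;\le\; C\g_n K_n^{d + 2/(\nu-1) - \tau}.
$$
The hypothesis $\tau>d+1+2/(\nu-1)$ makes the exponent strictly negative (indeed $<-1$), so using $K_n=K_0^{\chi^n}$ with $\chi=3/2$ the sum over $n$ is dominated by its leading term and one obtains $|\calO_0\setminus\bigcap_n\calO_n|\lesssim\g_0 K_0^{d+2/(\nu-1)-\tau}$, which tends to $0$ as $\g_0\to 0$ thanks to the constraint $K_0\to\infty$ built into D.

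\emph{Main obstacle.}  The delicate point is not the Diophantine measure estimate, which is classical, but the inductive control of the frequencies $\omega^{(n)}, \Omega^{(n)}_j$ invoked at the start.  This requires chaining Lemma \ref{redKam}(1) (the bound $|S^{(n+1)}|^{\mathrm{dec}}\lesssim M\g_n^{-1}K_n^{2\tau+1}|P^{(n)}|^{\mathrm{dec}}$) with the compatibility conditions \eqref{satana}--\eqref{odio} on $\calL_n$ and with Theorem \ref{thm:kambis} to obtain $|P^{(n)}|^{\mathrm{dec}}_{\vec v_n,\gotp_1}\lesssim \g_n \e_0 K_0^{\ka_1} K_n^{-\ka_1}$; the smallness assumption $\e_0(LM)^{\tau+1}\ll 1$ is then precisely what is needed to absorb the accumulated Lipschitz constants and to make $K_0$ large enough for the preceding measure bound to close up.
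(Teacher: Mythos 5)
The paper's own ``proof'' of this lemma is a one--line citation: the measure estimate is exactly Corollary~C of \cite{Po2}. What you have written is a direct reconstruction of the Arnold/P\"oschel argument that this citation encapsulates: nested decomposition of $\calO_0\setminus\bigcap_n\calO_n$ into the step--$n$ resonance zones, truncation of the internal indices via the asymptotics $\Omega_j\sim j^{\nu}$ (giving $O(K_n^{2/(\nu-1)})$ active pairs per $\ell$), a Fubini/Lipschitz estimate in the direction $\ell/|\ell|$ for $|\ell|\gtrsim LM$ after parametrizing by $\omega$, and summation over $n$ using $K_n=K_0^{\chi^n}$ and $\tau>d+1+2/(\nu-1)$. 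That is the right skeleton, and the inductive frequency bounds you invoke at the start are indeed what the text establishes just before the lemma (via Lemma \ref{redKam} and the compatibility of the $\calL_n$), so your ``main obstacle'' paragraph correctly identifies where the real work outside the measure estimate lies. A minor slip: for the ``$+$'' Mel'nikov condition the truncation should be stated on $\max(j,k)\gtrsim K_n^{1/\nu}$, not $\min(j,k)$; this does not affect the count since the ``$-$'' case dominates.

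There is, however, one step that is wrong as written: the claim that the zones with $1\le|\ell|<4LM$ ``give empty contribution for $\g_0$ small''. Assumption A only states that for each fixed $(\ell,j,k)$ the set $\{\xi:\ \omega^{(0)}\cdot\ell+\Omega^{(0)}_j\pm\Omega^{(0)}_k=0\}$ has \emph{measure zero}; it does not give a uniform lower bound on $|\omega^{(0)}\cdot\ell+\Omega^{(0)}_j\pm\Omega^{(0)}_k|$ over $\calO_0$ (compactness gives such a bound only for $\ell=0$, where A asserts nonvanishing everywhere). Hence for small nonzero $|\ell|$, where your transversality-in-$\ell/|\ell|$ argument is unavailable, the zones are in general nonempty for every $\g_0>0$; the correct statement is that each of these finitely many zones (finitely many triples $(\ell,j,k)$, uniformly in $n$, since every step--$n$ zone is contained in the corresponding step--$0$ zone enlarged by $O(\e_0\g_0)+M\g_0K_0^{-\tau}$) has measure tending to $0$ as $\g_0\to0$, by continuity of measure, \emph{with no rate}. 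Consequently your final quantitative bound $|\calO_0\setminus\bigcap_n\calO_n|\lesssim \g_0K_0^{d+2/(\nu-1)-\tau}$ is not justified; only the qualitative conclusion $|\calO_0\setminus\bigcap_n\calO_n|\to0$ survives after this correction, which is precisely why the lemma (and P\"oschel's Corollary~C) is stated without a rate. With that one step repaired, your argument proves the lemma.
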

\begin{proof}
This is proved in \cite{Po2} Corollary C.
\end{proof}

The main point in this approach is that at each step, while constructing our approximate solutions by inverting \eqref{merdina3Ham}, we apply  a change of variables which approximately diagonalizes the linearized operator up to a negligible remainder. Condition \eqref{secmel3}, ensures that the sequence of linear changes of variables converges.
 In fact this last approach could be made slightly more flexible, indeed in our construction we have used the norm \eqref{decado} on the component $\VV^{(w,w)}\oplus \VV^{(y,w,w)}$ in order to perform the reduction. This imposes some unnecessary conditions on the changes of variables, since the only unavoidable request on the $\calL_n$ is that they preserve $\calE$ and do not disrupt the bounds. A typical  example is a change of variables of the form $\calL w(x)= w(x+ a(\theta,x))$. One does not expect such change of variables to have finite \eqref{decado} norm but, if $a$ is chosen appropriately, it can be a compatible change of variables.

\subsection{Application to the NLS.}\label{exNLS}
Let us specify to a PDE context;  typical examples are $\ell_{a,p}= H^{p}(\TTT^d_a)$ or $\ell_{p}= \ell_{a=0,p}= H^{p}(\mathbb G)$ with $\mathbb G$
 a compact Lie group or homogeneous manifold.  Then we may suppose that $\Omega^{(0)}$ in \eqref{enneo} is diagonal w.r.t. the Fourier 
 basis (and self-adjoint) while $G$ is a composition operator w.r.t. the $w$ variable. For simplicity of the exposition, let us restrict ourself to semi-linear PDEs 
 with no derivative in the nonlinearity, 
so that $G(w(x))= f(w(x))$ with $f $ a $C^k$ map $\CCC^2\to \CCC^2$.  More precisely we choose
\begin{equation}\label{labestia}
\calE=\calE^{(1)}_{\rm Ham}:=\Big\{F \in \calE^{(0)}_{\rm Ham}:\quad d_w F^{(w)}(\theta,y,u) =D+ M+ R\,,\quad u\in D_{a,p}(r)\Big\}
\end{equation}
where $D$ is a diagonal operator $M$ is a multiplication operator while $R$ is finite rank. Since we apply symplectic changes of variables which are the identity plus a traslation plus a  finite rank operator, 
$\calE^{(1)}_{\rm Ham}$ is preserved throughout our algorithm.

As an example consider the NLS equation on a simple {\em compact} Lie group $\mathbb G$:
\begin{equation}\label{NLSfin}
\ii  \partial_t u +\Delta u + M_\xi u = \epsilon g(|u|^2) u \,,\quad 
\end{equation}
here $g(y)\in C^q(\RRR,\RRR)$ with $q$ large, while $M_\xi$ is a Fourier multiplier 
$$
M_\xi \phi_i(x)= \xi_i \phi_i(x) \,,\quad i=1,\cdots,d
$$
where $\phi_n(x)$ are distinct eigenfunctions of the Laplace-Beltrami operator. We introduce the variables $\theta,y,w=(z,\bar z)$ by writing (for $I_n>0$ )
\begin{equation}\label{nonvoglioripetere}
u(x)= \sum_{j=1}^d \sqrt{I_j + y_j}e^{\ii \theta_j}\phi_j(x) +z(x)\,,
\end{equation}
where $z(x)$ belongs to the orthogonal complement of Span$(\phi_i(x))_{i=1}^d$ which by definition is $\ell_p=\ell_{0,p}$. As norm we choose the one induced on $\ell_p$ by $H^p(\mathbb G)$.
  This change of variables is symplectic and one obtains a Hamiltonian vector field $F=N_0+G$ of the form \eqref{adattato}, where $\omega^{(0)}_i(\xi)= \lambda_i + \xi_i$, with  $\lambda_i$ being the eigenvalue of the Laplace-Beltrami operator associated to $\phi_i$. Correspondingly $\Lambda^{(0)}$ is the Laplace-Beltrami operator restricted to the complement of Span$(\phi_n(x))_{n=1}^d$. 
Since the non-linearity is a composition operator on $H^p$, then classical results ensure the $C^k$ tameness of the vector field for all $k$.
Moreover the restriction of a multiplication operator to $\ell_{p}$ is a multiplication operator up to a finite rank operator ($\ell_{p}$ is the complement of a finite dimensional subspace), then  
 $F\in \calE^{(1)}_{\rm Ham}$. 
Now we fix $\g_0>1/2$ and take $\tG_0,\tR_0,\e_0\sim \epsilon$ in \eqref{sceltapar}. In this way the NLS equations satisfies all the hypotheses of Theorem \ref{esempappli}
and hence we deduce the existence of an invariant torus in the set $\calO_\io$.

Let us now discuss the measure estimates for the sets $\mathcal C_n$ in this setting.
It is easily seen that $\omega^{(n)}_j= \lambda_j+\xi_j +O(\epsilon)$ so imposing the diophantine conditions on such sequence is simple. As one could expect the key point is the inversion of the operator in \eqref{merdina3Ham}. In turn, since the operator is triangular, this amounts to inverting the diagonal, i.e. the operator
\begin{equation}\label{merdafin}
\gotL_n= (\omega^{(n)}\cdot \partial_\theta\big)\,\uno  -  F^{(w,w)}_n(\theta).
\end{equation}
 acting on $\VV^{(w,0)}$.
 We first remark that $F^{(w,w)}_n(\theta)$ is the linearized of $F^{(w)}$ at an approximate solution up to a finite rank term. This follows from the fact that our changes of variables are traslations plus finite rank.
 From this we deduce that $F^{(w,w)}_n(\theta)$ is a multiplication operator plus a finite rank one.
Now in order to prove that the estimates \eqref{buoni} and \eqref{cribbio4} hold for \eqref{merdafin} for a large set of $\xi$ one can use a multiscale theorem, such as the one in \cite{B3},\cite{BB2} or \cite{BCP}.   Indeed one may verify that $\gotL_n$ in \eqref{merdafin} fits all the hypotheses of Proposition 5.8 in \cite{BCP} so that the tame estimates on the inverse follow by conditions on the eigenvalues. The measure estimates follow by eigenvalue variation (again just as in \cite{BCP}).
 
 If $\mathbb G=\TTT^k$,  this general strategy is carried on in full details in \cite{BB3}, in the more complicated case of a multiplication potential; see also the application to the wave equation \cite{BeBoW}.
 Since the authors follow the Nash-Moser approach they never apply the symplectic change of variables which block-diagonalize, but 
 only deduce its existence by the Hamiltonian structure.
 
 If $\mathbb G=\TTT$ and we look for odd solutions, then the hypoteses A. to D. of subsection \ref{reduco}
 are satisfied and using the same change of variables as in \eqref{nonvoglioripetere} and reasoning as
 above, one can see that $F\in\calE^{(2)}_{\rm Ham}$, see \eqref{mannaggiac}, and therefore one can apply Theorem \ref{pesciolone} and obtain that
 the invariant torus is also reducible; note that in this case the procedure is complete in the sense that the set $\calO_\io$ has positive measure.
 \begin{theorem}
 	The NLS equation \eqref{NLSfin} admits reducible and linearly stable quasi-periodic solutions for all $\e$ small enough and for all $\xi$ in a positive measure set.
 \end{theorem}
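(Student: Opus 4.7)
The strategy is to recast the NLS \eqref{NLSfin} in the abstract framework of Subsection~\ref{reduco} via the symplectic substitution \eqref{nonvoglioripetere}, verify hypotheses A.--D., and invoke Theorem~\ref{pesciolone}; linear stability will then be a direct consequence of reducibility.

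First I would fix $\mathbb G = \TTT$ with Dirichlet boundary conditions (equivalently, work in the odd subspace), choose $d$ distinct tangential indices $j_1 < \dots < j_d$ with eigenfunctions $\phi_{j_i}(x) = \sin(j_i x)$ and positive tangential actions $I_{j_i}>0$, and perform the change of coordinates \eqref{nonvoglioripetere}, where $z$ lies in the $L^2$-orthogonal complement of $\mathrm{Span}(\phi_{j_i})_{i=1}^d$ with norm induced from $H^p(\TTT)$. This is symplectic and yields a Hamiltonian vector field $F_0 = N_0 + G_0$ of the form \eqref{adattato}, with $\omega^{(0)}_i(\xi) = j_i^2 + \xi_{j_i}$ and $\Omega^{(0)}_k(\xi) = k^2 + \xi_k$ for normal indices $k$, after identifying the Fourier multipliers in $M_\xi$ with independent parameters.

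Next I would check A.--D. of Subsection~\ref{reduco}. Non-degeneracy (A.) is immediate: since $\xi \mapsto (\omega^{(0)}, \Omega^{(0)})$ is affine with linearly independent components, $\{\omega^{(0)}\!\cdot\ell + \Omega^{(0)}_j \pm \Omega^{(0)}_k = 0\}$ is a proper affine hyperplane for every $(\ell,j,k)$ with $j\neq k$, and $\Omega^{(0)}_j \pm \Omega^{(0)}_k \neq 0$ generically on $\calO_0$. Frequency asymptotics (B.) holds with $\nu=2$ and $M,L\sim 1$, since the maps are affine perturbations of $\lambda_j = j^2$. Regularity (C.) holds because $u\mapsto g(|u|^2)u$ is a composition operator, hence $C^k$-tame on $H^p(\TTT)$ for $p>1/2$ by Moser-type estimates; more importantly, after \eqref{nonvoglioripetere} the linearization $d_w F^{(w)}(\theta,y,w)$ splits as a Fourier-diagonal piece (from $\Delta + M_\xi$) plus the operator of multiplication by a $C^k$ function of $(\theta,x)$, plus a finite-rank correction from projecting off the tangential modes. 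On the 1D torus, the matrix elements of such a multiplication operator decay rapidly in $|i-j|$, so its decay norm \eqref{decado} is finite, which places $F_0 \in \calE^{(2)}_{\rm Ham}$ as defined in \eqref{mannaggiac}. Smallness (D.) then holds by taking $\tG_0 \sim \tR_0 \sim \e_0 \sim \epsilon$, with $\g_0$ and all scheme exponents chosen exactly as prescribed in D.

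With A.--D.\ verified, Theorem~\ref{pesciolone} yields a positive-measure Cantor-like set $\calO_\infty(\g_0)\subset \calO_0$ and a close-to-identity symplectic $\HH_\infty$ such that $(\HH_\infty)_*F_0$ is reduced to the normal form $\DD_\infty \in \NN_0$; pulling back through $\HH_\infty$ and \eqref{nonvoglioripetere} gives a quasi-periodic solution of \eqref{NLSfin}, while the fact that the normal dynamics is diagonal with purely imaginary, $\theta$-independent eigenvalues $\ii\Omega^{(\infty)}_k$ is exactly linear stability. The main technical point---and the reason for restricting to $\mathbb G = \TTT$ rather than working as in Example 2 with a general Lie group---is the persistence of the decomposition $D + M$ with finite $|\cdot|^{\mathrm{dec}}$ norm throughout Lemma~\ref{redKam}: on the circle the multiplication piece has summable off-diagonal decay, so the KAM reduction of Subsection~\ref{reduco} closes and one does not need to fall back on the multiscale inversion of Example~2. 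The measure estimate on $\calO_\infty$ then reduces, via the eigenvalue asymptotics $\Omega^{(n)}_k = k^2 + O(\epsilon)$ uniform in $n$, to standard Diophantine counting for the second Mel'nikov conditions \eqref{secmel3}--\eqref{secmel300}, giving full measure in $\calO_0$ as $\g_0\to 0$ provided $\tau > d + 1 + 2/(\nu-1) = d+3$, exactly as in Corollary~C of \cite{Po2}.
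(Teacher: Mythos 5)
Your proposal follows the same path the paper takes: transform via \eqref{nonvoglioripetere}, verify Hypotheses A.--D.\ of Subsection~\ref{reduco} (which the paper asserts without detail), and invoke Theorem~\ref{pesciolone} together with Corollary~C of \cite{Po2} for the measure estimate. One minor slip: the multiplier $M_\xi$ in \eqref{NLSfin} is supported only on the $d$ tangential modes $\phi_1,\dots,\phi_d$, so the normal frequencies are simply $\Omega^{(0)}_k=k^2$ with no $\xi$-dependence --- this does not undermine your argument (the Lipschitz bound in B.\ holds trivially, condition A.\ reduces for $l=0$ to $j^2\pm k^2\neq 0$ and for $l\neq 0$ to nondegeneracy of the affine map $\xi\mapsto\omega^{(0)}(\xi)\cdot l$, and the measure estimate proceeds through the lipeomorphism $\xi\mapsto\omega^{(n)}(\xi)$ alone), but your phrase ``$\xi\mapsto(\omega^{(0)},\Omega^{(0)})$ is affine with linearly independent components'' should be corrected accordingly.
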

Note that  one could avoid adding the Fourier multiplier in \eqref{NLSfin} and obtain the parameters by using Birkhoff Normal form (this is done for instance in \cite{PP4}).
\\
In our application, we have not considered a reversible case: this is due to the fact that the natural structure for the semilinear NLS
is the Hamiltonian one. On the other hand, if one considers DNLS (Derivative NLS) there are interesting examples which are not Hamiltonian
but instead they are reversible; see for instance \cite{ZGY,FP}. We believe (see \cite{Feo,FP2}) that our result can be fruitfully applied also to the fully nonlinear
autonomous case.

\subsection{Some comments}
%

The examples 1-4 show that our KAM approach interpolates from the Nash-Moser scheme to the KAM. 
Each different choice of decomposition depends on the specific application one is studying. 
 Note that 
we could always choose the simplest decomposition \eqref{sottoMIN} (or \eqref{sottoHAM} in the Hamiltonian setting) 
and achieve the block-decoupling of the linearized operators through the compatible change of variables $\calL_{n}$.

A final remark is in order. In the PDEs setting of subsection \ref{exNLS}, applying the changes of variables of Lemma \ref{redKam} implies some loss of information.
Indeed it is easily to see that changes of variables do not preserve $\calE^{(1)}_{\rm Ham}$ unless we can show that the matrices in $S^{(n)}$ are T\"oplitz (up to finite rank). 

Unfortunately in most applications  this is not the case: indeed in classic KAM scheme the PDEs structure is essentially ignored and one works in $\calE^{(2)}_{\rm Ham}$. This has been an obstacle in extending the KAM theory to higher spatial dimensions.  In the latter case it is convenient to choose as $\calE$ a slightly more involved class of vector field, the so called 
\emph{quasi-T\"oplitz} or \emph{T\"oplitz-Lipschitz} vector fields.
The idea is to retain some information on the original PDE structure by showing that the linearized operator in the $w-$component can be ``approximated''
by piecewise multiplication operators.

A very good idea is to follow the approach of \cite{BBM},\cite{BBM1} where the Authors take advantage of the Second Mel'nikov conditions \eqref{secmel3} but do not 
apply the changes of variables which diagonalize the linearized operator. In this way at each step they preserve the PDE structure.
The key observation is that, in a Sobolev regularity setting, the bounds \eqref{cribbio4} and \eqref{buoni} follow from corresponding bounds  in some 
special coordinate system (i.e. the one in which the operator is diagonal) provided that the change of variables 
to the new coordinates is well-defined as an operator from the phase space to itself. Then there is no need to actually apply the change of variables. 
In the analytic context however this approach presents some difficulties, as one can see easily already in the case of the torus diffeomorphism, i.e. in studying 
the conjugate of a vector field $F$ under the map
\begin{equation}\label{diffto}
\TTT^{d}_{s}\ni\theta \mapsto \theta+g(\theta),
\end{equation}
for some $s\geq0$, and $g(\theta)$ small. Note that this change of variables is necessary in order to pass from \eqref{merdina1} to \eqref{merdina2}.
The map \eqref{diffto} induces an operator on the functions $f\in H^{p}(\TTT^{d}_{s})$  defined as $(\TT f)(\theta)=f(\theta+g(\theta))$.
It is easy to check that in the Sobolev context, i.e. $s=0$, the map \eqref{diffto} is a diffeomorphism of $\TTT^{d}$ into itself
and hence $\TT$ is well-defined from $H^{p}(\TTT^{d})$ into itself.
 On the contrary, 
if $s>0$ one has that the map \eqref{diffto} maps $\TTT^{d}_{s}$ into $\TTT^{d}_{s'}$ with $s'<s$, in other words there is a loss of analyticity tied to the size of $g$.
In this case one cannot follow the strategy used in \cite{BBM1}. 
By following directly such strategy one loses all the analyticity after a finite number of steps.
This is due to the fact that, even if the iterative Nash-Moser scheme is coordinate independent,
some of the estimates we perform actually depend on the system of coordinates. 
The basic  idea of our approach is in fact to use at each step the system of coordinates which is more adapted to the problem one is studying.
This point explains also the r\^ole of the compatible transformations $\calL_{n}$.
Indeed such transformation are not fundamental in proving the convergence of the scheme, but are introduced as a degree of freedom
in order to study the set of good parameters in \ref{pippopuffo3}. In particular such transformations are the key point in order to study problems in the analytic setting.

Up to now we have only considered degree decompositions, however one can certainly consider cases in which $\NN,\calX,\RR$ are not triangular.
For instance we may consider $\calE$ as in Example \ref{exHam}  with  the same $\calX$ as in  \eqref{sottoHAM} (i.e. it contains only terms of degree $\leq 0$ w.r.t. the decomposition with deg$(y)=2$) but where  $\RR$ contains  only (and all)  functions of degree $\geq 3$. Now this choice respects Definitions \ref{nomec} and \ref{nx} but clearly the decomposition is not triangular since $\Pi_\calX {\rm ad}N$ is not  block diagonal. However it is easily seen that $\Pi_\calX {\rm ad}(R)=0$ for any $R\in \RR$ so that $\Pi_\calX {\rm ad}(\Pi_\calX^\perp F)=\Pi_\calX {\rm ad}(\Pi_\NN F)$. In fact if  one divides $$\NN= \bigoplus_{j=0}^2 \NN_j$$  in terms of increasing degree then $\Pi_\calX {\rm ad}(\Pi_{\NN_0}F)$ is block diagonal on $\calX=\oplus \calX_j$ while $\Pi_\calX {\rm ad}(\Pi_{\NN_0}^\perp F)$ is upper triangular, so that solving the homological equation only depends on inverting $\Pi_\calX {\rm ad}(\Pi_{\NN_0}F)$ as in the previous example. This means that  we can apply Theorem\ref{thm:kambis} having a pretty explicit description of the set $\calO_\infty$.

\paragraph{On the smallness condition}\label{riscalo}

Let us comment  smallness conditions in Constraint \ref{sceltapar}.
First of all one can note that conditions \eqref{exp} are trivially non empty. Indeed given any $\mu,\nu,\ka_{3},\al,\gotp_{1},\chi$,
(which typically are fixed by the problem) then \eqref{exp1},\eqref{exp2} and \eqref{exp3} give lower bounds one $\ka_{1},\ka_{2},\h$, while
\eqref{exp4} and \eqref{exp5} constrain $\gotp_{2}$ in a non-empty interval.

The conditions on $\e_0,\tG_0,\tR_0$ are more subtle. Indeed \eqref{expexpexp}
gives a lower bound on the size of $K_{0}$, but then it is not trivial to show that \eqref{expexp}
can be fulfilled. Clearly if
$\tG_0\sim \tR_0\sim \e_0$ all the conditions 
reduce to a smallness condition on $\e_0$ in terms of $K_0$. If $\tG_0$ or $\tR_0$ are large the problem gets more complicated, see also Remark \ref{concreto?}. Unfortunately this situation appears in many applications, for this reason we have not made any simplifying assumption on the relative sizes. 

As one sees in \eqref{sizes}, the parameters $\tG_0,\tR_0,\e_0$ give upper bounds on the size of $G_0,\Pi_{\NN^\perp}G_0,\Pi_{\calX} G_0$ w.r.t. the parameter $\gamma_0$.
In applications $G_0,\gamma_0$ are essentially given so that the only hope of modulating the bounds comes from modifying the domain $ \TTT^d_s\times D_{a,p}(r)\times \calO_0$. 
To this purpose we  first make the trivial remark that if one shrinks to a suitably small neighborhood of zero then polynomials of high degree become very small. In order to formalize this fact we define a scaling degree as follows.
$$
\mathtt s(\theta)=0\,,\quad \mathtt{s}(y)=\mathtt s\,, \quad \mathtt{s}(w)=1. 
$$
Then given a monomial vector field this fixes the scaling as 
$$\mathtt s(y^j e^{\ii \theta\cdot \ell} w^\al \partial_{\tv})= \mathtt s  j +|\al|-\mathtt s(\tv) .$$
  By construction the scaling is additive w.r.t. commutators
and behaves just as the degree in  Remark \ref{sonfica}.
We have the following result.

\begin{lemma}\label{scaling}
Consider a tame vector field $F$ as in Definition \ref{tame} of minimal scaling $\bar{\mathtt{s}}$. 
Consider the rescaling $r_0\rightsquigarrow \de r_0$, $r \rightsquigarrow \de r$.
Then one has
\begin{equation}
C_{\vec{v}_1,p}(F)\leq \de^{\bar{\mathtt{s}}} C_{\vec{v},p}(F),
\end{equation}
with $\vec{v}=(\g,\calO,s,a,  r)$ and  $\vec{v}_1=(\g,\calO,s,a,\de r)$.
\end{lemma}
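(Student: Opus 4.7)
The strategy is to realize the rescaling as pull-back under a linear coordinate change on $(y,w)$ and then to exploit the homogeneity of each Taylor monomial.

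Introduce $R_{\delta}:(\theta,y,w)\mapsto(\theta,\delta^{\mathtt{s}}y,\delta w)$, a bijection $\TTT^{d}_{s}\times D_{a,p}(r)\to\TTT^{d}_{s}\times D_{a,p}(\delta r)$ with Jacobian $\mathrm{diag}(\uno,\delta^{\mathtt{s}}\uno,\delta\uno)$, and associated pull-back on vector fields
\begin{equation*}
(R_{\delta}^{*}F)^{(\theta)}=F^{(\theta)}\circ R_{\delta},\quad(R_{\delta}^{*}F)^{(y)}=\delta^{-\mathtt{s}}F^{(y)}\circ R_{\delta},\quad(R_{\delta}^{*}F)^{(w)}=\delta^{-1}F^{(w)}\circ R_{\delta}.
\end{equation*}
A direct inspection of \eqref{totalnorm}--\eqref{equation3} shows that, after the substitution $(y,w)=R_{\delta}(\tilde{y},\tilde{w})$, the factor $\delta^{-\mathtt{s}(\tv)}$ absorbed into $R_{\delta}^{*}$ cancels the change of weight $r_{0}^{-\mathtt{s}(\tv)}\rightsquigarrow(\delta r_{0})^{-\mathtt{s}(\tv)}$. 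Since test maps transform as $\Phi_{1}=R_{\delta}\circ\Phi\circ R_{\delta}^{-1}$ and $h_{j,1}\leftrightarrow R_{\delta}^{*}h_{j}$ with preserved norms, each tameness inequality of Definition \ref{tame} for $F$ in the $\vec{v}_{1}$-norm transforms into the same inequality for $R_{\delta}^{*}F$ in the $\vec{v}$-norm, yielding $C_{\vec{v}_{1},p}(F)\le C_{\vec{v},p}(R_{\delta}^{*}F)$.

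Next, expand $F$ via the Taylor decomposition of Definition \ref{mamm} as
\begin{equation*}
F=\sum_{j=0}^{\mathtt{n}}\sum_{(\tv_{1},\ldots,\tv_{j})}F^{(\tv,\tv_{1},\ldots,\tv_{j})}(\theta)[\tv_{1},\ldots,\tv_{j}]\,\partial_{\tv}+F_{\RR_{\mathtt{n}}},
\end{equation*}
where by the minimal scaling hypothesis every nonzero monomial component $f$ has scaling $\bar{\mathtt{s}}(f)=\sum_{i}\mathtt{s}(\tv_{i})-\mathtt{s}(\tv)\ge\bar{\mathtt{s}}$. A direct computation (each input $\tv_{i}$ yielding $\delta^{\mathtt{s}(\tv_{i})}$ and the output covector $\partial_{\tv}$ contributing $\delta^{-\mathtt{s}(\tv)}$) gives $R_{\delta}^{*}f=\delta^{\bar{\mathtt{s}}(f)}f$, and since $\delta\le 1$ we have $\delta^{\bar{\mathtt{s}}(f)}\le\delta^{\bar{\mathtt{s}}}$. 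For the remainder one writes the integral Taylor formula
\begin{equation*}
F_{\RR_{\mathtt{n}}}(\theta,y,w)=\int_{0}^{1}\frac{(1-t)^{\mathtt{n}}}{\mathtt{n}!}\,d^{\mathtt{n}+1}_{\mathtt{U}}F(\theta,ty,tw)[(y,w)^{\mathtt{n}+1}]\,dt
\end{equation*}
and runs the same scaling bookkeeping on the $\mathtt{n}+1$ arguments, deducing that $R_{\delta}^{*}F_{\RR_{\mathtt{n}}}$ carries a uniform factor $\le\delta^{\bar{\mathtt{s}}}$. By subadditivity of tameness constants over this decomposition, $C_{\vec{v},p}(R_{\delta}^{*}F)\le\delta^{\bar{\mathtt{s}}}\,C_{\vec{v},p}(F)$, and combined with the identity of the first paragraph this is the claim.

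The main obstacle is the remainder: while the scaling of each Taylor monomial is an immediate algebraic identity, the tameness constants of $R_{\delta}^{*}F_{\RR_{\mathtt{n}}}$ must be extracted from those of $d^{\mathtt{n}+1}_{\mathtt{U}}F$ evaluated at the interpolated arguments $(\theta,ty,tw)$, and one has to track the $\delta$ factors arising both from the $\mathtt{n}+1$ input slots and the output covector uniformly in $t\in[0,1]$. Once this bookkeeping parallels the monomial case, the $\delta^{\bar{\mathtt{s}}}$ factor emerges uniformly, which is precisely what the lemma asserts.
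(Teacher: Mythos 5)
The paper states Lemma \ref{scaling} without any proof, so there is no paper-side argument to compare against; I can only assess your proposal on its own terms. Your strategy — realizing the simultaneous rescaling of the ambient radius $r_0$ and the domain radius $r$ as conjugation by the linear map $R_\delta$, checking that the weights $r_0^{-\mathtt{s}(\tv)}$ in \eqref{normaV}, \eqref{equation2}--\eqref{equation3} make all the norms in Definition \ref{tame} invariant under this conjugation, and then reducing the question to the homogeneity of $R_\delta^*$ on each monomial block of Definition \ref{mamm} — is natural and, in its main thrust, correct. The identity $C_{\vec{v}_1,p}(F)=C_{\vec{v},p}(R_\delta^* F)$ via norm invariance is sound, and $R_\delta^* f = \delta^{\bar{\mathtt{s}}(f)}f$ for a monomial $f\in\VV^{(\tv,\tv_1,\dots,\tv_k)}$ is an immediate calculation.

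Two points should be made explicit. First, the ``subadditivity'' step deserves a word of care: if one only knows $C_{\vec v,p}(f_j)\le C_{\vec v,p}(F)$ for each monomial projection (Remark \ref{monomi}), summing over the finitely many components leaves a combinatorial constant. The way to remove it — consistent with the paper's flexibility on tameness constants (Remark \ref{tameness100}) and with the fact that the norm \eqref{supernorma} is defined \emph{additively} over the direct-sum decomposition — is to take $C_{\vec v,p}(F)=\sum_j C_{\vec v,p}(f_j)+C_{\vec v,p}(F_{\RR_{\mathtt n}})$; with this choice the bound $\sum_j\delta^{\bar{\mathtt s}(f_j)}C(f_j)+\delta^{s_{\RR}}C(F_{\RR_{\mathtt n}})\le\delta^{\bar{\mathtt s}}C_{\vec v,p}(F)$ is exact. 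You invoke subadditivity tacitly; you should name which choice of tameness constants you are propagating. Second, the argument for the remainder silently assumes that its effective scaling exponent dominates $\bar{\mathtt s}$. Running the bookkeeping on the integral Taylor formula gives a worst-case factor $\delta^{(\mathtt n+1)\min(\mathtt s,1)-\max_\tv\mathtt s(\tv)}$, so you are implicitly requiring $\bar{\mathtt s}\le(\mathtt n+1)\min(\mathtt s,1)-\max_\tv\mathtt s(\tv)$. In every example considered in Section \ref{appli} this holds trivially, since $\bar{\mathtt s}=-\mathtt s$ there (the $\VV^{(y,0)}$ component is present), but the lemma as stated does not guarantee it and your proof should either note it as a standing hypothesis or restrict the conclusion to the Taylor polynomial part of $F$.
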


%
%
%

This definition of scaling induces a natural scaling decomposition of a vector field. In particular we remark that 
by construction $\NN$ contains necessarily  some terms of scaling zero, while $\calX$ contains  terms of negative scaling.
However one can fix the scaling so that $\RR$ contains only terms of positive scaling. 
Hence by Lemma \ref{scaling} terms of positive scaling can be made small.
In conclusion given the constants $\bar\tG_0,\bar\e_0,\bar\tR_0$ in a ball $\bar{r}_0$, one can try to fulfill \ref{expexp} by
rescaling $r_0=\de \bar{r_0}$. in this way $\tR_0$ becomes smaller, $\tG_0$ at best does not grow, while 
$\e_0$ necessarily grows.  This procedure produces upper and lower bounds on $\de$. 

Consider the decomposition in \eqref{sottoHAM} where the degree is equal to the scaling. Then  $\NN$ 
contains only terms of degree zero and hence $\tG_0$ is scaling invariant. $\RR$ contains only terms of positive degree
so that rescaling $\tR_0\rightsquigarrow \de \bar{\tR}_0$ at least. $\calX$ has negative degree $\geq-2$, hence $\e_0\rightsquigarrow \de^{-2}\bar{\e}_0$.
As explained in Remark \ref{concreto?} constants $\bar{\tG_0},\bar{\e}_0,\bar{\tR}_0$ are expected to depend only on $\g$. Using the rescaling we have introduced the 
extra parameter $\de$, which should be chosen in terms of $\g$ in order to fulfill the conditions  \eqref{expexp}.
Actually it can be useful to make a finer analysis for $\bar{\e}_0$. Indeed one can bound separately the terms of degree $-2,-1,0$ in $\bar{\e}_0$. Let us denote
them by $\bar{\e}_0^{(i)}$, $i=-2,-1,0$. Then ${\e}_0\rightsquigarrow \sum_{i=-2}^{0}\de^{i}\bar{\e}_0^{(i)}$, and hence the smallness conditions can be taken asymetrically.
The same holds for $\tR_0$. This has been discussed with slightly different notation in \cite{BB}.

\zerarcounters
\section{Proof of the result}\label{itscheme}

We divide the proof of Theorem \ref{thm:kambis}  into two pieces: we first show one
step in full details (this is the same in both cases) and then we prove that it is indeed possible to perform infinitely many
steps and that the procedure converges.

\subsection{The KAM step}

\begin{proposition}\label{kamstep}
Let $\g_0$, $a_0$, $r_0$, $s_0$, $\e_0$  and $K_0$ be the constants appearing in
Theorem \ref{thm:kambis}. 
Fix  $\g, a,r,s\ge0$ so that
\begin{equation}\label{gallina}
\frac{\g_0}{2}\leq \g \le \g_0,\,\,\frac{a_{0}}{2}\leq a\leq a_{0},\;  \frac{s_{0}}{2}\leq s\leq s_{0},\; \frac{r_{0}}{2}\leq r\leq r_{0}
\end{equation}
and  $0<\rho<1$ such that 
\begin{equation}\label{gallina3}
r-8\rho r_0>\frac{r_0}{2},  \;\; {\rm if} \; s_0\neq0 \; {\rm then} \; s-8\rho s_0>\frac{s_0}{2},\,\, {\rm if} \; a_0\neq0 \,\, {\rm then} \;\; a-8\rho a_0>\frac{a_0}2.
\end{equation}
Consider  a  vector field:
\begin{equation}\label{gallina2}
F : \TTT^d_s\times D_{a,p+\nu}( r)\times \calO_0\to V_{a,p},
\end{equation}
 which is  $C^{\mathtt n+2}$-tame up to order $q=\gotp_2+2$.  
 Let $N_0$ be the diagonal vector field appearing in Theorem \ref{thm:kambis} and
write $F=N_0+(F-N_0)=N_0+G$.
 Set $\vec v=(\g,\calO,s,a,r)$, denote by $\calO\subseteq \calO_0$ some set of parameters $\x$ for which 
$F(\x)\in\calE$
and $|\Pi_{\calX}G|_{\vec{v},\gotp_2-1}\leq \mathtt{C}C_{\vec{v},\gotp_{2}}(\Pi_{\NN}^{\perp}G)$ and 
define also
 \begin{equation}\label{sizes2}
\Gamma_{p}:=\g^{-1}C_{\vec{v},p}(G), \quad 
 \Theta_{p}:=\g^{-1}C_{\vec{v},p}(\Pi_{\NN}^\perp G), \quad  
 \de:=\g^{-1}|\Pi_{\calX}G|_{\vec{v},\gotp_1}.
 \end{equation}
Fix $ K>K_0$ and  assume that 
 \begin{equation}\label{kam32}
\rho^{-1} K^{\mu+\nu+3}\Gl \de \le \epsilon
 \end{equation}
 with $\epsilon=\epsilon(\gotp_1,d)$  small enough.
Consider  a map $\calL$  compatible with $(F,K,\vec v,\rho)$ (see Definition \ref{compa}) and set 
$$
\hat{F}=N_0+\hat{G}:=(\calL)_{*}F\,.
$$
Consider any
 $\calO_{+}\subseteq \calO$ solving the homological equation for $(\hat F,K,\vec v^{\text{\tiny 0}}_2,\rho)$ and set
  \begin{equation}\label{vino}
  \begin{aligned}
  \vec w_i&=(\gamma,\calO_+,s-i\rho  s_0,a -i \rho a_0, r-i \rho  r_0),\\
  \vec v_i^{\text{\tiny 0}}&=(\gamma,\calO_0,s-i\rho  s_0,a -i \rho a_0, r-i \rho  r_0),
   \end{aligned}
   \end{equation}
   for $i=1,\ldots,8$.
 The following holds:
\smallskip

\noindent (i)
 there exists an invertible (see Def. \ref{leftinverse}) change of variables
\begin{equation}\label{kam66}
\Phi_+:=\uno+{f}_+ : \TTT^d_{s-4\rho s_0}\times D_{a-4\rho a_0,p}(r-4\rho r_0)\times
\calO_0\longrightarrow \TTT^d_{s-2\rho s_0}\times D_{a-2\rho  a_0,p}(r-2\rho  r_0) ,
\end{equation}
with $f_+$ a regular vector field (see Def. \ref{linvec}) such that 
$\Phi_+$ is $\calE$ preserving for all $\xi\in\calO_+$ and is generated by a vector field $g_+$ which satisfies the bound
\begin{equation}\label{stimatotale2}
\begin{aligned}
|g_+|_{\vec w_2,\gotp_1}&\leq\mathtt C \Gamma_{\gotp_1}K^{\mu} \de
\\
|g_+|_{\vec w_2,\gotp_2}
&\le {\mathtt C} K^{\mu+1} (\Theta_{\gotp_2}+\e_0 K^{\ka_3} \Theta_{\gotp_1} + K^{\al(\gotp_2-\gotp_1)} \de(\Gamma_{\gotp_2}+\e_0 K^{\ka_3} \Gamma_{\gotp_1}))\,;
\end{aligned}
\end{equation}
 (ii) fix $s_+,a_+,r_+$ as
\begin{equation}\label{kam6bis}
s_+=s-8\rho  s_0, \quad a_+=a-8\rho  a_0,
 \quad r_+= r-8\rho  r_0
\end{equation}
then
\begin{equation}\label{gamgee}
F_{+}:=(\Phi_+)_{*}\hat{F}=N_{0}+G_{+}:\TTT^d_{s_+}\times D_{a_{+},p+\nu}(r_{+})\times \calO_0
\to V_{a,p}\,;
\end{equation}

\noindent
(iii) setting  $\g_0/2\leq\g_{+}\leq \g$, $\vec{v}_{+}:=(\g_{+},\calO_{+},a_+,s_+)=\vec w_8$, 
$F_{+}$ is tame 
and denoting the tameness constants as
$$\Gamma_{+,p}:=\g_{+}^{-1}C_{\vec{v}_{+},p}(G_+), \quad \Theta_{+,p}:=\g_{+}^{-1}
C_{\vec{v}_{+},p}(\Pi_{\NN}^\perp G_+),\quad 
\de_{+}:=\g_{+}^{-1}|\Pi_{\calX}G_{+}|_{\vec{v}_{+},\gotp_1}
$$
 one can fix
\begin{equation}\label{kam8}
\begin{aligned}
\Gamma_{+,\gotp_1}&=  
\frac{\g}{\g_+}
(1+\e_0K^{-1})\Gamma_{\gotp_1}+\mathtt C K^\mu \Gl \de (K^{\nu+1}\Gamma_{\gotp_1} + \e_0  K^{-\eta})\,, \\
\Gamma_{+,\gotp_{2}}&=  {\mathtt C}\Big(\Gamma_{\gotp_2}+ \e_0 \Gl K^{\ka_3}+\Gl K^{\mu+\nu+2}
(\Theta_{\gotp_2}+\e_0 K^{\ka_3} \Theta_{\gotp_1}+ K^{\al(p-\gotp_1)} \de(\Gamma_{\gotp_2}+\e_0 K^{\ka_3} \Gamma_{\gotp_1}))
\Big)\,,
\end{aligned}
\end{equation}
\begin{equation}\label{kam9}
\begin{aligned}
 \Theta_{+,\gotp_1}&= \frac{\g}{\g_+}
 (1+\e_0K^{-1})\Theta_{\gotp_1}+ \mathtt C K^\mu \Gl \de (K^{\nu+1}\Gamma_{\gotp_1} + \e_0  K^{-\eta})\,, \\
\Theta_{+,\gotp_2}&=\mathtt{C} \big( \Theta_{\gotp_2}+\e_0 K^{\ka_3} \Theta_{\gotp_1} +
  K^{\mu+\nu+2} \Gl \big( \Theta_{\gotp_2}+\e_0 K^{\ka_3} \Theta_{\gotp_1} + K^{\al(\gotp_2-\gotp_1)} \de(\Gamma_{\gotp_2}+\e_0 K^{\ka_3} \Gamma_{\gotp_1})\big )\big)\,,\\
\g_{+}^{-1}&|\Pi_{\calX}(G_{+})|_{\vec{v}_+,\gotp_{2}-1}\leq \Theta_{+,\gotp_{2}}
\end{aligned}
\end{equation}
and 
\begin{equation}\label{cazzo}
\begin{aligned}
\de_{+}&\le {\mathtt C} \Gl \big ( \de^2 
\Gamma^2_{\gotp_1}K^{2\mu+2\nu+4} + \de \e_0 K^{\mu-\eta}\big)+  K^{\mu+\nu+2-(\gotp_2-\gotp_1) }  \big(\Theta_{\gotp_2}+\e_0 K^{\ka_3} \Theta_{\gotp_1} \big)\\
&\qquad+
\Gl K^{\mu+\nu+2-(\gotp_2-\gotp_1) }  \big (\Theta_{\gotp_2}+\e_0 K^{\ka_3} \Theta_{\gotp_1} + K^{\al(\gotp_2-\gotp_1)} \de(\Gamma_{\gotp_2}+\e_0 K^{\ka_3} \Gl)\big)\,.
\end{aligned}
\end{equation} 
\end{proposition}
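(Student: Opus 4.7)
\smallskip
\noindent\textbf{Proof sketch.} The plan is to execute the three conjugations in sequence and track the three relevant quantities---the tameness constant $C_{\vec v,p}(G_+)$, the tameness constant of $\Pi_\NN^\perp G_+$, and the sharp regular norm $|\Pi_\calX G_+|_{\vec v,p}$---through each step. First, $\calL$ is already given to be compatible with $(F,K,\vec v,\rho)$, so the bounds \eqref{odio} of Definition \ref{compa} directly yield the estimates for $\hat G=\calL_*F-N_0$ on the domain $\vec v_2$; moreover item (iii) of that Definition gives $\Pi_\calX\hat G=\Pi_\calX\calL_*\Pi_\calX G$ and $\Pi_\NN^\perp\calL_*\Pi_\NN F=0$, which is what lets us bound $|\Pi_\calX\hat G|_{\vec v_2,\gotp_1}$ in terms of $\de$ alone (no cross-contamination from the $\NN$ part). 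Next, since $\calO_+$ satisfies the homological equation for $(\hat F,K,\vec v_2^{\text{\tiny 0}},\rho)$, Definition \ref{pippopuffo2} gives us a regular vector field $g_+\in\BB_\calE\cap E^{(K)}$ satisfying \eqref{buoni22}, \eqref{cribbio42}: the smallness condition \eqref{kam32} ensures $|g_+|_{\vec w_2,\gotp_1}\le\mathtt c\rho$, so by items (4)--(5) of Definition \ref{linvec-abs} the time-$1$ flow $\Phi_+=\uno+f_+$ is an $\calE$-preserving, left-invertible change of variables mapping as in \eqref{kam66}, with $|f_+|_{\vec w_i,p}\le 2|g_+|_{\vec w_{i-1},p}$. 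The bounds \eqref{stimatotale2} on $g_+$ then follow by combining \eqref{buoni22} with the bounds on $\hat G$ from \eqref{odio}.

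For part (ii), the pushforward $F_+=(\Phi_+)_*\hat F$ is well-defined on $\TTT^d_{s_+}\times D_{a_+,p+\nu}(r_+)$ by Definition \ref{leftinverse} applied to $\Phi_+$ (this accounts for two of the eight shrinking factors of $\rho$; the other six absorb $\calL$ and provide safety room for the conjugation estimates). The pushforward formula \eqref{push} reads $F_+=dF_+(\Phi_+^{-1})\cdot\hat F(\Phi_+^{-1})$, and by the standard composition/differentiation estimates for tame vector fields developed in the appendix on tame properties (cf.\ Lemmas \ref{conj}, \ref{derivate}, \ref{normnorm}), together with the size bound $|f_+|_{\vec w_2,p}\lesssim|g_+|_{\vec w_1,p}$, we obtain tameness constants satisfying $C_{\vec v_+,p}(G_+)\lesssim C_{\vec v_2,p}(\hat G)(1+|g_+|_{\vec w_2,\gotp_1})+C_{\vec v_2,\gotp_1}(\hat G)|g_+|_{\vec w_2,p+\nu+1}$. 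Substituting \eqref{odio} and \eqref{stimatotale2} yields \eqref{kam8}. The bound on $\Pi_\NN^\perp G_+$ in \eqref{kam9} is obtained identically once we exploit that $[g_+,N_0]\in\calX$ (since $g_+\in\BB_\calE\subset\calX\cap\calA$ and $N_0$ is diagonal, so ad$(N_0)$ preserves $\Pi_\calX$), which means the $N_0$ contribution to $\hat F$ does not produce extra terms in $\Pi_\NN^\perp$.

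The heart of the proof is the quadratic estimate \eqref{cazzo}. The key algebraic identity is obtained by projecting the pushforward onto $\calX$: using that $\Pi_\calX$ is a projection onto polynomials of bounded degree $\mathtt n$, we can write $\Pi_\calX F_+$ as a \emph{finite} Lie-type expansion, in contrast to a full Neumann/Lie series which would be ill-defined for merely $C^{\mathtt n+2}$-tame fields. The first two terms give
\[
\Pi_\calX F_+ = \Pi_\calX\hat F + \Pi_\calX[g_+,\hat F] + \Pi_\calX\calR_2,
\]
where $\calR_2$ is quadratic in $g_+$ and controlled by the tame commutator estimates plus the bound $\Gl\de K^{\mu+\nu+2}$ on $|g_+|_{\vec w_2,\gotp_1+\nu+1}$ coming from \eqref{stimatotale2}. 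Splitting $\hat F=\Pi_\calX\hat F+\Pi_\calX^\perp\hat F$ and invoking the homological equation in the form $\Pi_K\Pi_\calX\,\mathrm{ad}(\Pi_\calX^\perp\hat F)[g_+]-\Pi_K\Pi_\calX\hat F=u$, we get the crucial cancellation
\[
\Pi_\calX F_+ = (\uno-\Pi_K)\Pi_\calX\hat F \;+\; \Pi_\calX[g_+,\Pi_\calX\hat F] \;-\; u \;-\; (\uno-\Pi_K)\Pi_\calX[g_+,\Pi_\calX^\perp\hat F] \;+\; \Pi_\calX\calR_2.
\]
Every summand is now either quadratic in the small data $(\de,g_+)$ or carries a smoothing factor $K^{-(\gotp_2-\gotp_1)}$ coming from \eqref{P2}. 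Estimating in $|\cdot|_{\vec v_+,\gotp_1}$ term-by-term, using \eqref{cribbio42} for the $u$-term, \eqref{buoni22} for $g_+$, and the sharpness of $|\cdot|_{\vec v,\gotp_1}$ as a tameness constant (item (2) of Definition \ref{linvec-abs}, in particular \eqref{tameconst3}) to convert the commutator tameness bounds back to the regular norm, produces exactly \eqref{cazzo}.

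\textbf{Expected main obstacle.} The delicate part is \emph{not} the identification of the cancellation structure but the combinatorial bookkeeping: each of the six domain shrinkages in \eqref{vino} must be charged to a specific operation (flow generation, inverse construction, Cauchy-type derivative estimate in the commutator, smoothing, etc.), and the passage between $C_{\vec v,p}$ and $|\cdot|_{\vec v,p}$ has to be made at exactly the right places so that the final bounds on $\Pi_\calX G_+$ and on $\Pi_\NN^\perp G_+$ (whose tameness constant needs the same structural argument as for $\Pi_\calX G_+$, but bounded in a weaker topology) are all simultaneously compatible with Constraint \ref{sceltapar}---this is what will later permit the inductive iteration of the step.
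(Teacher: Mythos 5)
Your overall skeleton matches the paper's: the compatibility bounds \eqref{odio} for $\hat G$, the homological equation producing $g_+$ and the flow $\Phi_+$ via items (4)--(5) of Definition \ref{linvec-abs}, and, for $\de_+$, the finite expansion of $\Pi_\calX(\Phi_+)_*\hat F$ with the cancellation coming from $u$, the smoothing factor from $\Pi_K^\perp$ via \eqref{P2}, the genuinely quadratic terms, and the final passage from tameness constants to $|\cdot|_{\vec v,\gotp_1}$ through the sharpness \eqref{tameconst3}. That part of your argument is essentially the paper's proof of \eqref{cazzo}.

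The gap is in part (iii), in how you bound $\Gamma_{+,p}$ and $\Theta_{+,p}$. You estimate $C_{\vec v_+,p}(G_+)$ by a plain conjugation bound on $\hat G$, but $G_+=(\Phi_+)_*\hat F-N_0=\big((\Phi_+)_*N_0-N_0\big)+(\Phi_+)_*\hat G$, and the first piece is not controlled by any generic conjugation or commutator lemma: no quantitative information on $N_0$ (e.g. on $\Lambda^{(0)}$) enters the hypotheses or the conclusions \eqref{kam8}--\eqref{kam9}, so a direct estimate of $[g_+,N_0]$ would necessarily introduce constants depending on $N_0$ that do not appear there. The paper's mechanism is to write $(\Phi_+)_*N_0-N_0=\int_0^1(\Phi_+^t)_*[g_+,N_0]\,dt$, use the diagonality of $N_0$ (Definition \ref{norm}) and $g_+\in\BB_\calE\cap E^{(K)}$ to conclude $[g_+,N_0]=\Pi_K\Pi_\calX[g_+,N_0]$, and then substitute the homological equation, so that $[g_+,N_0]$ is replaced by $\Pi_K\Pi_\calX\hat G+u-\Pi_K\Pi_\calX[g_+,\Pi_\calX^\perp\hat G]$, all of which are controlled by $\de,\Gamma,\Theta,u$; this is exactly what produces the correction term $\mathtt C K^\mu\Gamma_{\gotp_1}\de\,(K^{\nu+1}\Gamma_{\gotp_1}+\e_0K^{-\eta})$ appearing in \emph{both} $\Gamma_{+,\gotp_1}$ and $\Theta_{+,\gotp_1}$, which your substitution of \eqref{odio} and \eqref{stimatotale2} alone cannot yield. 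Moreover your dismissal of this term for $\Theta_{+,p}$ is backwards: since $[g_+,N_0]\in\calX$ and $\calX$ is complementary to $\NN$, this contribution lies entirely in the range of $\Pi_\NN^\perp$, so it contributes in full to $\Theta_{+,p}$ and must be estimated by the same homological-equation rewriting (this is the computation around \eqref{telofo}--\eqref{ennezero} in the paper). Finally, the third line of \eqref{kam9}, i.e. the bound on $\g_+^{-1}|\Pi_\calX G_+|_{\vec v_+,\gotp_2-1}$, does not follow from the ingredients you list: it requires item 2(d) of Definition \ref{pippopuffo2}, namely \eqref{cribbio420}, which your proposal never invokes.
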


\begin{proof}
First of all we note that by the definition of $\calL$  one has
$$\hat F: \TTT^d_{s - 2\rho  s_0 }\times D_{a- 2\rho  a_0 ,p+\nu}(r-2 \rho  r_0 )\times \calO_0 \to V_{a,p}$$ and
$\hat F\in \calE$ for each $\xi$ in $\calO$.
By \eqref{satana2} we have
$$
\begin{aligned} 
&\Pi_\calX \hat G=\Pi_\calX \hat F = \Pi_\calX (\calL)_* F = \Pi_\calX (\calL)_* \Pi_\calX F=  
\Pi_\calX (\calL)_* \Pi_\calX G\,,\\
&\Pi_{\NN}^\perp\hat{G}=
\Pi_\NN^\perp \hat F = \Pi_\NN^\perp (\calL)_* F = \Pi_\NN^\perp (\calL)_* \Pi_\NN^\perp F
=  \Pi_\NN^\perp (\calL)_* \Pi_\NN^\perp G\,.
\end{aligned}
$$
Now since $G$ is $C^{\mathtt n}$-tame and $\NN,\calX$ have maximal degree $\leq\mathtt n$  we have that the tameness constants of $\Pi_\NN G, \Pi_{\RR}G$ as well as $|\Pi_\calX G|$ are controlled by the tameness constant of $G$ .

By \eqref{odio} we have the bounds
\begin{equation}\label{formosa}
\begin{aligned}
&C_{\vec w_2,\gotp_{1}}(\hat G )\leq C_{\vec v,\gotp_{1}}( G)(1+\e_0 K^{-1} )\leq \g \Gl (1+\e_0 K^{-1})\,,\\
&C_{\vec w_2,\gotp_{2}}( \hat G)\leq C_{\vec v,\gotp_{2}}( G)+\e_0 K^{\ka_3} C_{\vec v,\gotp_{1}}(G) \le\g( \Gamma_{\gotp_2}+\e_0 K^{\ka_3} \Gamma_{\gotp_1})\,,
\end{aligned}
\end{equation}

\begin{equation}\label{kam11bis}
\begin{aligned}
&C_{\vec w_2,\gotp_{1}}(\Pi_\NN^\perp\hat G )\leq C_{\vec v,\gotp_{1}}(\Pi_\NN^\perp G)(1+\e_0 K^{-1} )\leq \g \Theta_{\gotp_1} (1+\e_0 K^{-1})\,,\\
&C_{\vec w_2,\gotp_{2}}( \Pi_\NN^\perp \hat G)\leq C_{\vec v,\gotp_{2}}( \Pi_\NN^\perp G)+\e_0 K^{\ka_3} C_{\vec v,\gotp_{1}}(\Pi_\NN^\perp G) \le\g( \Theta_{\gotp_2}+\e_0 K^{\ka_3} \Theta_{\gotp_1})\,,
\end{aligned}
\end{equation}
and 
\begin{equation}\label{kam11}
\begin{aligned}
&C_{\vec w_2,\gotp_{1}}(\Pi_\calX \hat G )\leq C_{\vec v,\gotp_{1}}(\Pi_\calX G)(1+\e_0 K^{-1} )\leq 2\g \de\,,\\
&C_{\vec w_2,\gotp_{2}}(\Pi_\calX \hat G )\leq C_{\vec v,\gotp_{2}}(\Pi_\calX G)+\e_0 K^{\ka_3} C_{\vec v,\gotp_{1}}(\Pi_\calX G) \le\g( \Theta_{\gotp_2}+\e_0 K^{\ka_3} \Theta_{\gotp_1})\,,\\
&|\Pi_{\calX} \hat{G}|_{\vec{w}_2,\gotp_2-1}\leq \g( \Theta_{\gotp_2}+\e_0 K^{\ka_3} \Theta_{\gotp_1}).
\end{aligned}
\end{equation}

Our aim is to define for $\x\in\calO_+$ a vector field ${g}_{+}$ as the ``approximate'' solution of the equation
\begin{equation}\label{maria}
\Pi_{K}\Pi_{\calX}[{g}_+,\Pi_\calX^\perp \hat F ]=\Pi_{K}\Pi_{\calX} \hat{F}.
\end{equation}
By Definition, if $\x\in \calO_+$ then  we can
find ${g}_+$ satisfying  properties (a),(b),(c),(d) of Definition \ref{pippopuffo2}.

By \eqref{buoni22} one gets
\begin{equation}\label{stimatotale}
\begin{aligned}
|{g}_+|_{\vec w_2,p}\leq & \mathtt C
\gamma^{-1}K^{\mu}(|\Pi_K\Pi_{\calX}\hat G|_{\vec w_2,p}
+K^{\al(p-\gotp_1)}|\Pi_K\Pi_{\calX}\hat G|_{\vec w_2,\gotp_1}
\g^{-1}C_{{\vec w_2,p}}(\hat G))
\end{aligned}
\end{equation}
and hence, using \eqref{formosa},\eqref{kam11} and \eqref{P1}  we have \eqref{stimatotale2}
Moreover, by condition $(a)$ of Definition \ref{pippopuffo2}, one has  
$g_{+}\in \BB_\calE$ for $\xi\in \calO_+$  and $|g_+|_{\vec v_2^{\tino},\gotp_1}\le \mathtt C |g_+|_{\vec w_2,\gotp_1}$.

Now, if $\epsilon$ in \eqref{kam32} is small enough, by Definition \ref{linvec-abs} item 5,  $g_+$ generates a change of variables 
${\Phi}_{+}=\uno+{f}_{+}$and 
 $|{f}_{+}|_{\vec v_3^{\text{\tiny 0}},p}\leq 2|\tilde g_+|_{\vec v_2^{\text{\tiny 0}},p}$.
Finally, for possibly smaller $\epsilon$ one has
\eqref{kam66}, by using Definition \ref{linvec-abs} item 4.  Note that the smallness conditions on $\epsilon$ come only from this two conditions.

\smallskip

First we note that, since $N_0$ is diagonal (recall Definition \ref{norm}), one has
\begin{equation}\label{sabbia}
\begin{aligned}
G_{+}&:=(\Phi_{+})_{*}N_0+(\Phi_{+})_{*}\hat{G}-N_0=\int_{0}^{1}dt\,(\Phi_{+})_{*}^{t}[g_+,N_0]+
(\Phi_{+})_{*}\hat{G}\\
&=\int_{0}^{1}dt(\Phi_{+})_{*}^{t}\Pi_{K}\Pi_{\calX}[g_+,N_{0}+\Pi_\calX^\perp\hat{G}]-
\int_{0}^{1}dt(\Phi_{+})_{*}^{t}\Pi_{K}\Pi_{\calX}[g_+,\Pi_\calX^\perp\hat{G}]
+(\Phi_{+})_{*}\hat{G}\\
&=\int_{0}^{1}dt(\Phi_+)_{*}^{t}(\Pi_{K}\Pi_\calX\hat{G}+u)-
\int_{0}^{1}dt(\Phi_+)_{*}^{t}\Pi_{K}\Pi_{\calX}[g,\Pi_\calX^\perp\hat{G}]
+(\Phi_{+})_{*}\hat{G}
\end{aligned}
\end{equation}
where 
\begin{equation}\label{kam16}
u:=\Pi_{K}\Pi_{\calX}[g_{+},N_{0}+\Pi_\calX^\perp\hat{G}]-\Pi_{K}\Pi_\calX\hat{G}
\end{equation}
and $u$ in \eqref{kam16} satisfies  \eqref{cribbio42}, so by applying \eqref{formosa} and \eqref{kam11}, we get
\begin{equation}\label{dorata}
\begin{aligned}
|u|_{\vec w_2,\gotp_1} & \leq \mathtt C \g  \e_0 \Gamma_{\gotp_1} K^{-\eta+\mu} \de 
 \\
|u|_{\vec w_2,\gotp_2} & \leq \mathtt C \g K^{\mu+1}\left(
   (\Theta_{\gotp_2} +\e_0 K^{\ka_3}\Theta_{\gotp_1})\Gamma_{\gotp_1}+K^{\al(\gotp_2-\gotp_1)}\de (\Gamma_{\gotp_2} +\e_0 K^{\ka_3}\Gamma_{\gotp_1})
    \right);
\end{aligned}
\end{equation}

Regarding the first summand of \eqref{sabbia}, using Lemma \ref{conj}, we have
\begin{equation}\label{gallina5}
\begin{aligned}
C_{\vec{w}_{8},\gotp_{1}}\left(\int_{0}^{1}dt\Phi_{*}^{t}(\Pi_{K}\Pi_\calX\hat{G}+u)\right)&
\stackrel{\eqref{dorata},\eqref{kam32}}{\leq}
\g{\mathtt C}\de(1+\g  \e_0 \Gamma_{\gotp_1} K^{-\eta+\mu} )
\end{aligned}
\end{equation}
 Moreover
\begin{equation}\label{kam18}
\begin{aligned}
\!\!\!\! C_{\vec{w}_{8},\gotp_{2}}\left(\int_{0}^{1}dt\Phi_{*}^{t}(\Pi_{K}\Pi_\calX\hat{G}+u)\right)&\leq
(1+\rho )
(C_{\vec{w}_6,\gotp_{2}}(\Pi_\calX\hat{G}+u)+C_{\vec w_6,\gotp_{1}}(\Pi_\calX\hat{G}+u)
|f_+|_{\vec w_7,\gotp_2+\nu+1})\\
\stackrel{(\ref{cribbio42}),\eqref{eq106},\eqref{kam32}}{\leq}&{\mathtt C}\g  K^{\mu+1}\left(
   (\Theta_{\gotp_2} +\e_0 K^{\ka_3}\Theta_{\gotp_1})\Gamma_{\gotp_1}+K^{\al(\gotp_2-\gotp_1)}\de (\Gamma_{\gotp_2} +\e_0 K^{\ka_3}\Gamma_{\gotp_1})\right) \,.
%
%
%
\end{aligned}
\end{equation}

The second term of \eqref{sabbia} can be estimated as follows. First we note that, since $\hat G$ is $C^{\mathtt n+2}$-tame up to order\footnote{due to the truncation $\Pi_K$} $q+1$, then  the vector field 
$\Pi_{K}\Pi_{\calX}[g_+,\Pi_\calX^\perp\hat{G}]$ is $C^{\mathtt n+1}$-tame up to order $q$. This implies, due to the presence of the projection onto $\calX$, that it is indeed $C^{k}$-tame for all $k$.
The tameness constant is given by
\begin{equation}\label{kam19}
\begin{aligned}
C_{\vec{w}_8,p}(\Pi_{K}\Pi_{\calX}[g_+,\Pi_\calX^\perp\hat{G}])&\stackrel{\eqref{P3}}{\leq} 
K C_{\vec{w}_8,p-1}(\Pi_{\calX}[g_+,\Pi_\calX^\perp\hat{G}])\\
&\leq \mathtt C K^{\nu+1}(|g_{+}|_{\vec w_8,p}C_{\vec{w}_8,\gotp_{0}}(\Pi_\calX^\perp\hat{ G})+
|g_{+}|_{\vec w_8,\gotp_0}C_{\vec{w}_8,p}(\Pi_\calX^\perp\hat{ G}))\\
& \stackrel{{rmk} \ref{monomi}}{\leq}  \mathtt C K^{\nu+1}(|g_{+}|_{\vec w_8,p}C_{\vec{w}_8,\gotp_{0}}(\hat{ G})
+
|g_{+}|_{\vec w_8,\gotp_0}C_{\vec{w}_8,p}(\hat{ G})
),
\end{aligned}
\end{equation}
hence, by \eqref{formosa} and \eqref{stimatotale2}, we have
\begin{equation}\label{kam19tris}
\begin{aligned}
C_{\vec{v}_+,\gotp_1}(\Pi_{K}\Pi_{\calX}[g_+,\Pi_\calX^\perp\hat{G}])&\leq \g \mathtt C \Gamma^2_{\gotp_1}
K^{\mu+\nu+1}\de\,,
\end{aligned}
\end{equation}
\begin{equation}\label{kam19bis}
\begin{aligned}
C_{\vec{v}_+,\gotp_2}(\Pi_{K}\Pi_{\calX}[g_+,\Pi_\calX^\perp\hat{G}])
&{\leq}
{\mathtt C} \g K^{\mu+\nu+2}\Gamma_{\gotp_1}\Big(
(\Theta_{\gotp_2}+\e_0 K^{\ka_3} \Theta_{\gotp_1} + K^{\al(\gotp_2-\gotp_1)} \de(\Gamma_{\gotp_2}+\e_0 K^{\ka_3} \Gamma_{\gotp_1}) 
\Big)\,.
\end{aligned}
\end{equation}
Therefore using Lemma \ref{conj} we have
\begin{equation}\label{gallina6}
C_{\vec{w}_{8},\gotp_1}(\int_{0}^{1}dt(\Phi_{+})_{*}^{t}\Pi_{K}\Pi_{\calX}
[g_+,\Pi_\calX^\perp\hat{G}])\leq\g\mathtt C \Gamma_{\gotp_1}^2K^{\mu+\nu+1}\de
\end{equation}
and
\begin{equation}\label{gallina6bis}
\begin{aligned}
C_{\vec{w}_{8},\gotp_2}(\int_{0}^{1}dt(\Phi_{+})_{*}^{t}\Pi_{K}\Pi_{\calX}
[g_+,\Pi_\calX^\perp\hat{G}])\leq \g
{\mathtt C}  & K^{\mu+\nu+2}\Gamma_{\gotp_1} \Big(
\Theta_{\gotp_2}+\e_0 K^{\ka_3} \Theta_{\gotp_1} \\
 &+ K^{\al(\gotp_2-\gotp_1)} 
\de(\Gamma_{\gotp_2}+\e_0 K^{\ka_3} \Gamma_{\gotp_1}) 
\Big)
\end{aligned}
\end{equation}
by \eqref{kam32}.

Finally, again by Lemma \ref{conj}, we estimate the third summand in \eqref{sabbia} as
\begin{equation}\label{kam17bis}
\begin{aligned}
C_{\vec{w}_{8},\gotp_1}(\Phi_{*}(\hat{G}))&\stackrel{\eqref{formosa},\eqref{stimatotale2}}{\le}
\g \Gamma_{\gotp_1}(1+{\mathtt C}\de \Gamma_{\gotp_1} K^\mu)(1+\e_0 K^{-1})
\end{aligned}
\end{equation}
and
\begin{equation}\label{spapagno}
\begin{aligned}
C_{\vec{w}_{8},\gotp_2}((\Phi_+)_{*}\hat{G}) &\le (1+{\mathtt c}^{-1}
|f_+|_{\vec w_7,\gotp_1})
\Big(
C_{\vec w_6,\gotp_2}(\hat G)+C_{\vec w_6,\gotp_1}(\hat G)|f_+|_{\vec w_7,\gotp_2+\nu+1}
\Big)\\\\
&\stackrel{\eqref{formosa},\eqref{stimatotale2}}{\le} \g {\mathtt C}
\Big(\Gamma_{\gotp_2}+\e_0 K^{\ka_3} \Gl\\
&\qquad\qquad+  \Gl K^{\mu+\nu+2}(\Theta_{\gotp_2}+\e_0 K^{\ka_3} \Theta_{\gotp_1} + 
K^{\al(p-\gotp_1)} \de(\Gamma_{\gotp_2}+\e_0 K^{\ka_3} \Gamma_{\gotp_1}))
\Big)\,.
\end{aligned}
\end{equation}

The bounds \eqref{kam8} follow by collecting together
 \eqref{gallina5}, \eqref{kam18}, \eqref{gallina6},\eqref{kam17bis} and \eqref{spapagno}.

Let us study $\Theta_{+,p}$. First of all we see that
\begin{equation}\label{uffa3}
\Pi_{\NN}^\perp G_{+}=\Pi_\NN^\perp F_+=\Pi_\NN^\perp(\Phi_+)_*\hat F=\Pi_{\NN}^\perp\Big(
(\Phi_{+})_{*}N_0+(\Phi_{+})_{*}(\Pi_{\NN}\hat{G})+(\Phi_{+})_{*}(\Pi_{\NN}^\perp\hat{G})
\Big)\,.
\end{equation}

In order to estimate the first term $\Pi_{\NN}^\perp(\Phi_{+})_{*}N_0$, we first note that
\begin{equation}\label{telofo}
\begin{aligned}
\Pi_{\NN}^\perp(\Phi_{+})_{*}N_0&=\Pi_{\NN}^\perp((\Phi_{+})_{*}N_0-N_0)
=\Pi_\NN^\perp\int_0^1d t\,(\Phi_+^t)_*[g_+,N_0]
=\Pi_\NN^\perp\int_0^1d t\,(\Phi_+^t)_*(\Pi_{K}\Pi_{\calX}[g_+,N_0])\\
&=\Pi_\NN^\perp\Big(
\int_{0}^{1}dt(\Phi_{+})_{*}^{t}\Pi_{K}\Pi_{\calX}[g_+,N_{0}+\Pi_\calX^\perp\hat{G}]-
\int_{0}^{1}dt(\Phi_{+})_{*}^{t}\Pi_{K}\Pi_{\calX}[g_+,\Pi_\calX^\perp\hat{G}]
\Big)\,,
\end{aligned}
\end{equation}
 substituting \eqref{kam16} and using  Remark \ref{monomi} in order to remove the projection
we have
\begin{equation}\label{ennezero}
\begin{aligned}
&C_{\vec w_8,\gotp_1}(\Pi_{\NN}^\perp(\Phi_{+})_{*}N_0)\stackrel{\eqref{gallina5},\eqref{gallina6}}{\le}  \mathtt C \g K^\mu \Gl \de (K^{\nu+1}\Gamma_{\gotp_1} + \e_0  K^{-\eta}) \,,\\
&C_{\vec w_8,\gotp_2}(\Pi_{\NN}^\perp(\Phi_{+})_{*}N_0)\stackrel{\eqref{kam18},\eqref{gallina6}}{\le}
{\mathtt C}\g   K^{\mu+\nu+2}\Gamma_{\gotp_1}\Big(
   \Theta_{\gotp_2}+\e_0 K^{\ka_3} \Theta_{\gotp_1} + K^{\al(p-\gotp_1)} \de(\Gamma_{\gotp_2}+\e_0 K^{\ka_3} \Gamma_{\gotp_1}) 
   \Big)\,.
\end{aligned}
\end{equation}

In order to bound the second summand we use  Lemma \ref{normnorm} with $\calU=\NN$ and obtain
\begin{equation}\label{uffa6}
\begin{aligned}
C_{\vec{w}_{8},\gotp_1}(\Pi_{\NN}^\perp(\Phi_{+})_{*}(\Pi_{\NN}\hat{G}))&\leq
{\mathtt C} 
|f_+|_{\vec w_6,\gotp_1+\nu+1}C_{\vec w_6,\gotp_1}(\Pi_{\NN}\hat{G})
\stackrel{rmk\ref{monomi}}{\le} {\mathtt C} 
|f_+|_{\vec w_6,\gotp_1+\nu+1}C_{\vec w_6,\gotp_1}(\hat{G})\\
&\stackrel{\eqref{formosa}}{\le}
\g{\mathtt C} K^{\mu+\nu+1} \de\Gamma^2_{\gotp_1}, 
\end{aligned}
\end{equation}
and
\begin{equation}\label{uffa5}
\begin{aligned}
C_{\vec{w}_{8},\gotp_2}(\Pi_{\NN}^\perp(\Phi_{+})_{*}(\Pi_{\NN}\hat{G}))&\leq
{\mathtt C}\Big(
 C_{\vec{w}_2,\gotp_2}(\hat{G})|f_{+}|_{\vec w_6,\gotp_1}+C_{\vec{w}_2,\gotp_1}(\hat{G})
 |f_+|_{\vec w_6,\gotp_2}\Big)\\
 &\stackrel{(\ref{formosa})}{\leq}\g {\mathtt C}
 \Gl K^{\mu+1} (\Theta_{\gotp_2}+\e_0 K^{\ka_3} \Theta_{\gotp_1} + K^{\al(p-\gotp_1)} \de(\Gamma_{\gotp_2}+\e_0 K^{\ka_3} \Gamma_{\gotp_1}))
\end{aligned}
\end{equation}

Regarding the third summand, using Remark \ref{monomi}, Lemma \ref{conj} and \eqref{kam11bis} we obtain
\begin{equation}\label{uffa4}
\begin{aligned}
C_{\vec{w}_8,\gotp_{1}}(\Pi_\NN^\perp(\Phi_{+})_{*}(\Pi_{\NN}^\perp \hat{G}))
&\stackrel{\eqref{stimatotale2}}{\leq} \g (1\!+\!\mathtt C \Gl K^{\mu}\de)(1+\e_0 K^{-1})
\Theta_{\gotp_1},\\
\!\!\!C_{\vec{w}_8,\gotp_{2}}(\Pi_\NN^\perp(\Phi_{+})_{*}
(\Pi_{\NN}^\perp\hat{G}))&\!\!\!\!\stackrel{\eqref{stimatotale2}}{\leq}\!\!\!
\mathtt{C}\g ( \Theta_{\gotp_2}+\e_0 K^{\ka_3} \Theta_{\gotp_1})+\\
&+
{\mathtt C} \g K^{\mu+\nu+2} \Tl \big( \Theta_{\gotp_2}+\e_0 K^{\ka_3} \Theta_{\gotp_1} + K^{\al(p-\gotp_1)} \de(\Gamma_{\gotp_2}+\e_0 K^{\ka_3} \Gamma_{\gotp_1})\big ).
\end{aligned}
\end{equation} 

By collecting together \eqref{ennezero}, \eqref{uffa6}, \eqref{uffa5} and \eqref{uffa4} we obtain the first two lines of \eqref{kam9}. In order to prove the last of \eqref{kam9} we use item (d) of Definition \ref{pippopuffo2}.
Indeed we substitute \eqref{stimatotale2} ,\eqref{formosa} and \eqref{kam11bis} in \eqref{cribbio420} and we obtain the desired bound.
\smallskip

In order to prove the bound for $\de_+$ we first write
\begin{equation}\label{orchi}
\Pi_{\calX} {G}_{+}= \Pi_\calX (\hat{F}+[\hat{F},g_{+}]+Q), \qquad
Q:=(\Phi_{+})_* \hat{F}-(\hat{F}+[\hat{F},g_{+}])
\end{equation}
and hence
\begin{equation}\label{orchi2}
\begin{aligned}
\Pi_\calX {G}_{+}&= 
\Pi_\calX\hat{F}+\Pi_\calX[\Pi_\calX^\perp\hat{F},g_{+}]+\Pi_\calX[\Pi_\calX\hat{F},g_{+}]+\Pi_\calX Q\\
&=
u+\Pi_{K}^{\perp}\Big(
\Pi_\calX\hat{G}+\Pi_\calX [\Pi_\calX^\perp\hat{G},g_{+}]
+\Pi_\calX[\Pi_\calX\hat{G},g_{+}]\Big) + \Pi_{K}\Pi_\calX[\Pi_\calX\hat{G},g_{+}] +\Pi_\calX Q\\ 
&=
u+\Pi_{K}^{\perp}\Big(
\Pi_\calX\hat{G}+\Pi_\calX [\hat{G},g_{+}]\Big) + \Pi_{K}\Pi_\calX[\Pi_\calX\hat{G},g_{+}] +\Pi_\calX Q
\end{aligned}
\end{equation}
where $u$ is defined in \eqref{kam16} and bounded in \eqref{dorata}.

The second summand in \eqref{orchi2} can be bounded as
\begin{equation}\label{orchi8}
\begin{aligned}
&|\Pi_K^{\perp}\Big(
\Pi_\calX\hat{G}+\Pi_\calX [\hat{G},g_{+}]
\Big)|_{\vec{w}_8,\gotp_1}\stackrel{\eqref{P2}}{\leq}
K^{-(\gotp_2-\gotp_1)+2} |\Pi_\calX\hat{G}+\Pi_\calX [\hat{G},g_{+}])|_{\vec{w}_6,\gotp_{2}-2}\\
&\stackrel{(\ref{kam11}),(\ref{buoni22})}{\leq}{\mathtt C}\g
K^{-(\gotp_2-\gotp_1)+\nu+\mu +2 } \Gl (\Theta_{\gotp_2}+\e_0 K^{\ka_3} \Theta_{\gotp_1}
+ K^{\al(\gotp_2-\gotp_1)} \de(\Gamma_{\gotp_2}+\e_0 K^{\ka_3} \Gamma_{\gotp_1}))\\
&\quad \quad+{\mathtt C}\g K^{-(\gotp_2-\gotp_1)+2}(\Theta_{\gotp_2}+\e_0 K^{\ka_3} \Theta_{\gotp_1}).
\end{aligned}
\end{equation}

We can choose the tameness constant of the third summand in \eqref{orchi2}  as follows
\begin{equation}\label{caffe}
\begin{aligned}
C_{\vec w_8,\gotp_1}(\Pi_{K}\Pi_\calX[\Pi_\calX\hat{F},g_+])&\stackrel{\eqref{P3}}{\leq}K
C_{\vec w_8,\gotp_1-1}(\Pi_{K}\Pi_\calX[\Pi_\calX\hat{F},g_+])
\stackrel{\eqref{commu2}}{\le}
{\mathtt C}K^{\nu+1}
C_{\vec w_8,\gotp_1}(\Pi_\calX\hat{F})|g_+|_{\vec w_8,\gotp_1}\\
&\stackrel{\eqref{kam11},\eqref{stimatotale2}}{\le} \g{\mathtt C}K^{\mu+\nu+1}\Gl \de^2\,.
\end{aligned}
\end{equation}

Finally we deal with the last summand in \eqref{orchi2} as follows.
Using Remark \ref{resto} and Definition \ref{norm} one can reason as in
\eqref{sabbia} and write
\begin{equation}\label{orchi3}
\begin{aligned}
\Pi_{\calX}Q&=\Pi_{\calX}\left(\int_{0}^{1}dt\int_{0}^{t}ds(\Phi_{+})_{*}^{s}\left(\big[g_+,[g_{+},\hat{G}]\big]
+\big[g_{+},\Pi_{K}\Pi_\calX(\hat{F}+u-[g_{+},\Pi_{\calX}^\perp\hat{G}])\big]
\right)
\right)\,.
\end{aligned}
\end{equation}
Regarding the first summand in \eqref{orchi3} one uses \eqref{rem1} and obtains
\begin{equation}\label{orchi6}
\begin{aligned}
C_{\vec{w}_{8},\gotp_{1}}&\left(\Pi_{\calX}\int_{0}^{1}ds\int_{0}^{t}ds(\Phi_{+})_{*}^{s}
\left(\big[g_+,[g_{+},\hat{G}]\big]\right)\right)\leq
C_{\vec{w}_{6},\gotp_1+2}(\hat{G})|g_{+}|_{\vec{w}_{6},\gotp_{1}}|g_{+}|_{\vec{w}_{6},\gotp_{1}+\nu+2} \\
&\stackrel{(\ref{stimatotale2})}{\leq}
\mathtt C K^{\nu+2+2\mu}\Gamma_{\gotp_1}^2\de^2 \left(C_{\vec{w}_{6},\gotp_1+2}(\Pi_{K}\hat{G})+
C_{\vec{w}_{6},\gotp_1+2}(\Pi_{K}^{\perp}\hat{G})\right)
\\
&\stackrel{(\ref{formosa})}{\leq}
{\mathtt C}\g\de^2 \Gamma_{\gotp_1}^2K^{\nu+2\mu+4}\Big(
\Gamma_{\gotp_1}+K^{-(\gotp_2-\gotp_1)}(\Gamma_{\gotp_2} +\e_0 K^{\ka_3}\Gl)
\Big)\,.
\end{aligned}
\end{equation}
Again we are using the fact that $G$ is $C^{\mathtt n+2}$-tame to infer that the double commutator is $C^{\mathtt n}$-tame, and then the projection onto $\calX$ in order to recover the $C^k$-tameness for all $k$.
Regarding the second summand in \eqref{orchi3}, since each term is a polynomial in $E^{(K)}$,
 using Lemmata \ref{conj}, \ref{derivate}-(iii) and the bounds \eqref{gallina5}, \eqref{kam19tris}
we obtain
\begin{equation}\label{falco}
\begin{aligned}
C_{\vec{w}_{8},\gotp_1}(\int_{0}^{1}\int_{0}^{t}&(\Phi_{+})_{*}^{s}[g_{+},\Pi_{K}\Pi_\calX(\hat{F}+u
-[g_{+},\Pi_\calX^\perp\hat{G}])])
\leq \\
&\le \mathtt C |g_+|_{\vec w_6,\gotp_1+1}C_{\vec w_6,\gotp_1+\nu+1}(
\Pi_{K}\Pi_\calX(\hat{F}+u-[g_{+},\Pi_\calX^\perp\hat{G}]))\\
&\le {\mathtt C}\g K^{2\mu+2\nu+4}\Gamma_{\gotp_1}^3\de^2\,.
\end{aligned}
\end{equation} 
Therefore, collecting together \eqref{falco} and \eqref{orchi6} we obtain
\begin{equation}\label{orchi7}
C_{\vec{v}_{+},\gotp_{1}}(\Pi_{\calX}Q)\leq{\mathtt C}\g\de^2 \Gamma^2_{\gotp_1}K^{2\mu+2\nu+4}\Big(
\Gamma_{\gotp_1}+K^{-(\gotp_2-\gotp_1)}(\Gamma_{\gotp_2}+\e_0 K^{\ka_3}\Gl)
\Big).
\end{equation}

In conclusion one has
\begin{equation}\label{orchi10}
\begin{aligned}
C_{\vec{v}_{+},\gotp_1}(\Pi_{\calX}F_{+})&\leq{\mathtt C}\g \Gl K^{\mu+\nu+1}\Big ( \de^2 
\Gamma_{\gotp_1}K^{\mu+\nu+3}\big(
\Gamma_{\gotp_1}+K^{-(\gotp_2-\gotp_1)}(\Gamma_{\gotp_2}+\e_0 K^{\ka_3}\Gl)\big)+\\
& K^{-(\gotp_2-\gotp_1) }  (\Theta_{\gotp_2}+\e_0 K^{\ka_3} \Theta_{\gotp_1} + K^{\al(\gotp_2-\gotp_1)} \de(\Gamma_{\gotp_2}+\e_0 K^{\ka_3} \Gl)\Big)+ \Gl \de  \e_0  K^{-\eta+\mu}.
\end{aligned}
\end{equation}
Recalling that the norm $|\cdot|_{\vec{v}_+,\gotp_1}$ is the sharp tameness constant (see \eqref{tameconst3}), then \eqref{orchi10}
 implies  the bound \eqref{cazzo} since $\de \Gl K^{\mu+\nu+3}\le 1$ by \eqref{kam32}.
\end{proof}

\subsection{Proof of Theorem \ref{thm:kambis}: iterative scheme.}

We now prove Theorem \ref{thm:kambis} by induction on $n$.
The induction basis is trivial with $g_0=0$.
Assuming \eqref{lamorte} up to $n$ we prove the inductive step using the ``KAM step'' of Proposition
\ref{kamstep}.
First of all we ensure that 
\begin{equation}\label{porcocazzo}
\rho_n^{-1} K_n^{\mu+\nu+3}\Gamma_{n,\gotp_1} \de_n \le \mathtt c,
\end{equation}
which, by the inductive hypothesis and \eqref{numeretti} reads
\begin{equation}\label{merdafritta}
2^{n+9}K_0^{(\mu+\nu+3-\ka_2)\chi^n} \mathtt G_0 \e_0 K_0^{\ka_2} \le \mathtt c;
\end{equation}
this is true because
 by \eqref{exp2} and the fact that $K_0$ is large enough depending on $\chi,d,\gotp_0$,
the left hand side \eqref{merdafritta} is decreasing in $n$
so that \eqref{porcocazzo} follows form
$$
K_0^{\mu+\nu+4}\mathtt G_0\e_0 <1
$$
which is indeed implied by \eqref{1s1}
because $\tG_0\ge \tR_0$.

Hence we can apply the ``KAM step''   to $F_n:= (\Phi_n\circ\calL_n)_*F_{n-1}\in \calW_{\vec v^{\text{\tiny 0}}_n,\gotp_2}$ which is a  $C^{\mathtt n+2}$-tame up to order $q=\gotp_2+2$. We fix $(K_{n},\g_n,a_n,s_n,r_n,\rho_n,\calO_n)\rightsquigarrow(K,\g,a,s,r,\rho,\calO)$,
 $\Gamma_{n,p}\rightsquigarrow \Gamma_{p}$,
$\Theta_{n,p}\rightsquigarrow \Theta_{p}$,
$\de_{n}\rightsquigarrow\delta$,
$(\g_{n+1},a_{n+1},s_{n+1},r_{n+1},\rho_{n+1},\calO_{n+1})\rightsquigarrow(\g_+,a_+,s_+,r_+,\rho_+,\calO_+)$. The KAM steps produces a bounded regular vector field $g_{n+1}$ and a left invertible change of variables $\Phi_{n+1}= \uno +f_{n+1}$  such that  
$F_{n+1}:= (\Phi_{n+1}\circ\calL_n)_*F_{n}\in \calW_{\vec v^{\text{\tiny 0}}_{n+1},\gotp_2}$ is  $C^{\mathtt n+2}$-tame up to order $q=\gotp_2+2$.
We  now verify that the bounds \eqref{lamorte} hold with $\Gamma_{n+1,p}\rightsquigarrow \Gamma_{+,p}$,
$\Theta_{n+1,p}\rightsquigarrow \Theta_{+,p}$, ,
$\de_{n+1}\rightsquigarrow\delta_+$.

Let us prove {\bf (i)}.
By substituting into \eqref{stimatotale2} we immediately obtain the bounds for $g_{n+1}$ of \eqref{lamorte}.

Now we recall that, by definition
$$
\frac{\g_n}{\g_{n+1}}=1+\frac{1}{2^{n+3}-1}.
$$

We use \eqref{kam8} together with the inductive hypotheses to obtain   
$$
\Gamma_{n+1,\gotp_1}\leq 
\Big(1+\frac{1}{2^{n+3}-1}\Big)
\tG_n + 2\e_0K_n^{-1}\tG_0+\mathtt C K_n^{\mu-\ka_2} \tG_0 \e_0 K_0^{\ka_2}  (K_n^{\nu+1}\tG_0+ \e_0  K_n^{-\eta})\le \tG_{n+1}\,,
$$
which follow by requiring
$$
\max( 2^nK_n^{-1}\e_0, 2^n K_n^{\mu+\nu+1-\ka_2} K_0^{\ka_2}\tG_0 \e_0  ,  2^n  K_n^{-\eta-\ka_2+\mu } K_0^{\ka_2}\e_0)\le \mathtt c\,,
$$
and as before this follows by \eqref{exp3} and \eqref{1s1}.

Regarding $\Theta_{n+1,\gotp_1}$, using \eqref{kam9} we get
$$
\Theta_{n+1,\gotp_1}\leq \Big(1+\frac{1}{2^{n+3}-1}\Big)
 \tR_n +2\e_0K_n^{-1}\tR_0+ \mathtt C K_n^{\mu+\nu+1-\ka_2} \tG_0^2 \e_0 K_0^{\ka_2}  
 +K_n^{-\eta-\ka_2+\mu}\tG_0 \e^2_0 K_0^{\ka_2}\le \tR_{n+1}.
$$
which again follows from \eqref{exp3} and \eqref{1s1}.

For $\de_{n+1}\rightsquigarrow \de_+$, we apply \eqref{cazzo} and get
\begin{align*}
\de_{n+1}\leq &{\mathtt C} \tG_0 \Big ( \e_0^2 K_0^{\ka_2} (
\tG_0^2  K_0^{\ka_2} K_n^{2\mu+2\nu+4-2\ka_2} +   K_n^{\mu-\eta-\ka_2})+ \\ ( K_n^{\ka_1}+\e_0 K_n^{\ka_3} )&(\tR_0 K_n^{\mu+\nu+1-\Delta\gotp } + 
\e_0 K_0^{\ka_2} \tG_0 K_n^{\mu+\nu+1-(1-\al)\Delta\gotp -\ka_2} )\Big)+( K_n^{\ka_1}+\e_0 K_n^{\ka_3} ) \tR_0 K_n^{-\Delta\gotp +1} \le \e_0 K_0^{\ka_2}K_{n}^{-\chi \ka_2}
\end{align*}
which follows by \eqref{exp2}, \eqref{exp3}, \eqref{exp4}, \eqref{1s1} and \eqref{4s2}.

Regarding $\Gamma_{n+1,\gotp_2}$, by \eqref{kam8} we get
$$
\Gamma_{n+1,\gotp_{2}}\leq  {\mathtt C}\tG_0(K_n^{\ka_1}+ \e_0  K_n^{\ka_3}) \Big( 1+ K_n^{\mu+\nu+2}(\tR_0 + K_n^{\al\Delta\gotp-\ka_2} \e_0 K_0^{\ka_2} \tG_0 )
\Big)\le \tG_0 K_n^{\chi \ka_1}
$$
which follows by \eqref{exp1}, \eqref{exp5} and \eqref{6s2}.
 
Finally, by \eqref{kam9}
$$
\Theta_{n+1,\gotp_2}\leq  \mathtt{C}\tR_0(K_n^{\ka_1}+\e_0 K_n^{\ka_3})+
{\mathtt C}  K_n^{\mu+\nu+2} (K_n^{\ka_1}+\e_0 K_n^{\ka_3})\tG_0 \big( \tR_0 + K_n^{\al\Delta\gotp-\ka_2} \e_0 K_0^{\ka_2} \tG_0\big )\leq \tR_0 K_n^{\chi \ka_1}
$$
which follows again by \eqref{exp1}, \eqref{exp5} and \eqref{6s2}.

\smallskip

We now prove {\bf (ii)}.
Setting $\vec{w}_{4,n}=(\g_{n},\calO_{n+1},s_n-4\rho_{n}s_0,a_n-4\rho_{n}a_0,r_n-4\rho_{n}r_0)$, we prove inductively
\begin{equation}\label{accaenne}
\|(\calH_{n+1}-\uno)(u)\|_{\vec{w}_{4,n},\gotp_1}\le   \e_0\sum_{k=0}^{n+1}\frac{1}{2^{k}}.
\end{equation}
Note that the choice of $\vec w_{4,n}$ \eqref{accaenne} is consistent with the fact that $F_n:=(\calH_{n})_*F_0$  is defined on the domain
$$
\TTT_{s_n}^d \times D_{a_n,p}(r_n)\,.
$$

For $n=-1$ this is obvious. Then by induction we have
\begin{equation}\label{milano5}
\begin{aligned}
\|({\calH}_{n+1}-\uno)(u)\|_{\vec{w}_{4,n},\gotp_1}&\leq\|(\calH_{n}-\uno)(u)\|_{\vec{w}_{4,n},\gotp_1}+
\|f_{n+1}(\calL_{n+1}\circ\calH_{n}(u))\|_{\vec{w}_{4,n},\gotp_1}+\|(\calL_{n+1}-\uno)
\calH_{n}(u)\|_{\vec{w}_{4,n},\gotp_1}\\
&\stackrel{\eqref{accaenne}, (\ref{satana})}{\leq}
  \e_0\sum_{k=0}^{n}\frac{1}{2^{k}}+2|g_{n+1}|_{\vec{w}_{4,n},\gotp_1}
 \|\calL_{n+1}\circ\calH_{n}(u)\|_{\vec{w}_{4,n},\gotp_1}
 +{\mathtt C}\e_{0}K_{n}^{-1}
 \|\calH_{n}(u)\|_{\vec{w}_{4,n},\gotp_1}\\
 &\stackrel{(\ref{stimatotale2})}{\leq} \e_0\sum_{k=0}^{n}\frac{1}{2^{k}}+K_0^{\ka_2}\e_0 \tG_0 K_n^{-\ka_2+\mu+1} + 2 \mathtt C  \e_{0}K_{n}^{-1}
\\
 &\stackrel{\eqref{numeretti}}{\leq}  \e_0\Big(\sum_{k=0}^{n}\frac{1}{2^{k}}+\frac{1}{2^{n+1}}\Big)
\end{aligned}
\end{equation}
 for $K_0$ large enough.
Moreover as before
\begin{equation}\label{milano6}
\|(\calH_{n+1}-\calH_{n})u\|_{\vec{w}_{4,n},\gotp_1}\leq \|(\calL_{n+1}-\uno)\calH_{n}(u)\|_{\vec{w}_{4,n},\gotp_1}+
\|f_{n+1}(\calL_{n+1}\circ\calH_{n})(u)\|_{\vec{w}_{4,n},\gotp_1}\le  \e_0 2^{-(n+1)}
\end{equation}
which implies that the sequence $\calH_n$ is Cauchy and therefore
there exists a limit map ${\calH}_\io=\lim_{n\to\infty}\calH_{n}$. Moreover
\begin{equation}\label{torino}
\begin{aligned}
\!\!\!\!\!\!
\|({\calH}_\io-\uno)(u)\|_{\g_{\infty},\calO_{\infty},\frac{s_0}{2},\frac{a_0}{2},\gotp_1}&\leq\|(\HH_{1}-\uno)(u)\|_{\g_{\infty},\calO_{\infty},\frac{s_0}{2},\frac{a_0}{2},\gotp_1}+
\sum_{n\geq2}\|(\HH_{n}-\calH_{n-1})(u)\|_{\g_{\infty},\calO_{\infty},\frac{s_0}{2},\frac{a_0}{2},\gotp_1}\\
&\leq 2\e_0 \,,
\end{aligned}
\end{equation}
so that for $\e_0$ small enough also \eqref{dominio} holds. 
We are left with the proof of \eqref{fine}.
By definition the limit vector field is
\begin{equation}\label{torino2}
F_{\infty}:=\lim_{n\to\infty}F_{n},\quad F_{n}:=(\Phi_{n}\circ\calL_{n})_{*}F_{n-1}.
\end{equation}
On the other hand we have
$$
\calF_{\infty}:=(\calH_\io)_{*}F_0
$$
We want to prove that $\calF_{\infty}=F_{\infty}$; setting $\calF_{n}:=(\calH_{n})_{*}F_{0}$ we show inductively that
 $\calF_{n}=F_{n}$ for any $n\geq1$. For $n=1$ one has $\calH_{1}=\Phi_{1}\circ\calL_{1}$ and hence $F_{1}=\calF_{1}$. 
 Now assume that $\calF_{n-1}=F_{n-1}$. By definition one has  
 $$
 \calH_{n}=\calK_{n}\circ\calH_{n-1}:=(\Phi_{n}\circ\calL_{n})\circ\calH_{n-1}, \qquad \calH^{-1}_{n}=\calH^{-1}_{n-1}\circ\calK_{n}^{-1}.
 $$
 Hence we have
 \begin{equation}\label{fine2}
 \begin{aligned}
 F_{n}-\calF_{n}&=(\calK_{n})_{*}F_{n-1}-(\calH_{n})_{*}F_0=d\calK_{n}F_{n-1}\circ\calK_{n}^{-1}-d\calH_n F_0\circ\calH^{-1}_{n}\\
 &=d\calK_{n}F_{n-1}\circ\calK_{n}^{-1}-d\calK_{n}d\calH_{n-1}F_0\circ\calH_{n-1}^{-1}\circ\calK_{n}^{-1}\\
 &=d\calK_n(F_{n-1}-d\calH_{n-1}F_0\circ\calH_{n-1}^{-1})\circ\calK_n^{-1}=0.
 \end{aligned}
\end{equation}

This concludes the proof of Theorem \ref{thm:kambis}.
\EP

\zerarcounters
\appendix
\section{Smooth functions and vector fields on the torus}\label{app:toro}

Here we provide some technical results.

The following one is a general result about smooth maps on the torus. First of all, for any $p\ge0$ and $\ze\ge0$
we denote as usual
\begin{equation}\label{space}
H^{p}(\TTT_{s}^{b}; \CCC):=\big\{u=\sum_{l\in\ZZZ^b}u_{l}e^{\ii l\cdot \theta} : 
\|u\|^{2}_{s,p}:=\sum_{l\in\ZZZ^{b}}\langle l\rangle^{2p}|u_{l}|^{2}e^{2s|l|}<\infty\big\}\,,
\end{equation}
the space of functions which are analytic on the strip $\TTT_{s}^b$, Sobolev on its boundary,
and have Fourier coefficients $u_{l}$. By Cauchy formula 
for analytic complex functions we have that this $u$ is uniquely determined by the values that assume on the edge of the domain  i.e $z=x\pm i\s s$
where $\s\in\{\pm 1\}^{b}$. We can define a natural norm using the Sobolev norm of the function on the boundary
\begin{equation}\label{ananorm}
|u|^{2}_{s,p}:=\sum_{\s\in\{\pm1\}^{b}}\int_{\TTT^{b}}\langle\nabla\rangle^{2p}|u(x+i\s s)|^{2}
\end{equation}
Using the Fourier basis it reads
$$
|u|^{2}_{s,p}:=\sum_{\s\in\{\pm1\}^{b}}\sum_{l\in \ZZZ^{b}}\langle l\rangle^{2p}|u_{l}|^{2}e^{-2s\s\cdot l}\,.
$$
\begin{lemma}\label{equi}
The norm $|\cdot|_{s,p}$ and 
\begin{equation}\label{ananorm2}
\|u\|_{s,p}^{2}:=\sum_{l\in\ZZZ^{b}}\langle l\rangle^{2p}|u_{l}|^{2}e^{2s|l|}
\end{equation}
are equivalent.
\end{lemma}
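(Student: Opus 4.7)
\medskip

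\noindent\textbf{Proof plan for Lemma \ref{equi}.}

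The plan is to expand both norms in Fourier coefficients and compare them term by term. First I will use Parseval's identity applied to the boundary values of $u$: since $u$ is analytic on the strip $\TTT^b_s$, for each sign vector $\sigma\in\{\pm1\}^b$ one has
\begin{equation*}
u(x+\ii \sigma s)=\sum_{l\in\ZZZ^b}u_l\,e^{\ii l\cdot x}\,e^{-s\,\sigma\cdot l}\,,
\end{equation*}
so Parseval and the standard identification of $\langle\nabla\rangle$ in the Fourier basis give
\begin{equation*}
\int_{\TTT^b}|\langle\nabla\rangle^{p}u(x+\ii \sigma s)|^{2}\,dx
=\sum_{l\in\ZZZ^b}\langle l\rangle^{2p}|u_l|^{2}\,e^{-2s\,\sigma\cdot l}\,,
\end{equation*}
and hence
\begin{equation*}
|u|^{2}_{s,p}=\sum_{l\in\ZZZ^b}\langle l\rangle^{2p}|u_l|^{2}\,S(l)\,,\qquad S(l):=\sum_{\sigma\in\{\pm1\}^b}e^{-2s\,\sigma\cdot l}\,.
\end{equation*}

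Next I would compute $S(l)$ explicitly by factorizing over coordinates: one has
\begin{equation*}
S(l)=\prod_{j=1}^{b}\bigl(e^{-2sl_j}+e^{2sl_j}\bigr)=\prod_{j=1}^{b}2\cosh(2s\,l_j)\,,
\end{equation*}
and then use the two elementary inequalities $e^{|x|}\le 2\cosh(x)\le 2e^{|x|}$ for all $x\in\RRR$ to deduce
\begin{equation*}
e^{2s\,|l|_{1}}\le S(l)\le 2^{b}\,e^{2s\,|l|_{1}}\,,\qquad |l|_{1}:=\sum_{j=1}^{b}|l_j|\,.
\end{equation*}
The final step is the standard equivalence of norms on $\RRR^b$: one has $c_b\,|l|\le |l|_{1}\le C_b\,|l|$ for constants $c_b,C_b>0$ depending only on $b$, so the two weight functions $e^{2s|l|_1}$ and $e^{2s|l|}$ are comparable uniformly in $l$ (once one reinterprets $s$ modulo the dimensional constants, or, what is the same here, after agreeing that $|\cdot|$ and $|\cdot|_1$ are equivalent norms on $\ZZZ^b$).

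There is no real obstacle in this proof; it is essentially a Parseval identity plus the factorization of $S(l)$. The only point that requires a little care is to make the constants in the final comparison explicit and to check that they depend only on $b$ (and not on $s$ or $p$), which is immediate from the computation above.
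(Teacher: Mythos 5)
Your Parseval step and the factorization of the weight are correct and are surely the intended argument (the paper states Lemma \ref{equi} without proof): writing $|u|_{s,p}^{2}=\sum_{l\in\ZZZ^{b}}\langle l\rangle^{2p}|u_{l}|^{2}S(l)$ with $S(l)=\prod_{j=1}^{b}2\cosh(2sl_{j})$ and using $e^{|x|}\le 2\cosh x\le 2e^{|x|}$ gives $e^{2s|l|_{1}}\le S(l)\le 2^{b}e^{2s|l|_{1}}$, hence equivalence of $|\cdot|_{s,p}$ with the norm whose weight is $e^{2s|l|_{1}}$, with constants $1$ and $2^{b/2}$ depending only on $b$. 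The one step that does not hold as written is the last one. Equivalence of $|\cdot|_{1}$ and the Euclidean norm on $\RRR^{b}$ does \emph{not} make the weights $e^{2s|l|_{1}}$ and $e^{2s|l|}$ uniformly comparable: the exponential amplifies the discrepancy, e.g.\ for $l=(n,\dots,n)$ the ratio is $e^{2s(b-\sqrt{b})n}\to\infty$ when $|l|$ is Euclidean and $b\ge2$. Your parenthetical repair, ``reinterpreting $s$ modulo the dimensional constants'', only produces embeddings between spaces with \emph{different} strip widths (since $e^{2s|l|_{1}}\le e^{2sC_{b}|l|}$), not equivalence of the two norms for the same $s$; if $|l|$ in \eqref{ananorm2} really denoted the Euclidean (or sup) norm, the lemma would simply be false for $b\ge 2$.

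What makes the statement true is the convention that in \eqref{space} and \eqref{ananorm2} the symbol $|l|$ stands for $\sum_{j}|l_{j}|$, which is the natural weight for the anisotropic thickened torus $\max_{h}|{\rm Im}\,\theta_{h}|<s$ on which the corner traces $x+\ii\sigma s$, $\sigma\in\{\pm1\}^{b}$, are taken. With that reading your sandwich $e^{2s|l|}\le S(l)\le 2^{b}e^{2s|l|}$ already finishes the proof, and the final ``norm equivalence on $\RRR^{b}$'' step should simply be deleted rather than invoked; as it stands, it is the one genuine gap in an otherwise complete argument.
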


The following Lemma lists some important properties of  Sobolev spaces $H^{s}:=H^{s}(\TTT^{b};\CCC)$ with norm
$$
\|u\|^{2}_{s}:=\sum_{l\in\ZZZ^{b}}\langle l\rangle^{2p}|u_{l}|^{2}.
$$
The same results holds also for our analytic norm in \eqref{space}. The proof of the Lemma is classical.
\begin{lemma}\label{A} Let $s_{0}>d/2$. Then
\begin{enumerate}
\item[(i)] {\bf Embedding.} $\|u\|_{L^{\infty}}\leq C(s_{0})\|u\|_{s_{0}}$, $\forall \; u\in H^{s_{0}}$.
\item[(ii)] {\bf Algebra.} $\|uv\|_{s_{0}}\leq C(s_{0})\|u\|_{s_{0}}\|v\|_{s_{0}}$, $\forall\; u,v\in H^{s_{0}}$.
\item[(iii)] {\bf Interpolation.} For $0\leq s_{1}\leq s\leq s_{2}$, $s=\la s_{1}+(1-\la)s_{2}$,
\begin{equation}\label{A1}
\|u\|_{s}\leq \|u\|^{\la}_{s_{1}}\|u\|_{s_{2}}^{1-\la}, \quad \forall\; u\in H^{s_{2}}.
\end{equation}
\item[(iv)] {\bf Asymmetric tame product.} For $s\geq s_{0}$ one has
\begin{equation}\label{A5}
\|uv\|_{s}\leq C(s_{0})\|u\|_{s}\|v\|_{s_{0}}+C(s)\|u\|_{s_{0}}\|v\|_{s}, \quad \forall\; u,v\in H^{s}.
\end{equation}
%
\item[(vi)] {\bf Mixed norms asymmetric tame product.} For $s\geq0$, $s\in\NNN$  
setting $|u|^{\infty}_{s }:=\sum_{|\al|\leq s}||D^{\al}u||_{L^{\infty}}$
 the norm in $W^{s,\infty}$
one has
\begin{equation}\label{A7}
\|uv\|_{s}\leq\frac{3}{2}\|u\|_{L^{\infty}}\|v\|_{s}+C(s)|u|_{s,\infty}\|v\|_{0},
\forall\; u\in W^{s,\infty}, v\in H^{s}.
\end{equation}
If $u:=u(\la)$ and $v:=v(\la)$ depend in a Lipschitz way on $\la\in\Lambda\subset\RRR^{b}$,
all the previous statements hold if one replace the norms
$\|\cdot\|_{s}$, $|\cdot|^{\infty}_{s }$ with 
$\|\cdot\|_{s,\la}$, $|\cdot|^{\infty}_{s,\la }$ defined as in \eqref{ancoralip}.
\end{enumerate}
\end{lemma}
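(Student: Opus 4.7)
All five statements are classical and follow from straightforward Fourier analysis on $\TTT^b$. The plan is to prove them in the order (i), (iii), (iv), (ii), (vi), since (ii) is a special case of (iv) and (vi) is a variant of (iv) once (i) is available. The Lipschitz-in-$\lambda$ versions are immediate, since all inequalities are linear or bilinear estimates: applying them to $u(\xi)-u(\eta),v(\xi)$ and $u(\eta),v(\xi)-v(\eta)$ and dividing by $|\xi-\eta|^{-1}$ (with the loss of one derivative built into \eqref{ancoralip}) yields the statement verbatim. I will carry this out silently at the end.

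For (i), expand $u(x)=\sum_{l} u_l e^{\ii l\cdot x}$ and apply Cauchy--Schwarz to $\sum_l |u_l|=\sum_l \langle l\rangle^{-s_0}\langle l\rangle^{s_0}|u_l|$; the series $\sum_l \langle l\rangle^{-2s_0}$ converges since $2s_0>d$ (here we mean the torus of dimension $b$, with $s_0>b/2$; the factor $C(s_0)=\big(\sum_l\langle l\rangle^{-2s_0}\big)^{1/2}$ is the embedding constant). For (iii), note $(uv)_l = \sum_{m} u_{l-m}v_m$ is irrelevant here: the Fourier coefficient of $u$ at $l$ has $|u_l|\langle l\rangle^s=|u_l|^\lambda\langle l\rangle^{\lambda s_1}\cdot |u_l|^{1-\lambda}\langle l\rangle^{(1-\lambda)s_2}$, and H\"older's inequality with exponents $1/\lambda,1/(1-\lambda)$ gives \eqref{A1}.

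The heart of the argument is (iv). By convolution, $(uv)_l=\sum_{m\in\ZZZ^b}u_{l-m}v_m$. Use the elementary inequality
\[
\langle l\rangle^{2s}\le C(s)\big(\langle l-m\rangle^{2s}+\langle m\rangle^{2s}\big),\qquad \forall\, l,m\in\ZZZ^b,\ s\ge0,
\]
to bound $\|uv\|_s^2\le 2C(s)(I_1+I_2)$ where $I_1=\sum_l\Big(\sum_m\langle l-m\rangle^s|u_{l-m}|\,|v_m|\Big)^2$ and $I_2$ is the symmetric term with the weight on $v$. In $I_1$, by Cauchy--Schwarz in $m$ with the weight $\langle m\rangle^{-2s_0}$,
\[
I_1 \le \Big(\sum_m\langle m\rangle^{-2s_0}\Big)\sum_l\sum_m\langle l-m\rangle^{2s}|u_{l-m}|^2\langle m\rangle^{2s_0}|v_m|^2 = C(s_0)\|u\|_s^2\|v\|_{s_0}^2,
\]
using Fubini. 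Symmetrically, $I_2\le C(s_0)\|u\|_{s_0}^2\|v\|_s^2$. Taking square roots gives \eqref{A5}. Now (ii) is the case $s=s_0$.

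For (vi), write $(uv)(x)=u(x)v(x)$ and use, for integer $s$, Leibniz's formula
\[
D^\alpha(uv) = \sum_{\beta\le\alpha}\binom{\alpha}{\beta} D^\beta u\,D^{\alpha-\beta}v,\qquad |\alpha|\le s.
\]
Separate the term $\beta=0$ (giving $u\cdot D^\alpha v$, estimated by $\|u\|_{L^\infty}\|v\|_s$; the factor $3/2$ accommodates the Parseval-type equivalence between $\|\cdot\|_s^2$ and $\sum_{|\alpha|\le s}\|D^\alpha\cdot\|_0^2$) from the remaining terms $|\beta|\ge 1$, each of which contains at most $s$ derivatives of $u$ in $L^\infty$ and at most $s-1$ derivatives of $v$ in $L^2$. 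Interpolation (iii) on the $v$-factor plus Young's inequality absorbs the $\|v\|_s$ contribution into the first term (which is where the constant $3/2$ rather than $1$ comes in), leaving $C(s)|u|_{s,\infty}\|v\|_0$ as claimed. The non-integer case is reduced to the integer one by interpolation (iii).

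The main obstacle is purely bookkeeping in (vi): organizing the Leibniz terms so that exactly one $\|v\|_s$ appears with coefficient $\le 3/2\|u\|_{L^\infty}$ after absorbing the lower-order Leibniz contributions via interpolation. Everything else is routine Cauchy--Schwarz and Fubini.
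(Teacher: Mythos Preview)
The paper does not prove this lemma; it simply states ``The proof of the Lemma is classical.'' Your arguments for (i), (iii), (iv), (ii) are correct and are indeed the standard Fourier-side proofs. The Lipschitz extension is also handled correctly.

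There is, however, a genuine gap in your sketch of (vi). After separating the $\beta=0$ Leibniz term, you bound the remaining terms by $C(s)\,|u|_{s,\infty}\,\|v\|_{s-1}$ (using $\|D^\beta u\|_{L^\infty}\le |u|_{s,\infty}$ for $|\beta|\ge 1$), then interpolate only on $v$ to write $\|v\|_{s-1}\le \epsilon\|v\|_s + C_\epsilon\|v\|_0$. This produces a term $C(s)\,\epsilon\,|u|_{s,\infty}\,\|v\|_s$, which you claim can be absorbed into $\tfrac{3}{2}\|u\|_{L^\infty}\|v\|_s$. But $|u|_{s,\infty}$ can be arbitrarily large compared to $\|u\|_{L^\infty}$, so no choice of $\epsilon$ makes this absorption work uniformly. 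The missing ingredient is the Landau--Kolmogorov (Gagliardo--Nirenberg in $L^\infty$) interpolation on $u$: for $1\le j\le s-1$,
\[
\|D^\beta u\|_{L^\infty}\le C\,\|u\|_{L^\infty}^{1-j/s}\,|u|_{s,\infty}^{j/s},\qquad |\beta|=j.
\]
Combined with $\|v\|_{s-j}\le \|v\|_0^{j/s}\|v\|_s^{1-j/s}$, each Leibniz term is bounded by $C\big(\|u\|_{L^\infty}\|v\|_s\big)^{1-j/s}\big(|u|_{s,\infty}\|v\|_0\big)^{j/s}$, and \emph{now} Young's inequality yields $\epsilon\|u\|_{L^\infty}\|v\|_s + C_\epsilon|u|_{s,\infty}\|v\|_0$, which can be absorbed as you intend. (Also, your remark about reducing the non-integer case is superfluous: the statement already restricts to $s\in\NNN$.)
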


We now introduce the space
\begin{equation}\label{pio}
W^{p,\io}(\TTT^b_\zeta):=\big\{\be:\TTT^b_\ze\to\TTT^b_\ze \;:\;|\be|_{p,\ze,\io}:=\sum_{k=0}^{p}
\|d^k\be\|_{L^{\infty}(\TTT^{b}_{\zeta})}<\io\big\}\,,
\end{equation}
and note that one has $H^{\ze,p+\gotp_0}(\TTT^b_\ze)\subset W^{p,\io}(\TTT^b_\ze)$.

\begin{lemma}[{\bf Diffeo}]\label{lem.diffeo}
Let $\be\in W^{p,\io}(\TTT^{b}_\ze)$ for some $p,\ze\ge0$ such that
\begin{equation}\label{diffeo2}
\|\be\|_{\ze,\gotp_{0}}\leq 
\frac{\de}{2C_{1}}, \quad \|\be\|_{\ze,\gotp_{0}}\leq\frac{1}{2C_{2}}, \quad 0<\de<\frac{\ze}{2}, \;\; C_{1},C_{2}>0,
\end{equation}
and let us consider $\Phi:\TTT^b_\ze\to\TTT^b_{2\ze}$ of the form
\begin{equation}\label{diffeo}
x\mapsto x+\be(x)=\Phi(x).
\end{equation}
Then the following is true.

\noindent
(i) There exists $\Psi:\TTT^b_{\ze-\de}\to\TTT^b_{\ze}$ of the form $\Psi(y)=y+\tilde{\be}(y)$ with
$\tilde{\be}\in W^{p,\io}(\TTT^b_{\ze-\de})$ 
satisfying
\begin{equation}\label{diffeo3}
\|\tilde{\be}\|_{\ze-\de,\gotp_{0}}\leq\frac{\de}{2}, \quad 
\|\tilde{\be}\|_{\ze-\de,p}\leq2\|\be\|_{\ze,p}\,,
\end{equation}
such that for all $x\in\TTT^b_{\ze-2\de}$ one has $\Psi\circ \Phi(x)=x$.

\noindent
(ii) For all $u \in H^{\ze,p}(\TTT^b_\ze)$, the composition $(u\circ \Phi)(x)=u(x+\be(x))$ satisfies
\begin{equation}\label{diffeo4}
\|u\circ \Phi\|_{\ze-\de,p}\leq C(\|u\|_{\ze,p}+|d\be|_{p-1,\ze,\infty}\|u\|_{\ze,\gotp_0}).
\end{equation}
%

 \end{lemma}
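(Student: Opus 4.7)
The plan is to prove (i) via a contraction mapping argument and then to extract the Sobolev bound by interpolation and Faà di Bruno, while (ii) will follow directly from the chain rule combined with the tame product estimates of Lemma \ref{A}.

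For part (i), observe that requiring $\Psi\circ\Phi=\mathrm{id}$ is equivalent to the fixed point equation
\begin{equation*}
\tilde\be(y)=-\be\big(y+\tilde\be(y)\big)=:\calT(\tilde\be)(y).
\end{equation*}
I would apply Banach's fixed point theorem to $\calT$ on the closed ball
\begin{equation*}
\mathcal{B}:=\{\tilde\be\in W^{\gotp_0,\io}(\TTT^b_{\ze-\de};\CCC^b)\,:\,\|\tilde\be\|_{\ze-\de,\gotp_0}\le \de/2\}.
\end{equation*}
The condition $\de/2<\ze-(\ze-\de)$ guarantees that $y+\tilde\be(y)\in\TTT^b_\ze$ whenever $y\in\TTT^b_{\ze-\de}$ and $\tilde\be\in\mathcal{B}$, so that the composition is well defined, while the first smallness assumption $\|\be\|_{\ze,\gotp_0}\le \de/(2C_1)$ gives $\calT(\mathcal{B})\subseteq\mathcal{B}$. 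Contraction follows from the mean value inequality applied component-wise to $\be$ together with the algebra property Lemma \ref{A}(ii): one gets $\|\calT(\tilde\be_1)-\calT(\tilde\be_2)\|_{\ze-\de,\gotp_0}\le C_2\|\be\|_{\ze,\gotp_0}\|\tilde\be_1-\tilde\be_2\|_{\ze-\de,\gotp_0}\le \tfrac{1}{2}\|\tilde\be_1-\tilde\be_2\|_{\ze-\de,\gotp_0}$ thanks to the second smallness assumption. Uniqueness of the fixed point yields the identity $\Psi\circ\Phi(x)=x$ on $\TTT^b_{\ze-2\de}$ (the two layers of shrinkage come from composing $\Phi$ and $\Psi$ in turn). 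The higher-order bound $\|\tilde\be\|_{\ze-\de,p}\le 2\|\be\|_{\ze,p}$ is then obtained by differentiating the fixed point identity $p$ times via Faà di Bruno, isolating the principal term $d^p\tilde\be$, and absorbing the remaining implicit contributions using the already established control of lower order derivatives and the smallness of $\|\be\|_{\ze,\gotp_0}$.

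For part (ii) the argument is a direct chain rule estimate. Applying Faà di Bruno to $d^k(u\circ\Phi)$ for $k\le p$ produces a sum of terms of the shape $(d^j u)(\Phi)\cdot \prod_i d^{k_i}\Phi$ with $\sum_i k_i=k$ and $j\le k$. The top-order term $(d^pu)(\Phi)\cdot(d\Phi)^{\otimes p}$ is estimated in $L^2(\TTT^b_{\ze-\de})$ by changing variables through the near-identity diffeomorphism $\Phi$ (whose Jacobian is bounded thanks to the smallness of $d\be$), which yields a contribution $\lesssim\|u\|_{\ze,p}$. The remaining terms all have $j<p$ and are handled by the asymmetric tame product estimate \eqref{A7} together with the Sobolev embedding Lemma \ref{A}(i), giving a contribution $\lesssim|d\be|_{p-1,\ze,\io}\|u\|_{\ze,\gotp_0}$.

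The main obstacle is the interplay between the analytic-strip reduction $\ze\to\ze-\de$ and the Sobolev regularity measured on the boundary: the real-variable arguments (fixed point, chain rule, $L^\io$ bounds) need to be made compatible with the Fourier-side definition \eqref{ananorm2} of $\|\cdot\|_{s,p}$. Here Lemma \ref{equi}, which identifies $\|\cdot\|_{s,p}$ with the boundary norm $|\cdot|_{s,p}$ of \eqref{ananorm}, is the key enabling tool: it allows one to view any $u$ analytic on $\TTT^b_\ze$ as a vector of $2^b$ Sobolev functions on the flat tori $\TTT^b\pm i\s\ze$ and to apply the classical real-variable Moser-type estimates of Lemma \ref{A} on each of them, after which the pieces reassemble to give the desired analytic-Sobolev bounds on $\tilde\be$ and on $u\circ\Phi$.
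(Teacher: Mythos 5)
Your overall strategy — Banach fixed point for (i), Fa\`a di Bruno and tame products for (ii) — is the same as the paper's, but there is a genuine gap at the point you yourself flag as ``the main obstacle'', and Lemma~\ref{equi} alone does not fill it. The contraction estimate you claim,
$\|\calT(\tilde\be_1)-\calT(\tilde\be_2)\|_{\ze-\de,\gotp_0}\le C_2\|\be\|_{\ze,\gotp_0}\|\tilde\be_1-\tilde\be_2\|_{\ze-\de,\gotp_0}$,
cannot come from a bare mean value inequality plus the algebra property: writing $\calT(\tilde\be_1)-\calT(\tilde\be_2)=-\int_0^1 d\be\bigl(y+(1-t)\tilde\be_2+t\tilde\be_1\bigr)\,dt\cdot(\tilde\be_1-\tilde\be_2)$ shows that the contraction constant is governed by a norm of $d\be$, not of $\be$, and $\|d\be\|_{\ze-\de,\gotp_0}$ is not controlled by $\|\be\|_{\ze,\gotp_0}$ in any Sobolev scale without a loss of derivative. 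The mechanism that makes this work is precisely the Cauchy-type smoothing estimate on the analytic strip, $\|u\|_{\ze',\,s+\tau}\le(\tau/e)^\tau\,(\ze-\ze')^{-\tau}\|u\|_{\ze,s}$, which converts the loss of a derivative into a factor $\de^{-1}$; the first smallness hypothesis $\|\be\|_{\ze,\gotp_0}\le\de/(2C_1)$ is calibrated exactly to absorb that $\de^{-1}$. The paper makes this quantitative by expanding $\be(y+\tilde\be)=\sum_{n\ge0}\tfrac1{n!}(\partial^n\be)\,\tilde\be^{\,n}$ (valid for $\ze>0$, with the $\ze=0$ case handled by citing \cite{Ba2}) and bounding $\|\be\|_{\ze-\de,\gotp_0+n}\lesssim(n/e\de)^n\|\be\|_{\ze,\gotp_0}$, the $1/n!$ rescuing convergence. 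Your sketch never invokes any such derivative-for-analyticity trade-off, so the self-map and contraction bounds are unsupported; the same issue resurfaces when you ``absorb the remaining implicit contributions'' in the Fa\`a di Bruno step for $\|\tilde\be\|_{\ze-\de,p}$, where the intermediate derivatives of $\tilde\be$ must be controlled by a similar loss-gain mechanism (or, as the paper does, by running the contraction directly in the $p$-norm with the Cauchy bound built in). Part (ii) as you outline it is fine, and matches the Lemma 11.4 of \cite{Ba2} that the paper cites.
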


 \prova
 For $\ze=0$ the result is proved in \cite{Ba2} 
thus in the following we assume $\ze>0$.

\noindent
\emph{(i)} First of all recall that, if $\gotp_{0}\geq b/2$ then $\|u\|_{L^{\infty}}\leq \|u\|_{\ze,\gotp_{0}}$.
We look for $\tilde{\be}$ such that
\begin{equation}\label{diffeo44}
\tilde{\be}(y)=-\be(y+\tilde{\be}(y)).
\end{equation}
The idea is to rewrite the problem as a fixed point equation. We define the operator
$\calG : H^{\ze,p}\to H^{\ze,p}$ as $\calG(\tilde{\be})=-\be(y+\tilde{\be})$. First of all we need to show that $\calG$ 
maps the ball $B_{\de/2}:=\{\|u\|_{\ze-\de,p}<\de/2\}$ into itself.

One has
\begin{equation}\label{diffeo5}
\begin{aligned}
\|\calG(\tilde{\be})\|_{\ze-\de,\gotp_{0}}&=\left\| \sum_{n\geq0}\frac{1}{n!}(\del^{n}\be) \tilde{\be}^{n}
\right\|_{\ze-\de,\gotp_{0}}\leq
\sum_{n\geq0}\frac{1}{n!}\|\be\|_{\ze-\de,\gotp_{0}+n}\|\tilde{\be}\|^{n}_{\ze-\de,\gotp_{0}}\,,
\end{aligned}
\end{equation}
where $\del \be$ denotes the derivative of $\be$ w.r.t. its argument.
Note that for any $u\in H^{\ze+\de,s}$ and $\tau>0$ one has
\begin{equation}\label{diffeo6}
\|u\|_{\ze,s+\tau}\leq \left(\frac{\tau}{e}\right)^{\tau}\frac{1}{\de^{\tau}}\|u\|_{\ze+\de,s}\,;
\end{equation}
indeed
\begin{equation*}
\begin{aligned}
\|u\|_{\ze,p+\tau}^{2}&=\sum_{l\in\ZZZ^b}\langle l \rangle^{2(p+\tau)}e^{2\ze|l|}|u_l|^{2}\leq\sum_{l\in\ZZZ^b}
\langle l\rangle^{2p}|l|^{2\tau}e^{-2\de(|l|)}e^{2(\ze+\de)|l|}|b_l|^{2}\,,
\end{aligned}
\end{equation*}
and the function $f(x):=x^{2\tau}e^{-2\de x}$ reach its maximum at $x=\tau/\de$ and $f(\tau/\de)=(\tau/\de e)^{2\tau}$,
so that \eqref{diffeo6} follows.  Then using  \eqref{diffeo6} and the fact that
$n!=(1/\sqrt{2\pi n})(n/e)^{n}(1+O(1/n))$ as $n\to\infty$, we obtain
\begin{equation}\label{diffeo7}
\begin{aligned}
\|\calG(\tilde{\be})\|_{\ze-\de,\gotp_{0}}&\leq\sum_{n\geq0}\frac{1}{n!}\left(\frac{n}{e}\right)^{n}\frac{1}{\de^{n}}
\|\be\|_{\ze,\gotp_{0}}\|\tilde{\be}\|^{n}_{\ze-\de,\gotp_{0}}\leq \|\be\|_{\ze,\gotp_{0}}\sum_{n\geq0}C
\left(\frac{\|\tilde{\be}\|_{\ze-\de,\gotp_{0}}}{\de}\right)^{n}\\
&\leq 2 C \|\be\|_{\ze,\gotp_{0}}\stackrel{(\ref{diffeo2})}{\leq} \frac{\de}{2}\,.
\end{aligned}
\end{equation}
Finally we show that $\calG$ is a contraction. One has
\begin{equation}\label{diffeo8}\
\begin{aligned}
\|\calG(\tilde{\be}_{1})-\calG(\tilde{\be}_{2})\|_{\ze-\de,p}
&=\left\| \sum_{n\geq1}\frac{1}{n!}(\del^{n}\be)\tilde{\be}_{1}^{n}-
\sum_{n\geq1}\frac{1}{n!}(\del^{n}\be)\tilde{\be}_{2}^{n}
\right\|_{\ze-\de,p}\\
&=\left\|\sum_{n\geq1}\frac{1}{n!}(\del^{n}\be)(\tilde{\be}_{1}-\tilde{\be}_{2})
\left(\sum_{k=0}^{n-1}\tilde{\be}_{1}^{k}\tilde{\be}_{2}^{n-1-k}\right)\right\|_{\ze-\de,p}\\
&\le\|\tilde{\be}_{1}-\tilde{\be}_{2}\|_{\ze-\de,p}
\sum_{n\geq1}\frac{1}{n!}\left(\frac{n}{e}\right)^{n}\frac{1}{\de^{n}}\|\be\|_{\ze,p}
\|\sum_{k=0}^{n-1}\|\tilde{\be}_{1}\|^{k}_{\ze-\de,p}\|\tilde{\be}_{2}\|_{\ze-\de,s}^{n-1-k}\\
&\leq \|\tilde{\be}_{1}-\tilde{\be}_{2}\|_{\ze-\de,p} C_{2}\|\be\|_{\ze-\de,p}\stackrel{(\ref{diffeo2})}{\leq} \frac{1}{2}
\|\tilde{\be}_{1}-\tilde{\be}_{2}\|_{\ze-\de,p}	\,.
\end{aligned}
\end{equation}
Then we deduce that there exists a unique fixed point in $B_{\de/2}$, hence a solution of the equation \eqref{diffeo4}.

\noindent
\emph{(ii)} One can follow almost word by word the proof of Lemma 11.4 in \cite{Ba2}
using the norm \eqref{ananorm} instead of \eqref{ananorm2} and  the interpolation properties 
of the $W^{p,\infty}(\TTT^{b}_{\ze})$-norms.
\EP

\begin{rmk}\label{pdimerda}
Note that by Lemma \ref{equi}, one has
$$
\begin{aligned}
&\|f^{(\theta)}(\theta,y,w)\|_{s,a,p}\approx \frac{1}{s_0}\max_{1\le i\le d}
\sum_{\s\in\{\pm1\}^d}\|f^{(\theta_i)}({\rm Re}(\theta)+\ii\s s)\|_{H^{p}}\,,\\
&\|f^{(y)}(\theta,y,w)\|_{s,a,p}\approx \frac{1}{r_0^\mathtt s}\sum_{i=1}^{d_1}
\sum_{\s\in\{\pm1\}^d}\|f^{(y_i)}({\rm Re}(\theta)+\ii\s s)\|_{H^{p}}\,,\\
&\|f^{(w)}(\theta,y,w)\|_{s,a,p}\approx \frac{1}{r_0}
\sum_{\s\in\{\pm1\}^d}\big(\|{\mathtt f}_{\gotp_0}({\rm Re}(\theta)+\ii\s s)\|_{H^p(\TTT^d_s)}+ \|{\mathtt f}_{p}({\rm Re}(\theta)+\ii\s s)\|_{H^{\gotp_0}(\TTT^d_s)}\big)
\end{aligned}
$$
where
$ {\mathtt f}_p(\theta)$ is defined as in \eqref{effepi}.
In particular this means that for all $s\ge0$, $a\ge0$ and $p\ge \ol{\gotp}>n/2$ one has the standard
algebra, interpolation and tame properties w.r.t. composition with functions
in $H^p(\TTT^d_s)$; see for instance \cite{BB1,BBM1,FP,BCP} just to mention a few.
%
\end{rmk}

\noindent
From Lemma \ref{lem.diffeo} and Remark \ref{pdimerda} above we deduce the following result.
 \begin{lemma}\label{compo}
 Given a tame vector field $f\in \VV_{\vec{v},p}$ with scale of constants $C_{p}(f)$
 of the form \eqref{vectorfield} and given 
 a map $\Phi(\theta)=\theta+\be(\theta):\TTT^d_{s'}\to\TTT^d_s$ as in \eqref{diffeo}
 with $b=d$ and $\ze=s$, then 
 the composition $f\circ \Phi$ is a tame vector field with constant 
 \begin{equation}\label{pillole}
 C_{p}(f\circ \Phi)\leq C_{p}(f)+C_{\gotp_0}(f)\|\be\|_{s,p+\nu+3}.
 \end{equation}
 Moreover if $f$ is a regular vector field, i.e. it satisfies \eqref{maremma}, then
  \begin{equation}\label{pillole2}
  |f\circ \Phi|_{\vec{v}_1,p}\leq |f|_{\vec{v},p}+|f|_{\vec{v},\gotp_0}\|\be\|_{s,p+\nu+3}.
  \end{equation}
  where $\vec{v}_1=(\la,\calO,s',a)$.
\end{lemma}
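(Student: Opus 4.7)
My plan is to derive both estimates by reducing composition with $\Phi$ to the tameness hypothesis on $f$ applied to a perturbed test map. Since $\Phi$ acts only on the $\theta$-coordinate, for any auxiliary map $\Psi=\uno+g$ entering Definition \ref{tame} I observe that $(f\circ\Phi)\circ\Psi = f\circ\widetilde{\Psi}$, where $\widetilde{\Psi}=\uno+\tilde g$ is defined by
$$
\tilde g^{(\theta)}(\theta,y,w) = g^{(\theta)}(\theta,y,w) + \beta\bigl(\theta+g^{(\theta)}(\theta,y,w)\bigr), \qquad \tilde g^{(y)}=g^{(y)}, \qquad \tilde g^{(w)}=g^{(w)}.
$$
Differentiation in the $\mathtt U=\{y,w\}$-directions commutes with the $\theta$-diffeomorphism $\Phi$, so
$$
d^m_\mathtt U (f\circ\Phi)(\Psi)[h_1,\ldots,h_m] = d^m_\mathtt U f(\widetilde{\Psi})[h_1,\ldots,h_m].
$$
The program is then to apply the tameness hypothesis on $f$ with test map $\widetilde{\Psi}$, which requires checking (i) the smallness/domain conditions $\|\tilde g\|_{\gotp_1}<1/2$ and $\widetilde{\Psi}:\TTT^d_{s'}\times D\to\TTT^d_s\times D_{a,p+\nu}(r)$, and (ii) an estimate on $\|\widetilde{\Psi}\|_{\vec v',p+\nu}$.

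Both (i) and (ii) reduce to a composition estimate for $\beta\circ(\uno+g^{(\theta)})$. For fixed $(y,w)$ the map $\theta\mapsto\theta+g^{(\theta)}(\theta,y,w)$ is a torus diffeomorphism of the kind treated in Lemma \ref{lem.diffeo}, so item (ii) of that lemma yields
$$
\|\beta\circ(\uno+g^{(\theta)})\|_{\vec v',p} \le C\bigl(\|\beta\|_{s,p} + |d\beta|_{p-1,s,\infty}\,\|g^{(\theta)}\|_{\vec v',\gotp_0}+\|\beta\|_{s,\gotp_0+1}\|g^{(\theta)}\|_{\vec v',p}\bigr).
$$
The Sobolev embedding yields $|d\beta|_{p-1,s,\infty}\le C\|\beta\|_{s,p+\gotp_0}$, and combined with $\gotp_1\ge\gotp_0+\nu+1$ (Hypothesis \ref{hyp22}) and the smallness of $\beta$ inherited from \eqref{diffeo2}, this certifies that $\widetilde{\Psi}$ is admissible and produces the bound
$$
\|\widetilde{\Psi}\|_{\vec v',p+\nu}\le C\bigl(\|\Psi\|_{\vec v',p+\nu}+\|\beta\|_{s,p+\nu+3}\bigr).
$$
The shift $\nu+3$ is what we end up needing: $\nu$ from the tameness bound of $f$, plus roughly $\gotp_0$ from the embedding $H^{\gotp_0}\hookrightarrow L^\infty$, absorbed using the standing relation between $\gotp_1$, $\gotp_0$ and $\nu$. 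Substituting into Definition \ref{tame} applied to $f$ and $\widetilde{\Psi}$ yields \eqref{pillole} with $C_p(f\circ\Phi)$ as stated.

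For the regular case, vector fields in $\calA$ have the special form \eqref{maremma} in which every coefficient $f^{(\tv,\tv')}(\theta)$ depends on $\theta$ only, so $f\circ\Phi$ simply replaces $f^{(\tv,\tv')}(\theta)$ by $f^{(\tv,\tv')}(\theta+\beta(\theta))$. Since the norm $|\cdot|_{s,a,p}$ in \eqref{linnorm} is defined componentwise through $H^p$-type norms in $\theta$, I can apply Lemma \ref{lem.diffeo}(ii) directly to each coefficient (using Remark \ref{pdimerda} for the mixed analytic--Sobolev components $f^{(y_i,w)}$), summing to obtain \eqref{pillole2} with the same shift.

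The main obstacle is purely bookkeeping: tracking precisely how the $\nu+3$ shift arises from combining the $\nu$-loss in Definition \ref{tame}, the Sobolev embedding used to estimate $|d\beta|_{p-1,s,\infty}$, and the composition estimate in Lemma \ref{lem.diffeo}. Conceptually nothing is new beyond the identity $(f\circ\Phi)\circ\Psi = f\circ(\Phi\circ\Psi)$ and the observation that $\Phi\circ\Psi$ still fits the tameness framework for $f$.
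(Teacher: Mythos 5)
Your argument is correct in substance, but it is organized differently from the paper's proof, so a comparison is worthwhile. The paper never brings the abstract test map into play at this stage: it applies the composition estimate \eqref{diffeo4} of Lemma \ref{lem.diffeo} directly to the components of $f$, regarding $(y,w)$ as parameters (for the $w$-component one first reduces to the scalar functions ${\mathtt f}_p$ of Remark \ref{chede}), and then asserts that the bounds involving the $\mathtt U$-derivatives, i.e.\ the full family $(\mathrm{T}_m)$, follow in the same way; the regular case \eqref{pillole2} is the same componentwise computation read through the norm \eqref{linnorm}. You instead keep $f$ as a black box and push the torus diffeomorphism into the test map, via $(f\circ\Phi)\circ\Psi=f\circ(\Phi\circ\Psi)$ together with the commutation of $d_{\mathtt U}$ with $\Phi$, so that the whole family $(\mathrm{T}_m)$ for $f\circ\Phi$ is obtained at once from Definition \ref{tame} applied to $f$ at the modified map $\widetilde\Psi$; the composition estimate is then applied to $\beta\circ(\uno+g^{(\theta)})$ rather than to the components of $f$. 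What your route buys is a more transparent verification of $(\mathrm{T}_m)$ for arbitrary admissible $\Psi$, which the paper dispatches with ``the bounds on the derivatives follow in the same way''; what it costs is two points that should be made explicit: the bound you quote for $\beta\circ(\uno+g^{(\theta)})$ is not literally Lemma \ref{lem.diffeo}(ii) (there the $L^\infty$ factor falls on the derivative of the \emph{shift}, i.e.\ on $g^{(\theta)}$, whereas you need the Moser-type tame form in which the high norm falls either on $\beta$ alone or on $g^{(\theta)}$ times a low norm of $\beta$ --- true and obtainable from Lemma \ref{A} and Remark \ref{pdimerda}, but it is a different estimate), and the admissibility of $\widetilde\Psi$, namely $\|\tilde g\|_{\vec v',\gotp_1}<1/2$, requires smallness of $\beta$ in the $\gotp_1$-norm, which the paper also assumes implicitly (``if $\|\be\|_{s,\gotp_1}$ is sufficiently small''). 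Your accounting of the exponent $\nu+3$ in \eqref{pillole} is loose, but no looser than the paper's own; for the regular case your componentwise argument coincides with the paper's.
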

%
\begin{proof}
By Lemma \ref{lem.diffeo} one has that if $\|\be\|_{s,\gotp_{1}}$ is sufficiently small, then the vector field $f\circ\Phi$ is defined on $\TTT^d_{s-\rho s_0}\times D_{a,p}(r-\rho r_{0})$.
Lemma \ref{lem.diffeo} guarantees that for a function $u(\theta)\in \CCC$ the estimate \eqref{diffeo4} holds. Hence also
the components $f^{(v)}(\theta,y,w)$ for $v=\theta,y$ satisfy the same bounds (recall that for the norm \eqref{totalnorm} $y,w$ are  parameters). Let us study the composition
of $f^{(w)}(\theta+\be(\theta),y,w)$.
By Remark \ref{chede} one has 
$$
\|f^{(w)}\|_{s,a,p}=\frac{1}{r_0}(\|{\mathtt f}_{\gotp_0}\|_{s,p}+\|{\mathtt f}_{p}\|_{s,\gotp_0}).
$$
Now ${\mathtt f}_p:\TTT^d_{s}\to \CCC$ and hence we can apply Lemma \ref{lem.diffeo}
to obtain the result. The bounds on the derivatives follow in the same way.
\end{proof}

 \section{Properties of Tame and regular vector fields}\label{tameprop}

\zerarcounters

We now discuss the main properties of $C^k$-tame and regular vector fields; in particular
we need to control the changes in the tameness constants when conjugating via changes
of variables generated by regular bounded vector fields.

\begin{lemma}\label{derivate}
Consider any two $C^k$-\emph{tame} vector fields $F,G\in \mathcal W_{\vec v,p}$, then the following holds.
\begin{itemize}
\item[(i)] 
For $l=1,\ldots,d$ one has  that $\del_{\theta_{l}}F$ is a  $C^k$-tame vector field up to order $q-1$ with
tameness constants $C_{\vec v, p+1}(F)$.
\item[(ii)] For $l=1,\ldots,d$ one has that $\del_{y_{l}}F, d_{w}F[w]$ are 
$C^{k-1}$-tame vector fields up to order $ q$ with tameness constants $C_{\vec v,p}(F)$.
for any $h\geq 0$.
\item[(iii)] The commutator $[G,F]$ is a  $C^{k-1}$-tame vector field up to order $ q-1$ with scale of
constants 
 \begin{equation}\label{commu2}
C_{\vec{v},p}([G,F])\leq 
\mathtt C (C_{\vec{v},p+\nu_G+1}(F)C_{\vec{v},\gotp_{0}+\nu_F+1}(G)+C_{\vec{v},\gotp_{0}+\nu_G+1}(F)C_{\vec{v},p+\nu_F+1}(G)),
 \end{equation}
 where $\nu_F$, $\nu_G$ are the loss of regularity of $F$, $G$ respectively.
  
 \item[(iv)] If $F$ is a polynomial of maximal degree $k$ in $y,w$   then it is $C^{\infty}$-tame up to order $q$.
\end{itemize}
\end{lemma}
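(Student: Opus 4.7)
The plan is to check each item directly from the quantitative bound (T$_m$) in Definition \ref{tame}, exploiting two general principles: (a) differentiation in $\theta$ raises the Sobolev index by one (plus a chain-rule contribution controlled by $\|\Phi\|_{\vec v',p+1}$, which is exactly what (T$_m$) already accommodates); and (b) differentiation in $y$ or $w$ leaves the Sobolev index untouched but consumes one order of $C^k$-regularity.

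For item (i), I would use the identity $d^m_{\mathtt U}(\partial_{\theta_l}F) = \partial_{\theta_l}(d^m_{\mathtt U}F)$, evaluate at an admissible $\Phi$, and apply the obvious bound $\|\partial_{\theta_l}u\|_{H^{p}} \leq \|u\|_{H^{p+1}}$ on the Sobolev norms in $\theta$ entering \eqref{equation}--\eqref{equation3} (cf.\ Remark \ref{pdimerda}). The chain-rule correction produced by $\partial_{\theta_l}\Phi$ is absorbed into the $C_{\vec v,\gotp_0}(F)\|\Phi\|_{\vec v',p+\nu+1}$ summand of (T$_m$) when applied at regularity $p+1$; this delivers the claimed tameness constant $C_{\vec v,p+1}(F)$ for $\partial_{\theta_l}F$. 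Item (ii) is even more direct: since $d^m_{\mathtt U}(\partial_{y_l}F)[h_1,\ldots,h_m] = d^{m+1}_{\mathtt U}F[h_1,\ldots,h_m,e_{y_l}]$ (and analogously $d_wF[w]$ is a special value of $d_{\mathtt U}F$), one simply invokes (T$_{m+1}$) for $F$, available precisely because $k\geq 1$, paired with the trivial bound $\|e_{y_l}\|_{\vec v',\ast}\sim 1$; the Sobolev index is unchanged, so the constant is $C_{\vec v,p}(F)$.

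For (iii), I would decompose the commutator as $[G,F]=dG[F]-dF[G]$ and on each summand split the differential into the $\theta$-part and the $\mathtt U$-part,
\[
dG^{(\tv)}[F] = \partial_\theta G^{(\tv)}\cdot F^{(\theta)} + d_{\mathtt U}G^{(\tv)}[F^{(\mathtt U)}],
\]
and symmetrically for $dF^{(\tv)}[G]$. The two contributions are handled by the asymmetric tame product of Lemma \ref{A}(iv) via Remark \ref{pdimerda}: by item (i), $\partial_\theta G$ is tame with constant $C_{\vec v,p+1}(G)$ (so the $+1$ shift in \eqref{commu2} is produced), while $F^{(\theta)}$ enters at the low regularity $\gotp_0$. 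The extra shift by $\nu_G$ in $C_{\vec v,p+\nu_G+1}(F)$ arises because evaluating the composition forces the range of $F$ to lie inside the domain of $G$, which is $D_{a,p+\nu_G}(r)$; symmetrically for $\nu_F$. Summing the four contributions and interpolating yields exactly \eqref{commu2}. Finally, (iv) is immediate: for $F\in\calP_k$ one has $d^m_{\mathtt U}F\equiv 0$ when $m>k$, so (T$_m$) holds vacuously and with the same scale of constants for all $m\ge0$, upgrading $C^k$-tameness to $C^\infty$-tameness at no cost.

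The main obstacle will be the bookkeeping in (iii): one has to track carefully that after composing with $\Phi$ the inner evaluation of $F$ (respectively $G$) takes place in a domain compatible with $G$'s (respectively $F$'s) loss of regularity $\nu_G$ (respectively $\nu_F$), which is what dictates the precise exponent on each factor of \eqref{commu2}. Everything else reduces to the chain rule, the smoothing identities in $\theta$ from Remark \ref{pdimerda}, and the asymmetric product inequality of Lemma \ref{A}(iv)--(vi).
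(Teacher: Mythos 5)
Your overall route is the same as the paper's: item (i) by the chain rule together with the fact that a $\theta$-derivative costs exactly one Sobolev index, and items (ii)--(iv) read off directly from Definition \ref{tame} (the paper indeed dispatches them with ``follow by the definitions'', so your identification $d^m_{\mathtt U}(\del_{y_l}F)=d^{m+1}_{\mathtt U}F[\cdot,e_{y_l}]$, the vanishing of $d^m_{\mathtt U}F$ for $m>k$ when $F\in\calP_k$, and your account of how the shifts $\nu_F,\nu_G$ in \eqref{commu2} arise from the $\|h_j\|_{\vec v',p+\nu}$ slots of $(T_m)$ are all consistent with what the paper intends).

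The one step that does not work as you state it is the treatment of the chain-rule correction in (i). Writing out $\del_{\theta_l}\big(d^m_{\mathtt U}F(\Phi)[h_1,\dots,h_m]\big)$, the correction produced by $\del_{\theta_l}\Phi$ contains the term $\big(\del_{\theta}d^m_{\mathtt U}F\big)(\Phi)\cdot\del_{\theta_l}f^{(\theta)}$, i.e.\ it contains precisely the unknown quantity $(\del_\theta F)\circ\Phi$ (and its $\mathtt U$-differentials) that you are trying to estimate. This term is not of the form $d^m_{\mathtt U}F(\Phi)[\cdots]$, so it cannot be ``absorbed into the $C_{\vec v,\gotp_0}(F)\|\Phi\|_{\vec v',p+\nu+1}$ summand of $(T_m)$'': no application of $(T_m)$ to $F$ bounds it, since any such bound would presuppose tameness of $\del_\theta F$, which is the statement being proved. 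The paper closes this loop with a two-step absorption argument: first take $p=\gotp_0$ and use the smallness $\|\del_\theta f\|_{s,a,\gotp_0}\le\|f\|_{s,a,\gotp_1}<1/2$ (here $\gotp_1\ge\gotp_0+\nu+1$ enters) to move the self-referential term to the left-hand side, as in \eqref{polline2}; then, for general $p$, combine the asymmetric tame product of Lemma \ref{A} with the $\gotp_0$-bound just obtained to arrive at \eqref{polline3}, which is the tameness property for $\del_{\theta_l}F$ with constant $\mathtt c\,C_{\vec v,p+1}(F)$, valid for $p\le q-1$. Once you replace your one-line absorption claim by this bootstrap, your proof of (i) coincides with the paper's, and the remaining items stand as you wrote them.
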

\begin{proof} 
Let us check item $(i)$. We consider a map $\Phi:=\uno+f$ as in Definition \ref{tame}. Recall that  $\|f\|_{s,a,\gotp_1}<1/2$. One has that
\begin{equation}\label{polline}
\begin{aligned}
\|(\del_{\theta}F)\circ\Phi\|_{s,a,p}&\leq \|\del_{\theta}(F\circ \Phi)\|_{s,a,p}+\|(\del_{\theta}F)\circ\Phi \cdot \del_{\theta}f\|_{s,a,p}\\
&\leq\|\del_{\theta}(F\circ \Phi)\|_{s,a,p}+
\|(\del_{\theta}F)\circ\Phi\|_{s,a,p}\|\del_{\theta}f\|_{s,a,\gotp_0}+\|(\del_{\theta}F)\circ\Phi\|_{s,a,\gotp_0}\|\del_{\theta}f\|_{s,a,p}.
\end{aligned}
\end{equation}
Now, for $p=\gotp_0$,  by \eqref{polline} one gets
\begin{equation}\label{polline2}
\big(1-2\|\del_{\theta}f\|_{s,a,\gotp_0}\big)\|(\del_{\theta}F)\circ\Phi\|_{s,a,\gotp_0}\leq \|\del_{\theta}(F\circ\Phi)\|_{s,a,\gotp_0}\stackrel{(T1)}{\leq}
C_{s,a,p+1}(F)(1+\|\Phi\|_{s,a,\gotp_0}),
\end{equation}
hence, for $p>\gotp_0$ one has
\begin{equation}\label{polline3}
\|(\del_{\theta}F)\circ\Phi\|_{s,a,p}\leq\mathtt{c}( C_{s,a,p+1}(F)+C_{s,a,\gotp_0}(F)\|\Phi\|_{s,a,p+1}),
\end{equation}
for some $\mathtt{c}$ independent of $p$. Equation \eqref{polline3} implies property $(T1)$ for the vector field $(\del_{\theta}F)(\theta,y,w)$.
Clearly it holds for $p\leq q-1$. The other bounds follows similarly. 
Finally items $(ii),(iii),(iv)$ follow by the definitions.

\end{proof}
\begin{rmk}\label{monomi}
For $k\geq 0$ and any $\tv\in \mathtt V, \tv_1,\ldots,\tv_k\in \mathtt U$  consider any monomial subspace
$\VV^{(\tv,\tv_1,\dots,\tv_k)}$ as in \eqref{sotto}. Then for all $C^k$-tame vector fields $F$ one has that
$\Pi_{\VV^{(\tv,\tv_1,\dots,\tv_k)}}F$ is $C^\io$-tame (up to the same order  as $F$) and one can
choose the constant as
$C_{\vec v,p}(\Pi_{\VV^{(\tv,\tv_1,\dots,\tv_k)}} F)=  C_{\vec v,p}(F)$. The same holds for
the direct sum $\calU$ of a finite number of monomial spaces
and their orthogonal, namely one can chose
$$
C_{\vec v,p}(\Pi_{\calU} F) = C_{\vec v,p}(F),\qquad
C_{\vec v,p}(\Pi_{\calU}^\perp F) = C_{\vec v,p}(F)\,.
$$
\end{rmk}

\begin{lemma}[{\bf Conjugation}]\label{conj}  
Consider  a tame left invertible map $\Phi=\uno+f$  with tame inverse $\Psi= \uno+h $ 
as in Definition \ref{leftinverse} such that \eqref{sinistra} holds.
Assume that $\gotp_1\ge\gotp_0+\nu+1$ and the fields $f, h$ are such that
$C_{\vec{v},\gotp_1}(f)=C_{\vec{v},\gotp_1}(h)\leq {\mathtt c}\rho$ for $\rho>0$ and $\mathtt c$ the same
appearing in Remark \ref{azz} \footnote{By Remark \ref{azz} the smallness
of the constants $C_{\vec{v},\gotp_1}(f),C_{\vec{v},\gotp_1}(h)$ automatically implies
that $\Phi,\Psi$ satisfy \eqref{laputtana250}.}. 
For any   vector field 
\begin{equation}\label{miodio}
F : \TTT^d_s\times D_{a,p+\nu}(r) \times \calO \to V_{a,p},
\end{equation}  which is $C^k$--tame up to order $q$, one has that the push--forward
\begin{equation}\label{push2}
F_+:=\Phi_* F: \TTT^d_{s- 2\rho s_0}\times D_{a,p+\nu}(r-2\rho r_0)\times \calO  \to  V_{a- 2\rho a_0,p}
\end{equation}
is $C^k$--tame up to order $q-\nu-1$, with scale of constants
 \begin{equation}\label{dafare}
 C_{\vec{v}_{2},p}(F_+)\leq (1+\rho)\Big(C_{\vec{v},p}(F)+C_{\vec{v},\gotp_{0}}(F)
 C_{\vec{v}_{1},p+\nu+1}(f)\Big),
 \end{equation}
 where $\vec{v}:=(\la,\calO,s,a,r)$, $\vec{v}_{1}:=(\la,\calO,s-\rho s_0,a-\db a_0, r-\rho r_0)$ and 
 $\vec{v}_{2}:=(\la,\calO,s-2\rho s_0,a-2\db a_0, r-2\rho r_0)$.
\end{lemma}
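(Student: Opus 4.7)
The overall strategy is to unfold the push--forward as
$$
F_+ = \Phi_*F = d\Phi(\Psi)[F(\Psi)] = F\circ \Psi + df(\Psi)\bigl[F\circ \Psi\bigr]
$$
and estimate the two summands separately. To verify $(T_m)$ of Definition \ref{tame} for $F_+$ I fix an auxiliary close--to--identity map $\tilde\Phi = \uno + \tilde f$ from the smaller domain associated with $\vec v_2$ into that of $\vec v$, plus test vector fields $h_1,\dots,h_m$, and I evaluate $d^m_{\mathtt U} F_+(\tilde\Phi)[h_1,\dots,h_m]$. The crucial observation is that $\Psi\circ\tilde\Phi = \uno + \tilde f + h\circ\tilde\Phi$ is itself close to the identity: the smallness $\|\tilde f\|_{\vec v_2,\gotp_1}<1/2$ together with $C_{\vec v_1,\gotp_1}(h)\le \mathtt c\rho$ and Remark \ref{azz} guarantee that $\Psi\circ\tilde\Phi$ lands in the domain of $F$, hence the tameness axiom for $F$ can be legitimately invoked at this composition.

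The first summand is handled by applying $(T_m)$ of Definition \ref{tame} for $F$ with auxiliary map $\Psi\circ\tilde\Phi$. This gives an estimate of the form
$$
\|d^m_{\mathtt U}(F\circ\Psi\circ\tilde\Phi)[h_1,\dots,h_m]\|_{\vec v_2',p} \lesssim
 \bigl(C_{\vec v,p}(F)+C_{\vec v,\gotp_0}(F)\|\Psi\circ\tilde\Phi\|_{\vec v_2',p+\nu}\bigr)\prod_j \|h_j\|_{\vec v_2',\gotp_0+\nu} + \text{mixed terms},
$$
and the norm $\|\Psi\circ\tilde\Phi\|_{\vec v_2',p+\nu}$ is in turn bounded by $\|\tilde\Phi\|_{\vec v_2',p+\nu}+\|h\circ\tilde\Phi\|_{\vec v_2',p+\nu}$, the second piece being controlled by tameness of $h$ applied at $\tilde\Phi$ and contributing $C_{\vec v_1,p+\nu}(h)$ plus cross terms. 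Since $C_{\vec v_1,\gotp_1}(h)\le \mathtt c\rho$, all contributions of the form $C_{\vec v,\gotp_0}(F)(1+\mathtt c\rho)$ combine into the prefactor $(1+\rho)$ of \eqref{dafare}.

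The second summand $df(\Psi)[F(\Psi)]$ is treated analogously, now using tameness of the vector field $f$ itself. By Lemma \ref{derivate} the map $df$ is $C^{k-1}$--tame with a loss of one derivative, and the Leibniz/chain rule applied to $m$ differentiations in $\mathtt U$ produces a sum of terms of the form
$$
d^{j+1}f(\Psi\circ\tilde\Phi)\bigl[d_{\mathtt U}^{|I_1|}(\Psi\circ\tilde\Phi),\dots,d_{\mathtt U}^{|I_\ell|}(\Psi\circ\tilde\Phi),\,d^{m-j}_{\mathtt U}(F\circ\Psi\circ\tilde\Phi)[\dots]\bigr];
$$
each such term is bounded by combining the tame estimates of $f$ (at regularity $p+\nu+1$, whence the constant $C_{\vec v_1,p+\nu+1}(f)$ appears) with the bound on $F\circ\Psi\circ\tilde\Phi$ obtained in the first step. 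The sharp--tameness property of the norm $|\cdot|_{\vec v,p}$ on regular fields (Definition \ref{linvec-abs}) is invoked to close the circle and show that the constants we produce are indeed tame constants for $F_+$.

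The main obstacle is the bookkeeping of the Fa\`a di Bruno--type chain--rule expansion: one must ensure that, at every term produced, a factor carrying high regularity is systematically paired with a factor carrying only $\gotp_0$--regularity, so that the terms recombine into exactly the tame structure $(T_m)$ rather than a weaker one. In particular one has to check that the ``dangerous'' products $\prod_k C_{\vec v_1,\gotp_1}(f)^{\ast}$ collapse into the single prefactor $(1+\rho)$ via the smallness hypothesis $C_{\vec v,\gotp_1}(f)\le \mathtt c\rho$, and that the loss of one $\theta$--derivative in $df$ combined with the loss $\nu$ already present in $F$ produces precisely the shift $p\rightsquigarrow p+\nu+1$ in the final constant. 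Once this combinatorial organization is in place, the remaining estimates are a direct application of Lemma \ref{derivate}, the definition of tameness, and the triangle inequality in the norm \eqref{totalnorm}.
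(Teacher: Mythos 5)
Your proposal follows essentially the same route as the paper's proof: write $F_+=F\circ\Psi+df(\Psi)[F\circ\Psi]$, compose with an arbitrary test map $\Gamma$ so that $\Psi\circ\Gamma$ lands in the domain of $F$, and then combine the tameness of $F$ (evaluated at $\Psi\circ\Gamma$) with the tameness of $f$ and $h$ to produce exactly the constant $(1+\rho)\big(C_{\vec{v},p}(F)+C_{\vec{v},\gotp_{0}}(F)C_{\vec{v}_{1},p+\nu+1}(f)\big)$. The only cosmetic differences are that for the higher-order properties $(T_m)$ the paper exploits the linearity of $f,h$ in $(y,w)$ instead of a full Fa\`a di Bruno bookkeeping, and that your appeal to the sharpness property of Definition \ref{linvec-abs} is unnecessary here, since verifying the $(T_m)$ inequalities already exhibits tameness constants for $F_+$.
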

 \begin{proof}
By \eqref{push} 
the vector field $F_{+}$ is defined in $\TTT_{s-2\rho s_0}\times D_{a,p}(r-2\rho r_{0})\times \calO$.
Then, given a change of coordinates
$\Gamma:\TTT^d_{s_1}\times D_{a',p'}(r_1)\times \calO
\rightarrow \TTT_{s-2\rho s_0}\times D_{a,p}(r-2\rho r_{0})$
we can consider the composition of $F_{+}$ with $\Gamma$; in particular
$$
\Psi\circ \Gamma: \TTT^d_{s_1}\times D_{a',p'}(r_1)\times \calO\longrightarrow
\TTT_{s}\times D_{a,p+\nu}(r ),
$$
namely the domain of $F$.
Let us check the property $($T$_0)$ for the vector field $F_{+}$.
In the following we will keep track only of the index $p$.
One has
\begin{equation}\label{topolino2}
\begin{aligned}
&\|\Psi(\Gamma)\|_{p}\leq C_{p}(f)+(1+C_{\gotp_{0}}(f))\|\Gamma\|_{p},\\
&\|\Psi(\Gamma)\|_{\gotp_{0}+\nu}\leq 1+2 C_{\gotp_{0}+\nu}(f).
\end{aligned}
\end{equation}
so that we get
\begin{equation}\label{topolino}
\begin{aligned}
\|F_{+}(\Gamma)\|_{p}&\leq \|F(\Psi(\Gamma))\|_{p}+\|{ d}f(\Psi(\Gamma))[F(\Psi(\Gamma))]\|_{p}\\
&\stackrel{(\text{T}_1)}{\leq} (1+C_{\gotp_{0}+1}(f))\|F(\Psi(\Gamma))\|_{p}+
\left(C_{p+1}(f)+C_{\gotp_{0}+1}(f)\|\Psi(\Gamma)\|_p\right)\|F(\Psi(\Gamma))\|_{\gotp_{0}}\\
&\stackrel{(\text{T}_0)}{\leq} (1+C_{\gotp_{0}+1}(f))\left[C_{p}(F)+C_{\gotp_{0}}(F)\|\Psi(\Gamma)\|_{p+\nu}
\right]\\
&\qquad\qquad+\left(C_{p+1}(f)+C_{\gotp_{0}+1}(f)\|\Psi(\Gamma)\|_{p}\right)
\left[C_{\gotp_{0}}(F)+C_{\gotp_{0}}(F)\|\Psi(\Gamma)\|_{\gotp_{0}+\nu}
\right]\,,
\end{aligned}
\end{equation}
and therefore
\begin{equation}\label{topolino3}
\begin{aligned}
\|F_{+}(\Gamma)\|_{p}&\leq C_{p}(F)(1+C_{\gotp_{0}+\nu+1}(f))+5 C_{\gotp_{0}}(F)(1+C_{\gotp_{0}+1}(f))C_{p+\nu+1}(f)\\
&+\|\Gamma\|_{p+\nu}\left[ C_{\gotp_{0}}(F)(1+3C_{\gotp_{0}+\nu+1}(f))^{2}\right],
\end{aligned}
\end{equation}
that is  $($T$_0$). The other properties are obtained with similar calculations using also the fact that 
the vector field $f$ is linear in the variables $y,w$. Hence
$F_{+}$ is tame with scale of constants in \eqref{dafare}.
 \end{proof}
\begin{rmk}
In Lemma \ref{conj}, if  $f,h\in E^{(K)}$, then  the smoothing estimate \eqref{P1} applied to $|f|_{\vec v,p+\nu+1}$ implies that  $F_{+}$ is $C^k$--tame up to order $q$. The same holds if $\Phi$ is generated by a vector field $g\in E^{(K)}$.
\end{rmk}

\begin{rmk}\label{resto}
Consider a vector field $g$ which generates a well defined flow $\Phi^t$ for $t\leq 1$ and set $\Phi:=\Phi^1$.
Then, for all $p\ge\gotp_0$ for all  vector fields $F $  such that the push-forward  with $\Phi^t$ is well defined, 
one has
\begin{equation}\label{res1}
\begin{aligned}
L:= \Phi_{*}F-F&=\int_{0}^{1}\Phi_{*}^{t}[g,F]d t,\\
Q:= \Phi_{*}F-[g,F]-F&=\int_{0}^{1}\int_{0}^{t}\Phi_{*}^{s}[g,[g,F]]d s d t.
\end{aligned}
\end{equation}
If moreover  $g\in \BB$ satisfies Definition \ref{linvec-abs} item 5,  and $F$ is as in formula \eqref{miodio}  then
$L$ is $C^{k-1}$ tame and $Q$ 
is  $C^{k-2}$ tame up to order $q-\nu-1$ with constants
\begin{subequations}
\begin{align}
& C_{\vec{v_2},p}(L)\leq  \mathtt C \Big(
C_{\vec{v},p+1}(F)|g|_{\vec{v},\gotp_{0}+\nu+1}+C_{\vec{v},\gotp_{0}+1}(F)|g|_{\vec{v},p+\nu+1}\Big) \label{rem0} \\
&  C_{\vec{v_2},p}(Q)\leq  \mathtt C \Big(
 C_{\vec{v},p+2}(F)|g|^2_{\vec{v},\gotp_{0}+\nu+1}+C_{\vec{v},\gotp_{0}+2}(F)|g|_{\vec{v},\gotp_{0}+\nu+2}|g|_{\vec{v},p+\nu+2} \Big) \label{rem1}
\end{align}
\end{subequations}
$\vec{v}:=(\la,\calO,s,a,r)$, $\vec{v}_{2}:=(\la,\calO,s-2\rho s_0, a, r-2 \rho r_0)$.
Finally if $g\in E^{(K)}$ then $L,Q$ are tame up to order $q$.
%
 \end{rmk}

\begin{lemma}\label{normnorm}
Consider any subspace $\calU$ which is a finite direct sum of {\em monomial} subspaces
 as in formula \eqref{sotto}, having degree at most $k$, or their average in $\theta$. 
 Under the hypotheses of Lemma \ref{conj}, assume also that\footnote{hence they are both $C^{\io}$-tame.}
$F\in \calU$ and $f\in\BB$. 
Then for all $p\ge\gotp_0$ one has
\begin{equation}\label{uffa}
C_{\vec{v}_{2},p}(\Pi_{\calU}^\perp \Phi_{*}F)\leq {\mathtt C}\Big(
C_{\vec{v},p}(F)\rho+C_{\vec{v},\gotp_1}(F)|f|_{\vec{v}_{1},p+\nu+1}\Big)
\end{equation}
 where $\vec{v}:=(\la,\calO,s,a)$, $\vec{v}_{1}:=(\la,\calO,s-\rho s_0,a)$ and 
 $\vec{v}_{2}:=(\la,\calO,s-2\rho s_0,a)$.
 Moreover if $f\in E^{(K)}$ then \eqref{uffa} holds up to order $q$.
 \end{lemma}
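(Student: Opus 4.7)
The plan is to exploit that the hypothesis $F\in\calU$ gives $\Pi_\calU^\perp F=0$, so that
\[
\Pi_\calU^\perp \Phi_* F \;=\; \Pi_\calU^\perp\bigl(\Phi_* F - F\bigr).
\]
By Remark \ref{monomi} the projection $\Pi_\calU^\perp$ onto the complement of a finite direct sum of monomial subspaces preserves tameness constants, so the problem reduces to bounding $C_{\vec v_2, p}(\Phi_* F - F)$ in a way that extracts a factor of $\rho$ from the $C_{\vec v, p}(F)$-term. Writing $\Psi=\uno+h$ for the left inverse of $\Phi=\uno+f$ provided by Lemma \ref{conj} and Definition \ref{linvec-abs}(5) (with $|h|_{\vec v_1, p}\le 2|f|_{\vec v, p}$), I unfold the push-forward as
\[
\Phi_* F \;=\; d\Phi(\Psi)\,F(\Psi) \;=\; F(\Psi) + df(\Psi)\,F(\Psi),
\]
so that $\Phi_* F - F = [F(\Psi)-F] + df(\Psi)\,F(\Psi)$ and each piece can be treated separately.

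For the second piece I use that $f\in\BB$ is a regular (hence polynomial in $y,w$) bounded vector field, whence $d_{yw}f$ costs no regularity (Lemma \ref{derivate}(ii)) and $d_\theta f$ costs only one $\theta$-derivative, giving $C_{\vec v,p}(df)\le \mathtt C|f|_{\vec v, p+1}$. A tame product estimate for $df(\Psi)\,F(\Psi)$, combined with Lemma \ref{conj} applied to $F(\Psi)$, yields a bound of the form
\[
C_{\vec v_2, p}\bigl(df(\Psi)\,F(\Psi)\bigr)\le \mathtt C\bigl(C_{\vec v, p}(F)\,|f|_{\vec v_1, \gotp_1} + C_{\vec v,\gotp_1}(F)\,|f|_{\vec v_1, p+\nu+1}\bigr),
\]
whose first summand is absorbed into $C_{\vec v, p}(F)\rho$ via $|f|_{\vec v_1,\gotp_1}\le \mathtt C\,C_{\vec v,\gotp_1}(f)\le \mathtt c\rho$. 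For the first piece $F(\Psi)-F$, I Taylor-expand in $(h^{(y)}, h^{(w)})$: because $F$ lies in a finite sum of monomial subspaces of degree $\le k$, this is an \emph{exact} finite sum whose terms are products of $\partial^{\alpha}_{yw}F$ (tame with constant $\lesssim C_{\vec v, p}(F)$, no $\theta$-loss, by Lemma \ref{derivate}(ii)) with monomials in the small quantities $h^{(y)}, h^{(w)}$. The remaining pure $\theta$-shift $F(\theta + h^{(\theta)},y,w) - F(\theta,y,w)$ is controlled via the composition estimate Lemma \ref{compo}, which bounds the composition by $C_{\vec v, p}(F) + C_{\vec v, \gotp_0}(F)|h^{(\theta)}|_{p+\nu+3}$; the difference with $F$ then acquires the structure $C_{\vec v, p}(F)\cdot(\text{small in }\rho) + C_{\vec v, \gotp_1}(F)|h|_{p+\nu+1}$ after combining with the $y,w$-Taylor part and using $|h|_{\gotp_1}\lesssim \rho$.

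The main obstacle is achieving the low index $p$ (rather than $p+1$) on the coefficient $C_{\vec v, p}(F)$ of the $\rho$-term: a direct Taylor expansion of the $\theta$-shift would produce $C_{\vec v, p+1}(F)\rho$, losing one $\theta$-derivative. This loss is circumvented by the use of the composition Lemma \ref{compo} (which distributes the $\theta$-regularity onto $h^{(\theta)}$ rather than onto $F$, so that the high-norm piece is paired with the low-index constant $C_{\vec v, \gotp_1}(F)$ and fits into the second term of \eqref{uffa}), together with the polynomial structure of $F$ in $y, w$ (which makes the $y,w$-Taylor expansion finite and loss-free by Lemma \ref{derivate}(ii)). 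Collecting all contributions and using $|h|_{\vec v_1, p}\lesssim |f|_{\vec v, p}$ yields \eqref{uffa}. The final claim for $f\in E^{(K)}$ is then immediate, since all the intermediate estimates respect the frequency-truncated structure by the smoothing property \eqref{P3} of the space of regular vector fields.
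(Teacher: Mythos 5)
There is a genuine gap, and it sits exactly at the point you flag as "the main obstacle''. After reducing to $\Pi_{\calU}^\perp(\Phi_*F-F)$ you discard the projection (via Remark \ref{monomi}) and try to estimate $\Phi_*F-F$ itself. But the pure $\theta$-shift contribution $F(\theta+h^{(\theta)},y,w)-F(\theta,y,w)$ cannot be bounded in the form \eqref{uffa}: Lemma \ref{compo} only controls the \emph{composition}, giving $C_{\vec v,p}(F\circ\Phi)\le C_{\vec v,p}(F)+C_{\vec v,\gotp_0}(F)\|h^{(\theta)}\|_{p+\nu+3}$, so the difference with $F$ carries a term of size $C_{\vec v,p}(F)$ with \emph{no} factor $\rho$; the only way to gain smallness is to write the difference as $\int_0^1 d_\theta F(\theta+t h^{(\theta)})[h^{(\theta)}]\,dt$, which inevitably produces $C_{\vec v,p+1}(F)\,\rho$, i.e.\ the loss of one $\theta$-derivative that \eqref{uffa} does not allow (and there is no interpolation hypothesis on the scale of tameness constants that lets you trade $C_{\vec v,p+1}(F)\rho$ for $C_{\vec v,p}(F)\rho+C_{\vec v,\gotp_1}(F)|f|_{\vec v_1,p+\nu+1}$). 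So the statement you are actually proving after dropping the projection is false in general, and the final sentence "the difference then acquires the structure $C_{\vec v,p}(F)\cdot(\text{small in }\rho)+\dots$'' is unsupported.

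The projection $\Pi_{\calU}^\perp$ is not a harmless decoration here: it is what kills precisely that problematic term, and this is the heart of the paper's argument. Since $f\in\BB$, its $\theta$-component satisfies $f^{(\theta)}(\theta,y,w)=f^{(\theta)}(\theta,0,0)$, so shifting only the angle maps a monomial field $F^{(\tv,\tv_1,\ldots,\tv_h)}(\theta)[\tv_1,\ldots,\tv_h]\partial_\tv$ into another field of the \emph{same} monomial type (only the $\theta$-coefficient changes); hence the $\theta$-shift part of $F\circ\Psi-F$ stays in $\calU$ and is annihilated by $\Pi_{\calU}^\perp$ (and if $\calU$ is an averaged subspace, $F$ is $\theta$-independent and this term is simply zero). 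What survives the projection is only the $y,w$-shift part, which is estimated by $(T_1)$ and genuinely carries the factor $\|f^{(\mathtt u)}\|\lesssim\rho$, plus the term $d_{\mathtt u}f(\Psi)[F\circ\Psi]$, which you treat correctly. Your handling of the second piece and of the finite Taylor expansion in $y,w$ is fine; the proof is repaired by keeping $\Pi_{\calU}^\perp$ until the $\theta$-shift term has been identified as an element of $\calU$, rather than appealing to Lemma \ref{compo} for a smallness it does not provide.
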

 \begin{proof}
One has
\begin{equation}\label{uffa2}
\Pi_{\calU}^\perp\Big(\Phi_{*}F\Big)=
\Pi_{\calU}^\perp\Big(F\circ\Psi -F +d_{u}f(\Psi)[F\circ\Psi]\Big)\,.
\end{equation}
The last summand clearly satisfies the estimates \eqref{uffa}.
Regarding the first terms we write
$$
\Pi_{\calU}^\perp(F\circ\Psi -F)= \Pi_{\calU}^\perp \big( d_\theta F [f^{(\theta)}] + \sum_{\mathtt u= y,w} d_{\mathtt u} F[f^{(\mathtt u)}]\big)\,;
$$
the second term satisfies \eqref{uffa} by property ($T_1$),
 while we claim that $d_\theta F [f^{(\theta)}]\in\calU$. Indeed if $F\in \VV^{(\tv,\tv_1,\ldots,\tv_h)}$,
 it has the form
 $$
 F= \frac{1}{\al(\tv_1,\ldots,\tv_h)!}\Big(\prod_{i=1}^h d_{\tv_i}\Big)F^{(\tv)}(\theta,0,0)[\tv_1,\ldots,\tv_h],
 $$
so that deriving w.r.t. $\theta$ on both sides and computing at $g^{(\theta)}$ commutes with the
derivation in $\tv_i\in {\mathtt U}$ (this follows from the fact that if $f\in \BB$ then $f^{(\theta)}(\theta,y,w)= f^{(\theta)}(\theta,0,0)$). If $\calU$ is only the average of some monomial space, then
clearly its $\theta$-derivative is zero.
\end{proof}

\subsection{Proof of Lemma \ref{grrr}}
 \begin{lemma}\label{stolemma}
  All regular vector fields $f$ as in  Definition \ref{linvec} are  $C^{\infty}$-tame up to order $q$  with tameness constant 
  \begin{equation}\label{tameconst}
  C_{\vec{v},p}(f)= C_{d,q} |f|_{\vec{v},p}.
   \end{equation}
 \end{lemma}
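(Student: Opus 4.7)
The plan is to exploit the very simple algebraic structure of a regular vector field $f$ in Definition \ref{linvec}: it is polynomial of degree at most one in the normal variables $\mathtt U=(y,w)$. Consequently $d_{\mathtt U}^{m}f\equiv 0$ for all $m\geq 2$, so property $(\mathrm{T}_{m})$ is trivially satisfied for $m\geq 2$, and we only have to check $(\mathrm{T}_{0})$ and $(\mathrm{T}_{1})$. Fix any $C^{\mathtt n+1}$ map $\Phi=\uno+\phi$ with $\|\phi\|_{\vec v',\gotp_{1}}<1/2$ as in Definition \ref{tame}, and decompose $f=f_{0}+f_{1}+f_{2}$ where $f_{0}=\sum_{v\in\mathtt V}f^{(v,0)}(\theta)\partial_{v}$, $f_{1}=f^{(y,y)}(\theta)\,y\cdot\partial_{y}$ and $f_{2}=f^{(y,w)}(\theta)\cdot w\,\partial_{y}$. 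We treat the three pieces separately.

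For $f_{0}$ one has $f_{0}(\Phi)=f_{0}(\theta+\phi^{(\theta)})$ (since $f_{0}$ does not depend on $y,w$), so the composition reduces to composition with the torus diffeomorphism $\theta\mapsto\theta+\phi^{(\theta)}$. By Lemma \ref{compo} (which in turn rests on Lemma \ref{lem.diffeo} and the mixed-norm estimates of Remark \ref{pdimerda}), one gets
\[
\|f_{0}\circ\Phi\|_{\vec v',p}\leq\mathtt C\,\bigl(|f|_{\vec v,p}+|f|_{\vec v,\gotp_{0}}\|\Phi\|_{\vec v',p+\nu}\bigr),
\]
which is exactly $(\mathrm{T}_{0})$ with $C_{\vec v,p}(f_{0})=\mathtt C|f|_{\vec v,p}$. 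For $f_{1}$ we evaluate $f_{1}(\Phi)=f^{(y,y)}(\theta+\phi^{(\theta)})(y+\phi^{(y)})\,\partial_{y}$; then the asymmetric tame product (Lemma \ref{A} item (iv), transported to the mixed analytic/Sobolev norm via Remark \ref{pdimerda}) combined again with Lemma \ref{compo} produces the same kind of estimate, with constant controlled by $|f^{(y,y)}|_{\vec v,p}\leq|f|_{\vec v,p}$.

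The main technical point is the ``finite rank'' piece $f_{2}$. Here $f_{2}(\Phi)=\bigl(f^{(y,w)}(\theta+\phi^{(\theta)})\cdot(w+\phi^{(w)})\bigr)\partial_{y}$, and the scalar product is the duality pairing between $\ell_{-a,-q}$ and $\ell_{a,q}$ of Hypothesis \ref{hyp22}. The whole point of the -- admittedly ugly -- choice of spaces $H^{p}(\TTT^{d}_{s};\ell_{-a,-\gotp_{0}-\nu})\cap H^{\gotp_{0}}(\TTT^{d}_{s};\ell_{-a,p-\gotp_{1}-\gotp_{0}-\nu})$ in Definition \ref{linvec} is precisely to make this pairing tame: since $w$ lies in $D_{a,p+\nu}(r)$ we can estimate $\|w\|_{a,\gotp_{0}+\nu}\lesssim\|w\|_{a,\gotp_{1}}\leq r_{0}$ and $\|w\|_{a,p+\nu}\lesssim r_{0}\|\Phi\|_{\vec v',p+\nu}$, so applying Cauchy--Schwarz in the pairing together with the mixed-norm algebra/composition estimates yields
\[
\|f_{2}(\Phi)\|_{\vec v',p}\leq\mathtt C\bigl(|f^{(y,w)}|_{\vec v,p}+|f^{(y,w)}|_{\vec v,\gotp_{0}}\|\Phi\|_{\vec v',p+\nu}\bigr).
\]
This is exactly the bound $(\mathrm{T}_{0})$ with the claimed tameness constant. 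Condition $(\mathrm{T}_{1})$ follows by the same computation, since $d_{y}f[h_{1}]$ is of the same type as $f_{0}$ once $h_{1}^{(y)}$ plays the role of $\phi^{(y)}$, and $d_{w}f_{2}[h_{1}]=f^{(y,w)}(\theta+\phi^{(\theta)})\cdot h_{1}^{(w)}\,\partial_{y}$ is estimated by the same Cauchy--Schwarz argument applied to $h_{1}^{(w)}$ instead of $w+\phi^{(w)}$; an inductive check then gives the full $(\mathrm{T}_{1})$ inequality with $C_{\vec v,p}(f)=\mathtt C\,|f|_{\vec v,p}$.

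The main obstacle -- and the only delicate point of the whole proof -- is the bookkeeping on the pairing estimate for $f_{2}$: one must split the mixed norm in \eqref{linnorm} into its ``high-$\theta$/low-$w$'' and ``low-$\theta$/high-$w$'' components and pair each with the appropriate Sobolev-in-$\theta$ norm of $w+\phi^{(w)}$, using that $\gotp_{1}\geq\gotp_{0}+\nu+1$ and the smoothing bounds \eqref{duck33}--\eqref{duck34}; the shift $-\gotp_{1}-\gotp_{0}-\nu$ in the second summand of \eqref{linnorm} is tuned exactly so that the loss coming from the pairing reassembles into the desired $p+\nu$ loss on the right-hand side of $(\mathrm{T}_{m})$. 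The dependence of the constant $\mathtt C=C_{d,q}$ on $\gotp_{0}$, on the dimensions $d,d_{1}$ and on the maximal regularity $q$ enters only through the Sobolev constants used in Lemmata \ref{compo} and \ref{A}.
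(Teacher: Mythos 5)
Your argument is correct and follows essentially the same route as the paper's proof: reduce to $(\mathrm{T}_0)$--$(\mathrm{T}_1)$ using that $f$ is affine in $(y,w)$, treat the $\theta$-dependence by composition with the torus diffeomorphism (Lemmata \ref{lem.diffeo} and \ref{compo}), the $(y,y)$ block by the asymmetric tame product, and the finite-rank $(y,w)$ block by Cauchy--Schwarz in the $\ell_{a,\gotp_0+\nu}$--$\ell_{-a,-\gotp_0-\nu}$ pairing, exactly as in \eqref{sto1}--\eqref{sto8}. One small correction to a side remark: the tameness upper bound \eqref{tameconst} only uses the $\ell_{-a,-\gotp_0-\nu}$-valued components of the norm \eqref{linnorm}, while the shifted summand $H^{\gotp_0}(\TTT^{d}_{s};\ell_{-a,p-\gotp_1-\gotp_0-\nu})$ is there to guarantee the sharpness property \eqref{tameconst3}, not to close the estimate $(\mathrm{T}_m)$.
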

\begin{proof}
In view of Lemma \ref{derivate}--(iv), we only need to prove that a regular vector field is $C^1$-tame.
 Consider a regular vector field $f$ (see Definition \ref{linvec}) and 
a map $\Phi=\uno+g$ as in Definition \ref{tame}. For simplicity we drop the indices $\vec{v},\vec{v}_1,\vec{v}_2$.
Without loss of generality we can also assume that $g^{(\theta)}$ depends only on $\theta$, since in $($T$_m)$ we first perform the $y,w$-derivatives
and then compute at $\Phi = \uno+g$. 
 Let us check $($T$_0)$ for $f$. One has
 \begin{equation*}
 \begin{aligned}
(f\circ \Phi)^{(\theta)}&:=h^{(\theta,0)}(\theta), \qquad (f\circ \Phi)^{(w)}:=h^{(w,0)}(\theta),\\
(f\circ \Phi)^{(y)}&:=h^{(y,0)}(\theta)+h^{(y,y)}(\theta)\Phi^{(y)}(\theta,y,w)+h^{(y,w)}(\theta)\cdot \Phi^{(w)}(\theta,y,w),\\
\end{aligned}
 \end{equation*}
 where
 $$
 h^{(v,v')}(\theta):=f^{(v,v')}(\theta+g^{(\theta,0)}(\theta)), \quad v,v'=0,\theta,y,w.
 $$
 We first give bounds on the norm of $f\circ \Phi$ in terms of the norms of $h$ and $\Phi$.
The  terms depending only on $\theta$ are trivially bounded by the norm of $h$.
 In the $y$-component one has
 \begin{equation}\label{sto1}
 \begin{aligned}
 \|h^{(y,y)}\Phi^{(y)}\|^{2}_{s,a,p}&\leq C(d)\frac{1}{r_{0}^{2\mathtt s}} \sum_{\ell\in\ZZZ^{d}}\sum_{i=1}^{d_1}\sum_{k=1}^{d_1}|(h^{(y_{i},y_{k})}g^{(y_{k})})(\ell)|^{2}e^{2s|\ell|}\langle \ell\rangle^{2p}\\
 &=C(d)\frac{1}{r_{0}^{2\mathtt s}}\sum_{i=1}^{d_1}\sum_{k=1}^{d_1}\|h^{(y_{i},y_{k})}(\theta)\Phi^{(y_{k})}(\theta)\|^{2}_{s,p}\\
 &\stackrel{(\ref{A5})}{\leq} C(d)\frac{1}{r_{0}^{2\mathtt s}}\sum_{i=1}^{d_1}\sum_{k=1}^{d_1}(\|h^{(y_{i},y_{k})}\|_{s,p}\|\Phi^{(y_{k})}\|_{s,\gotp_{0}}+
 \|h^{(y_{i},y_{k})}\|_{s,\gotp_{0}}\|\Phi^{(y_{k})}\|_{s,p} )^{2},
 \end{aligned}
 \end{equation}
 \noindent
 hence one obtains
 \begin{equation}\label{sto2}
 \|h^{(y,y)}\Phi^{(y)}\|_{s,a,p}\leq K \left(r_{0}^{\mathtt s}\|h^{(y,y)}\|_{s,a,p}\|\Phi^{(y)}\|_{s,a,\gotp_{0}}+r_{0}^{\mathtt s}\|h^{(y,y)}\|_{s,a,\gotp_{0}}\|\Phi^{(y)}\|_{s,a,p}\right).
 \end{equation}
  Finally  one has
   \begin{equation}\label{sto3}
   \begin{aligned}
   \|h^{(y,w)}\Phi^{(w)}\|^{2}_{s,a,p}&\leq C\frac{1}{r_{0}^{2\mathtt s}}\sum_{i=1}^{d_1}\|h^{(y_{i},w)}\Phi^{(w)}\|^{2}_{s,p}=
   C \frac{1}{r_{0}^{2\mathtt s}}\sum_{i=1}^{d_1}\sum_{l\in\ZZZ^{d}}\langle l\rangle^{2p}e^{2s|l|}|(h^{(y_{i},w)}\cdot\Phi^{(w)})(l)|^{2}\\
   &\leq C\frac{1}{r_{0}^{2\mathtt s}}\sum_{i=1}^{d_1}\sum_{l\in\ZZZ^{d}}\langle l\rangle^{2p}e^{2s|l |}(\sum_{k\in\ZZZ^{d}}|h^{(y_{i},w)}(l-k)\cdot\Phi^{(w)}(k)|)^{2}\\
   &\leq \frac{C}{r_{0}^{2\mathtt s}}\sum_{i=1}^{d_1}
   \sum_{l,k\in\ZZZ^{d}}\langle l-k\rangle^{2p}e^{2s|l-k|}\langle k\rangle^{2\gotp_{0}}e^{2s|k|}|h^{(y_{i},w)}(l-k)\cdot\Phi^{(w)}(k)|^{2}\\
   &\qquad+ \frac{C}{r_{0}^{2\mathtt s}}\sum_{i=1}^{d_1}
   \sum_{l,k\in\ZZZ^{d}}\langle l-k\rangle^{2\gotp_{0}}e^{2s|l-k|}\langle k\rangle^{2p}e^{2s|k|}|h^{(y_{i},w)}(l-k)\cdot\Phi^{(w)}(k)|^{2}\\
   &\stackrel{(\ref{sonasega})}{\leq}
   \frac{C}{r_{0}^{2\mathtt s}}\sum_{i=1}^{d_1}
   \sum_{l,k\in\ZZZ^{d}}\langle l-k\rangle^{2p}e^{2s|l-k|}\langle k\rangle^{2\gotp_{0}}e^{2s|k|}
   \|h^{(y_{i},w)}(l-k)\|^{2}_{-a,-\gotp_0-\nu}\|\Phi^{(w)}(k)\|_{a,\gotp_{0}+\nu}^{2}\\
   &\qquad+ \frac{C}{r_{0}^{2\mathtt s}}\sum_{i=1}^{d_1}
   \sum_{l,k\in\ZZZ^{d}}\langle l-k\rangle^{2\gotp_0}e^{2s|l-k|}\langle k\rangle^{2p }e^{2s|k|}
   \|h^{(y_{i},w)}(l-k)\|^{2}_{-a,-\gotp_{0}-\nu}\|\Phi^{(w)}(k)\|_{a,\gotp_0+\nu}^{2},
   \end{aligned}
   \end{equation}
   where in the third line we used the standard interpolation estimates and the fact that $\gotp_{0}>d/2$.
  By \eqref{sto3}, since $p\leq q$, it follows that 
    \begin{equation*}
  \|h^{(y,w)}\Phi^{(w)}\|_{s,a,p}\leq C\left(
  r_{0}\| h^{(y,w)}\|_{H^{p}(\TTT^{d}_{s};\ell_{-a,-\gotp_0-\nu})}
  \|\Phi^{(w)}\|_{s,a,\gotp_{0}+\nu}+r_{0}\|h^{(y,w)}\|_{H^{\gotp_0}(\TTT^{d}_{s};\ell_{-a,-\gotp_0-\nu})}\|\Phi^{(w)}\|_{s,a,p}
  \right).
  \end{equation*}
Now each component  $h^{(v,v')}$ is a function  $\TTT^d_s\to V_{a,p}$ composed with a diffeomorphism of the torus given by 
$\theta \mapsto \theta+g^{(\theta,0)}(\theta)$. Hence we obtain, by using 
 Lemma \ref{lem.diffeo}$(ii)$ and Lemma \ref{compo}, 
\begin{equation}\label{sto8}
\|f\circ\Phi\|_{s,a,p}\leq C(d,q)( |f|_{s,a,p}+|f|_{s,a,\gotp_{0}}\|\Phi\|_{s,a,p+\nu}),
\end{equation}
 The property $($T$_1)$ follows in the same way.
  \end{proof}

 \begin{lemma}\label{cometichiami}
Consider a vector field
  $f\in \mathcal B$ such that
  \begin{equation}\label{puffetta}
f :\TTT^{d}_{s}\times D_{a,p}(r )\times\calO\to V_{a,p}
 \end{equation}
 and 
 \begin{equation}\label{pippo}
 |f|_{\vec{v},\gotp_1}\leq {\mathtt c}{\rho},
 \end{equation}
 for some $\rho>0$.
 If $\rho$ is small enough, then for all $\x\in \calO$
 the following holds.
 
  \noindent
 (i) The map $\Phi:=\uno+f$ is such that
 \begin{equation}\label{speriamobene}
 \Phi: \TTT^{d}_{s}\times D_{a,p}(r)\times \calO\longrightarrow \TTT^{d}_{s+\rho s_0}\times D_{a,p}(r+\rho r_0).
 \end{equation}

 \noindent
 (ii) There exists a vector field $h\in \BB$ such that
 \begin{itemize}
 \item $|h|_{\vec{v}_1,p}\leq 2|f|_{\vec{v},p}$, 
 the map $\Psi:=\uno+h$ is such that
 \begin{equation}\label{nome00}
 \Psi : \TTT^{d}_{s-\rho s_0}\times D_{a,p}(r-\rho r_0)\times \calO\to \TTT^{d}_{s}\times  D_{a,p}(r ).
 \end{equation}
 
 \item  for all $(\theta,y,w)\in \TTT^d_{s-2\rho s_0}\times D_{a,\gotp_1}(r-2\rho r_0)$ one has
 \begin{equation}\label{nome}
 \Psi\circ\Phi(\theta,y,w)=(\theta,y,w).
 \end{equation}
 \end{itemize}
 \end{lemma}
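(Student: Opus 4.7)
The plan is to establish \emph{(i)} by direct pointwise estimates from the definition of the norm $|\cdot|_{\vec{v},\gotp_1}$, and then construct the inverse in \emph{(ii)} component by component, exploiting the very special form \eqref{maremma} of a bounded regular vector field $f \in \BB$, namely
$$
f = f^{(\theta,0)}(\theta)\partial_\theta + \big(f^{(y,0)}(\theta) + f^{(y,y)}(\theta)y + f^{(y,w)}(\theta)\cdot w\big)\partial_y + f^{(w,0)}(\theta)\partial_w.
$$

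For part \emph{(i)}: since $\gotp_1 \ge \gotp_0 > d/2$, the norm $|f|_{\vec{v},\gotp_1}$ controls via Sobolev embedding the $L^\infty$ norms of each component. Specifically $\|f^{(\theta,0)}\|_{L^\infty(\TTT^d_s)} \le C s_0 |f|_{\vec{v},\gotp_1}$, $\sup_{\theta} \|f^{(w,0)}(\theta)\|_{a,\gotp_1} \le C r_0 |f|_{\vec{v},\gotp_1}$, and for the $y$-component, using the Cauchy--Schwarz estimate \eqref{sonasega} on the pairing $f^{(y,w)}(\theta)\cdot w$ together with $\|w\|_{a,\gotp_0} \le r$, plus the trivial bound $|f^{(y,y)}(\theta)y|_1 \le C|f^{(y,y)}(\theta)|\, r^{\mathtt s}$, one gets $\sup_{(\theta,y,w)} |f^{(y)}(\theta,y,w)|_1 \le C r_0^{\mathtt s} |f|_{\vec{v},\gotp_1}$. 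For $\mathtt c$ in \eqref{pippo} small enough these bounds are dominated by $\rho s_0$, $\rho r_0$ and $\rho r_0^{\mathtt s}$ respectively, which yields \eqref{speriamobene}.

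For part \emph{(ii)}, I would construct $\Psi = \uno + h$ by solving $\Psi \circ \Phi = \uno$ as a triangular system of equations. The $\theta$-component reduces to the diffeomorphism equation $h^{(\theta,0)}(\theta + f^{(\theta,0)}(\theta)) = -f^{(\theta,0)}(\theta)$, which is exactly the setting of Lemma \ref{lem.diffeo}(i) with $\beta = f^{(\theta,0)}$, producing $h^{(\theta,0)}$ with $\|h^{(\theta,0)}\|_{s-\rho s_0, p} \le 2\|f^{(\theta,0)}\|_{s,p}$. Write $\Psi^{(\theta)}(\theta') = \theta' + h^{(\theta,0)}(\theta')$. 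The $w$-component equation gives immediately $h^{(w,0)}(\theta') = -f^{(w,0)}(\Psi^{(\theta)}(\theta'))$. Matching coefficients of $y'$, $w'$ and constants in the $y$-component equation leads to the explicit formulas
\begin{align*}
h^{(y,y)}(\theta') &= -\big(\uno + f^{(y,y)}(\Psi^{(\theta)}(\theta'))\big)^{-1} f^{(y,y)}(\Psi^{(\theta)}(\theta')), \\
h^{(y,w)}(\theta') &= -\big(\uno + f^{(y,y)}(\Psi^{(\theta)}(\theta'))\big)^{-1} f^{(y,w)}(\Psi^{(\theta)}(\theta')), \\
h^{(y,0)}(\theta') &= -\big(\uno + h^{(y,y)}(\theta')\big) f^{(y,0)}(\Psi^{(\theta)}(\theta')) - h^{(y,w)}(\theta') \cdot f^{(w,0)}(\Psi^{(\theta)}(\theta')),
\end{align*}
in which the Neumann series for $(\uno + f^{(y,y)})^{-1}$ converges thanks to \eqref{pippo} and the algebra property of $H^{\gotp_1}$. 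By construction $h \in \BB$ and \eqref{nome} holds on $\TTT^d_{s-2\rho s_0} \times D_{a,\gotp_1}(r-2\rho r_0)$.

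The main technical content is the norm bound $|h|_{\vec{v}_1, p} \le 2|f|_{\vec{v}, p}$, which must hold both in the mild $\gotp_1$-norm and in the stronger $p$-norm for $p \ge \gotp_1$. I would combine three ingredients: the asymmetric tame product \eqref{A5} to handle the matrix and vector products $(f^{(y,y)})^k$, $f^{(y,w)}\cdot f^{(w,0)}$, etc.\ appearing on the right-hand sides; the inversion estimate from Lemma \ref{lem.diffeo}(i) to control $h^{(\theta,0)}$; and Lemma \ref{compo} together with Remark \ref{pdimerda} to propagate tameness through the composition with $\Psi^{(\theta)}$, losing only a further factor $1 + C|f|_{\vec{v}, \gotp_1} \le 2$. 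The delicate point, and the only one that is not a plain adaptation of standard arguments, is that the asymmetric-regularity component $f^{(y,w)}$ lies in $H^p(\TTT^d_s; \ell_{-a, -\gotp_0-\nu})$, a Sobolev space with values in a \emph{dual} sequence space; one must check that this mixed norm is preserved under composition with $\Psi^{(\theta)}$. This holds because composition with a near-identity torus diffeomorphism acts as a tame operator on any $H^p(\TTT^d_s; E)$ for any Banach space $E$, by the same argument as Lemma \ref{lem.diffeo}(ii), the $E$-valued nature being irrelevant to that proof. Once the $\gotp_1$-bound is in hand, the $p$-bound for $p \ge \gotp_1$ follows by linearity of the explicit formulas above combined with the tame product estimate, picking up the second summand $|f|_{\vec v,\gotp_1}\, |(\,\cdot\,)|_{\vec v, p}$ from each factor which is absorbed into the constant $2$ by the smallness \eqref{pippo}.
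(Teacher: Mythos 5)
Your proof is correct and follows essentially the same route as the paper: part (i) by Sobolev embedding on the components, and part (ii) by inverting the $\theta$-shift via Lemma \ref{lem.diffeo}, writing the remaining components of the inverse explicitly, and controlling the norms through Lemma \ref{lem.diffeo}(ii), Lemma \ref{compo} and the tame product estimates. Your explicit formulas (with $(\uno+f^{(y,y)})^{-1}$, the correct sign, where the paper's displayed expression has a slip) and your remark on composing the dual-space--valued component $f^{(y,w)}$ with the torus diffeomorphism only make explicit what the paper leaves implicit.
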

 \begin{proof}
\emph{(i)} We want to bound the components of $\Phi=\uno+f$. First of all we see that for $\theta\in\TTT^{d}_s$
one has
\begin{equation}\label{pippo2222}
|\Phi^{(\theta)}|_{\infty}\leq s+|f^{(\theta)}|_\infty\leq s+
\|f^{(\theta)}\cdot\del_\theta\|_{s,a,\gotp_0}\stackrel{(\ref{pippo})}{\leq}s+\rho s_0\,,
\end{equation}
where we used the standard Sobolev embedding Theorem.
The bound on $\|\Phi^{(w)}\|_{a,\gotp_0}\leq r+\rho r_0$ follows directly by hypothesis \eqref{pippo}.
In order to obtain the estimates on the $y-$components
we need to check that 
\begin{equation}\label{pippo3}
\begin{aligned}
|f^{(y,0)}(\theta)|_1\le {\mathtt c}^{-1}\|f^{(y,0)}&(\theta)\cdot\del_{y}\|_{s,a,\gotp_0}, \qquad
|f^{(y,y)}(\theta)y|_{1}\leq {\mathtt c}^{-1}\|f^{(y,y)}(\theta)y\cdot\del_{y}\|_{s,a,\gotp_0}, \\
&|f^{(y,w)}(\theta)w|_{1}\leq {\mathtt c}^{-1}\|f^{(y,w)}(\theta)w\cdot\del_{y}\|_{s,a,\gotp_0}\,.
\end{aligned}
\end{equation}
Since for a $d$-dimensional vector $\mathtt v$ one has $|\mathtt v|_1\le d|\mathtt v|_{\io}$ we get
\begin{equation}\label{pippom}
|f^{(y,w)}(\theta)\cdot w|_1\le d_1 \max_{v=y_1,\ldots,y_{d_1}} \|f^{(v,w)}(\theta)\cdot w\|_\io
\le K(n,\gotp_0)\| f^{(y,w)}(\theta)\cdot w\|_{{s,\gotp_0}}\,.
\end{equation}
The other bounds in \eqref{pippo3} follow in the same way. The extension of the bounds for the Lipschitz norm
is standard; see for instance \cite{FP}.
Thus we obtain
$|\Phi^{(y)}|_{1}\leq 
(r+\rho r_0)^{2}$ so that \eqref{speriamobene} follows.

\noindent
\emph{(ii)} The first $d$ components of the map
 $(\theta_{+},y_{+},w_{+})=\Phi(\theta,y,w)$ are  $\theta_{+}=\theta+f^{(\theta)}(\theta)$.
 If $\rho$ is small enough we can apply Lemma \ref{lem.diffeo} in order to define an inverse map
 $h^{(\theta)}(\theta_{+})\in W^{p,\infty}(\TTT^{d}_{s-\rho s_{0}})$ with
 $\|h^{(\theta)}\|_{s-\rho s_{0},p}\leq2\|f^{(\theta)}\|_{s,p}$.
 Hence we set
 \begin{equation}\label{nuovo}
 \Psi^{(\theta)}(\theta_{+}):=\theta_{+}+{h^{(\theta)}}(\theta_{+})\,,
 \qquad
 \theta_{+}\in\TTT^{d}_{s-\rho s_{0}}.
 \end{equation}
Regarding the other components we first solve $y,w$ as functions of $y_+,w_+,\theta$ and then substitute \eqref{nuovo}.
We have
$$
\begin{aligned}
w &= w_+- f^{(w,0)}(\Psi^{(\theta)}(\theta_{+})) \\
y&= (\uno - f^{(y,y)}(\Psi^{(\theta)}(\theta_{+})))^{-1}(y_+ -f^{(y,0)}(\Psi^{(\theta)}(\theta_{+})) -f^{(y,w)}(\Psi^{(\theta)}(\theta_{+}))\cdot (w_+-f^{(w,0)}(\Psi^{(\theta)}(\theta_{+}))))
\end{aligned}
$$
which fixes the remaining components of $h$. The estimates on the norm of $h$ follow by Lemma \ref{lem.diffeo} $(ii)$  and by Lemma \ref{compo}.
\end{proof}

\begin{lemma}\label{flusso}
Given any regular bounded vector field $g\in \mathcal B$, $p\geq \gotp_1$  with $| g|_{\vec{v},\gotp_1}\leq {\mathtt c} \rho$
 then  for $0\leq t\leq1$ there exists $f_{t}\in \mathcal B$ such that the time$-t$ map of the flow of $g$ is of the form 
  $\uno+ f_{t}$ moreover we have $|f_{t}|_{\vec{v},p} \leq  2| g|_{\vec{v}_1,p}$ where $\vec v_1= (\lambda,\calO,s-\rho s_0,a,r)$.
\end{lemma}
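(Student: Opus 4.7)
The plan is to construct the time-$t$ flow of $g$ explicitly and obtain the norm bound by a Gronwall/bootstrap argument. Since $g\in\BB$ is affine in $(y,w)$ by \eqref{maremma}, the Cauchy problem $\dot\Phi^t=g(\Phi^t)$, $\Phi^0=\uno$, splits into three decoupled blocks: the autonomous torus ODE $\dot\theta_t=g^{(\theta,0)}(\theta_t)$; the pure quadrature $\dot w_t=g^{(w,0)}(\theta_t)$ once $\theta_t$ is known; and the linear inhomogeneous equation $\dot y_t=g^{(y,0)}(\theta_t)+g^{(y,y)}(\theta_t)y_t+g^{(y,w)}(\theta_t)\cdot w_t$. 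A key consequence is that the flow automatically preserves the affine-in-$(y,w)$ structure of \eqref{maremma}: writing $\Phi^t=\uno+f_t$, the component $f_t$ is forced to split exactly as $f_t^{(\theta,0)}(\theta)\del_\theta + f_t^{(w,0)}(\theta)\del_w + (f_t^{(y,0)}(\theta)+f_t^{(y,y)}(\theta)y+f_t^{(y,w)}(\theta)\cdot w)\del_y$, so $f_t\in\BB$ with no additional structural argument.

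First I would solve the $\theta$-block by Picard contraction in $H^p(\TTT^d_{s-\rho s_0};\CCC^d)$. Under $|g|_{\vec v,\gotp_1}\le \mathtt c\rho$ with $\mathtt c$ small enough, Lemma \ref{lem.diffeo} ensures that at each iteration the map $\theta\mapsto\theta+f_\tau^{(\theta,0)}(\theta)$ is a well-defined diffeomorphism of $\TTT^d_{s-\rho s_0}$ into $\TTT^d_s$, and the composition bound \eqref{pillole2} closes the iteration. Gronwall applied to $f_t^{(\theta,0)}=\int_0^t g^{(\theta,0)}\!\circ(\uno+f_\tau^{(\theta,0)})\,d\tau$ yields the desired inequality on the $\theta$-component after reducing $\mathtt c$ so that the effective amplification factor is below $2$. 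Next I would treat $f_t^{(w,0)}(\theta_0)=\int_0^t g^{(w,0)}(\theta_\tau(\theta_0))\,d\tau$ by one further application of Lemma \ref{compo}. Finally the $y$-block, being a linear first-order system with coefficient $g^{(y,y)}(\theta_t)$ and inhomogeneity affine in $(y_0,w_0)$, is solved by Duhamel via its fundamental matrix $Y_t(\theta_0)$; this extracts the three regular coefficients $f_t^{(y,0)}, f_t^{(y,y)}, f_t^{(y,w)}$.

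The main obstacle will be the estimate for $f_t^{(y,w)}$ in the mixed dual-type norm of Definition \ref{linvec}, namely in $H^p(\TTT^d_s;\ell_{-a,-\gotp_0-\nu})\cap H^{\gotp_0}(\TTT^d_s;\ell_{-a,p-\gotp_1-\gotp_0-\nu})$, because the space is defined through a duality pairing rather than intrinsically. The strategy is to view $g^{(y,w)}(\theta)\cdot(\,\cdot\,)$ as a $\theta$-family of bounded functionals on $\ell_{a,\gotp_0+\nu}$, test the Duhamel representation of $f_t^{(y,w)}$ against an arbitrary $w\in\ell_{a,\gotp_0+\nu}$, and invoke Lemma \ref{stolemma} to pass from the tame composition bound to the required regular-field bound via the sharpness property \eqref{tameconst3}. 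Once the four component estimates are in place, a single joint Gronwall bootstrap on $[0,1]$ closes the inequality $|f_t|_{\vec v,p}\le 2|g|_{\vec v_1,p}$, which is the claim.
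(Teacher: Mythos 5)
Your overall route is exactly the paper's: the proof of Lemma \ref{flusso} consists precisely in exploiting the triangular structure of the system $\dot\theta=g^{(\theta,0)}(\theta)$, $\dot w=g^{(w,0)}(\theta)$, $\dot y=g^{(y,0)}(\theta)+g^{(y,y)}(\theta)y+g^{(y,w)}(\theta)\cdot w$ — solve the torus ODE first (standard existence/uniqueness, giving $\theta_t=\theta_0+h_t(\theta_0)$ with $h_t$ controlled via Lemma \ref{lem.diffeo} and Lemma \ref{compo}), then integrate the $w$-equation, then solve the affine $y$-equation by Duhamel; preservation of the form \eqref{maremma}, hence $f_t\in\BB$, is automatic, as you say. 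Up to this point your proposal just fills in details the paper leaves implicit.

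The one step where your plan genuinely deviates — the estimate of $f_t^{(y,w)}$ in the norm $H^{p}(\TTT^d_s;\ell_{-a,-\gotp_0-\nu})\cap H^{\gotp_0}(\TTT^d_s;\ell_{-a,p-\gotp_1-\gotp_0-\nu})$ — is the step that would fail as written. First, the sharpness property \eqref{tameconst3} is only asserted for $p=\gotp_1$ (and its extension in Lemma \ref{sharpa} costs one derivative and uses the Hamiltonian relation $f^{(y,w)}=-\ii J\partial_\theta f^{(w,0)}$, not available for a general $g\in\BB$), so it cannot deliver the bound $|f_t|_{\vec v,p}\le 2|g|_{\vec v_1,p}$ for all $p\ge\gotp_1$; moreover, invoking sharpness presupposes that you have already exhibited a valid tameness constant for $f_t$ of the right size, i.e.\ verified the full estimates $(T_m)$, which is essentially the statement you are trying to prove. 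Second, testing the Duhamel formula against a fixed $w\in\ell_{a,\gotp_0+\nu}$ does not control the vector-valued norm: the quantity $\sup_{\|w\|_{a,\gotp_0+\nu}\le1}\|f^{(y,w)}(\cdot)\cdot w\|_{H^p(\TTT^d_s)}$ can be much smaller than $\|f^{(y,w)}\|_{H^p(\TTT^d_s;\ell_{-a,-\gotp_0-\nu})}$ (take the $\ell$-th Fourier coefficient of $f^{(y,w)}$ to be the $\ell$-th dual basis functional: the second quantity grows like $\sqrt N$ while the first stays bounded). The correct — and simpler — argument is the one implicit in the paper and already used in the proof of Lemma \ref{cometichiami}: the $(y,w)$-component of the norm \eqref{linnorm} is itself a Sobolev norm in $\theta$ with values in the fixed Hilbert space $\ell_{-a,\cdot}$, and the Duhamel representation expresses $f_t^{(y_i,w)}$ through $g^{(y_i,w)}\circ(\uno+h_\tau)$ and products with $g^{(y,y)}\circ(\uno+h_\tau)$; hence Lemma \ref{lem.diffeo}(ii) (valid for Hilbert-space-valued functions, cf.\ Remark \ref{chede} and Lemma \ref{compo}) together with the tame product estimates gives the bound directly, and the Gronwall bootstrap then closes as you describe, with the factor $2$ coming from the smallness $|g|_{\vec v,\gotp_1}\le\mathtt c\rho$.
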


\begin{proof}
The dynamical system associated with $g$ is
\begin{subequations}
\begin{align}
&\dot\theta = g^{(\theta,0)}(\theta),
\label{theta}\\
&\dot{y}= g^{(y,0)}(\theta)+g^{(y,y)}(\theta)y+g^{(y,w)}(\theta)\cdot w,
\label{y}\\
&\dot w= g^{(w,0)}(\theta).
\label{w}
\end{align}
\end{subequations}
We solve first \eqref{theta}, then substitute into \eqref{w} and finally substitute both into \eqref{y} and
hence the result follows by proving that the solution of \eqref{theta}, with initial datum $\f$, has the form
$$
\theta(t)=\f+h(t,\f),
$$
with $h\in H^{p}(\TTT^d_{s-\rho s_0})$ a zero-average  function. This latter statement follows by the standard
theory of existence, uniqueness and smoothness w.r.t. the initial data. 
\end{proof}

\zerarcounters

\section{Proof of Proposition \ref{uffffa}}\label{app:homo}
\begin{proof}
Given a tame vector field  $F\in  \VV_{\vec{v},p}$ such that $F\in \calE$
for all $\xi\in \calO$, let us define
$$
\gotA:=\gotN+\gotR:=\Pi_K\Pi_\calX([{\Pi_\NN F},\cdot])+\Pi_K\Pi_\calX([\Pi_\RR F,\cdot]).
$$
We note that 
$\gotN,\gotR: E^{(K)}\cap \BB_\calE \to E^{(K)}\cap \calX\cap\calE$.

Then the ``approximate invertibility'' of $\gotN$
implies the ``approximate invertibility'' of $\gotA$.
Indeed let $\gotW:\, E^{(K)}\cap \calX\cap\calE \to E^{(K)}\cap \BB_\calE$ be  the
 ``approximate right inverse'' of $\gotN$ defined in \eqref{buoni} and
 denote 
 $$
 \gotU:=  \gotR \gotW: E^{(K)}\cap \calX\cap\calE \to E^{(K)}\cap \calX\cap\calE. 
 $$
 By \eqref{blodia} and \eqref{sodia} we have that   $\gotU$ is strictly upper triangular so
$
\gotU^{\mathtt b}=0.
$
Now we set  $\gotB=  \gotN\gotW-\uno$ which is ``small'' in the sense of \eqref{cribbio4}.
Then we have
\begin{equation}\label{corinzi7}
(\gotN+\gotR)\gotW(\uno+\gotU)^{-1}= (\uno+\gotU+\gotB)(\uno+\gotU)^{-1} = \uno+ \gotB (\uno+\gotU)^{-1}
\,,\quad
(\uno+\gotU)^{-1}= \sum_{j=0}^{\mathtt b-1} (-1)^j \gotU^j\,.
\end{equation}
Thus $\gotW(\uno+\gotU)^{-1}$ is an approximate inverse for $\gotA$ in the sense that it is a true inverse for $\gotB=0$.
Then for all $\xi\in \calO$ let us set 
\begin{equation}\label{primig}
\tilde g:= \gotW(\uno+\gotU)^{-1}\Pi_K\Pi_X F.
\end{equation}
As for the bounds we first notice that by \eqref{buoni} and \eqref{vino2} one has
\begin{equation}\label{questanuo}
|\gotU X|_{\vec{v},p}\leq K^{\mu_{1}+\nu+1 }\big[ \Theta_{\gotp_1} |X|_{\vec{v},p}+
\big(\Theta_{p}(1+\Gamma_{\gotp_1})+ \Theta_{\gotp_1}K^{\al(p-\gotp_1)}\Gamma_{p}\big)|X|_{\vec{v},\gotp_1}\big].
\end{equation}

Now we can prove inductively that 
\begin{equation}\label{indu1}
|\gotU^jX|_{\vec v,p}\le K^{j(\mu+\nu+1)}\Big[\Theta_{\gotp_1}^j|X|_{\vec{v},p}+
\big(\Theta_{p}(1+\Gamma_{\gotp_1})+ \Theta_{\gotp_1}K^{\al(p-\gotp_1)}\Gamma_{p}\big)
{\mathtt P}_j(\Theta_{\gotp_1},\Gamma_{\gotp_1})|X|_{\vec{v},\gotp_1} \Big],
\end{equation}
where ${\mathtt P}_j(\Theta_{\gotp_1},\Gamma_{\gotp_1})$ is a polynomial of degree $2(j-1)$ defined recursively as
\begin{equation}\label{recu}
\begin{aligned}
&{\mathtt P}_1:=1\,,\\
&{\mathtt P}_j(\Theta_{\gotp_1},\Gamma_{\gotp_1}):= \Theta_{\gotp_1}^{j-1}+2\Theta_{\gotp_1}(1+\Gamma_{\gotp_1})
{\mathtt P}_{j-1}(\Theta_{\gotp_1},\Gamma_{\gotp_1}).
\end{aligned}
\end{equation} 

Indeed for $j=1$ this is exactly the bound \eqref{questanuo}; then assuming \eqref{indu1} to hold up to $j$ we have
\begin{equation}\label{indu2}
\begin{aligned}
|\gotU^{j+1}X|_{\vec v,p} &= |\gotU(\gotU^{j}X)|_{\vec v,p}\le K^{\mu+\nu+1 }
\big[ \Theta_{\gotp_1} |\gotU^{j}X|_{\vec{v},p}+\big(\Theta_{p}(1+\Gamma_{\gotp_1})+ 
\Theta_{\gotp_1}K^{\al(p-\gotp_1)}\Gamma_{p}\big)|\gotU^{j}X|_{\vec{v},\gotp_1}\big]\\
&\!\!\!\!\!\le  K^{\mu+\nu+1 }\Big( \Theta_{\gotp_1} K^{j(\mu+\nu+1)}\Big[\Theta_{\gotp_1}^j|X|_{\vec{v},p}+
\big(\Theta_{p}(1+\Gamma_{\gotp_1})+ \Theta_{\gotp_1}K^{\al(p-\gotp_1)}\Gamma_{p}\big)
{\mathtt P}_j(\Theta_{\gotp_1},\Gamma_{\gotp_1})|X|_{\vec{v},\gotp_1} \Big]\\
&+
\big(\Theta_{p}(1+\Gamma_{\gotp_1})+ \Theta_{\gotp_1}K^{\al(p-\gotp_1)}\Gamma_{p}\big) 
K^{j(\mu+\nu+1)}\Big[\Theta_{\gotp_1}^j+
\Theta_{\gotp_1}(1+2\Gamma_{\gotp_1})
{\mathtt P}_j(\Theta_{\gotp_1},\Gamma_{\gotp_1})\Big]|X|_{\vec{v},\gotp_1} \Big)\\
&\!\!\!\!\!=K^{(j+1)(\mu+\nu+1)}\Big(
\Theta^{j+1}|X|_{\vec{v},p}\\
&+
\big(\Theta_{p}(1+\Gamma_{\gotp_1})+ \Theta_{\gotp_1}K^{\al(p-\gotp_1)}\Gamma_{p}\big)
\big(
\Theta_{\gotp_1}^j+
2\Theta_{\gotp_1}(1+\Gamma_{\gotp_1})
{\mathtt P}_j(\Theta_{\gotp_1},\Gamma_{\gotp_1})
\big)|X|_{\vec{v},\gotp_1} 
\Big)
\end{aligned}
\end{equation}
which is \eqref{indu1} for $j+1$ taking into account \eqref{recu}.
Moreover, again by induction, the polynomials ${\mathtt P}_j$ satisfy the bound
\begin{equation}\label{eddaje}
|{\mathtt P}_j|\le 3^j \Theta_{\gotp_1}^{j-1}(1+\Gamma_{\gotp_1})^{j-1}\,,
\end{equation}
uniformly in $\Theta_{\gotp_1},\Gamma_{\gotp_1}$.
Indeed for $j=1$ this is trivial while assuming \eqref{eddaje} up to $j$ we have
\begin{equation}\label{indu3}
\begin{aligned}
|{\mathtt P}_{j+1}|&\stackrel{\eqref{recu}}{\le} \Theta_{\gotp_1}^j+2 \Theta_{\gotp_1}(1+\Gamma_{\gotp_1})
|{\mathtt P}_j|\\
&\le \Theta_{\gotp_1}^j+2
\Theta_{\gotp_1}(1+\Gamma_{\gotp_1})
3^{j} \Theta_{\gotp_1}^{j-1}(1+\Gamma_{\gotp_1})^{j-1}
= \Theta_{\gotp_1}^{j}(1+2\cdot3^j(1+\Gamma_{\gotp_1})^{j})\\
&\le (1+2\cdot3^j)\Theta_{\gotp_1}^{j}(1+\Gamma_{\gotp_1})^{j}\le
 3^{j+1}\Theta_{\gotp_1}^{j}(1+\Gamma_{\gotp_1})^{j}\,,
\end{aligned}
\end{equation}
where in the last inequality we used the fact that $1+2 C^j\le C^{j+1}$ for $C\ge3$.
Summarizing we obtained
\begin{equation}\label{induno}
|\gotU^jX|_{\vec v,p}\le K^{j(\mu_{1}+\nu+1)}\Theta^{j-1}_{\gotp_1}\Big[\Theta_{\gotp_1}|X|_{\vec{v},p}+
\big(\Theta_{p}(1+\Gamma_{\gotp_1})+ \Theta_{\gotp_1}K^{\al(p-\gotp_1)}\Gamma_{p}\big)
3^j (1+\Gamma_{\gotp_1})^{j-1}|X|_{\vec{v},\gotp_1} \Big],
\end{equation}
so that (the second summand is zero for $j=0$)
\begin{equation}\label{induno2}
|\gotW\gotU^jX|_{\vec v,p}\le 4^j K^{j(\mu_{1}+\nu+1)+\mu_{1}}\Big[\Theta^{j}_{\gotp_1}|X|_{\vec{v},p}+
\Theta_{p}\Theta^{j-1}_{\gotp_1}(1+\Gamma_{\gotp_1})^j |X|_{\vec{v},\gotp_1}+ \Theta^j_{\gotp_1}K^{\al(p-\gotp_1)}(1+\Gamma_{\gotp_1})^{j}\Gamma_{p}  |X|_{\vec{v},\gotp_1}\Big]
\end{equation}
and finally
\begin{equation}
\begin{aligned}
|\gotW(1+\gotU)^{-1}X|_{\vec v,p} &\le K^{\mu_1} (1+ 4K^{\mu_{1}+\nu+1}\Theta_{\gotp_1})^b |X|_{\vec v,p} \\
&\qquad+
K^{\mu_1} (1+\Gamma_{\gotp_1})(1+ 4K^{\mu_{1}+\nu+1}\Theta_{\gotp_1}(1+\Gamma_{\gotp_1}))^{b-1}\Theta_p  |X|_{\vec{v},\gotp_1}\\
&\qquad+
K^{\mu_{1}+\al(p-\gotp_1)}(1+ 4K^{\mu_{1}+\nu+1}\Theta_{\gotp_1}(1+\Gamma_{\gotp_1}))^{b}\Gamma_p |X|_{\vec{v},\gotp_1}
\end{aligned}
\end{equation}
where again the second summand is in fact zero if $b=0$. Therefore, since $\Theta_p\le \Gamma_p$, we obtain
\begin{equation}
|\gotW(1+\gotU)^{-1}X|_{\vec v,p} \le K^{(b+1)(\mu_{1}+\nu+1)}(1+ \Theta_{\gotp_1}(1+\Gamma_{\gotp_1}))^{b}(1+\Gamma_{\gotp_1})\Big[  |X|_{\vec v,p} +K^{\al(p-\gotp_1)}\Gamma_p |X|_{\vec{v},\gotp_1} \Big] 
\end{equation}
this concludes the proof of \eqref{buoni22}. The proof of \eqref{cribbio42} follows the same lines.
Now we have defined a function $\tilde g$ on the set $\calO$. In order to conclude the proof we need to extend this function to the whole $\calO_0$. We know that regular vector fields in $E^{(K)}$ have a structure of Hilbert space w.r.t. the norm $|\cdot|_{s,a,\gotp_1}$ so we may apply Kirtzbraun Theorem in order to extend $\tilde g$ to a regular vector field in $E^{(K)}$ with the same Lipschitz norm, i.e. $|g|_{s,a,\gotp_1}^{\rm lip}\le \g^{-1}|\tilde g|_{\vec v,p}$ .  As for the sup norm one clearly has
$$
\sup_{\xi\in\calO_0}|g|_{s,a,\gotp_1}\le \sup_{\xi\in\calO_0}|\tilde g|_{s,a,\gotp_1}+ \rm{diam}(\calO_0)|g|_{s,a,\gotp_1}^{\rm lip}.
$$
\end{proof}

\section{Time analytic case}\label{solaminchia}
 
Theorem \ref{thm:kambis} does not make any assumptions on the analiticity parameters $a_0,s_0$ and relies on tame estimates in order to control the {\em high} Sobolev norm $\gotp_2\ge \gotp_1+\ka_0+\chi\ka_2$. However if one makes the Ansatz that $s_0>0$ then we may take $\gotp_2=\gotp_1$ and consequently have a simplified scheme, since we do not have to control the tameness constants in high norm but only the norm $|\cdot|_{\vec v,\gotp_1}$. 
In order to do so we need to modify
Definition \ref{linvec-abs} by substituting item {\it 3.} with the following:

\begin{itemize}
\item[{\it $3'$.}] For $K>1$ there exists  smoothing projection operators $\Pi_K: \calA_{\vec v,p}\to \calA_{\vec v,p} $ such that $\Pi_K^2=\Pi_K$
and setting $\vec v_1= (\g,\calO,s+s_1,a,r)$, 
for $p_1\ge 0$, one has 
\begin{align}
& \qquad| \Pi_{K}F |_{\vec v_1,p+p_1} \le  \mathtt C K^{{p_1 }} e^{s_1K}| F|_{\vec v,p} \label{P1'}\\
& \qquad| F-\Pi_{K}F |_{\vec v,p} \le  \mathtt C K^{{-p_1 }} e^{-s_1K} | F|_{\vec v_1,p+p_1}\label{P2'}
\end{align}
finally if $C_{\vec v,p}(F)$ is any tameness constant for $F$ then we may choose a tameness constant such that
\begin{equation}
\qquad C_{\vec v,p+p_1}(\Pi_{K}F )\le \mathtt C K^{{p_1 }} e^{s_1 K}C_{\vec v_1,p}(F)\qquad\label{P3'}
\end{equation}
\end{itemize}

%
We denote  by $E^{(K)}$ the subspace where $\Pi_K E^{(K)}=E^{(K)}$. 

\begin{const}[{\bf The exponents: analytic case}]\label{sceltaparbis}
We fix parameters $\e_0,\tR_0,\tG_0,\mu,\nu,\eta,\chi,\ka_2$ such that the following holds.
\begin{itemize}
\item $0<\e_0\le\tR_0\le \tG_0$ with $\e_0\tG_0^3,\e_0 \tG_0^2 \tR_0^{-1}<1$.


\item We have $\mu,\nu\ge 0$,  $1<\chi<2$, finally setting $\ka_0:= \mu+\nu+4$   
\begin{equation}\label{expa}
 \ka_2> \frac{2\ka_0}{2-\chi}\,,
\qquad 
 \eta > \mu+(\chi-1) \ka_2+1\,,
\end{equation}
\item  there exists $K_0>1$   such that
\begin{equation}\label{expexpexpa}
 \log K_0\geq \frac{1}{\log\chi}C,
\end{equation}
with $C$ a given function of $\mu,\nu,\eta,\ka_2, s_0$  (which goes to $\infty$ as $s_0\to 0$)
and moreover
\begin{subequations}\label{expexpa}
\begin{align}
&\tG_0^2\tR_0^{-1}\e_0 K_0^{\ka_0}\max (1, 	\tR_0 \tG_0 K_0^{\ka_0+(\chi-1)\ka_2})<1\,,
\label{1s1a}\\
&  K_0^{\ka_0 +(\chi-1) \ka_2 } e^{-  \frac{s_0 K_0}{32}}\tG_0\e_0^{-1} \max\Big ( 1,\tR_0 \Big)\le 1\,,
 \label{4s2a}
 \end{align}
 \label{smalla}
 \end{subequations}
\end{itemize}
\end{const} 
Now in order to state our result we define the good parameters and the changes of variables as in the general case but with $\gotp_2=\gotp_1$, $\ka_3=\ka_1=\al=0$. For clarity we restate our definition in this simpler case.
\begin{defi}[{\bf Homological equation}]\label{pippopuffo2a} 
Let $\gamma> 0$, $K\ge K_0$,
consider a compact set $\calO \subset \calO_0$ and set $\vec{v}=(\g,\calO,s,a,r)$ and $\vec{v}^{\text{\tiny 0}}=(\g,\calO_0,s,a,r)$.
Consider a  vector field $F\in \calW_{\vec v^{\text{\tiny 0}},p}$  i.e.
$$
F= N_0+G: \calO_0\times    D_{a,p+\nu}(r)\times\TTT^{d}_{s}\to V_{a,p}\,, 
$$
 which is  $C^{\mathtt n+2}$-tame up to order $q=\gotp_1+2$.
   We say $\mathcal O$ satisfies the  \emph{  homological equation}, for
   $(F,K,\vec{v}^{\text{\tiny 0}},\rho)$ if  
    the following holds.

 \noindent
1.\, For all $\xi\in \mathcal O$   one has  $F(\xi)\in \calE$.
 
 \noindent
2.\, there exist   a bounded regular vector field $g\in \calW_{\vec{v}^{\text{\tiny 0}},p}\cap E^{(K)}$ such that 

 \begin{itemize}
 \item[(a)]  $g\in \BB_\calE$ for all $\xi\in \calO$,
  \item[(b)] one has $|g|_{\vec{v}^{\text{\tiny 0}},\gotp_1}\le \mathtt C |g|_{\vec v,\gotp_1}\le \mathtt c \rho$
  and
 \begin{equation}\label{buoni22a}
|g|_{\vec v,\gotp_1}\leq
\gamma^{-1}K^{\mu}|\Pi_K\Pi_{\calX}G|_{\vec v,\gotp_1}
(1+
\g^{-1}C_{{\vec v,p}}(G))\,,
 \end{equation}
 \item[(c)] setting 
 $
 u:=\Pi_{K}\Pi_{\calX}({\rm ad}(\Pi_{\calX}^{\perp} F)[g]-F),
 $
one has 
  \phantom{assssasfffff}
  \begin{equation}\label{cribbio42a}
    |u|_{\vec{v},\gotp_1}\leq \e_0\g^{-1} K^{-\eta+\mu} C_{\vec{v},\gotp_1}(G)|\Pi_{K}\Pi_{\calX}G|_{\vec{v},\gotp_1},
   \end{equation}
  \end{itemize}
\end{defi}
\begin{rmk}
Note that if we take $\gotp_2=\gotp_1$ then the second inequality in \eqref{buoni22} as well as item 2(d) of Definition \ref{pippopuffo2} follow from \eqref{tameconst2} and \eqref{tameconst3}.
\end{rmk}

\begin{defi}[{\bf Compatible changes of variables: analytic case}]\label{compaa} Let the parameters in Constraint
\ref{sceltaparbis} be fixed.
Fix also $\vec v= (\g,\calO,s,a,r)$, $\vec{v}^{\text{\tiny 0}}= (\g,\calO_0,s,a,r)$ with $\calO\subseteq\calO_0$
a compact set,  parameters
$K\ge K_0,\rho<1$. Consider a
 vector field $F= N_0+G\in \calW_{\vec{v}^{\text{\tiny 0}},p}$  which is  $C^{\mathtt n+2}$-tame up to
 order $q=\gotp_1+2$ and such that $F\in \calE\quad \forall\x\in \calO$.
  We say that a 
left invertible $\calE$-preserving change of variables 
$$
\calL, \calL^{-1}: \TTT^d_{s}\times D_{a,\gotp_1}(r)\times \calO_0 \to
 \TTT^d_{s+\rho s_0}\times D_{a-\rho a_0,\gotp_1}(r+\rho r_{0})
 $$
 is {\em compatible} with $(F,K,\vec v,\rho)$ if the following holds:
\begin{itemize}
\item[(i)]  $\calL$ is ``close to identity'', i.e.
denoting $\vec{v}^{\text{\tiny 0}}_1:=(\g,\calO_0,s-\rho s_0,a-\rho a_0,r-\rho r_0)$ one has 
\begin{equation}\label{satanaa}
\begin{aligned}
&\|(\calL-\uno)h\|_{\vec{v}^{\text{\tiny 0}}_1,\gotp_1}\leq \mathtt C  \e_0 K^{-1}
\|h\|_{\vec{v}^{\text{\tiny 0}},\gotp_1}\,.\\
\end{aligned}
\end{equation}

\item[(ii)]  $\calL_*$ conjugates the $C^{\mathtt n+2}$-tame vector field $F$ to 
the vector field $\hat{F}:= 
\calL_{*}F=  N_0+ \hat G$
which is $C^{\mathtt n+2}$-{tame}; moreover denoting
 $\vec v_2:=(\g,\calO,s-2\rho s_0,a-2\rho a_0,r-2\rho r_0)$
one may choose the tameness constants of $\hat G$ so that
\begin{equation}\label{odioa}
C_{\vec v_2, \gotp_1}(\hat{G})\le C_{\vec v,\gotp_1}(G)(1+\e_0 K^{-1})\,,
\end{equation}
\item[(iii)] $\calL_*$ ``preserves the $(\NN,\calX,\RR)$-decomposition'', namely one has
\begin{equation}\label{satana2a}
\Pi_\NN^\perp (\calL_* \Pi_\NN F) = 0\,, \quad \qquad \Pi_{\calX}(\calL_{*}\Pi_{\calX}^\perp F)=0\,.
\end{equation}
\end{itemize}
\end{defi}

%

\medskip

Then the result is the following.

 \begin{theo}[{\bf Abstract KAM: analytic case}]\label{thm:kambisa}
Let $N_0$ be a diagonal vector field as in Definition \ref{norm} and consider a  vector field
 \begin{equation}\label{kam1a}
 F_0:=N_{0}+G_0 \in \calE\cap\calW_{\vec{v}_{0},p}
 \end{equation} 
 which is  $C^{\mathtt n+2}$-tame up to order $q=\gotp_1+2$.
Fix   parameters $\e_0,\tR_0,\tG_0,\mu,\nu,\eta,\chi,\ka_2$  satisfying Constraint \ref{sceltaparbis}
and assume that
 \begin{equation}\label{sizesa}
\g_{0}^{-1}C_{\vec{v}_0,\gotp_1}(G_0) \le \tG_0\,,\quad
 \g_0^{-1}C_{\vec{v}_0,\gotp_1}(\Pi_{\NN}^\perp G_0)\le \tR_0\,, \quad
 \g_0^{-1}|\Pi_{\calX}G_0|_{\vec{v}_0,\gotp_1}\le \e_0\,.
 \end{equation}

For all $n\geq 0$  we define recursively  changes of variables $\calL_n,\Phi_n$  and  compact sets
$ \calO_n$ as follows.

\smallskip

  Set $\HH_{-1}=\HH_0=\Phi_0=\calL_0=\uno$, and for $0\le j \le n-1$ set recursively
  $\HH_j= \Phi_j\circ \calL_j\circ \HH_{j-1}$ and  $F_j:=(\HH_j)_{*}F_0:=N_0+G_{j}$.
  Let $\calL_{n}$ be any change of variables   compatible with $(F_{n-1},K_{n-1}, \vec v_{n-1},\rho_{n-1})$ following Definition \ref{compaa},
    and  $\calO_n$ be any compact set 
   \begin{equation}\label{oscurosignorea}
   \calO_{n}\subseteq \calO_{n-1}\,,
   \end{equation}  
  which satisfies the Homological equation for $((\calL_n)_*F_{n-1},K_{n-1},\vec v^{\text{\tiny 0}}_{n-1},\rho_{n-1})$.
   For $n>0$ let $g_n$ be the regular vector field defined in item (2) of Definition \ref{pippopuffo2a} and set
    $\Phi_n$ the time-1 flow map generated by $g_n$. 
    
    Then $\Phi_n$ is left invertible and 
     $F_n:= (\Phi_n\circ\calL_n)_*F_{n-1}\in \calW_{\vec v^{\text{\tiny 0}}_n,p}$ is  $C^{\mathtt n+2}$-tame up to order $q=\gotp_1+2$.
Moreover the following holds.

      \begin{itemize}
     
  \item[{\bf (i)}]
  Setting $G_n=F_n-N_0$ then
 \begin{equation}\label{lamortea}
 \begin{aligned}
  \Gamma_{n,\gotp_1}&:=\g_{n}^{-1} C_{\vec{v}_n,\gotp_1}(G_n)\leq \tG_n, 
\qquad
&\Theta_{n,\gotp_1}:= \g_{n}^{-1}C_{\vec{v}_n,\gotp_1}(\Pi_{\NN}^\perp G_n)\leq \tR_n,
 \\
\de_n&:= \g_{n}^{-1} |\Pi_{\calX}G_n|_{\vec{v}_n,\gotp_1}\leq K_0^{\ka_2} \e_0 K_{n}^{-\ka_2},
\qquad
&|g_{n}|_{\vec u_n,\gotp_1}
     \leq  K_0^{\ka_2} \e_0 \tG_0K_{n-1}^{-\ka_2+\mu+1},
\end{aligned}
 \end{equation}
where 
$\vec u_{n}=(\g_{n},\calO_{n},s_{n}+12\rho_n s_0 ,a_{n}+12\rho_{n} a_0,r_{n}+12\rho_n r_0)$.
 \item[{\bf (ii)}]
 The sequence $\calH_n$ converges
     for all $\x\in\calO_0$ 
  to some change of variables
  \begin{equation}\label{dominio1000}
  {\calH}_\io={\calH}_\io(\x):  D_{{a_{0}},p}({s_{0}}/{2},{r_{0}}/{2})\longrightarrow D_{\frac{a_{0}}{2},p}({s_{0}},{r_{0}}).
  \end{equation}
  
 \item[{\bf (iii)}]
 Defining $F_{\infty}:=(\calH_\io)_{*}F_0$ one has 
 \begin{equation}\label{fine1000}
 \Pi_\calX F_{\infty}=0
 \quad \forall \xi \in \calO_\io:=\bigcap_{n\geq0}\calO_n
 \end{equation}
 and 
 $$
 \g_0^{-1}C_{\vec{v}_\io,\gotp_1}(\Pi_{\NN}F_{\infty}-N_0)\le 2\tG_0, \quad
  \g_0^{-1}C_{\vec{v}_\io,\gotp_1}(\Pi_{\RR}F_{\infty})\le 2\tR_0
 $$
 with $\vec{v}_\io:=(\g_0/2,\calO_\io,s_{0}/2,a_{0}/2)$. 
 
 \end{itemize}
\end{theo}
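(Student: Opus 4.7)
The plan is to adapt the iterative scheme of Theorem \ref{thm:kambis} (Section \ref{itscheme}), exploiting the exponential smoothing \eqref{P1'}--\eqref{P2'} so as to dispense entirely with the high-regularity norm $\gotp_2$. First I would prove an analytic version of Proposition \ref{kamstep}: given $F = N_0 + G$ with bounds on $\Gl,\Tl,\de$ and a compatible change of variables $\calL$ (in the sense of Definition \ref{compaa}), produce the time-$1$ flow $\Phi_+=\uno+f_+$ of the solution $g$ to the Homological equation of Definition \ref{pippopuffo2a}. The algebraic decompositions \eqref{sabbia}--\eqref{orchi3} of the pushed-forward vector field $F_+ = N_0 + G_+$ carry over verbatim, since they are purely formal. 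What changes is the bookkeeping of tameness constants, which is now tracked at the single level $\gotp_1$: using Lemma \ref{conj} and Lemma \ref{normnorm} one obtains
\begin{equation*}
\Gamma_{+,\gotp_1}\le \tfrac{\g}{\g_+}(1+\e_0 K^{-1})\Gl + \mathtt C K^{\mu+\nu+3}\,\Gl^2\,\de, \qquad \Theta_{+,\gotp_1}\le \tfrac{\g}{\g_+}(1+\e_0 K^{-1})\Tl + \mathtt C K^{\mu+\nu+3}\,\Gl^2\,\de.
\end{equation*}

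The essential new estimate is the one on $\de_+$. Rewriting \eqref{orchi2}, the contributions $u$ and $\Pi_\calX Q$ are immediately controlled, using \eqref{cribbio42a} and Remark \ref{resto}, by $\e_0\g\Gl\de\,K^{\mu-\eta}$ and $\g\Gl^2\de^2 K^{2\mu+2\nu+4}$ respectively. The only term requiring the analytic smoothing is the ultraviolet tail $\Pi_K^\perp(\Pi_\calX\hat G + \Pi_\calX[\hat G,g])$: I would apply \eqref{P2'} with analyticity gap $s_1=\rho s_0$, yielding
\begin{equation*}
|\Pi_K^\perp(\Pi_\calX\hat G + \Pi_\calX[\hat G,g])|_{\vec w,\gotp_1}\le \mathtt C e^{-\rho s_0 K}\,\g\,\big(\Tl + K^{\mu+\nu+2}\Gl^2\,\de\big),
\end{equation*}
so that $\de_+\le\mathtt C\Gl\de(\e_0 K^{\mu-\eta}+\Gl K^{2\mu+2\nu+4}\de) + \mathtt C e^{-\rho s_0 K}K^{\mu+\nu+2}(\Tl+\Gl^2\de)$. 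Here the exponential gain $e^{-\rho s_0 K}$ plays the role that $K^{-\Delta\gotp}$ played in the general proof.

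Second, I would verify inductively that the bounds \eqref{lamortea} propagate with the choices \eqref{numeretti}. The inductive hypotheses on $\Gamma_n,\Theta_n$ transfer as in the general case, using \eqref{1s1a} in place of \eqref{1s1}. The novel verification is that
\begin{equation*}
e^{-\rho_n s_0 K_n}\,K_n^{\mu+\nu+2}\,\tR_0 \le \e_0\, K_0^{\ka_2}\,K_n^{-\chi\ka_2}\,,
\end{equation*}
which is the analytic substitute for \eqref{4s2}. The base case $n=0$ is exactly \eqref{4s2a}, while for $n\ge 1$ the crucial point is that $\rho_n s_0 K_n = 2^{-n-5} s_0 K_0^{\chi^n}$ grows doubly-exponentially in $n$ (because $\chi>1$ by \eqref{expa}), so the factor $e^{-\rho_n s_0 K_n}$ crushes any polynomial in $K_n$. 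Once \eqref{lamortea} is established, parts (ii) and (iii) of the theorem follow \emph{verbatim} from the telescoping estimates \eqref{milano5}--\eqref{torino} and the identity \eqref{fine2}, since those only use the low-norm bound on $g_n$.

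The main obstacle I anticipate is precisely the competition between the geometric decay of the analyticity loss $\rho_n$ (whose sum must converge to $s_0/2$) and the exponential gain $e^{-\rho_n s_0 K_n}$ from ultraviolet truncation. This competition is winnable exactly in the regime $\chi>1$, where the product $\rho_n K_n$ diverges super-exponentially, but it does require one to be careful in the choice of $K_0$ in \eqref{expexpexpa}: the constant $C$ there must absorb the geometric $1/s_0$ loss so that $s_0 K_0 \ge 32 \cdot (\text{polynomial growth rate})$, which is precisely why the constraint depends on $s_0$ and blows up as $s_0\to 0$.
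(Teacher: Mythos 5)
Your proposal follows essentially the same route as the paper's own proof: rerun the KAM step of Proposition \ref{kamstep} with $\gotp_2=\gotp_1$, $\al=\ka_3=0$, replace the ultraviolet-tail estimate (the analogue of \eqref{cazzo}) by one using the exponential smoothing \eqref{P2'}, which produces exactly the $e^{-\rho s_0 K}$ gain in $\de_+$, and close the induction because $\rho_n s_0 K_n=2^{-(n+5)}s_0K_0^{\chi^n}$ grows super-exponentially while items (ii)--(iii) carry over verbatim from Theorem \ref{thm:kambis}. This matches the argument sketched in Appendix \ref{solaminchia} (with \eqref{cazzo2} in place of your displayed tail bound and \eqref{4s2a} as the base smallness condition), so the proposal is correct.
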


\begin{proof}
The proof of Theorem \ref{thm:kambisa} is essentially identical to the one of Theorem \ref{thm:kambis}. We give a sketch for completeness.
The induction basis is trivial with $g_0=0$.
Assuming \eqref{lamortea} up to $n$ we prove the inductive step using the ``KAM step'' of Proposition
\ref{kamstep}
with $\gotp_1=\gotp_2$ and $\al=\ka_3=0$ and the bound \eqref{cazzo} substituted by
\begin{equation}\label{cazzo2}
\begin{aligned}
\de_{+}&\le {\mathtt C} \Gl \big ( \de^2 
\Gamma^2_{\gotp_1}K^{2\mu+2\nu+4} + \de \e_0 K^{\mu-\eta}\big)+ e^{-2 \rho s_0 K} K^{\mu+\nu+1-(\gotp_2-\gotp_1) }  \big(\Theta_{\gotp_2}+\e_0 K^{\ka_3} \Theta_{\gotp_1} \big)\\
&\qquad+
\Gl K^{\mu+\nu+1-(\gotp_2-\gotp_1) } e^{-2 \rho s_0 K} \big (\Theta_{\gotp_2}+\e_0 K^{\ka_3} \Theta_{\gotp_1} + K^{\al(\gotp_2-\gotp_1)} \de(\Gamma_{\gotp_2}+\e_0 K^{\ka_3} \Gl)\big)\,.
\end{aligned}
\end{equation} 
Bound \eqref{cazzo2} follows using the smoothing properties  \eqref{P1'}, \eqref{P2'} in item $(3')$, in the equation \eqref{orchi8}.

First of all we note that 
\begin{equation}\label{porcocazzoa}
\rho_n^{-1} K_n^{\mu+\nu+3}\Gamma_{n,\gotp_1} \de_n \le \mathtt c,
\end{equation}
which, by the inductive hypothesis and \eqref{numeretti} reads
\begin{equation}\label{merdafrittaa}
2^{n+9}K_0^{(\mu+\nu+3-\ka_2)\chi^n} \mathtt G_0 \e_0 K_0^{\ka_2} \le \mathtt c;
\end{equation}
this is true since 
 by \eqref{expa} and \eqref{expexpexpa}
the left hand side \eqref{merdafrittaa} is decreasing in $n$
so that \eqref{porcocazzoa} follows form
$$
K_0^{\mu+\nu+4}\mathtt G_0\e_0 <1
$$
which is indeed implied by \eqref{1s1a}
because $\tG_0\ge \tR_0$.

Hence we can apply the ``KAM step''   to $F_n:= (\Phi_n\circ\calL_n)_*F_{n-1}\in \calW_{\vec v^{\text{\tiny 0}}_n,\gotp_2}$ which is a  $C^{\mathtt n+2}$-tame up to order $q=\gotp_2+2$. We fix $(K_{n},\g_n,a_n,s_n,r_n,\rho_n,\calO_n)\rightsquigarrow(K,\g,a,s,r,\rho,\calO)$,
 $\Gamma_{n,p}\rightsquigarrow \Gamma_{p}$,
$\Theta_{n,p}\rightsquigarrow \Theta_{p}$,
$\de_{n}\rightsquigarrow\delta$,
$(\g_{n+1},a_{n+1},s_{n+1},r_{n+1},\rho_{n+1},\calO_{n+1})\rightsquigarrow(\g_+,a_+,s_+,r_+,\rho_+,\calO_+)$. The KAM steps produces a bounded regular vector field $g_{n+1}$ and a left invertible change of variables $\Phi_{n+1}= \uno +f_{n+1}$  such that  
$F_{n+1}:= (\Phi_{n+1}\circ\calL_n)_*F_{n}\in \calW_{\vec v^{\text{\tiny 0}}_{n+1},\gotp_2}$ is  $C^{\mathtt n+2}$-tame up to order $q=\gotp_2+2$.
We  now verify that the bounds \eqref{lamortea} hold with $\Gamma_{n+1,\gotp_1}\rightsquigarrow \Gamma_{+,\gotp_1}$,
$\Theta_{n+1,\gotp_1}\rightsquigarrow \Theta_{+,\gotp_1}$, ,
$\de_{n+1}\rightsquigarrow\delta_+$.

Let us prove {\bf (i)}, the others follow exactly as in Theorem \eqref{thm:kambis}.

By substituting into \eqref{stimatotale2} we immediately obtain the bounds for $g_{n+1}$ of \eqref{lamortea}.

Now we recall that, by definition
$$
\frac{\g_n}{\g_{n+1}}=1+\frac{1}{2^{n+3}-1}.
$$

We use \eqref{kam8} together with the inductive hypotheses to obtain   
$$
\Gamma_{n+1,\gotp_1}\leq 
\Big(1+\frac{1}{2^{n+3}-1}\Big)
\tG_n + 2\e_0K_n^{-1}\tG_0+\mathtt C K_n^{\mu-\ka_2} \tG_0 \e_0 K_0^{\ka_2}  (K_n^{\nu+1}\tG_0+ \e_0  K_n^{-\eta})\le \tG_{n+1}\,,
$$
which follow by requiring
$$
\max( 2^nK_n^{-1}\e_0, 2^n K_n^{\mu+\nu+1-\ka_2} K_0^{\ka_2}\tG_0 \e_0  ,  2^n  K_n^{-\eta-\ka_2+\mu } K_0^{\ka_2}\e_0)\le \mathtt c\,,
$$
and as before this follows by \eqref{expa} and \eqref{1s1a}.

Regarding $\Theta_{n+1,\gotp_1}$, using \eqref{kam9} we get
$$
\Theta_{n+1,\gotp_1}\leq \Big(1+\frac{1}{2^{n+3}-1}\Big)
 \tR_n +2\e_0K_n^{-1}\tR_0+ \mathtt C K_n^{\mu+\nu+1-\ka_2} \tG_0^2 \e_0 K_0^{\ka_2}  
 +K_n^{-\eta-\ka_2+\mu}\tG_0 \e^2_0 K_0^{\ka_2}\le \tR_{n+1}.
$$
which again follows from  \eqref{expa} and \eqref{1s1a}.

For $\de_{n+1}\rightsquigarrow \de_+$, we apply \eqref{cazzo2} with $\gotp_2=\gotp_1$, $\al=\ka_3=0$  and get
\begin{align*}
\de_{n+1}\leq &{\mathtt C} \tG_0 \Big ( \e_0^2 K_0^{\ka_2} (
\tG_0^2  K_0^{\ka_2} K_n^{2\mu+2\nu+4-2\ka_2} +   K_n^{\mu-\eta-\ka_2})+ \\ &e^{-2 \rho_n s_0 K_n}(\tR_0 K_n^{\mu+\nu+1 } + 
\e_0 K_0^{\ka_2} \tG_0 K_n^{\mu+\nu+1 -\ka_2} )\Big)+ e^{-2 \rho_n s_0 K_n}\tR_0 K_n\le \e_0 K_0^{\ka_2}K_{n}^{-\chi \ka_2}
\end{align*}
Now 
$$
{\mathtt C} \tG_0 \Big ( \e_0^2 K_0^{\ka_2} (
\tG_0^2  K_0^{\ka_2} K_n^{2\mu+2\nu+4-2\ka_2} +   K_n^{\mu-\eta-\ka_2})\le \frac12 \e_0 K_0^{\ka_2}K_{n}^{-\chi \ka_2}
$$ by \eqref{expa} and \eqref{1s1a}. 
As for the second term, 
since $s_0>0$ all the summands  are decreasing in $n$ provided that $K_0$ is large enough.

\end{proof}

\bibliography{bibliografiaNLS}

\def\cprime{$'$}
\begin{thebibliography}{10}
\expandafter\ifx\csname url\endcsname\relax
  \def\url#1{\texttt{#1}}\fi
\expandafter\ifx\csname urlprefix\endcsname\relax\def\urlprefix{URL }\fi
\expandafter\ifx\csname href\endcsname\relax
  \def\href#1#2{#2} \def\path#1{#1}\fi

\bibitem{M1}
J.~Moser, Convergent series expansions for quasi--periodic motions, Math. Ann.
  169 (1967) 136--176.

\bibitem{KKK}
A.~Kolmogorov, On conservation of conditionally periodic motions for a small
  change in hamil- ton's function, Dokl. Akad. Nauk SSSR 98 (1954) 527--530.

\bibitem{Arn}
V.~I. Arnold, Small denominators and problems of stability of motion in
  classical celestial mechanics, Russian Math. Surveys 18 (1963) 85--193.

\bibitem{Moser-Pisa-66}
J.~Moser, {R}apidly convergent iteration method and non-linear partial
  differential equations - i, Ann. Sc. Norm. Sup. Pisa 20~(2) (1966) 265--315.

\bibitem{Pos}
J.~P\"oschel, A lecture on the classical {KAM} theorem, in: Proc. Symp. Pure
  Math., Vol.~69, 2001, pp. 707--732.

\bibitem{Rus}
H.~R\"ussmann, Nondegeneracy in the perturbation theory of integrable dynamical
  systems, in: Number theory and dynamical systems ({Y}ork, 1987), Vol. 134,
  Cambridge Univ. Press, Cambridge, 1989, pp. 5--8.

\bibitem{Sev}
M.~Sevryuk, The reversible context 2 in {KAM} theory: the first steps, Regul.
  Chaotic Dyn 16~(1-2) (2011) 24--38.

\bibitem{Ze}
E.~Zehnder, Generalized implicit function theorems with applications to some
  small divisors problems i-ii, Comm. Pure Appl. Math.~(29) (1976) 49--113.

\bibitem{Fej}
J.~Fejoz, Periodic and quasi-periodic motions in the many-body problem.
  m{\'e}moire d'habilitation {\`a} diriger des recherches. (2010).

\bibitem{K1}
S.~Kuksin, Hamiltonian perturbations of infinite-dimensional linear systems
  with imaginary spectrum, Funktsional Anal. i Prilozhen. 21~(3) (1987) 22--37.

\bibitem{W}
C.~E. Wayne, Periodic and quasi-periodic solutions of nonlinear wave equations
  via {KAM} theory, Comm. Math. Phys. 127~(3) (1990) 479--528.

\bibitem{CW}
W.~Craig, C.~E. Wayne, Newton's method and periodic solutions of nonlinear wave
  equations, Comm. Pure Appl. Math. 46~(11) (1993) 1409--1498.

\bibitem{Boj2}
J.~Bourgain, Construction of quasi-periodic solutions for hamiltonian
  perturbations of linear equations and applications to nonlinear pde,
  Internat. Math. Res. Notices.

\bibitem{P1}
J.~P{\"o}schel, \href{http://dx.doi.org/10.1007/BF02566420}{Quasi-periodic
  solutions for a nonlinear wave equation}, Comment. Math. Helv. 71~(2) (1996)
  269--296.
\newblock \href {http://dx.doi.org/10.1007/BF02566420}
  {\path{doi:10.1007/BF02566420}}.
\newline\urlprefix\url{http://dx.doi.org/10.1007/BF02566420}

\bibitem{Po2}
J.~P{\"o}schel, \href{http://www.numdam.org/item?id=ASNSP_1996_4_23_1_119_0}{A
  {KAM}-theorem for some nonlinear partial differential equations}, Ann. Scuola
  Norm. Sup. Pisa Cl. Sci. (4) 23~(1) (1996) 119--148.
\newline\urlprefix\url{http://www.numdam.org/item?id=ASNSP_1996_4_23_1_119_0}

\bibitem{KP}
S.~Kuksin, J.~P{\"o}schel, Invariant {C}antor manifolds of quasi-periodic
  oscillations for a nonlinear {S}chr\"odinger equation, Ann. of Math. 143~(1)
  (1996) 149--179.
\newblock \href {http://dx.doi.org/10.2307/2118656}
  {\path{doi:10.2307/2118656}}.

\bibitem{B2}
J.~Bourgain, Periodic solutions of nonlinear wave equations, in: Harmonic
  Analysis and Partial Differential Equations (Chicago, IL, 1996), Chicago
  Lectures in Math, Univ. Chicago Press, 1999, pp. 69--97.

\bibitem{Ku2}
S.~Kuksin, A {KAM} theorem for equations of the {K}orteweg-de {V}ries type,
  Rev. Math. Phys. 10~(3) (1998) 1--64.

\bibitem{B3}
J.~Bourgain, Quasi-periodic solutions of {H}amiltonian perturbations of 2{D}
  linear {S}chr\"odinger equations, Ann. of Math. (2) 148~(2) (1998) 363--439.
\newblock \href {http://dx.doi.org/10.2307/121001} {\path{doi:10.2307/121001}}.

\bibitem{CY}
L.~Chierchia, J.~You, {KAM} tori for 1{D} nonlinear wave equations with
  periodic boundary conditions, Comm. Math. Phys. 211 (2000) 497--525.

\bibitem{B5}
J.~Bourgain, Green's function estimates for lattice {S}chr\"odinger operators
  and applications, Vol. 158 of Annals of Mathematics Studies, Princeton
  University Press, Princeton, NJ, 2005.

\bibitem{GY}
J.~Geng, J.~You, A {KAM} theorem for {H}amiltonian partial differential
  equations in higher dimensional spaces, Comm. Math. Phys. 262~(2) (2006)
  343--372.
\newblock \href {http://dx.doi.org/10.1007/s00220-005-1497-0}
  {\path{doi:10.1007/s00220-005-1497-0}}.

\bibitem{EK}
L.~H. Eliasson, S.~B. Kuksin, K{AM} for the nonlinear {S}chr\"odinger equation,
  Ann. of Math. (2) 172~(1) (2010) 371--435.
\newblock \href {http://dx.doi.org/10.4007/annals.2010.172.371}
  {\path{doi:10.4007/annals.2010.172.371}}.

\bibitem{BB2}
M.~Berti, P.~Bolle, Sobolev quasi periodic solutions of multidimensional wave
  equations with a multiplicative potential, Nonlinearity 25 (2012) 2579--2613.

\bibitem{BB1}
M.~Berti, P.~Bolle, {Q}uasi-periodic solutions for {S}chr\"odinger equations
  with {S}obolev regularity of {NLS} on {$ {\mathbb T}^d $} with a
  multiplicative potential, J. European Math. Society 15 (2013) 229--286.

\bibitem{GYX}
J.~Geng, J.~You, X.~Xu, {KAM} tori for cubic {NLS} with constant potentials,
  preprint.

\bibitem{PP3}
M.~Procesi, C.~Procesi, A {KAM} algorithm for the non--linear {S}chr\"odinger
  equation, Advances in Math. 272 (2015) 399--470.

\bibitem{BCP}
M.~Berti, L.~Corsi, M.~Procesi, An abstract {N}ash-{M}oser theorem and
  quasi-periodic solutions for {NLW} and {NLS} on compact {L}ie groups and
  homogeneous manifolds, Comm. Math. Phys. 334~(3) (2015) 1413--1454.

\bibitem{CHP}
L.~Corsi, E.~Haus, M.~Procesi, A {KAM} result on compact {L}ie groups, Acta
  Applicandae Mathematicae 137 (2015) 41--59.

\bibitem{GP}
B.~Gr\'ebert, E.~Paturel, {KAM} for the {K}lein-{G}ordon equation on {${\mathbb
  S}^d$}, Boll. Unione Mat Ital. 9~(2) (2016) 237--288.

\bibitem{KaP}
T.~Kappeler, J.~P\"{o}schel, {KAM} and {K}d{V}, Vol.~45, Springer-Verlag,
  Berlin, 2003.

\bibitem{ZGY}
J.~Zhang, M.~Gao, X.~Yuan, {K}{A}{M} tori for reversible partial differential
  equations, Nonlinearity 24 (2011) 1189--1228.

\bibitem{LY}
J.~Liu, X.~Yuan, A {KAM} {T}heorem for {H}amiltonian partial differential
  equations with unbounded perturbations, Comm. Math. Phys, 307 (2011)
  629--673.

\bibitem{BBiP1}
M.~Berti, L.~Biasco, M.~Procesi, {KAM} theory for the {H}amiltonian derivative
  wave equation, Annales Scientifiques de l'ENS 46~(2) (2013) 299--371.

\bibitem{BBiP2}
M.~Berti, P.~Biasco, M.~Procesi, {KAM} theory for reversible derivative wave
  equations, Archive for Rational Mechanics and Analysis 212~(3) (2014)
  905--955.

\bibitem{IPT}
G.~Iooss, P.~Plotnikov, J.~Toland, Standing waves on an infinitely deep perfect
  fluid under gravity, Arch. Ration. Mech. Anal. 177~(3) (2005) 367--478.

\bibitem{Ba1}
P.~Baldi, Periodic solutions of forced {K}irchhoff equations, Ann. Scuola Norm.
  Sup. Pisa, Cl. Sci. 8~(5) (2009) 117--141.

\bibitem{Ba2}
P.~Baldi, {P}eriodic solutions of fully nonlinear autonomous equations of
  {B}enjamin-{O}no type, Ann. I. H. Poincar\'e (C) Anal. Non Lin\'eaire 30
  (2013) 33--77.

\bibitem{BBM}
P.~Baldi, M.~Berti, R.~Montalto, {KAM} for quasi-linear and fully nonlinear
  forced {K}d{V}, Math. Ann. 359 (2014) 471--536.

\bibitem{BBM1}
P.~Baldi, M.~Berti, R.~Montalto, {KAM} for autonomous quasilinear perturbations
  of {K}d{V}, arXiv:1404-3125 (2014).

\bibitem{FP}
R.~Feola, M.~Procesi, Quasi-periodic solutions for fully nonlinear forced
  reversible {S}chr{\"o}dinger equations, Journal of Differential Equations.

\bibitem{M}
R.~Montalto, Quasi-periodic solutions of forced {K}irchoff equation, preprint,
  arXiv:1602.05093.

\bibitem{BM1}
M.~Berti, R.~Montalto, Quasi-periodic standing wave solutions of
  gravity-capillary water waves, preprint (2016).

\bibitem{G}
F.~Giuliani, Quasi-periodic solutions for quasi-linear generalized {KdV}
  equations, preprint 2016, arXiv:1607.02583.

\bibitem{BB3}
M.~Berti, P.~Bolle, A {N}ash-{M}oser approach to {KAM} theory, to appear on
  Field Institute Communications.

\bibitem{FP2}
R.~Feola, M.~Procesi, {K}{A}{M} for quasi-linear autonomous {N}{L}{S}, in
  preparation.

\bibitem{KaM}
T.~Kappeler, R.~Montalto, Canonical coordinates with tame estimates for the
  defocusing {NLS} equation on the circle, preprint, arXiv:1607.04454.

\bibitem{BB}
M.~Berti, L.~Biasco, Branching of {C}antor manifolds of elliptic tori and
  applications to {PDE}s, Comm. Math. Phys. 305~(3) (2011) 741--796.

\bibitem{Bo4}
J.~Bourgain, Global solutions of nonlinear {S}chr\"odinger equations, Vol.~46
  of American Mathematical Society Colloquium Publications, American
  Mathematical Society, Providence, RI, 1999.

\bibitem{BeBoW}
M.~Berti, P.~Bolle, Quasi-periodic solutions for autonomous {NLW} on {$\mathbb
  T^d$} with a multiplicative potential, in preparation.

\bibitem{PP4}
M.~Procesi, C.~Procesi, Reducible quasi-periodic solutions for the non--linear
  {S}chr\"odinger equation, BUMI 2 (2016) 189--236.

\bibitem{Feo}
R.~Feola, Quasi-periodic solutions for fully nonlinear {N}{L}{S}, Ph.D. thesis,
  University ''La Sapienza'' (2016).

\end{thebibliography}

\end{document}